\numberwithin{equation}{section}
\numberwithin{subsection}{section}
\let\originalleft\left
\let\originalright\right
\def\left#1{\mathopen{}\originalleft#1}
\def\right#1{\originalright#1\mathclose{}}
\newenvironment{enumeratea}
{\begin{enumerate}[\upshape (a)]}
{\end{enumerate}}
\newenvironment{enumerate1}
{\begin{enumerate}[\upshape (1)]}
{\end{enumerate}}
\newtheorem*{namedtheorem}{\theoremname}
\newcommand{\theoremname}{testing}
\newtheorem{theorem}{Theorem}[section]
\newtheorem{proposition}[theorem]{Proposition}
\newtheorem{proposition-definition}[theorem]
{Proposition-Definition}
\newtheorem{corollary}[theorem]{Corollary}
\newtheorem{lemma}[theorem]{Lemma}
\theoremstyle{definition}
\newtheorem{definition}[theorem]{Definition}
\newtheorem{conditions}[theorem]{Conditions}
\newtheorem{example}[theorem]{Example}
\newtheorem{examples}[theorem]{Examples}
\newtheorem{remark}[theorem]{Remark}
\theoremstyle{remark}
\newcommand\nome{testing}
\newcommand\call[1]{\label{#1}\renewcommand\nome{#1}}
\newcommand\itemref[1]{\item\label{\nome;#1}}
\newcommand\refall[2]{\ref{#1}~(\ref{#1;#2})}
\newcommand\refpart[2]{(\ref{#1;#2})}
\renewcommand{\mathcal}{\mathscr}
 \newcommand\cB{\mathcal{B}}
\newcommand\cC{\mathcal{C}} \newcommand\cD{\mathcal{D}}
\newcommand\cG{\mathcal{G}}
\newcommand\cO{\mathcal{O}}
 \newcommand\cV{\mathcal{V}}
\renewcommand\AA{\mathbb{A}} 
 \newcommand\FF{\mathbb{F}}
\newcommand\GG{\mathbb{G}}
 \newcommand\NN{\mathbb{N}}
 \newcommand\PP{\mathbb{P}}
 \newcommand\rD{\mathrm{D}}
 \newcommand\rR{\mathrm{R}}
\newcommand\rU{\mathrm{U}}
\newcommand\rma{\mathrm{a}}
\newcommand\rmm{\mathrm{m}} 
 \newcommand\rmp{\mathrm{p}}
\renewcommand{\epsilon}{\varepsilon}
\newcommand\arr{\ifinner\to\else\longrightarrow\fi}
\newcommand\arrto{\ifinner\mapsto\else\longmapsto\fi}
\newcommand{\xarr}{\xrightarrow}
\newcommand\noqed{\renewcommand\qed{}}
\renewcommand\H{\operatorname{H}}
\newcommand\op{^{\mathrm{op}}}
\newcommand\eqdef{\overset{\mathrm{\scriptscriptstyle def}} =}
\renewcommand{\th}{^\text{th}}
\def\displaytimes_#1{\mathrel{\mathop{\times}\limits_{#1}}}
\def\displayotimes_#1{\mathrel{\mathop{\bigotimes}\limits_{#1}}}
\renewcommand\hom{\operatorname{Hom}}
\newcommand\ext{\operatorname{Ext}}
\newcommand\aut{\operatorname{Aut}}
\newcommand\pic{\operatorname{Pic}}
\newcommand\spec{\operatorname{Spec}}
\newcommand\generate[1]{\langle #1 \rangle}
\newcommand\rk{\operatorname{rk}}
\newcommand\id{\mathrm{id}}
\newcommand\indlim{\varinjlim}
\newcommand\coker{\operatorname{coker}}
\renewcommand\projlim{\varprojlim}
\newcommand{\cat}[1]{(\mathrm{#1})}
\newcommand\double{\mathbin{\rightrightarrows}}
\newcommand\doublelong[2]{\mathbin{\xymatrix{{}\ar@<3pt>[r]^{#1}
\ar@<-3pt>[r]_{#2}&}}}
\newcommand{\underhom}{\mathop{\underline{\mathrm{Hom}}}\nolimits}
\newcommand{\underisom}{\mathop{\underline{\mathrm{Isom}}}\nolimits}
\newcommand{\underaut}{\mathop{\underline{\mathrm{Aut}}}\nolimits}
\newlength{\ignora}\newcommand{\hsmash}[1]{\settowidth{\ignora}{#1}#1\hspace{-\ignora}}
\renewcommand{\setminus}{\smallsetminus}
\renewcommand\projlim{\varprojlim}
\newcommand{\catset}{\cat{Set}}
\newcommand{\catab}{\cat{Ab}}
\newcommand{\mmu}{\boldsymbol{\mu}}
\newcommand{\gm}{\GG_{\rmm}}
\newcommand{\gmp}[1]{\GG_{\rmm, #1}}
\newcommand{\GL}{\mathrm{GL}}
\newcommand{\ga}{\GG_{\rma}}
\newcommand\radice[2]{\sqrt[\uproot{2}#1]{#2}}
\newcommand{\step}[1]{\smallskip\textit{#1.}\;}
\newcommand{\smallbullet}{\mathchoice{}{}%
{\raisebox{.14ex}{\scalebox{.6}\textbullet}}%
{\raisebox{.09ex}{\scalebox{.5}\textbullet}}%
}%
\newcommand{\aff}[1]{(\mathrm{Aff}/#1)}
\newcommand{\univ}[1]{\Pi^{\cC}_{#1/\kappa}}
\newcommand{\univprime}[1]{\Pi^{\cC}_{#1/\kappa'}}
\newcommand{\univd}[1]{\Pi^{\cD}_{#1/\kappa}}
\newcommand{\univmt}[1]{\Pi^{\mathrm{MT}}_{#1/\kappa}}
\newcommand{\univfin}[1]{\Pi^{\rm N}_{#1/\kappa}}
\newcommand{\univu}[1]{\Pi^{\mathrm{U}}_{#1/\kappa}}
\newcommand{\univvu}[1]{\Pi^{\mathrm{VU}}_{#1/\kappa}}
\newcommand{\univsvu}[1]{\Pi^{\mathrm{SVU}}_{#1/\kappa}}
\newcommand{\univa}[1]{\Pi^{\mathrm{A}}_{#1/\kappa}}
\newcommand{\univva}[1]{\Pi^{\mathrm{VA}}_{#1/\kappa}}
\newcommand{\univvn}[1]{\Pi^{\mathrm{VN}}_{#1/\kappa}}
\newcommand{\fund}{\pi_{1}^{\cC}}
\newcommand{\fundu}{\pi_{1}^{\rm U}}
\newcommand{\vu}{^{\rm VU}}
\newcommand{\svu}{^{\rm SVU}}
\newcommand{\red}{_{\mathrm{red}}}
\newcommand{\ored}{^{0}\red}
\newcommand{\git}{/\hspace{-2pt}/}
\newcommand{\rep}{\operatorname{Rep}}
\newcommand{\vect}{\operatorname{Vect}}
\newcommand{\cha}{\operatorname{char}}
\newcommand{\tors}{\operatorname{Tors}}
\newcommand{\sep}{^{\mathrm{sep}}}
\newcommand{\et}{_{\text{\rm\'et}}}
\newcommand{\thickslash}{\mathbin{\!\!\!\fatslash}}
\newcommand{\underpic}{\mathop{\underline{\mathrm{Pic}}}\nolimits}
\newcommand{\underger}{\mathop{\underline{\mathrm{Ger}}}\nolimits}
\newcommand{\gmet}[1]{\widetilde\cO_{#1\et}}
\newcommand{\frob}{\operatorname{Frob}}
\begin{document}

\title{Fundamental gerbes}

\author[Borne]{Niels Borne$^\dagger$}

\author[Vistoli]{Angelo Vistoli$\ddagger$}

\address[Borne]{Laboratoire Paul Painlevé\\
Université de Lille\\
U.M.R. CNRS 8524\\
U.F.R. de Mathématiques\\
59\,655 Villeneuve d'Ascq Cédex\\
France}
\email{Niels.Borne@math.univ-lille1.fr}

\address[Vistoli]{Scuola Normale Superiore\\Piazza dei Cavalieri 7\\
56126 Pisa\\ Italy}
\email{angelo.vistoli@sns.it}

\thanks{$^\dagger$Supported in part by the Labex CEMPI (ANR-11-LABX-0007-01) and Anr ARIVAF (ANR-10-JCJC 0107)}

\thanks{$^\ddagger$Supported in part by research funds from the Scuola Normale Superiore}

\begin{abstract}
For a class of affine algebraic groups $\cC$ over a field, we define the notions of $\cC$-fundamental gerbe of a fibered category, generalizing what we had done in \cite{borne-vistoli-fundamental-gerbe} for finite group schemes.

We give a necessary and sufficient conditions on $\cC$ implying that a fibered category $X$ over $\kappa$ satisfying mild hypotheses admits a Nori $\cC$-fundamental gerbe. We also give a tannakian interpretation of the gerbe that results by taking as $\cC$ the class of virtually unipotent group schemes, under a properness condition on $X$.
\end{abstract}

\maketitle


\section{Introduction}

\subsubsection*{Previous work} Let $X$ be a reduced proper connected scheme over a 
field $\kappa$, with a rational point $x_{0} \in X(\kappa)$. The celebrated result of Nori \cite{nori-phd} says the following.

\begin{enumerate1}

	\item There is a profinite group scheme $\pi(X, x_{0})$, the \emph{Nori fundamental group scheme}, with a $\pi(X, x_{0})$-torsor $P \arr X$ with a trivialization $P\mid_{x_{0}} \simeq \pi(X, x_{0})$ such that for every profinite group scheme $G \arr \spec \kappa$ and every $G$ torsor $Q \arr X$ with a trivialization $\alpha\colon Q \mid_{x_{0}} \simeq G$, there is a unique homomorphism of group schemes $\pi(X, x_{0}) \arr G$ inducing $Q$ and $\alpha$.

\item There is an equivalence of tannakian categories between representations of the group scheme $\pi(X, x_{0})$ and essentially finite locally free sheaves on $X$.

\end{enumerate1} 

In our paper \cite{borne-vistoli-fundamental-gerbe} we extend this result in three ways: 
\begin{enumerate1}

\item we relax greatly the hypotheses on $X$, 

\item we remove the dependence on the base point, which does not even need to exist, by replacing the fundamental group scheme $\pi(X, x_{0})$ with a \emph{fundamental gerbe $\univfin X$},

\item and we give a more general definition of essentially finite locally free sheaf on $X$.

\end{enumerate1}

The fundamental gerbe $\univfin X$ of a category $X$ fibered in groupoids over the category $\aff\kappa$ of affine schemes over a fixed base field $\kappa$ is a profinite gerbe with a morphism $X \arr \univfin X$, that is universal among morphisms from $X$ to a profinite gerbe.

Also in \cite{nori-phd}, Nori defines a \emph{unipotent fundamental group} scheme $\fundu(X,x_{0})$; it is a prounipotent group scheme with a $\fundu(X,x_{0})$-torsor $P \arr X$ that satisfies the analogue of the universal property above for torsors under prounipotent group schemes.

\subsubsection*{The motivating question} It is a natural question whether one can define a universal prounipotent gerbe $X \arr \univu X$.

More generally, suppose that we are given a class $\cC$ of affine algebraic groups of finite type defined over extensions of $\kappa$, satisfying some natural stability conditions, listed in Definition~\ref{def:stable}. Then one defines a \emph{$\cC$-gerbe over $\kappa$} as an affine fpqc gerbe $\Gamma \arr \aff \kappa$, such that for every extension $\ell$ of $\kappa$ and every object $\xi$ of $\Gamma(\ell)$, the group scheme $\underaut_{\ell}\xi$ of automorphisms of $\xi$ is in $\cC(\ell)$. A pro-$\cC$-gerbe is a gerbe that is a projective limit of $\cC$-gerbes. If $X$ is a fibered category, we define a $\cC$-fundamental gerbe as a pro-$\cC$ gerbe $\univ X$ with a morphism $X \arr \univ X$ which is universal among all maps from $X$ to a pro-$\cC$-gerbe. If $x_{0} \in X(\kappa)$, $\xi$ is the image of $x_{0}$ in $\univ X$, and $\underaut_\kappa\xi$ is the automorphism group scheme of $\xi$ over $\kappa$, then there exists an  $\underaut_\kappa\xi$-torsor $P \arr X$  satisfying the analogue of the universal property above for torsors under projective limits of group schemes in $\cC(\kappa)$. (See Section~\ref{sec:fundamental-gerbes} for the rigorous definitions.)

In this paper we answer the following question: under what conditions on $\cC$ does $\univ X$ exist for a reasonably large class of fibered categories? 

It is certainly not the case that it exists in general. For example, one can show that if $\cC$ contains the semidirect product $\gm \ltimes \ga$, then $\univ X$ does not exist every time $X$ is a scheme with a line bundle with a nonzero section that vanishes somewhere (Example~\ref{ex:does-not-exist}).

\subsubsection*{Existence results for fundamental gerbes} We characterize the classes $\cC$ for which $\univ X$ exists for reasonable general $X$. If $G$ is an affine group scheme of finite type over a field $k$, we say that $G$ is \emph{virtually nilpotent} if, after passing to the algebraic closure of $k$, the group $G$ contains a nilpotent subgroup scheme of finite index. \emph{Virtually unipotent} and \emph{virtually abelian} group schemes are defined similarly.

We say that a class $\cC$ is \emph{well-founded} when it consists of virtually nilpotent group schemes. Our main examples of well-founded classes are those of virtually abelian and virtually unipotent affine group schemes of finite type.

Our first main result, Theorem~\ref{thm:maintheorem}, states that that if $X$ satisfies a mild finiteness condition, it is geometrically reduced, in the sense of Definition~\ref{def:reduced}, and $\H^{0}(X, \cO) = \kappa$, then $\univ X$ exists for every well-founded class $\cC$. For schemes, the finiteness condition is equivalent to being quasi-compact and quasi-separated.

In fact, the condition that the class $\cC$ be well-founded turns out to be also necessary (Remark~\ref{rmk:necessary}). In other words, as soon as we admit a group in our class $\cC$ that is not virtually nilpotent, then fundamental gerbes $\univ X$ do not exist anymore for a wide class of quasi-projective schemes $X$ satisfying the conditions above.

The proof of Theorem~\ref{thm:maintheorem} is very similar in structure with that of the existence of the fundamental gerbe in \cite{borne-vistoli-fundamental-gerbe}.

There are many examples of well-founded classes, and, correspondingly, many fundamental gerbes, and fundamental group schemes, that one can associate with a fibered category as above. Here are some examples.

\begin{enumerate1}

\item The Nori fundamental gerbe $\univfin X$, associated with the class of finite group schemes.

\item The unipotent fundamental gerbe $\univu X$.

\item The virtually unipotent fundamental gerbe $\univvu X$.

\item The abelian fundamental gerbe $\univa X$.

\item The virtually abelian fundamental gerbe $\univva X$.

\item The fundamental gerbe of multiplicative type $\univmt X$, associated with the class of group schemes of multiplicative type.

\item The virtually nilpotent fundamental gerbe $\univvn X$. Since, by definition, a well-founded class is contained in the class of virtually nilpotent groups, and fundamental gerbes are functorial under inclusion of classes (see Section~\ref{sec:change-class}), the virtually unipotent group fundamental gerbe $\univvn X$ dominates all the other $\univ X$ (we can call it the One Gerbe, in analogy with Tolkien's One Ring).

\end{enumerate1}

\subsubsection*{The tannakian interpretations} Of course one would like to have a tannakian interpretation for each of the fundamental gerbes above.

If $\Gamma$ is an affine gerbe over $\kappa$, we denote by $\rep \Gamma$ the corresponding tannakian category. A morphism $X \arr \Gamma$ induces a pullback $\rep\Gamma \arr \vect_{X}$, where we denote by $\vect_{X}$ the category of locally free sheaves on $X$; in particular for every fibered category satisfying the conditions of Theorem~\ref{thm:maintheorem} and every well-founded class $\cC$, we obtain a functor $\rep\univ X \arr \vect_{X}$. The pullbacks $\rep\univvn X \arr \vect_{X}$, $\rep\univa X \arr \vect_{X}$ and $\rep\univva X\arr \vect_{X}$ are almost never fully faithful, and we are not able to give a non-tautological tannakian interpretation of $\rep\univa X$ and $\rep\univva X$.

In contrast with this, we have that if $\cC$ is a well-founded subclass of the class of virtually unipotent group schemes, the pullback $\rep\univ X \arr \vect_{X}$ is fully faithful (Corollary~\ref{cor:fully-faithful-2}). In particular, the pullbacks $\rep\univfin X \arr \vect_{X}$, $\rep\univu X \arr \vect_{X}$ and $\rep\univvu X \arr \vect_{X}$ are fully faithful.

The pullback $\rep\univfin X \arr \vect_{X}$ induces an equivalence between $\rep\univfin X$ and the category of essentially finite bundles on $X$: this is proved in \cite{borne-vistoli-fundamental-gerbe}.

The pullback $\rep\univu X \arr \vect_{X}$ induces an equivalence of $\rep\univu X$ with the class of locally free sheaves that are obtained from successive extensions from trivial bundles (Theorem~\refall{thm:tannakian-char-u-vu}{1}). This is generalization of the tannakian characterization of the unipotent fundamental group scheme due to Nori \cite{nori-phd}, and is not at all surprising.

The tannakian interpretation of the virtually unipotent gerbe $\univvu X$ is somewhat more interesting. In \cite{otabe-extension}, S{.} Otabe defined \emph{semi-finite bundles}: these are locally free sheaves that are obtained as successive extensions of essentially finite bundles; see Definition~\ref{def:vect-u-vu}. Our terminology is different, as we call these \emph{extended essentially finite locally free sheaves}.

If $\cha\kappa = 0$ we show that the pullback $\rep\univvu X \arr \vect_{X}$ gives an equivalence between $\rep\univvu X$ and the category of extended essentially finite  locally free sheaves on $X$ (Theorem~\refall{thm:tannakian-char-u-vu}{2}). If $\cha\kappa > 0$, then $\rep\univvu X$ is equivalent to the category of locally free sheaves that become extended essential finite bundles after pullback by a sufficiently high power of the absolute Frobenius (Theorem~\ref{thm:tannakian-characterization-3}).

We conclude with a reference to a result, Theorem~\ref{thm:tonini-zhang}, due to Tonini and Zhang. Assume that $\cha \kappa > 0$, that $X$ is a pseudo-proper geometrically reduced algebraic stack of finite type over $\kappa$, and that\/ $\H^{1}(X, E)$ is a finite-dimensional vector space over $\kappa$ for all locally free sheaves on $X$. Then $\univvu X = \univsvu X = \univfin X$.

\subsubsection*{Fundamental gerbes of multiplicative type} A particularly interesting fundamental gerbe is the fundamental gerbe $\univmt X$ of multiplicative type, as it gives a conceptual interpretation of the universal torsor of Colliot-Thélène and Sansuc \cite{colliot-thelene-sansuc-torsors, colliot-thelene-sansuc-descente-II}. Suppose that $X$ is a projective variety over a field $\kappa$, such that if $\kappa\sep$ is a separable closure of $\kappa$, then $\pic(X_{\kappa\sep})$ is a free abelian group of finite rank, and $x_{0} \in X(\kappa)$. Then Colliot-Thélène and Sansuc define a $G$-torsor on $X$, where $G$ is the torus associated with the action of the Galois group on $\pic(X_{\kappa\sep})$. In our language, $G$ is the fundamental group of multiplicative type of $(X, x_{0})$.

In the last section we give a direct construction of $\univmt X$, generalizing that of Colliot-Thélène and Sansuc, which is completely independendent of the general machinery in the rest of the paper; this works even for fibered categories $X$ satisfying the same mild finiteness condition, with $\H^{0}(X, \cO) = \kappa$, but without assuming that $X$ is geometrically reduced. We construct $\univmt X$ from the Picard stack $\underpic_{X}$ over the small étale site $\kappa\et$, which sends every étale $\kappa$-algebra $A$ into the groupoid $\underpic(X_{A})$ of invertible sheaves over $X_{A}$ (Theorem~\ref{thm:main-multiplicative}).

Along the way, we prove a very general duality theorem for gerbes of multiplicative type, which extends the well known duality between groups of multiplicative type and sheaves of abelian groups on $\kappa\et$ (or, equivalently, abelian groups with a continous action of the Galois group of $\kappa\sep/\kappa$). More precisely, we prove an equivalence of $2$-categories between gerbes of multiplicative type and a certain class of Picard stacks with additional structure (Theorem~\ref{thm:main-equivalence}). See also \cite{brochard-duality} and \cite[\S 2.4]{braverman-bezrukavnikov} for related ideas. 

(In this part we do not give the full details of all the proofs, as said details tend to be rather tedious.)

\subsection*{Description of content}

The first three sections of the paper aim at fixing the notation, and present some facts about affine gerbes and fibered categories which are undoubtedly known to the experts, but for which we could not find a suitable treatment in the literature.

The real action starts in Section~\ref{sec:fundamental-gerbes}, in which we give the general definition of a $\cC$-fundamental gerbe, explain the connection of this with the notion of $\cC$-fundamental group, and give examples to show how fundamental gerbes don't exist in general.

Section~\ref{sec:well-founded} contains the definition of a well-founded class, and several technical results on group scheme actions on affine varieties that lead to the characterization of well-founded classes given in Theorem~\ref{thm:char-well-founded}.

The first main result, the existence of $\univ X$ for a well-founded class $\cC$, with appropriate hypotheses on $X$, is in Section~\ref{sec:maintheorem}.

Section~\ref{sec:change-class} contains a small but very useful result on the relation between $\univ X$ and $\univd X$, when $\cD$ is a subclass of a well-founded class $\cC$.

Section~\ref{sec:base-change} contains a base-change result for $\univ X$ under an algebraic extension of $\kappa$.

Our main results on the tannakian interpretation of certain fundamental gerbes, Theorems \ref{thm:tannakian-char-u-vu} and \ref{thm:tannakian-characterization-3} are stated in Section~\ref{sec:tannakian}, together with the result of Tonini and Zhang, Theorem~\ref{thm:tonini-zhang}, mentioned above. In Section~\ref{sec:general-tannakian} we put the problem of giving a tannakian interpretation of fundamental gerbes for a certain fundamental class into a more general framework, and we prove a more general result (Theorem~\ref{thm:tannakian-characterization-1}) that implies \ref{thm:tannakian-char-u-vu}. Section~\ref{sec:proof-3} contains the proof of \ref{thm:tannakian-characterization-3}.

The last section contains our treatment of the duality theorem for gerbes of multiplicative type, and our alternative construction for the universal gerbe of multiplicative type.

\subsection*{Acknowledgments}

We are grateful to Sylvain Brochard and Lei Zhang for very useful discussions. We heartily thank Marta Pieropan, who pointed out to us the possible connection of our theory of fundamental gerbes with the theory of the universal torsor of Colliot-Thélène and Sansuc.

We are especially in debt with Fabio Tonini for several helpful remarks.

The fact that Theorem~\ref{thm:char-well-founded} should be true was pointed out to us by Andrei Okounkov and Johan De Jong, to whom we express our appreciation.

\section{Notations and conventions}

We will fix a base field $\kappa$. All schemes and morphisms will be defined over $\kappa$. All fibered categories will be fibered in groupoids over the category $\aff \kappa$ of affine $\kappa$-schemes (or, equivalently, over the opposite of the category of $\kappa$-algebras). A base-preserving functor between categories fibered in groupoids will be referred to in short as a map, or a morphism. A $\kappa$-scheme $U$ will be identified with the category fibered in sets $\aff{U} \arr \aff{\kappa}$, where $\aff U$ is the category of maps $T \arr U$, where $T$ is an affine scheme.

All group schemes will be affine over extensions $\ell$ of $\kappa$. If $G$ is a group scheme of finite type over $\ell$, we will denote by $G^{0}$ the connected component of the identity. If $\ell$ is perfect, $G\ored$ is a smooth connected subgroup scheme of $G$.

If a group scheme $G$ over an extension $\ell$ of $\kappa$ acts on an $\ell$-scheme $X = \spec A$, we denote by $X\git G$ the spectrum of the $\ell$-algebra of invariants $A^{G}$. We will need the following standard fact, which is, for example, a particular case of Grothendieck's result on the existence of quotients for finite flat groupoids (see \cite{grothendieck-quotients}).

\begin{lemma}\label{lem:finite-actions}
Assume that $\ell$ is algebraically closed and $G$ is finite over $\ell$. Then the fibers of the function $X(\ell) \arr (X\git G)(\ell)$ are precisely the orbits of the action of $G(\ell)$ on $X(\ell)$.
\end{lemma}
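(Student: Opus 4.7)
The plan is to prove both inclusions. The easy one, that $G(\ell)$-orbits lie in fibers of $\pi \colon X \to X \git G$, follows directly from the definition of $A^G$: the two compositions $G \times X \rightrightarrows X \to X \git G$ (action and projection, followed by $\pi$) coincide, so $\pi(g \cdot x) = \pi(x)$ for every $g \in G(\ell)$ and $x \in X(\ell)$.

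For the reverse inclusion I would argue by contradiction. Suppose $x, y \in X(\ell)$ have $\pi(x) = \pi(y)$ but $y \notin G(\ell) \cdot x$. Write $O_x := G(\ell) \cdot x$ and $O_y := G(\ell) \cdot y$; these are disjoint finite sets of closed points of $X$. I would first use the Chinese remainder theorem to choose $h \in A$ with $h|_{O_x} = 0$ and $h|_{O_y} = 1$, and then form the norm-like product $\tilde h := \prod_{g \in G(\ell)} g \cdot h$, which is manifestly in $A^{G(\ell)}$ and satisfies $\tilde h(x) = 0$ and $\tilde h(y) = 1$. The real work is to promote $\tilde h$ to a genuine element of $A^G$ without losing this separation, after which the contradiction $\pi(x) \neq \pi(y)$ is immediate.

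In characteristic zero every finite group scheme over $\ell$ is automatically étale, so $A^{G(\ell)} = A^G$ and $\tilde h$ already does the job. In characteristic $p > 0$ I would use the connected-étale short exact sequence $1 \to G^0 \to G \to G/G^0 \to 1$ (valid since $\ell$ is perfect). The augmentation ideal $\frm \subset \cO(G^0)$ is nilpotent, say $\frm^{p^N} = 0$, and the coaction $\mu \colon A \to A \otimes \cO(G^0)$ satisfies $\mu(a) - a \otimes 1 \in A \otimes \frm$ for every $a \in A$. In characteristic $p$ one then has $(\mu(a))^{p^N} = a^{p^N} \otimes 1 + (\mu(a) - a \otimes 1)^{p^N} = a^{p^N} \otimes 1$, so $a^{p^N} \in A^{G^0}$. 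Applying this to $\tilde h$, the element $f := \tilde h^{p^N}$ lies in $A^{G^0}$ and, as a power of a $G(\ell)$-invariant, still lies in $A^{G(\ell)}$. Combining $A^G = (A^{G^0})^{G/G^0}$ with the fact that $G/G^0$ is smooth and $(G/G^0)(\ell) = G(\ell)$ is Zariski-dense in $G/G^0$, we conclude that $f \in A^G$. Since $\ell$ is a field, $f(x) = 0$ and $f(y) = 1 \neq 0$, contradicting $\pi(x) = \pi(y)$.

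The main obstacle is the infinitesimal part $G^0$ in positive characteristic: it acts trivially on $\ell$-points but non-trivially on the structure sheaf, so one cannot simply identify $G$-invariance with $G(\ell)$-invariance. The Frobenius-power trick above is the essential device bridging the gap, and it works precisely because algebraically closed fields are perfect, so no information is lost on the level of $\ell$-points when passing from $\tilde h$ to $\tilde h^{p^N}$.
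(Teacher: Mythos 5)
Your proof is correct. Note that the paper itself does not prove this lemma at all: it cites Grothendieck's theorem on quotients by finite flat groupoids, of which the statement is a special case. Your argument is a self-contained direct verification of exactly that special case, and the two devices you use --- the norm $\prod_{g\in G(\ell)}g\cdot h$ to force $G(\ell)$-invariance, and the $p^{N}$-th power to kill the action of the infinitesimal part $G^{0}$ --- are precisely the standard ingredients inside proofs of the general quotient theorem, so in substance you are re-deriving the needed case rather than introducing a new mechanism. Two steps deserve to be made explicit but are sound: the vanishing of $(\mu(\tilde h)-\tilde h\otimes 1)^{p^{N}}$ rests on $(A\otimes\frm)^{p^{N}}\subseteq A\otimes\frm^{p^{N}}=0$; and the identification $(A^{G^{0}})^{G/G^{0}}=(A^{G^{0}})^{G(\ell)}$ holds because $G/G^{0}$ is finite \'etale over the algebraically closed field $\ell$, hence a constant group scheme with group $(G/G^{0})(\ell)=G(\ell)$ (surjectivity of $G(\ell)\arr(G/G^{0})(\ell)$ follows from faithful flatness of $G\arr G/G^{0}$, without invoking the splitting of the connected--\'etale sequence) --- this is the precise content of your ``Zariski-dense'' remark. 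What your route buys is independence from the reference, and it works for arbitrary affine $X$, as the CRT step only needs finitely many pairwise comaximal maximal ideals; what the citation buys is the stronger package (integrality of $A$ over $A^{G}$, surjectivity and submersiveness of $X\arr X\git G$) that the direct separation argument does not address.
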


Let $G \arr \spec \ell$ be a group scheme, $P \arr \spec\ell$ a $G$-torsor. We can use the conjugation action of $G$ and $P$ to define a twisted form of $G$, which we call, as usual, an \emph{inner form} of $G$.

\section{Generalities on affine gerbes}

By \emph{affine gerbe} we will always mean affine fpqc gerbe over the base field $\kappa$, that is an fpqc gerbe over $\aff{\kappa}$ with affine diagonal, possessing an affine chart. These admit an obvious description in terms of groupoids (see \cite[\S 3]{borne-vistoli-fundamental-gerbe}) and are called \emph{tannakian gerbes} in \cite[Chapitre~III, \S 2]{saavedra}.

We will often consider gerbes of finite type, that by definition are those satisfying the equivalent conditions of the following proposition.

\begin{proposition}\call{finite-type}\label{prop:gerbe-finite-type}
Let $\Gamma \arr \spec \kappa$ be an affine gerbe. Then the following are equivalent.

\begin{enumerate1}

\itemref{8} $\Gamma$ is a smooth algebraic stack over $\kappa$.

\itemref{7} $\Gamma$ is an algebraic stack of finite type over $\kappa$.

\itemref{1} $\Gamma$ is an algebraic stack.

\itemref{2} The diagonal of\/ $\Gamma$ is of finite type.

\itemref{3} If $\ell$ is an extension of $\kappa$ and $\xi\in \Gamma(\ell)$, then $\underaut_{\ell}\xi$ is of finite type over $\ell$.

\itemref{4} There exists an extension $\ell$ of $\kappa$ and an object $\xi\in \Gamma(\ell)$ such that $\underaut_{\ell}\xi$ is of finite type over $\ell$.

\itemref{5} If $\{A_{i}\}_{i \in I}$ is an inductive system of $\kappa$-algebras, the natural map
   \[
   \indlim_{i} \Gamma(A_{i}) \arr \Gamma(\indlim_{i}A_{i})
   \]
is an equivalence of categories.

\itemref{6} The tannakian category $\rep\Gamma$ is finitely generated.
\end{enumerate1}

\end{proposition}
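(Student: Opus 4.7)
My plan is to fix an object $\xi_{0}$ of $\Gamma$ over some extension $\ell_{0}$ of $\kappa$ -- one exists by the affine chart hypothesis -- and set $G_{0} \eqdef \underaut_{\ell_{0}}\xi_{0}$; by the fpqc gerbe property, the base change $\Gamma_{\ell_{0}}$ is equivalent to $BG_{0}$. Each condition in the list then translates into a statement about the affine group scheme $G_{0}$ over $\ell_{0}$, and the proof amounts to moving between these formulations.

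The first block of equivalences concerns the automorphism groups and the diagonal. Conditions \refpart{finite-type}{3} and \refpart{finite-type}{4} are equivalent by the gerbe property: any two objects of $\Gamma$ become isomorphic after some fpqc extension, hence their automorphism groups become isomorphic after a common base change, and being of finite type descends along fpqc maps. Conditions \refpart{finite-type}{2} and \refpart{finite-type}{3} are equivalent because the fibre of the diagonal $\Gamma \arr \Gamma \times_{\kappa}\Gamma$ along $(\xi,\xi)$ is exactly $\underaut_{\ell}\xi$, while pairs of objects form an fpqc cover of $\Gamma \times_{\kappa}\Gamma$, so finite type of the diagonal can be tested fibrewise.

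For the block \refpart{finite-type}{2}$\Leftrightarrow$\refpart{finite-type}{1}$\Leftrightarrow$\refpart{finite-type}{7}$\Leftrightarrow$\refpart{finite-type}{8}, the affine chart $\spec R \arr \Gamma$, combined with affineness of $\Delta_{\Gamma}$, presents $\Gamma$ as an fppf groupoid in affine schemes; finite type of the diagonal upgrades this to an algebraic-stack presentation, giving \refpart{finite-type}{2}$\Leftrightarrow$\refpart{finite-type}{1}, and refining the chart to $\spec \ell_{0} \arr \Gamma$ after base change then yields \refpart{finite-type}{7}. For the smoothness statement \refpart{finite-type}{8}, the tautological atlas $\spec \ell_{0} \arr BG_{0}$ need not be smooth since $G_{0}$ may fail to be smooth, so I would embed $G_{0} \into \GL_{n,\ell_{0}}$ and use the smooth surjection $\GL_{n,\ell_{0}} \arr \GL_{n,\ell_{0}}/G_{0}$ to produce a smooth atlas of $\Gamma_{\ell_{0}}$, which then descends.

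Finally, \refpart{finite-type}{5} is equivalent to \refpart{finite-type}{4}: an affine gerbe commutes with filtered colimits of test algebras if and only if its banding group scheme is of finite presentation over its field of definition, which over a field is the same as finite type; one direction is standard descent for torsors, and the other is obtained by testing \refpart{finite-type}{5} on the presentation of the coordinate ring of $G_{0}$ as a filtered colimit of its finitely generated subalgebras. The equivalence with \refpart{finite-type}{6} is an instance of the Tannakian dictionary: a fibre functor identifies $\rep\Gamma$ with $\rep G_{0}$, and the latter is finitely generated as a symmetric monoidal abelian category if and only if $G_{0}$ is of finite type over $\ell_{0}$. The step I expect to be most delicate is the smoothness implication \refpart{finite-type}{7}$\Rightarrow$\refpart{finite-type}{8} in positive characteristic, since group schemes such as $\mmu_{p}$ are not smooth and the naive atlas $\spec\ell_{0} \arr B\mmu_{p}$ fails; producing and descending a genuine smooth cover via the $\GL_{n}$-orbit construction is the most technical part of the argument.
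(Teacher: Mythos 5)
Your descending implications \refpart{finite-type}{8}$\implies\cdots\implies$\refpart{finite-type}{4}, the equivalences \refpart{finite-type}{2}$\iff$\refpart{finite-type}{3}$\iff$\refpart{finite-type}{4} via the Isom-sheaves, and the Tannakian equivalence \refpart{finite-type}{6} all match the paper (which cites Saavedra for the last). The gap is in climbing back up from \refpart{finite-type}{2}/\refpart{finite-type}{4} to \refpart{finite-type}{1}, \refpart{finite-type}{7}, \refpart{finite-type}{8}, which is the real content of the proposition. To invoke Artin's algebraicity theorem you need an fppf atlas, i.e.\ one that is locally of finite presentation, and neither the given chart $\spec R \arr \Gamma$ nor your $\spec \ell_{0} \arr \Gamma$ is fppf in general: the pullback of $\spec\ell_{0} \arr \Gamma$ along an object $T \arr \Gamma$ factors as $(T\times_{\kappa}\spec\ell_{0})\times_{\Gamma\times\Gamma}\Gamma \arr T\times_{\kappa}\spec\ell_{0} \arr T$, and while the first arrow is of finite type by \refpart{finite-type}{2}, the second is of finite type only when $\ell_{0}/\kappa$ is finitely generated --- and nothing in your setup controls $\ell_{0}$, which may well be a huge extension. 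Producing an object of $\Gamma$ over a finite (or at least finitely generated) extension of $\kappa$ is exactly the obstruction, and it is what the paper's detour through \refpart{finite-type}{5} is for: applying preservation of filtered colimits to $\ell = \indlim_{i}A_{i}$ yields $\Gamma(k)\neq\emptyset$ for a finite extension $k/\kappa$, and only then is $\spec k \arr \Gamma$ an fppf cover.

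Correspondingly, your dismissal of \refpart{finite-type}{4}$\implies$\refpart{finite-type}{5} as ``standard descent for torsors'' hides the technical heart of the paper's proof. That $\Gamma_{\ell}\simeq\cB_{\ell}G$ preserves filtered colimits follows from its being of finite presentation over $\ell$; the work is in transferring this to $\Gamma$ over $\kappa$, which the paper does by identifying $\Gamma(A)$ with the category of descent data along $\spec\ell \arr \spec\kappa$ and checking that the relevant Hom-sets sit in equalizer diagrams, which commute with filtered colimits. Without either that argument or an independent construction of a point over a finitely generated extension, your upward implications do not close. (Your $\GL_{n}/G_{0}$ construction for smoothness is a workable substitute for the paper's citation of the fppf-gerbe result, but descending smoothness from $\Gamma_{\ell_{0}}$ over $\ell_{0}$ to $\Gamma$ over $\kappa$ again requires $\ell_{0}/\kappa$ to be under control, so it too depends on the missing step.)
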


\begin{proof}
The implications \refpart{finite-type}{8}$\implies$\refpart{finite-type}{7}$\implies$\refpart{finite-type}{1}$\implies$%
\refpart{finite-type}{2}$\implies$\refpart{finite-type}{3}$\implies$\refpart{finite-type}{4} are obvious.

The proof of \refpart{finite-type}{1}$\implies$\refpart{finite-type}{8} is given for fppf gerbes  in \cite[Proposition~A.2]{bergh-destackification}, it is also valid for fpqc gerbes.

Here is a sketch of proof that \refpart{finite-type}{4} implies \refpart{finite-type}{5}. Set $G \eqdef \underaut_{\ell}\xi$; then $\Gamma_{\ell} = \cB_{\ell}G$ is an algebraic stack, as it follows from Artin's theorem (\cite[Théorème~10.1]{laumon-moret-bailly}). Since \refpart{finite-type}{1}$\implies$\refpart{finite-type}{7} holds,  $\Gamma_{\ell}$ is an algebraic stack of finite type over $\ell$, or equivalently, of finite presentation. Hence $\Gamma_{\ell}$  preserves filtered colimits (see 
\cite[{Tag 0123}]{stacks-project}). Set $R_{0} \eqdef \ell$, $R_{1} \eqdef \ell\otimes_{\kappa}\ell$, and $R_{2} \eqdef \ell\otimes_{\kappa}\ell\otimes_{\kappa}\ell$; the natural maps $\iota_{1}$, $\iota_{2}\colon R_{0} \arr R_{1}$ and $\iota_{12}$, $\iota_{13}$ and $\iota_{23}\colon R_{1} \arr R_{2}$ induce functors $\iota_{1*}$, $\iota_{2*}\colon \Gamma(A\otimes_{\kappa}R_{0}) \arr \Gamma(A\otimes_{\kappa}R_{1})$ and $\iota_{12*}$, $\iota_{13*}$ and $\iota_{23*}\colon \Gamma(A\otimes_{\kappa}R_{1}) \arr \Gamma(A\otimes_{\kappa}R_{2})$ for each $\kappa$-algebra $A$. We call $\Delta\arr \aff \kappa$ the fibered category of objects of $\Gamma$ with descent data along the covering $\spec \ell \arr \spec \kappa$; if $A$ is a $\kappa$-algebra, the objects of $\Delta(A)$ are pairs $(\xi, a)$, where $\xi$ is an object of $\Gamma(A\otimes_{\kappa}R_{0})$ and $a$ is an isomorphism $a\colon \iota_{2*}\xi \simeq \iota_{1*}\xi$ satisfying $\iota_{12*}a \circ \iota_{23*}a = \iota_{13*}a$. An arrow $f\colon (\xi, a) \arr (\eta,b)$ in $\Delta(A)$ is a arrow $f\colon \xi \arr \eta$ in $\Gamma(R\otimes A)$, with the property that $b \circ\iota_{2*}f = \iota_{1*}f \circ a$ in $\hom_{\Gamma(R_{1}\otimes A)}(\iota_{2*}\xi, \iota_{1*}\eta)$. So the diagram
   \[
   \begin{tikzcd}[column sep = 1em]
   \hom_{\Delta(A)}\bigl((\xi, a), (\eta, b)\bigr) \rar& \hom_{\Gamma(R_{0}\otimes A)}(\xi, \eta)
   \ar[rr, shift left, "f {\mapsto} b \circ\iota_{2*}f"]
   \ar[rr, shift right, swap, "f {\mapsto} \iota_{1*}f \circ a"]&\ &
   \hom_{\Gamma(R_{1}\otimes A)}(\iota_{2*}\xi, \iota_{1*}\eta)
   \end{tikzcd}
   \]
is an equalizer.

The obvious functor $\Gamma(A) \arr \Delta(A)$ is an equivalence, because $\Delta$ is an fpqc stack. Hence it is enough to prove that for any inductive system of $\kappa$-algebras $\{A_{i}\}$ the functor $\indlim_{i} \Delta(A_{i}) \arr \Delta(\indlim_{i}A_{i})$ is an equivalence. 

Let us show that $\indlim_{i} \Delta(A_{i}) \arr \Delta(\indlim_{i}A_{i})$ is fully faithful. For this, notice that if $R$ is an $\ell$-algebra, and $A$ a $\kappa$-algebra, then the fibered category sending $A$ into $\Gamma(R\otimes_{\kappa}A) = \Gamma_{\ell}\bigl(R\otimes_{\ell}(\ell\otimes_{\kappa}A)\bigr)$ preserves filtered colimits, because $\Gamma_{\ell}$ does, and tensor products preserve colimits.

Set $A \eqdef \indlim_{i}A_{i}$; we need to prove that the functor $\indlim_{i} \Delta(A_{i}) \arr \Delta(\indlim_{i}A_{i})$ is fully faithful. Take two objects $\{(\xi_{i}, a_{i})\}$ and $\{(\eta_{i}, b_{i})\}$ of $\indlim_{i}\Delta(A_{i})$; call $(\xi, a)$ and $(\eta, b)$ their images in $\Delta(A)$. By definition we have
   \[
   \hom_{\indlim \Delta(A_{i})}\bigl(\{(\xi_{i}, a_{i})\}, \{(\eta_{i}, b_{i})\}\bigr)
   = \indlim_{i}\hom_{\Delta(A_{i})}\bigl((\xi_{i}, a_{i}), (\eta_{i}, b_{i})\bigr)\,.
   \]
Since filtered colimits preserve equalizers, we have a commutative diagram
   \[
   \begin{tikzcd}[column sep = 1.5em]
   \indlim\hom\bigl((\xi_{i}, a_{i}), (\eta_{i}, b_{i})\bigr) \rar\dar& 
   \indlim\hom(\xi_{i}, \eta_{i})\dar
   \ar[r, shift left]
   \ar[r, shift right]&
   \indlim\hom(\iota_{2*}\xi_{i}, \iota_{1*}\eta_{i})\dar\\   
   \hom\bigl((\xi, a), (\eta, b)\bigr) \rar& \hom(\xi, \eta)
   \ar[r, shift left]
   \ar[r, shift right]&
   \hom(\iota_{2*}\xi, \iota_{1*}\eta)
   \end{tikzcd}
   \]
in which the rows are equalizers, and the last two columns are bijections. It follows that the first column is also a bijection, which is exactly what we want to show.

The proof of the fact that the functor $\indlim_{i} \Delta(A_{i}) \arr \Delta(\indlim_{i}A_{i})$ essentially surjective is easy, and left to the reader.

It is easy to check that \refpart{finite-type}{5} implies \refpart{finite-type}{2}: this follows from the well-known fact, due to Grothendieck, that an affine scheme over a ring $R$ is finitely presented if and only the functor on $R$-algebras that it represents preserves inductive limits.

Let us check the stronger result that \refpart{finite-type}{5} implies 
\refpart{finite-type}{1}. Let $\ell$ be an extension of $\kappa$ such that $\Gamma(\ell) \neq \emptyset$. If $\{A_{i}\}$ is the inductive system of $\kappa$-subalgebras of $\ell$ of finite type over $\kappa$; then $\indlim_{i}A_{i} = \ell$, so $\indlim_{i}\Gamma(A_{i}) \simeq \Gamma(\indlim_{i}A_{i}) \neq \emptyset$. Hence $\Gamma(A_{i}) \neq \emptyset$ for some $i$; by passing to a quotient by a maximal ideal of $A_{i}$ we see that there is a finite extension $k/\kappa$ such that $\Gamma(k) \neq \emptyset$. If $\xi \in \Gamma(k)$ and $G \eqdef \underaut_{k}\xi$, then 
$G$ is of finite type over $k$, because \refpart{finite-type}{2} is satisfied; so the map $\spec k \arr \Gamma_{k}$ is an fppf cover, hence the composite $\spec k \arr \Gamma_{k} \arr \Gamma$ is an fppf cover. From Artin's theorem (\cite[Théorème~10.1]{laumon-moret-bailly}) we see that $\Gamma$ is an algebraic stack, as claimed.

The equivalence between \refpart{finite-type}{6} and \refpart{finite-type}{2} is proven in \cite[III~3.3.1.1]{saavedra}.
\end{proof}

We will mainly need the following Corollary.

\begin{corollary}\label{lem:finite-separable-point}
Let $\Gamma$ be an affine gerbe of finite type over $\kappa$. Then there exists a finite separable extension $\kappa'/\kappa$ such that $\Gamma(\kappa') \neq \emptyset$.
\end{corollary}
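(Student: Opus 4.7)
The plan is to reduce this to the well-known fact that a non-empty smooth scheme over a field has a point in a finite separable extension of that field. The key input is part~\refpart{finite-type}{8} of Proposition~\ref{prop:gerbe-finite-type}, which states that a gerbe of finite type is automatically smooth as an algebraic stack over $\kappa$.

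More precisely, I would proceed as follows. First, invoke Proposition~\ref{prop:gerbe-finite-type} to conclude that $\Gamma$ is a smooth algebraic stack of finite type over $\kappa$. Since $\Gamma$ is an affine fpqc gerbe it admits an affine chart, so in particular it is non-empty as a fibered category; consequently any smooth surjective atlas $U \arr \Gamma$ (which exists because $\Gamma$ is algebraic) has $U$ non-empty. The composition $U \arr \Gamma \arr \spec \kappa$ is then smooth, so $U$ is a non-empty smooth $\kappa$-scheme of finite type. Pick any point of $U$ valued in some finite separable extension $\kappa'/\kappa$; its image in $\Gamma(\kappa')$ is the desired object.

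The only step that requires a justification is the existence of a point of $U$ valued in a finite separable extension of $\kappa$. This is standard: a smooth $\kappa$-scheme is étale-locally of the form $\mathbb{A}^{n}_{\kappa}$, so we can choose a non-empty open $V \subseteq U$ together with an étale map $V \arr \mathbb{A}^{n}_{\kappa}$; picking any $\kappa$-rational point in its (open) image and taking the fiber yields a non-empty étale $\kappa$-scheme, whose closed points have residue fields that are finite separable extensions of $\kappa$. I expect this last step to be the only technical ingredient; everything else is a direct invocation of the preceding proposition.
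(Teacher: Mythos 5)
Your proposal is correct and follows exactly the paper's route: the paper's entire proof is the single citation of Proposition~\ref{prop:gerbe-finite-type}~(smoothness of a finite-type gerbe as an algebraic stack), leaving the reader to deduce the existence of a point over a finite separable extension. You have simply written out that standard deduction (smooth atlas, \'etale-local structure of smooth schemes) in full, which is a faithful expansion of the paper's argument rather than a different one.
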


\begin{proof}
	This follows from Proposition \ref{prop:gerbe-finite-type}\refpart{finite-type}{8}. 
\end{proof}

In this paper we will use repeatedly the following Lemma.

\begin{lemma}\label{lem:fiber-product}
Let $\phi'\colon G'\arr G$ and $\phi''\colon G'' \arr G$ be homomorphism of algebraic groups over $\kappa$. Then have an equivalence of fibered categories
   \[
   \cB_{\kappa}G' \times_{\cB_{\kappa}G} \cB_{\kappa}G'' \simeq 
   [G/(G' \times G'')]
   \]
where the action of $G' \times G''$ on $G$ is defined by
   \[
   g\cdot (g', g'') \eqdef \phi'(g')^{-1} g \phi''(g'')\,.
   \]
\end{lemma}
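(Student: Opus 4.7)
\textit{Plan.} The strategy is to unwind both sides explicitly in terms of torsor data and then match them. Over a $\kappa$-algebra $A$, an object of $(\cB_\kappa G' \times_{\cB_\kappa G} \cB_\kappa G'')(A)$ is a triple $(P', P'', \sigma)$ where $P'$ is a $G'$-torsor on $\spec A$, $P''$ is a $G''$-torsor, and $\sigma \colon P' \wedge^{G', \phi'} G \xrightarrow{\sim} P'' \wedge^{G'', \phi''} G$ is an isomorphism of $G$-torsors. An object of $[G/(G' \times G'')](A)$ is a pair $(Q, f)$ consisting of a $(G' \times G'')$-torsor $Q$ together with a $(G' \times G'')$-equivariant morphism $f \colon Q \to G$, where $G$ carries the right action of the statement.

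I would construct the equivalence in two steps. Sending $(P', P'', \sigma)$ to $(P' \times_{\spec A} P'', f_\sigma)$, where $P' \times P''$ is endowed with the evident $(G' \times G'')$-torsor structure and $f_\sigma(p', p'')$ is the unique element of $G$ satisfying $\sigma([p', 1]) = [p'', f_\sigma(p', p'')^{-1}]$, defines a functor in one direction. Conversely, from $(Q, f)$, the quotients $P' \eqdef Q/G''$ and $P'' \eqdef Q/G'$ are a $G'$- and a $G''$-torsor, the canonical map $Q \to P' \times_{\spec A} P''$ is an isomorphism of $(G' \times G'')$-torsors (as a local trivialization shows at once), and the same formula determines a well-defined $G$-torsor isomorphism $\sigma_f$. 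Well-definedness in both directions reduces to the identity $[p h, 1] = [p, \phi(h)]$ valid in any contracted product $P \wedge^{H, \phi} G$, and a routine check shows the two assignments are mutually quasi-inverse.

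The core calculation is the equivariance of $f_\sigma$. Applying $\sigma$ to $[p' g', 1] = [p', \phi'(g')]$ and using right $G$-equivariance of $\sigma$ one gets $[p'', f_\sigma(p', p'')^{-1} \phi'(g')]$; rewriting this via $[p'' g'', h] = [p'', \phi''(g'') h]$ as $[p'' g'', \phi''(g'')^{-1} f_\sigma(p', p'')^{-1} \phi'(g')]$ and equating with $[p'' g'', f_\sigma(p' g', p'' g'')^{-1}]$, one obtains $f_\sigma(p' g', p'' g'') = \phi'(g')^{-1} f_\sigma(p', p'') \phi''(g'')$, matching the action in the statement exactly. The main obstacle is purely notational: one must pin down conventions for the direction of torsor actions, the sign of the contracted product, and the direction of $\sigma$ so that the $(G' \times G'')$-action on $G$ emerges as $\phi'(g')^{-1} g \phi''(g'')$ rather than some inverted, swapped or conjugated variant. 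There is no geometric or conceptual difficulty beyond this careful bookkeeping.
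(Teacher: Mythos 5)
Your proposal is correct and follows essentially the same route as the paper: both sides are unwound as explicit torsor data over an affine base, the triple $(P',P'',\sigma)$ is sent to the $(G'\times G'')$-torsor $P'\times_{\spec A}P''$ equipped with the ``difference'' map to $G$ extracted from $\sigma$ via the contracted products, and the inverse functor recovers $P'$, $P''$ and $\sigma$ from an equivariant map $Q\arr G$. Your explicit formula $\sigma([p',1])=[p'',f_\sigma(p',p'')^{-1}]$ is exactly the paper's composite $\pi\circ(u'\times u'')$, and your equivariance computation matches the action in the statement, so the only difference is notational.
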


\begin{proof}
	An object of the fibered product $\cB_{\kappa}G' \times_{\cB_{\kappa}G} \cB_{\kappa}G''$ over a scheme $T$ is a triple $(P, P'', \alpha)$, where $P' \arr T$ and $P'' \arr T$ are, respectively, a $G'$-torsor and a $G''$-torsor, and $\alpha\colon P'\times^{G'}G \simeq P''\times^{G''}G$ is an isomorphism of $G$-torsors. Set $Q \eqdef P'\times^{G'}G$, and consider the usual isomorphism $\rho\colon Q\times G \arr Q\times_{T} Q$ defined by $(q, g) \arr (q, qg)$; denote by $\pi\colon Q\times_{T} Q \arr G$ the composite of $\rho^{-1}$ with the projection $Q\times G \arr G$. Let $u'\colon P'\arr Q$ the usual $\phi'$-equivariant morphism, and call $u''\colon P'' \arr Q$ the composite of the $\phi''$-equivariant morphism 
	$P'' \arr P''\times^{G''}G$ with $\alpha^{-1}$.

The composite
   \[
   P'\times_{T}P'' \xarr{u' \times u''} Q \times_{T} Q \xarr{\pi} G
   \]
is easily seen to be $(G'\times G'')$-equivariant, when the action of $G' \times G''$ on $G$ is the one described above. Of course $P'\times_{T}P''$ is a $(G' \times G'')$-torsor.

This defines a base-preserving function from the objects of $\cB_{\kappa}G' \times_{\cB_{\kappa}G} \cB_{\kappa}G''$ to those of $[G/(G' \times G'')]$; this is immediately seen to extend to a base-preserving functor $\cB_{\kappa}G' \times_{\cB_{\kappa}G} \cB_{\kappa}G'' \arr[G/(G' \times G'')]$.

To go in the opposite direction, let $P \arr T$ be a $(G'\times G'')$-torsor and $\theta\colon P \arr G$ a $(G'\times G'')$-equivariant morphism. If $P' \arr T$ and $P''\arr T$ are $G'$ and $G''$-torsors associated with $P$, we have a canonical isomorphism $P \simeq P'\times_{T}P''$; so we get a $(G'\times G'')$-equivariant morphism  $\theta\colon P'\times_{T}P'' \arr G$. From a section $p' \in P'(T)$ we obtain a $\phi''$-equivariant morphism $\theta_{p'}\colon P'' \arr G$, which in turn yields a $G$-equivariant morphism $P''\times^{G''}G \arr G$, which gives a section of $(P''\times^{G''}G)(T)$. Sending $p'$ into $\theta_{p'}$ gives a $\phi'$-equivariant morphism $P' \arr P''\times^{G''}G$, which extends to an isomorphism of $G$-torsors $P'\times^{G'}G \simeq P''\times^{G''}G$. This yields a base-preserving functor $[G/(G' \times G'')] \arr \cB_{\kappa}G' \times_{\cB_{\kappa}G} \cB_{\kappa}G''$, which is a quasi-inverse to the one above.
\end{proof}

Let $f\colon \Gamma \arr \Delta$ be a morphism of affine gerbes. Then $f$ is faithful if and only if for some extension $\ell$ of $\kappa$, and some object $\ell$ of $\Delta(\kappa)$, the induced homomorphism of group scheme $\underaut_{\ell}\xi \arr \underaut_{\ell}f(\xi)$ is a monomorphism. Hence, a homomorphism of group schemes $G \arr H$ induces a faithful morphism $\cB_{\kappa}G \arr \cB_{\kappa}H$ if and only if $G \arr H$ is a monomorphism.

If $\Gamma$ and $\Delta$ are of finite type, then $f$ is faithful if and only if it is representable. 

\begin{definition}
Let $f\colon \Gamma \arr \Delta$ be a morphism of affine gerbes over $\kappa$. We say that $f$ is \emph{locally full} if for any extension $\ell$ of $\kappa$ and any object $\xi$ of $\Gamma(\ell)$, the induced homomorphism of group schemes $\underaut_{\ell}\xi \arr \underaut_{\ell}f(\xi)$ is faithfully flat.
\end{definition}

\begin{remark}
If this is true for an extension $\ell$ and an object $\xi$ of $\Gamma(\ell)$, then it is true for all $\ell$ and all $\xi$.
\end{remark}

\begin{remark}
If $\phi\colon G \arr H$ is a homomorphism of affine group schemes, the corresponding morphism $\cB_{\kappa}G \arr \cB_{\kappa}H$ is locally full if and only if $\phi$ is 
faithfully flat, or, equivalently, an fpqc cover.
\end{remark}

\begin{remark}
A morphism of affine gerbes that is both faithful and locally full is in fact an equivalence. 
\end{remark}

\begin{definition}
Let $f\colon \Gamma \arr \Delta$ be a morphism of affine gerbes. A \emph{canonical factorization} of $f$ consists of a factorization $\Gamma \arr \Delta' \arr \Delta$ of $f$, such that $\Gamma \arr \Delta'$ is locally full, and $\Delta' \arr \Delta$ is faithful.
\end{definition}

\begin{proposition}
A morphism of affine gerbes $f\colon \Gamma \arr \Delta$ has a canonical factorization. Furthermore, if $\Gamma \arr \Delta' \arr \Delta$ and $\Gamma \arr \Delta'' \arr \Delta$ are two canonical factorizations, there exists an equivalence $\phi\colon \Delta' \arr \Delta''$, and a commutative diagram
   \[
   \begin{tikzcd}
   &\Delta' \arrow[dd, dashed, "\phi", near start] \ar{dr}&\\
   \Gamma\ar{ur}\ar{dr} \ar[rr, crossing over, "f", near start]&&\Delta\\
   & \Delta''\ar{ur} &\hsmash{\,.}\\
   \end{tikzcd}
   \]
\end{proposition}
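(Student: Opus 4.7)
The plan is to use tannakian duality to construct $\Delta'$ as the affine gerbe corresponding to a canonical tannakian subcategory of $\rep\Gamma$, and then to verify both the universal property and its uniqueness by translating faithfulness and local fullness into statements about restriction functors on tannakian categories.

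First I would consider the pullback tensor functor $f^{*}\colon \rep\Delta\arr\rep\Gamma$ and take $\cT\subseteq \rep\Gamma$ to be the full subcategory whose objects are subquotients of objects in the essential image of $f^{*}$. Since $f^{*}$ respects tensor products, duals, and direct sums, and each of these is compatible with subquotients, $\cT$ is a tannakian subcategory of $\rep\Gamma$. By tannakian reconstruction, there is an affine gerbe $\Delta'$ over $\kappa$ together with an equivalence $\rep\Delta'\simeq \cT$; the inclusion $\cT\into \rep\Gamma$ gives a morphism $\Gamma\arr\Delta'$, and the factorization of $f^{*}$ through $\cT$ gives $\Delta'\arr\Delta$, with composite isomorphic to $f$.

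To verify canonicality I would pass to an extension $\ell/\kappa$ on which all three gerbes acquire objects (using Corollary~\ref{lem:finite-separable-point} when the gerbes are of finite type, and projective limits of finite-type quotients in general), reducing the question to homomorphisms of affine group schemes $G\arr G'\arr H$ over $\ell$. Local fullness of $\Gamma\arr\Delta'$ amounts to $G\arr G'$ being faithfully flat, which by the standard tannakian dictionary is equivalent to $\rep G'\arr \rep G$ being fully faithful with subobject-stable image; this is satisfied by $\cT\into \rep\Gamma$ by construction. Faithfulness of $\Delta'\arr\Delta$ amounts to $G'\arr H$ being a monomorphism, equivalent to every representation of $G'$ being a subquotient of one coming from $H$, which is precisely the defining property of $\cT$.

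For uniqueness, any other canonical factorization $\Gamma\arr\Delta''\arr\Delta$ yields, by the same dictionary, a fully faithful subobject-stable embedding $\rep\Delta''\into \rep\Gamma$ whose essential image contains that of $f^{*}$ and in which every object is a subquotient of one from $\rep\Delta$; these two conditions force the essential image to be exactly $\cT$, and one extracts an equivalence $\phi\colon \Delta'\arr\Delta''$ compatible with the maps to $\Gamma$ and $\Delta$ up to a canonical $2$-isomorphism. The hard part will be the careful transport of the two dictionary entries (faithfully flat versus fully faithful with stable image, and monomorphism versus generates by subquotients) from affine group schemes to gerbes, which requires a descent argument to return from the trivialization over $\ell$ to $\kappa$.
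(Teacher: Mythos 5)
Your proposal is correct in outline, but it is a genuinely different proof from the one in the paper: it is essentially the tannakian argument that the paper explicitly declines to give (the text points to Tonini--Zhang, Proposition~B.4, for exactly this route) in favour of a ``direct approach''. The paper constructs $\Delta'$ geometrically: it identifies any canonical factorization with the rigidification of $\Gamma$ along the kernel group schemes $K_{A}(\xi) = \ker\bigl(\underaut_{A}(\xi) \to \underaut_{A}(f(\xi))\bigr)$, which gives uniqueness, and then proves existence by presenting $\Gamma$ by an fpqc groupoid $R \double U$ with $U$ affine and passing to the quotient groupoid $R/K_{A}(\xi) \double U$, the only real work being to show that $R/K_{A}(\xi)$ is affine. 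You instead take $\cT \subseteq \rep\Gamma$ to be the subquotient-closure of the essential image of $f^{*}$, reconstruct $\Delta'$ by Deligne's theorem, and translate local fullness and faithfulness through the dictionary (faithfully flat $\leftrightarrow$ fully faithful with subobject-stable image; monomorphism $\leftrightarrow$ every object a subquotient of a restricted one). Your approach buys you freedom from fpqc stackification, groupoid presentations, and the affineness check on the quotient, at the price of leaning on Deligne's reconstruction theorem and on the careful transport of the two dictionary entries from group schemes to gerbes --- note that the paper's own Proposition~\ref{prop:char-locally-full} (the equivalence of its conditions (1) and (6), via \cite[III 3.3.2.2]{saavedra}) already supplies the local-fullness half of that transport at the level of gerbes over $\kappa$, so the only genuinely new descent work in your plan is the faithfulness half. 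The paper's direct proof is more self-contained and also exhibits $\Delta'$ concretely as a rigidification, which it reuses conceptually elsewhere.
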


\begin{proof}[Sketch of proof] For a tannakian proof, see \cite[Proposition B.4]{tonini-zhang-fundamental}; here is a direct approach.  Suppose that we are given canonical factorization $\Gamma \xarr{g} \Delta' \xarr{h} \Delta$. For each $\kappa$-algebra and each $\xi \in \Gamma(A)$, call $K_{A}(\xi)$ the kernel of the homomorphism of group schemes $\underaut_{A}(\xi) \arr \underaut_{A}\bigl(f(\xi)\bigr)$. If $\xi$, $\eta \in \Gamma(A)$, then $\underhom_{A}(\xi, \eta)$ is a torsor over $\spec A$ for the group $\underaut_{A}(\xi)$, thus, by restriction we obtain a free action of $K_{A}(\xi)$ on $\underhom_{A}(\xi, \eta)$. From the definition of $\Delta'$ it follows that the map $\underhom_{A}(\xi, \eta) \arr \underhom_{A}\bigl(f(\xi), f(\eta)\bigr)$ induces an isomorphism of fpqc sheaves
   \[
   \underhom_{A}(\xi, \eta)/K_{A}(\xi) \simeq \underhom_{A}\bigl(f(\xi), f(\eta)\bigr)
   \]
Hence $\Delta'$ is the fqpc stackification of the prestack whose objects are the objects of $\Gamma$, and whose arrows $\xi \arr \eta$ over a fixed $A$ are sections over $\spec A$ of the sheaf of sets $\underhom_{A}(\xi, \eta)/K_{A}(\xi)$. In other words, $\Delta'$ is the rigidification of $\Gamma$ along $K_{A}$, as defined in \cite[Appendix~A]{dan-olsson-vistoli1}. This shows the uniqueness of $\Delta'$.

For the existence, we prefer not to use the dubious notion of fqpc stackification, and do the following. Let $U \arr \Gamma$ be an fqpc cover by an affine scheme (for this is it enough that $U$ is a non-empty scheme, for example, the spectrum of a field). Then if we set $R \eqdef U \times_{\Gamma} U$ we get an fpqc groupoid $R \double U$. Set $U\times U = \spec A$, and call $\xi$ and $\eta$ the objects of $\Gamma(A)$ corresponding to the composites of the given morphism $U \arr \Gamma$ with the two projections $U \times U \arr U$; then $R$ represents the functor $\underhom_{A}(\xi, \eta)$. Hence there is an action of $K_{A}(\xi)$ on $R$, leaving the morphism $R \arr U \times U$ invariant. By passing to the fpqc quotient $R/K_{A}(\xi)$ we obtain a groupoid $R/K_{A}(\xi) \double U$, whose stack of torsors is the desired rigidification. There remains to prove that $R/K_{A}(\xi)$ is an affine scheme, as this shows that the rigidification is an affine gerbe, and ends the proof.

For this, let $K$ be a field extension of $\kappa$ and let $\zeta$ be an object of $\Gamma(K)$. Since $\Gamma$ is an fpqc gerbe, there exists a faithfully flat extension $A \subseteq B$ such that $K \subseteq B$, such that $\xi_{B}$, $\eta_{B}$ and $\zeta_{B}$ are all isomorphic. So the pullback of $R$ to $\spec B$ is isomorphic to the affine group scheme $\underaut_{B}(\zeta_{B})$, and $\bigl(R/K_{A}(\xi)\bigr)_{B}$ is the quotient $\underaut_{B}(\zeta_{B})/K_{B}(\zeta_{B})$, which is affine. Hence $R/K_{A}(\xi)$ is affine.
\end{proof}

\begin{proposition}\call{prop:char-locally-full}
Let $f\colon \Gamma \arr \Delta$ be a morphism of affine gerbes over $\kappa$. The following conditions are equivalent.

\begin{enumerate1}

\itemref{1} The morphism $f$ is locally full.

\itemref{4} If $S$ is an affine scheme over $\kappa$ and $\xi$ and $\eta$ are two objects of\/ $\Gamma(S)$, the induced morphism of fpqc sheaves\/ $\underisom_{\ell}(\xi, \eta) \arr \underisom_{\ell}\bigl(f(\xi), f(\eta)\bigr)$ is surjective.

\itemref{2} The morphism $f$ makes $\Gamma$ into a relative gerbe over $\Delta$.

\itemref{5} If $f$ factors as $\Gamma \arr \Delta' \arr \Delta$, where $\Delta' \arr \Delta$ is a 
faithful homomorphism of affine gerbes, then $\Delta' \arr \Delta$ is an equivalence.

\itemref{6} The pullback homomorphism $\rep\Delta \arr \rep\Gamma$ is fully faithful, and any subrepresentation of a representation of\/ $\Gamma$ in its essential image is also in the essential image.
\end{enumerate1}
\end{proposition}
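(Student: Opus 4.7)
The plan is to establish the equivalences in the order $(1) \Leftrightarrow (4) \Leftrightarrow (2)$, then $(1) \Leftrightarrow (5)$ via uniqueness of the canonical factorization, and finally $(1) \Leftrightarrow (6)$ by appeal to Tannakian duality.

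First, I would unwind $(1) \Leftrightarrow (4)$. For $\xi, \eta \in \Gamma(S)$, the sheaf $\underisom_{S}(\xi,\eta)$ becomes, after an fpqc base change $S' \arr S$ over which $\xi|_{S'} \simeq \eta|_{S'}$, the trivial torsor under $\underaut_{S'}(\xi|_{S'})$. Surjectivity of $\underisom_{S}(\xi,\eta) \arr \underisom_{S}(f(\xi), f(\eta))$ therefore reduces to the induced homomorphism $\underaut_{S'}(\xi|_{S'}) \arr \underaut_{S'}(f(\xi|_{S'}))$ being an fpqc epimorphism of affine group schemes, which is precisely faithful flatness. The Remark after the definition of locally full ensures independence from the choice of $\xi$. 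Next, $(4) \Leftrightarrow (2)$ follows by interpreting the condition that $f$ makes $\Gamma$ a relative gerbe over $\Delta$ as (a) fpqc-local essential surjectivity of $f$, and (b) fpqc-local lifting of isomorphisms in $\Delta$ to isomorphisms in $\Gamma$. Condition (a) is automatic: fixing any $\gamma_{0} \in \Gamma(S_{0})$ for some fpqc cover $S_{0} \arr \spec\kappa$, any $\delta \in \Delta(S)$ satisfies that $\delta_{S \times S_{0}}$ and $f(\gamma_{0})_{S \times S_{0}}$ are fpqc-locally isomorphic since $\Delta$ is a gerbe, which produces the required lift. Condition (b) is literally (4).

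For $(1) \Leftrightarrow (5)$, the existence and essential uniqueness of canonical factorizations from the previous Proposition are the main tool. For $(5) \Rightarrow (1)$: applying (5) to the canonical factorization $\Gamma \arr \Delta_{0} \arr \Delta$ forces $\Delta_{0} \arr \Delta$ to be an equivalence, so $f$ is equivalent to the locally full map $\Gamma \arr \Delta_{0}$. Conversely, assume (1) and suppose $f = h \circ g$ with $h \colon \Delta' \arr \Delta$ faithful. Factor $g$ canonically as $g = h_{1} \circ g_{1}$; then $f = (h \circ h_{1}) \circ g_{1}$ is a canonical factorization of $f$, and by uniqueness against the trivial canonical factorization $f = \id_{\Delta} \circ f$ (valid since $f$ is locally full), the map $h \circ h_{1}$ is an equivalence. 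On automorphism groups at an object $\xi \in \Gamma(\ell)$, the composition $\underaut_{\ell}(\xi) \arr \underaut_{\ell}(g(\xi)) \arr \underaut_{\ell}(f(\xi))$ is faithfully flat while the second arrow is a monomorphism of affine group schemes over a field, hence a closed immersion. The corresponding surjection of Hopf algebras must also be injective, since the full composition of ring maps is faithfully flat; hence it is an isomorphism. Combined with the essential surjectivity forced by $h \circ h_{1}$ being an equivalence, this makes $h$ itself an equivalence.

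Finally, $(1) \Leftrightarrow (6)$ comes from Tannakian formalism. Fixing an object $\xi$ over some extension $\ell$, one identifies $\rep\Gamma$ and $\rep\Delta$ with categories of comodules over the Hopf algebras associated with $\underaut_{\ell}(\xi)$ and $\underaut_{\ell}(f(\xi))$, and the pullback functor with restriction along the homomorphism $\phi \colon \underaut_{\ell}(\xi) \arr \underaut_{\ell}(f(\xi))$. The standard result, which I would cite from Saavedra, Chapitre~III, is that the restriction functor along $\phi$ is fully faithful with essential image closed under subobjects if and only if $\phi$ is faithfully flat, i.e.\ exactly (1). The main obstacles are twofold: the Tannakian step $(1) \Leftrightarrow (6)$ requires a careful invocation of the theorem characterizing ``normal'' subcategories of Tannakian categories, extended beyond the finite-type case to arbitrary affine gerbes; and the argument for $(1) \Rightarrow (5)$ depends on the nontrivial facts that a monomorphism of affine group schemes over a field is a closed immersion, and that a closed immersion which is faithfully flat is an isomorphism.
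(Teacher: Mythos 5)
Your proposal is correct and follows essentially the same route as the paper: (1)$\Leftrightarrow$(4) by reducing the surjectivity of the $\underisom$-sheaves to faithful flatness of the induced map on automorphism group schemes after an fpqc base change, (1)$\Leftrightarrow$(5) via the existence and uniqueness of canonical factorizations together with the observation that a faithful map through which a locally full map factors induces a Hopf-algebra map that is both surjective and injective, and (1)$\Leftrightarrow$(6) by citing Saavedra, Chapitre~III. The only differences are cosmetic (you spell out (4)$\Leftrightarrow$(2), which the paper calls straightforward, and you phrase the group-scheme step via closed immersions rather than the paper's base-field extension to neutral gerbes).
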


\begin{proof}
	The equivalence between \refpart{prop:char-locally-full}{1}, \refpart{prop:char-locally-full}{2} and \refpart{prop:char-locally-full}{6} is also established in \cite[Proposition B.2 (2)]{tonini-zhang-fundamental}. We give a complete proof for the convenience of the reader.

\refpart{prop:char-locally-full}{4}$\implies$\refpart{prop:char-locally-full}{1}: it follows from the fact that a homomorphism of affine group schemes $G \arr H$ is faithfully flat if and only if it is an fpqc cover.

\smallskip

\refpart{prop:char-locally-full}{1}$\implies$\refpart{prop:char-locally-full}{4}: if $G \eqdef \underaut_{S}\xi$ and $H \eqdef \underaut_{S}f(\xi)$, then $\underisom_{\ell}(\xi, \eta)$ is a $G$-torsor and $\underisom_{\ell}\bigl(f(\xi), f(\eta)\bigr)$ is an $H$-torsor. The map $f$ induces a homomorphism $G \arr H$ of group scheme over $S$. The map $\underisom_{\ell}(\xi, \eta) \arr \underisom_{\ell}\bigl(f(\xi), f(\eta)\bigr)$ is $G \arr H$-equivariant, so it is enough to show that $G \arr H$ an fpqc cover.

This follows from the definition if $S$ is the spectrum of a field. In the general case, we may pass to an fpqc cover of $S$, and assume that there is a morphism $S \arr \spec \ell$, where $\ell$ is an extension of $\kappa$, and an object $\xi_{0}$ of $\Gamma(\ell)$ whose pullback to $S$ is isomorphic to $\xi$, so the general case follows from the case of a field.

\smallskip

\refpart{prop:char-locally-full}{4}$\iff$\refpart{prop:char-locally-full}{2}: this is straightforward.

\smallskip

\refpart{prop:char-locally-full}{1}$\implies$\refpart{prop:char-locally-full}{2}: we can extend the base field, and assume that $\Gamma = \cB_{\kappa}G$, $\Delta = \cB_{\kappa}H$, and $f$ is induced by a surjective homomorphism of affine group schemes $\phi\colon G \arr H$.

Then the factorization $\Gamma \arr \Delta' \arr \Delta$ corresponds to a factorization $G \arr H' \arr H$, where $H' \arr H$ is a monomorphism. Since $G \arr H$ is an epimorphism, it follows that $H' \arr H$ is an isomorphism, so that $\Delta' \arr \Delta$ is in fact an equivalence.

\smallskip

\refpart{prop:char-locally-full}{2}$\implies$\refpart{prop:char-locally-full}{1}: consider the canonical factorization $\Gamma \arr \Delta' \arr \Delta$. Since $\Delta' \arr \Delta$ is faithful, by hypothesis this is an equivalence, hence $f$ is locally full.

\refpart{prop:char-locally-full}{1}$\iff$\refpart{prop:char-locally-full}{6}: see \cite[III 3.3.2.2]{saavedra}.
\end{proof}

For the following we need the notion of cofiltered system of affined gerbes, and projective limit of such a cofiltered system; for this we refer to \cite[Section~3]{borne-vistoli-fundamental-gerbe}.

\begin{proposition}\label{prop:locally-full}
Let $\{\Delta_{i}\}$ be a cofiltered system of affine gerbes, $\Gamma \arr \projlim \Delta_{i}$ a morphism of affine gerbes. If each composite $\Gamma \arr \projlim \Delta_{i} \arr \Delta_{i}$ is locally full, then $\Gamma \arr \projlim \Delta_{i}$ is also locally full.
\end{proposition}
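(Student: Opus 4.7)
The plan is to translate the statement into a claim about homomorphisms of affine group schemes and then argue dually at the level of Hopf algebras. Set $\Delta \eqdef \projlim_i \Delta_i$, and choose an extension $\ell$ of $\kappa$ with $\Gamma(\ell) \neq \emptyset$ (such an extension exists because $\Gamma$, being an affine gerbe, has an affine chart and hence a point over a residue field of that chart). Pick $\xi \in \Gamma(\ell)$, call $f_i\colon \Gamma \arr \Delta_i$ the given composites and $f\colon \Gamma \arr \Delta$ the morphism under study, and set $\eta \eqdef f(\xi)$, $\eta_i \eqdef f_i(\xi)$, $H \eqdef \underaut_\ell \xi$, $G \eqdef \underaut_\ell \eta$ and $G_i \eqdef \underaut_\ell \eta_i$. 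By the construction of projective limits of affine gerbes recalled in \cite[Section~3]{borne-vistoli-fundamental-gerbe}, there is a canonical isomorphism $G \simeq \projlim_i G_i$ of affine group schemes over $\ell$. We must show that the induced homomorphism $H \arr G$ is faithfully flat.

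This is a purely group-scheme-theoretic statement over the field $\ell$. Dually, $H \arr G$ corresponds to a map of commutative Hopf algebras $\cO(G) \arr \cO(H)$; since $G \simeq \projlim_i G_i$, we have $\cO(G) \simeq \indlim_i \cO(G_i)$, and the map in question is the filtered colimit of the maps $\cO(G_i) \arr \cO(H)$. Over a field, a homomorphism of affine group schemes is faithfully flat if and only if its dual map of Hopf algebras is injective: one direction is immediate, and the converse is an instance of Takeuchi's theorem that any Hopf algebra over a field is faithfully flat as a module over any sub-Hopf algebra. By hypothesis each $H \arr G_i$ is faithfully flat, so each $\cO(G_i) \arr \cO(H)$ is injective.

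Filtered colimits of injective maps of abelian groups are injective, hence the colimit map $\cO(G) = \indlim_i \cO(G_i) \arr \cO(H)$ is injective, and invoking the above criterion in the opposite direction yields that $H \arr G$ is faithfully flat, as needed. The main technical input is the identification $G \simeq \projlim_i G_i$ under the conventions of \cite[Section~3]{borne-vistoli-fundamental-gerbe}; once this is granted, the proof is a short formal manipulation on coordinate rings, and no further hypothesis on the $\Delta_i$ (such as finite type) is required.
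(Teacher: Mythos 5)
Your proof is correct and is essentially identical to the paper's: both reduce to automorphism group schemes over a field where $\Gamma(\ell)\neq\emptyset$, identify the coordinate ring of the limit's automorphism group as the filtered colimit $\indlim_i \ell[G_i]$, and conclude via the criterion that a homomorphism of affine group schemes over a field is faithfully flat if and only if the dual Hopf algebra map is injective, using that filtered colimits preserve injectivity. The only difference is that you spell out the Takeuchi input and the existence of $\ell$ explicitly, which the paper leaves implicit.
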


\begin{proof}
We will use the fact that a homomorphism of affine groups $G \arr H$ over a field $k$ is faithfully flat if and only the corresponding homomorphism of Hopf algebras $k[H] \arr k[G]$ is injective.

Let $\xi$ be an object of $\Gamma(\ell)$, where $\ell$ is an extension of $\kappa$; denote by $\eta_{i}$ the image of $\xi$ in $\Delta_{i}$, $\eta$ its image in $\projlim\Delta_{i}$. Set $G \eqdef \underaut_{\ell}\xi$, $H_{i}\eqdef \underaut_{\ell}\eta_{i}$, $H \eqdef \underaut_{\ell}\eta$. We need to show that the homomorphism $G \arr H$ is faithfully flat, knowing that the composite $G \arr H \arr H_{i}$ is. But $\ell[H] = \indlim_{i}\ell[H_{i}]$; since every homomorphism $\ell[H_{i}] \arr \ell[G]$ 
is injective, the conclusion follows.
\end{proof}

\section{Fibered categories}

Let $\rmp_{X}\colon X \arr\aff{\kappa}$ be a category fibered in groupoids. 

We will consider $X$ as a site with the fpqc topology inherited from $\aff{\kappa}$: a collection $\{\xi_{i} \arr \xi\}$ of arrows in $X$ is an fpqc covering if the corresponding maps $\rmp_{X}\xi_{i} \arr \rmp_{X}\xi$ are flat, and $\rmp_{X}\xi$ is the union of a finite number of images of $\rmp_{X}\xi_{i}$.

The fpqc sheaf $\cO = \cO_{X}$ sends each object $\xi$ into $\cO(U)$, where $U=\rmp_{X}\xi$. If $\kappa'/\kappa$ is a field extension, denote by $X_{\kappa'}$ the fibered product $\aff{\kappa'} \times_{\aff{\kappa}}X$. There exists an obvious homomorphism of $\kappa$-algebras $\H^{0}(X, \cO) \arr \H^{0}(X_{\kappa'}, \cO)$, inducing a homomorphism of $\kappa'$-algebras $\H^{0}(X, \cO)\otimes_{\kappa}\kappa' \arr \H^{0}(X_{\kappa'}, \cO)$.

Recall that a quasi-compact and quasi-separated morphism, or scheme, or algebraic space, is nowadays called \emph{concentrated}.

\begin{definition}
A fibered category $X \arr \aff{\kappa}$ is \emph{concentrated} if there exists an affine scheme $U$ and a representable concentrated faithfully flat morphism $U \arr X$.
\end{definition}

Notice that if $U \arr X$ is as above, and we set $R \eqdef U \times_{X}U$, we obtain an fpqc groupoid $R \double U$ in algebraic spaces, in which $U$ and $R$ are concentrated. (If $X$ is an fpqc stack, which we are not assuming, then $X$ is equivalent to the stack of $(R \double U)$-torsors in the fpqc topology.) From standard arguments in descent theory it follows that we have an exact sequence
   \[
   0 \arr \H^{0}(X, \cO) \arr \H^{0}(U, \cO) \arr \H^{0}(R, \cO)\,.
   \]
From this we easily get the following.

\begin{proposition}
Assume that $X$ is concentrated. For any field extension $\kappa'/\kappa$, the base change homomorphism $\H^{0}(X, \cO)\otimes_{\kappa}\kappa' \arr \H^{0}(X_{\kappa'}, \cO)$ is an isomorphism.
\end{proposition}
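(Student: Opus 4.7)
The plan is to compare the exact sequence displayed just before the proposition with its analogue after base change, and to use flat base change for the global sections of $\cO$ on $U$ and $R$.

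First, I would observe that the base change $U_{\kappa'} \arr X_{\kappa'}$ of the representable concentrated faithfully flat cover $U \arr X$ is again a representable concentrated faithfully flat cover, and that $U_{\kappa'} \times_{X_{\kappa'}} U_{\kappa'} = R_{\kappa'}$. Consequently, $X_{\kappa'}$ is also concentrated, and the same descent argument gives an exact sequence
   \[
   0 \arr \H^{0}(X_{\kappa'}, \cO) \arr \H^{0}(U_{\kappa'}, \cO) \arr \H^{0}(R_{\kappa'}, \cO)\,.
   \]
Since $\kappa \arr \kappa'$ is flat, tensoring the original exact sequence with $\kappa'$ over $\kappa$ yields an exact sequence
   \[
   0 \arr \H^{0}(X, \cO)\otimes_{\kappa}\kappa' \arr \H^{0}(U, \cO)\otimes_{\kappa}\kappa' \arr \H^{0}(R, \cO)\otimes_{\kappa}\kappa'\,,
   \]
which maps naturally into the first exact sequence.

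Next, I would verify that the second and third base-change arrows are isomorphisms. The case of $U$ is trivial: $U$ is affine, so $\H^{0}(U_{\kappa'},\cO) = \H^{0}(U,\cO)\otimes_{\kappa}\kappa'$ essentially by definition. The case of $R$ is the crux of the matter, but it is a standard instance of flat base change: $R$ is a concentrated (that is, quasi-compact and quasi-separated) algebraic space over $\kappa$, so for every flat morphism $\spec \kappa' \arr \spec \kappa$, the natural map $\H^{0}(R,\cO)\otimes_{\kappa}\kappa' \arr \H^{0}(R_{\kappa'},\cO)$ is an isomorphism. (This is proven, e.g., by reducing via a quasi-compact \'etale cover to the qcqs scheme case, where the \v{C}ech computation yields the claim since quasi-coherent cohomology of qcqs schemes commutes with flat base change.)

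Having the right two vertical arrows be isomorphisms and both rows exact, a diagram chase (or the five lemma applied after appending zeros) shows that the leftmost arrow $\H^{0}(X,\cO)\otimes_{\kappa}\kappa' \arr \H^{0}(X_{\kappa'},\cO)$ is also an isomorphism, as desired. The only non-formal input is flat base change for $\H^{0}$ of concentrated algebraic spaces, which is the step most worth being explicit about; everything else is bookkeeping with the descent exact sequence.
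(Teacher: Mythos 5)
Your proposal is correct and follows exactly the route the paper intends: the paper derives the statement from the descent exact sequence $0 \arr \H^{0}(X, \cO) \arr \H^{0}(U, \cO) \arr \H^{0}(R, \cO)$ with the remark ``From this we easily get the following,'' and your argument supplies precisely the missing details (base-changing the cover, flatness of $\kappa \arr \kappa'$, and flat base change for $\H^{0}$ of the concentrated algebraic spaces $U$ and $R$). Nothing to add.
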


\begin{definition}\label{def:reduced}
A fibered category $X$ over $\aff\kappa$ is called \emph{reduced} if every map from $X$ to an algebraic stack $\Gamma$ over $\kappa$ factors through the reduced substack $\Gamma\red \subseteq \Gamma$.

It is \emph{geometrically reduced} if the fibered category $X_{\kappa'} \arr \aff{\kappa'}$ is  reduced for any extension $\kappa'/\kappa$.
\end{definition}

The following is straightforward.

\begin{proposition}
Let $X$ be a fibered category. Suppose that there exists a reduced scheme $U$ and a representable faithfully flat map $U \arr X$. Then $X$ is reduced.
\end{proposition}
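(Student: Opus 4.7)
The plan is to reduce the statement to the classical fact that any morphism from a reduced scheme to an algebraic stack factors through its reduced substack, and then descend along $U \arr X$. Let $f\colon X \arr \Gamma$ be a morphism to an algebraic stack $\Gamma$ over $\kappa$. Since $\Gamma\red \into \Gamma$ is representable by closed immersions, hence a monomorphism, factoring $f$ through $\Gamma\red$ is a property rather than extra structure: it is enough to show that for every affine $\kappa$-scheme $T$ and every object $\xi \in X(T)$, the induced morphism $T \arr \Gamma$ factors through $\Gamma\red$.

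Consider the composite $U \arr X \xarr{f} \Gamma$. Because $U$ is a reduced scheme, this composite factors through $\Gamma\red \into \Gamma$; let $v\colon U \arr \Gamma\red$ be the resulting lift. Given $\xi \in X(T)$, form the $2$-pullback $U_{T} \eqdef T\times_{X}U$. By hypothesis $U \arr X$ is representable and faithfully flat, so $U_{T}$ is an algebraic space and the projection $\pi\colon U_{T} \arr T$ is faithfully flat. The two composites $U_{T} \arr U \arr \Gamma$ and $U_{T} \arr T \arr \Gamma$ agree up to canonical $2$-isomorphism, and the first one factors through $\Gamma\red$ via $v$. Pairing this factorization with $\pi$, we obtain a morphism $U_{T} \arr T \times_{\Gamma}\Gamma\red$ through which $\pi$ factors.

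Set $T' \eqdef T\times_{\Gamma}\Gamma\red$; this is a closed subscheme of $T$, cut out by some ideal $I \subseteq \cO_{T}$. Since $\pi\colon U_{T} \arr T$ factors through the closed immersion $T' \into T$, the ideal $\pi^{-1}I\cdot \cO_{U_{T}}$ vanishes, i.e.\ the map $\cO_{T} \arr \pi_{*}\cO_{U_{T}}$ sends $I$ to zero. But fpqc descent for the structure sheaf (applied to the faithfully flat $\pi$) implies that $\cO_{T} \arr \pi_{*}\cO_{U_{T}}$ is injective, whence $I = 0$ and $T' = T$. Therefore $\xi$ is sent into $\Gamma\red(T)$, as required.

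The only real content is Step~2, where one needs that a morphism from a reduced scheme $U$ to an algebraic stack $\Gamma$ factors through $\Gamma\red$; this reduces via a smooth atlas of $\Gamma$ to the analogous statement for schemes. The main potential obstacle is the verification that the descent argument in Step~3 goes through when $U_{T}$ is merely an algebraic space (not a scheme), but this causes no trouble since faithful flatness and the injectivity of $\cO_{T}\arr \pi_{*}\cO_{U_{T}}$ are preserved in that generality.
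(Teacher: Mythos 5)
Your proof is correct. The paper itself gives no argument for this proposition (it is simply labelled ``straightforward''), and your write-up is exactly the expected one: since $\Gamma\red \into \Gamma$ is a closed immersion, hence a monomorphism, factoring through $\Gamma\red$ is a condition that can be tested objectwise on $T \arr \Gamma$, and that condition descends along the faithfully flat $U_{T} \arr T$ because $\cO_{T} \arr \pi_{*}\cO_{U_{T}}$ is injective. All the points you flag (the factorization of $U \arr \Gamma$ through $\Gamma\red$ for $U$ a reduced scheme, and the descent step with $U_{T}$ only an algebraic space) are handled correctly.
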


So, for example, an affine gerbe $X$ is reduced, because it has a representable faithfully flat map from the spectrum of a field \cite[Proposition 3.1~(b)]{borne-vistoli-fundamental-gerbe}. Since being an affine gerbe is a property that is stable under base change, an affine gerbe is in fact geometrically reduced.

Suppose that $G$ is an affine group scheme over $\kappa$, and $X$ is a fibered category. A $G$-torsor over $X$ is a morphism of fibered categories $X \arr \cB_{\kappa}G$. Morphisms of $G$-torsors are, of course, base-preserving natural transformations. The resulting category of $G$-torsor on $X$ will be denoted by $\tors_{G}(X)$; it is a groupoid. Of course, if $X$ is a scheme then $\tors_{G}(X)$ is equivalent to the categories of classical $G$-torsors over $X$.

A homomorphism $\phi\colon G \arr H$ of affine algebraic group schemes over $\kappa$ yields a functor $\cB_{\kappa}\phi\colon \cB_{\kappa}G \arr \cB_{\kappa}H$, sending $E \arr S$ into $E\times^{G}H \arr S$. Composing with this functor gives a group change functor $\tors_{G}(X) \arr \tors_{H}(X)$; the image of a torsor $E$ will be denoted by $E\times^{G}H$.

If $x_{0} \in X(\kappa)$, a \emph{pointed torsor} over $(X, x_{0})$ will be a pair $(E, e_{0})$, where $E\colon X \arr \cB_{\kappa}G$ is a $G$-torsor, and $e_{0}$ is a $\kappa$-rational point of the $G$-torsor $E(x_{0})$. Pointed $G$-torsors over $(X, x_{0})$ form a category in the obvious way: a morphism $f:(E, e_{0}) \arr (E', e'_{0})$ is a base-preserving natural transformation $f\colon E \arr E'$ such that $f_{x_{0}}(e_{0}) = e'_{0}$. This yields the groupoid $\tors_{G}(X, x_{0})$ of pointed $G$-torsors. 

If $(E, e_{0})$ is a pointed $G$-torsor on $(X, x_{0})$ and $\phi \colon G\arr H$ is a homomorphism of affine group schemes, there is a natural map $E(x_{0}) \arr E(x_{0})\times^{G} H = (E\times^{G}H)(x_{0})$; thus, taking the image of $e_{0}$, the torsor $E\times^{G}H$ becomes a pointed torsor, which we denote by $(E, e_{0})\times^{G}H$. This gives a group change functor $\tors_{G}(X, x_{0}) \arr \tors_{H}(X, x_{0})$.

\section{Fundamental gerbes}\label{sec:fundamental-gerbes}

\begin{definition}\call{def:stable}
Let $\cC$ be a class of affine group schemes of finite type over extensions $\ell$ of $\kappa$; for each $\ell$ we denote by $\cC(\ell)$ the class of group schemes over $\ell$ that are in $\cC$. We say that $\cC$ is \emph{stable} if the following conditions are satisfied.

\begin{enumerate1}

\itemref{1} Each $\cC(\ell)$ is closed under isomorphism of group schemes over $\ell$.

\itemref{2} If $\ell$ is an extension of $\kappa$, $\ell'$ is an extension of $\ell$, and $G$ is a group scheme in $\cC(\ell)$, then $G_{\ell'}$ is in $\cC(\ell')$.

\itemref{3} If $G$ and $H$ are in $\cC(\ell)$, then $G \times_{\ell}H$ is also in $\cC(\ell)$.

\itemref{4} Suppose that $G$ is in $\cC(\ell)$ and $H$ is an $\ell$-subgroup scheme of $G$. Then $H$ is in $\cC(\ell)$.

\itemref{6} Suppose that $G$ is in $\cC(\ell)$
and $H$ is a normal $\ell$-subgroup scheme of $G$. Then $G/H$ is in $\cC(\ell)$.

\itemref{7} If $G$ in $\cC(\ell)$, every inner form of $G$ is in $\cC(\ell)$. \smallskip

\end{enumerate1}
\end{definition}

\begin{definition}
A stable class $\cC$ is said to be \emph{very stable} if whenever $\ell$ is an extension of $\kappa$, $\ell'$ is a finite extension of $\ell$, and $G$ is an affine group scheme of finite type over $\ell$, then $G$ is in $\cC(\ell)$ if and only if $G_{\ell'}$ is in $\cC(\ell')$.

It is called \emph{weakly very stable} if the same is true for all finite separable extensions $\ell'/\ell$.

\end{definition}

\begin{definition}
Let $\cC$ be a stable class. A \emph{pro-$\cC$-group} over $\kappa$ is a group scheme that is a projective limit of groups in $\cC(\kappa)$.
\end{definition}

\begin{definition}
Let $\cC$ be a stable class. A \emph{$\cC$-gerbe over $\kappa$} is an affine gerbe of finite type over $\kappa$ such that for any object $\xi$ in $\Gamma(\ell)$, where $\ell$ is an extension of $\kappa$, the group scheme $\underaut_\ell \xi$ is in $\cC(\ell)$.

A pro-$\cC$-gerbe is a gerbe that is a projective limit of $\cC$-gerbes.
\end{definition}

\begin{remark}
It follows from conditions \refpart{def:stable}{2} and \refpart{def:stable}{7} of Definition~\ref{def:stable} that if $G$ is a group scheme over $\kappa$, then $\cB_{\kappa}G$ is a $\cC$-gerbe if and only if $G$ is in $\cC(\kappa)$.
\end{remark}

\begin{definition}
Let $X$ be a fibered category over $\aff\kappa$. A \emph{$\cC$-fundamental gerbe} $\univ X$ is a pro-$\cC$-gerbe over $\aff\kappa$ with a morphism of fibered categories $X \arr \univ X$ such that for any other pro-$\cC$-gerbe $\Gamma$ the induced morphism
   \[
   \hom_{\kappa}(\univ X, \Gamma) \arr \hom_\kappa(X, \Gamma)
   \]
is an equivalence of categories.
\end{definition}

\begin{remark}
It follows easily from the definition of a projective limit that for $X \arr \univ X$ to be a $\cC$-fundamental gerbe, it is enough to check the condition when $\Gamma$ is a $\cC$-gerbe.
\end{remark}

If $X(\kappa) \neq{\emptyset}$, the concept of a fundamental gerbe can be recast in the more traditional language of groups and torsors.

Let us fix a stable class $\cC$; we will consider $\cC(\kappa)$ as a full subcategory of the category of affine group schemes over $\kappa$. 

\begin{definition}
Let $x_{0}$ be an element of $X(\kappa)$. The pair $(X, x_{0})$ is \emph{$\cC$-rigid} if any pointed $G$-torsor on $(X, x_{0})$, where $G \in \cC(\kappa)$, has trivial automorphism group.
\end{definition}

\begin{definition}
Let $x_{0} \in X(k)$. A \emph{$\cC$-fundamental group} $\fund(X, x_{0})$ is a pro-$\cC$-group which prorepresents the functor $\cC(\kappa)\op \arr \catset$ that sends $G$ into the set of isomorphism classes in $\tors_G(X,x_{0})$.
\end{definition}

\begin{remark}
Clearly, if $\fund(X, x_{0})$ is a $\cC$-fundamental group, we have a canonical bijection between homomorphisms $\fund(X, x_{0}) \arr G$ and isomorphism classes of pointed $G$-torsors on $(X, x_{0})$. This shows that $\fund(X, x_{0})$ is unique, up to a unique isomorphism.
\end{remark}

\begin{proposition}\label{prop:group<->gerbe}
Let $X \arr \aff\kappa$ be a fibered category, $x_{0} \in X(\kappa)$.

Assume that $X$ has a $\cC$-fundamental gerbe $\rho\colon X \arr \univ X$, and denote by $\xi \in \univ X(\kappa)$ be the image of $x_{0}$.

Then $(X, x_{0})$ is $\cC$-rigid, and the pro-$\cC$-group scheme $\underaut_{\kappa}\xi$ is a fundamental group scheme for $X$.

Conversely, assume that $X$ is $\cC$-rigid, and let $\fund(X, x_{0})$ be a $\cC$-fundamental group scheme. Then there exists a morphism $X \arr \cB_{\kappa}\fund(X, x_{0})$ making the gerbe $\cB_{\kappa}\fund(X, x_{0})$ into a $\cC$-fundamental gerbe for $X$.
\end{proposition}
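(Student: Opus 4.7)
The strategy is to play the universal property of the gerbe against the prorepresentability of the fundamental group, via the standard dictionary between morphisms of classifying stacks and torsors.

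\textbf{From gerbe to group.} Assume $\rho\colon X \arr \univ X$ is a $\cC$-fundamental gerbe and set $\xi \eqdef \rho(x_{0})$. For any $G \in \cC(\kappa)$, the universal property furnishes an equivalence $\hom(\univ X, \cB_{\kappa}G) \simeq \hom(X, \cB_{\kappa}G)$. I would then observe that under this equivalence, pointed $G$-torsors on $(X, x_{0})$ correspond to pairs $(\tilde E, \tau)$, where $\tilde E\colon \univ X \arr \cB_{\kappa}G$ is a morphism and $\tau$ is a trivialization of the $G$-torsor $\tilde E(\xi)$ over $\spec\kappa$. Any $2$-automorphism of such a pair must fix $\xi$ and $\tau$, so must be the identity; this establishes $\cC$-rigidity. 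The dictionary, applied at the neutral object $\xi$ trivialized by $\tau$, identifies these pointed morphisms with group homomorphisms $\underaut_{\kappa}\xi \arr G$. Running over $G \in \cC(\kappa)$ one sees that $\underaut_{\kappa}\xi$ prorepresents the pointed torsor functor, hence is a $\cC$-fundamental group for $(X, x_{0})$.

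\textbf{From group to gerbe.} Assume $(X, x_{0})$ is $\cC$-rigid and let $\Pi \eqdef \fund(X, x_{0})$, written as $\Pi = \projlim_{i}G_{i}$ with $G_{i} \in \cC(\kappa)$. Each projection $\Pi \arr G_{i}$ corresponds by prorepresentability to an isomorphism class of pointed $G_{i}$-torsor on $(X, x_{0})$; by $\cC$-rigidity representatives can be chosen in a canonical compatible system, producing a compatible family of morphisms $X \arr \cB_{\kappa}G_{i}$ that I assemble into the desired morphism $X \arr \cB_{\kappa}\Pi \eqdef \projlim_{i}\cB_{\kappa}G_{i}$.

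\textbf{Checking universality.} It suffices to test against a single $\cC$-gerbe $\Gamma$. If $\Gamma(\kappa) = \emptyset$ both hom-groupoids are empty, since evaluation at $x_{0}$ on one side, or at the distinguished $\kappa$-point of $\cB_{\kappa}\Pi$ on the other, would otherwise produce a $\kappa$-point of $\Gamma$. Otherwise I pick $\eta \in \Gamma(\kappa)$, identify $\Gamma \simeq \cB_{\kappa}H$ with $H \eqdef \underaut_{\kappa}\eta \in \cC(\kappa)$, and match $\hom(\cB_{\kappa}\Pi, \cB_{\kappa}H)$ with $\tors_{H}(X) = \hom(X, \cB_{\kappa}H)$ via push-forward of the tautological $\Pi$-torsor built above. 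The main obstacle is essential surjectivity when an $H$-torsor $f\colon X \arr \cB_{\kappa}H$ has nontrivial fiber $P \eqdef f(x_{0})$: the fix is to pass to the inner form $H^{P} \eqdef \underaut_{\kappa}P$, which remains in $\cC(\kappa)$ by stability axiom~\refpart{def:stable}{7}, and to use the canonical equivalence $\cB_{\kappa}H \simeq \cB_{\kappa}H^{P}$ determined by $P$ to reduce to a torsor with trivial fiber, where prorepresentability of $\Pi$ applies. The matching of $2$-automorphisms on both sides is forced by $\cC$-rigidity, and this is where the main technical care is needed.
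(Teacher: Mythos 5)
Your argument is correct and follows essentially the same route as the paper: the forward direction rests on the universal property of $\univ X$ combined with the identification (Giraud, Remarque~1.6.7) of pointed $G$-torsors on a neutral gerbe with group homomorphisms out of the automorphism group of the base object, which is exactly the paper's key citation. For the converse the paper only says it is ``proved with similar arguments,'' and the extra details you supply --- using rigidity to turn the prorepresenting system of isomorphism classes into an actual compatible system, and the inner-form reduction $H \rightsquigarrow \underaut_{\kappa}(f(x_{0}))$ for torsors whose fibre at $x_{0}$ is non-trivial --- are precisely the steps needed to make that direction work.
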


\begin{proof}
If $G$ and $H$ are group schemes over $\kappa$, there is a canonical equivalence between the category of pointed $H$-torsors on $(\cB_{\kappa}G, e)$, where $e \in \cB_{\kappa}G(\kappa)$ is the trivial torsor, and the set of homomorphism $G \arr H$, considered as a category, in which the only arrows are the identities (see \cite[Remarque~1.6.7]{giraud}). 

Suppose that $X$ has a $\cC$-fundamental gerbe $X \arr \univ X$. If we denote by $\xi$ the image of $x_{0}$ in $\univ X$, we have equivalence $\cB_{\kappa}\underaut_{\kappa} \xi\simeq \univ X$, and an isomorphism between the image of the trivial torsor on $\spec \kappa$ with $\xi$. It follows from the definition of a fundamental gerbe that the map $X \arr \univ X$ induces an equivalence between pointed $G$-torsors on $(X, x_{0})$ and pointed $G$-torsors on $(\cB_{\kappa}\underaut_{\kappa} \xi, e)$. Thus $(X, x_{0})$ is $\cC$-rigid, because $\tors_G(X, x_{0})$ is equivalent to a set, and we get a bijection between isomorphism classes in $\tors_G(X, x_{0})$ and $\hom(\underaut_\kappa \xi, G)$.

The other direction is proved with similar arguments.
\end{proof}

For most stable classes $\cC$, fundamental gerbes do not exist in any kind of reasonable generality. The point is the following. Suppose that $\univ X$ exists, and we are given a $2$-commutative diagram
   \[
   \begin{tikzcd}
   X \ar{r}\ar{d} &\Gamma'\ar{d}\\
   \Gamma'' \ar{r} & \Gamma
   \end{tikzcd}
   \]
in which $\Gamma'$, $\Gamma''$ and $\Gamma$ are $\cC$-gerbes. Then the induced morphism $X \arr \Gamma' \times_{\Gamma} \Gamma''$ factors through $\univ X$. However, in many cases it is possible to show that $X \arr \Gamma' \times_{\Gamma} \Gamma''$ cannot factor through a gerbe. Our examples are based on Lemma~\ref{lem:fiber-product}.

\begin{example}\label{ex:does-not-exist}
Assume that $\cC$ contains a reductive non-abelian group. Any such group contains a semisimple non-abelian group $G$, which in turns contains a parabolic subgroup $P$. By Lemma~\ref{lem:fiber-product} we have
   \[
   \cB_{\kappa}P \times_{\cB_{\kappa}G} \spec \kappa \simeq G/P\,;
   \]
hence if $X$ is a projective variety with a non-constant map $X \arr G/P$ (for example $\PP^{1}$), we have a morphism $X \arr \cB_{\kappa}P \times_{\cB_{\kappa}G} \spec \kappa$ that does not factor through a gerbe, because any morphism from a gerbe to $G/P$ factors through $\spec \kappa$, and $X$ cannot have a $\cC$-fundamental gerbe.
\end{example}

\begin{example} For a more subtle example, let $\gm$ act on $\ga$ by multiplication in the usual way. Let $\cC$ be a stable class containing $\gm \ltimes \ga$. In fact, if $\cB_{\kappa}\gm \arr \cB_{\kappa}(\gm \ltimes \ga)$ is induced by the embedding $\gm \subseteq (\gm \ltimes \ga)$, using the lemma above it is easy to see that
   \[
   \cB_{\kappa}\gm \times_{\cB_{\kappa}(\gm \ltimes \ga)} \cB_{\kappa}\gm \simeq 
   [\ga/\gm]\,.
   \]
But a morphism $X \arr [\ga/\gm]$ corresponds to an invertible sheaf $L$ with a section $s \in \H^{0}(X, L)$; if we take a reduced positive-dimensional projective variety $X$, this has an invertible sheaf with a section that vanishes at some points, but not everywhere. This defines a morphism $X \arr [\ga/\gm]$ that does not factor through a gerbe.

\end{example}

\section{Well-founded classes}\label{sec:well-founded}

\subsection{Well-founded actions} Let $G$ be an affine group scheme of finite type over an algebraically closed field $k$, acting on an affine scheme $X$ of finite type over $k$. 

\begin{definition}
A \emph{$G$-reduced subscheme} $V\subseteq X$ is a closed $G$-invariant subscheme of $X$ with the property that every $G$-invariant nilpotent sheaf of ideals in $\cO_{V}$ is $0$.
\end{definition}

Equivalently, a closed $G$-invariant subscheme $V\subseteq G$ is $G$-reduced if the quotient stack $[V/G]$ is reduced.

\begin{remark}
If $G$ is smooth, which is automatically the case when $\cha k = 0$, a closed $G$-invariant subscheme $V \subseteq X$ is $G$-reduced if and only if it is reduced.
\end{remark}

\begin{definition}\label{def:well-founded-action}
The action of $G$ on $X$ is \emph{well-founded} if for any $G$-reduced subscheme $V \subseteq X$ such that $k[V]^{G} = k$, the action of $G(k)$ on $V(k)$ is transitive.
\end{definition}

\begin{remark}
Being well-founded is a property of the quotient stack $[X/G]$: the action is well founded if for every closed reduced substack $\cV \subseteq [X/G]$ with $\H^{0}(\cV, \cO) = k$, the groupoid $\cV(k)$ is transitive (or, equivalently, $\cV$ is a gerbe over $k$).
\end{remark}

\begin{remark}
If the action of $G$ is well-founded, then it has closed orbits. The converse holds if $G$ is geometrically reductive, by geometric invariant theory, but not in general (consider the example in which $X$ is a reductive group and $G$ is a parabolic subgroup acting by translation).
\end{remark}

It is immediate to give examples of actions that are not well-founded: the action of $\gm$ on $\AA^{1}$ by multiplication springs to mind.

A class of examples of well-founded actions comes from the following proposition.
Recall that a linear group scheme $G$ is called \emph{unipotent} if for every non-zero representation $G \arr \GL(V)$ we have $V^{G} \neq 0$.

\begin{proposition}\label{prop:unipotent->well-founded}
If $G$ is unipotent, the action of $G$ on $X$ is always well-founded.
\end{proposition}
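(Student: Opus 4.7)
The plan is to argue by contradiction: I would assume $G(k)$ fails to act transitively on $V(k)$ and derive $1 = 0$ in $k$ from the hypothesis $k[V]^G = k$. First I would pick $x$, $y \in V(k)$ lying in distinct $G(k)$-orbits. By the Kostant--Rosenlicht theorem (orbits of a unipotent group acting on an affine scheme are closed), the scheme-theoretic orbits $O_x := G \cdot x$ and $O_y := G \cdot y$ are disjoint closed $G$-invariant subschemes of $V$. Since $V$ is affine, their defining ideals in $A := k[V]$ are coprime, so I can choose $f \in A$ with $f|_{O_x} = 0$ and $f|_{O_y} = 1$.

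The next step is to exploit unipotency to force the constants into the $G$-subrepresentation generated by $f$. Let $M \subseteq A$ be this subrepresentation; since the coaction $\rho\colon A \to A \otimes k[G]$ is locally finite, $M$ is finite-dimensional, and $M \neq 0$ since $f \in M$. Unipotency then gives $M^G \neq 0$; but $M^G = M \cap A^G \subseteq k$, so $M^G = k$, and in particular $1 \in M$. Writing $\rho(f) = f \otimes 1 + \sum_{i \geq 1} f_i \otimes a_i$ with the $a_i \in \ker \epsilon \subseteq k[G]$ linearly independent (as the counit axiom allows), the space $M$ is spanned by $f$ together with the $f_i$, so I may expand $1 = c_0 f + \sum_{i \geq 1} c_i f_i$ for some $c_i \in k$.

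I conclude by evaluating at $x$. The orbit morphism $G \to V$, $g \mapsto g \cdot x$, factors through $O_x \hookrightarrow V$ by definition of the scheme-theoretic orbit; hence the pullback to $G$ of any function vanishing on $O_x$ is zero, i.e., the composite $(\operatorname{ev}_x \otimes \id) \circ \rho\colon A \to k[G]$ sends $f$ to $0 \in k[G]$. Written out, this is $f(x) \cdot 1 + \sum_{i \geq 1} f_i(x)\, a_i = 0$, and linear independence of $1, a_1, a_2, \ldots$ in $k[G]$ forces $f(x) = 0$ and $f_i(x) = 0$ for every $i \geq 1$. Plugging into the expansion of $1$ yields $1 = c_0 \cdot 0 + \sum c_i \cdot 0 = 0$, the desired contradiction.

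The main external ingredient is the Kostant--Rosenlicht theorem on closedness of orbits of unipotent group actions; the remainder is a short Hopf-algebraic calculation built around the defining property of unipotency (every non-zero subrepresentation has a non-zero invariant). I would note in passing that the $G$-reducedness hypothesis on $V$ never enters this argument --- the hypothesis $k[V]^G = k$, unipotency, and closedness of orbits suffice by themselves. The only delicate point is ensuring the orbit map really factors scheme-theoretically through $O_x$, which is automatic from the definition of scheme-theoretic image.
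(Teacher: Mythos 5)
Your proof is correct, but it takes a slightly different route from the paper's, and the comparison is instructive. The paper does not invoke Kostant--Rosenlicht: it picks a point $v_{0}$ whose orbit has minimal dimension, so that its scheme-theoretic orbit $\Omega$ is closed by the elementary boundary argument valid for \emph{any} algebraic group action, and then shows $\Omega = V$ via a separate lemma asserting that $V$ has no proper nonempty $G$-invariant closed subscheme at all: the ideal $I$ of such a subscheme is a nonzero rational $G$-module, unipotency gives $I^{G} \neq 0$, and $I^{G} \subseteq k[V]^{G} = k$ forces $I$ to contain a unit. Your argument runs the very same unipotency mechanism, only packaged differently: extracting $1$ from the cyclic subcomodule of a function $f$ vanishing on $O_{x}$ and then evaluating at $x$ is, in substance, the paper's lemma applied to the ideal $I_{O_{x}}$ (it is nonzero because $y \notin O_{x}$, hence contains a nonzero invariant, hence a nonzero constant, hence $O_{x} = \emptyset$, absurd) --- so the detour through $M$, the matrix coefficients $f_{i}$, and the normalization $f\rest{O_{y}} = 1$ can be compressed considerably. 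What your route buys is that you need no choice of a distinguished (minimal-dimensional) orbit; the price is the full strength of Kostant--Rosenlicht. One point deserves a word of care in your version: here $G$ need not be smooth and $V$ need not be reduced, so you should reduce the closedness claim to the classical theorem by noting that closedness of $G(k)\cdot x$ is purely topological, that $|G| = |G_{\mathrm{red}}|$ with $G_{\mathrm{red}}$ a smooth unipotent group, and that the underlying set of the scheme-theoretic orbit $O_{x}$ is then exactly the closed set $G(k)\cdot x$; this is precisely where closedness is consumed, since without it $O_{y}$ could lie inside $O_{x}$ and the two ideals would fail to be coprime. Your closing observation that $G$-reducedness of $V$ is never used matches the paper, which makes the same parenthetical remark.
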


\begin{proof}
Assume that it is not so. Let $V\subseteq X$ be an invariant closed subset such that $k[V]^{G} = k$ (we do not need to assume that $V$ is $G$-reduced), and $V(k)$ contains more than one $G(k)$-orbit. If $v_{0} \in V(k)$ is a point whose orbit has minimal dimension, then its scheme-theoretic orbit $\Omega \subseteq V$ is closed. From the lemma below it follows that $\Omega = V$, and the conclusion follows.
\end{proof}

\begin{lemma}
Let $V$ be an affine $k$-scheme with an action of a unipotent $k$-group scheme $G$. Assume that $\H^{0}(V, \cO)^{G} = k$. Then the only proper $G$-invariant closed subscheme of $V$ is $\emptyset$.
\end{lemma}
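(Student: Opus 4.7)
The plan is to rephrase the statement in commutative algebra and combine two ingredients: local finiteness of the $G$-action on the coordinate ring $A \eqdef \H^{0}(V,\cO)$, and the defining property of unipotence. Writing $V = \spec A$, a proper $G$-invariant closed subscheme corresponds to a nonzero $G$-stable ideal $I \subseteq A$, and the claim becomes $I = A$, i.e.\ $1 \in I$.

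Next I would pick any nonzero $f \in I$ and let $W \subseteq A$ be the smallest $G$-stable $k$-subspace containing $f$ (equivalently, the span of the orbit of $f$ under the coaction $A \arr A \otimes_{k} k[G]$). The standard local finiteness property of coactions of affine group schemes shows that $W$ is finite-dimensional over $k$, and $G$-stability of $I$ gives $W \subseteq I$. Since $G$ is unipotent and $W$ is a nonzero finite-dimensional representation, the definition of unipotence recalled just before the proposition yields $W^{G} \neq 0$.

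Finally, I would pick a nonzero $h \in W^{G}$. Then $h$ is a $G$-invariant element of $A$, so by the hypothesis $A^{G} = k$ we have $h \in k^{\times}$, in particular $h$ is a unit in $A$. But $h \in W \subseteq I$, which forces $I = A$, so the corresponding closed subscheme is empty.

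The only step deserving a comment is local finiteness of the $G$-action on $A$; however this is a general feature of actions of affine group schemes on their coordinate rings (obtained by writing out the coaction of $f$ in terms of a basis of $k[G]$) and requires no further assumptions on $V$ or smoothness of $G$. Consequently the argument presents no real obstacle.
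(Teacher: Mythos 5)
Your proof is correct and follows essentially the same route as the paper's: produce a nonzero $G$-invariant element of the ideal $I$ via unipotence, then use $A^{G}=k$ to conclude that this element is a unit, forcing $I=A$. The only difference is that you make explicit the local-finiteness step (passing to a finite-dimensional $G$-stable subspace $W\subseteq I$ before invoking the definition of unipotence), which the paper compresses into the phrase ``$G$ acts rationally on $I$''.
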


\begin{proof}
Let $W \subseteq V$ be a proper invariant closed subscheme; call $I \subseteq \H^{0}(V, \cO)$ its ideal. Then $G$ acts rationally on $I \neq 0$; so there exists $f \in I^{G} \setminus \{0\}$. Since $\H^{0}(V, \cO)^{G} = k$ we have $f \in k \setminus \{0\}$. So $f$ is invertible, therefore $W = \emptyset$.
\end{proof}

In what follows we will use the following notation. Assume $\cha k = p > 0$, and $n \in \NN$. Let $Y$ be a $k$-scheme. We denote by $Y_{n}$ the scheme $Y$, considered as a $k$-scheme via the composite $Y \arr \spec k \arr \spec k$, where the homomorphism $\spec k \arr \spec k$ is induced by the ring homomorphism $k \arr k$ defined by $x \mapsto x^{p^{-n}}$. Equivalently we can defined $Y_{n}$ as the fibered product $\spec k \times_{\spec k} Y$, where the map $\spec k \arr \spec k$ is induced by $x \mapsto x^{p^{n}}$. We have a relative Frobenius map $F_{n}\colon Y_{n} \arr Y$.

\begin{lemma}\label{lem:criterion-well-founded-action-1}
Suppose that the induced action of $G\red$ on $X\red$ is well-founded.  Then the action of $G$ on $X$ is also well-founded.
\end{lemma}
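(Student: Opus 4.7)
My plan is to reduce well-foundedness of $(G, X)$ to that of $(G\red, X\red)$ by passing each relevant subscheme $V\subseteq X$ to its reduction $V\red$, exploiting the identities $G(k) = G\red(k)$ and $V(k) = V\red(k)$ valid over the algebraically closed field $k$. So let $V\subseteq X$ be a closed $G$-invariant $G$-reduced subscheme with $k[V]^G = k$; I need to show that $G(k)$ acts transitively on $V(k)$.

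First I would observe that $V\red\subseteq X\red$ is a closed $G\red$-invariant reduced subscheme, hence $G\red$-reduced. The $G\red$-invariance of the nilradical $N\subseteq k[V]$ uses that $k[G\red]$ is reduced: over the algebraically closed $k$ this implies the nilradical of $k[V]\otimes_k k[G\red]$ equals $N\otimes_k k[G\red]$, so the coaction carries $N$ into $N\otimes k[G\red]$. Next I would prove a sub-lemma: $V$ is the smallest $G$-invariant closed subscheme of $X$ containing $V\red$. Indeed, the largest $G$-invariant ideal contained in $N$ is nilpotent and $G$-invariant, hence zero by the $G$-reducedness of $V$. Equivalently, the action morphism $\mu\colon G\times V\red\to V$ is scheme-theoretically surjective.

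The key step is to upgrade this to an isomorphism of $G$-schemes
\[
\bar\mu\colon G\times^{G\red} V\red \xarr{\sim} V,
\]
where $G\times^{G\red} V\red$ denotes the quotient of $G\times V\red$ by the $G\red$-action $h\cdot(g,v) = (gh^{-1},hv)$, with $G$ acting on the source by left translation on the first factor. Granting this, passing to quotient stacks yields $[V/G]\simeq [V\red/G\red]$, and therefore
\[
k[V\red]^{G\red} = \H^0([V\red/G\red],\cO) = \H^0([V/G],\cO) = k[V]^G = k.
\]
Applying the assumed well-foundedness of $(G\red, X\red)$ to the closed reduced $G\red$-invariant subscheme $V\red\subseteq X\red$, one then gets that $G\red(k)$ acts transitively on $V\red(k)$. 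Since $G(k) = G\red(k)$ and $V(k) = V\red(k)$, this yields the required transitivity of $G(k)$ on $V(k)$.

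The main obstacle will be proving that $\bar\mu$ is an isomorphism. Scheme-theoretic surjectivity is already in hand, and $G$-equivariance then reduces the question to injectivity. I would attack injectivity fpqc-locally on a test scheme $T$: any $T$-point of $V$ arises, locally on $T$, as $g\cdot v'$ with $g\in G$ and $v'\in V\red$, and two such representations differ by a unique element of $G\red$, precisely because $V$ is the minimal $G$-thickening of $V\red$ in the sense of the sub-lemma. This is where $G$-reducedness plays its essential role: without it, $\bar\mu$ would in general be only scheme-theoretically surjective and not an isomorphism, and the equivalence $[V/G]\simeq [V\red/G\red]$ would fail.
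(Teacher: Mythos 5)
Your reduction strategy (pass to $V\red$, use $G(k)=G\red(k)$ and $V(k)=V\red(k)$, and show $k[V\red]^{G\red}=k$) is the right one and matches the paper's, and your preliminary observations ($V\red$ is $G\red$-invariant and $G\red$-reduced; $V$ is the smallest $G$-invariant closed subscheme containing $V\red$, by noetherianity of $k[V]$) are correct. But the key step fails: the map $\bar\mu\colon G\times^{G\red}V\red \arr V$ is \emph{not} an isomorphism in general, and consequently $[V/G]\simeq[V\red/G\red]$ is false. Take $G=\mmu_{p}$ acting on $X=\spec k[x]/(x^{p})$ by scaling and $V=\spec k$ the origin (the only $G$-reduced $V$ with $k[V]^{G}=k$ here). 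Then $V\red=V$, $G\red=\{e\}$, and $\bar\mu$ is the structure map $\mmu_{p}=G\times^{\{e\}}\spec k\arr\spec k$, which is finite of degree $p$, not an isomorphism; correspondingly $[V/G]=\cB_{k}\mmu_{p}\not\simeq\spec k=[V\red/G\red]$. The point is that minimality of $V$ as a $G$-thickening of $V\red$ gives only scheme-theoretic surjectivity of $\bar\mu$; injectivity fails whenever points of $V\red$ have non-reduced stabilizers in $G$. $G$-reducedness of $V$ says that the quotient stack $[V/G]$ is reduced (no nilpotents in its structure sheaf), not that its automorphism group schemes are reduced, which is what your claimed equivalence would force. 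Your proposed local argument ("two such representations differ by a unique element of $G\red$") is exactly where this breaks: in the example the two representations $(g,v)$ and $(g',v)$ with $g,g'$ two $T$-points of $\mmu_{p}$ map to the same point of $V$ but differ by an element of $\mmu_{p}$, not of $G\red=\{e\}$.

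The conclusion you want from that step, namely $k[V\red]^{G\red}=k$, is nevertheless true, but it needs a different argument (note that the restriction $k[V]^{G\red}\arr k[V\red]^{G\red}$ is not obviously surjective, so it does not follow formally from $k[V]^{G}=k$). The paper handles characteristic $0$ trivially ($G=G\red$ and $G$-reduced $=$ reduced) and in characteristic $p>0$ uses the relative Frobenius $F_{n}\colon (G_{n},V_{n})\arr(G,V)$: for $n\gg 0$ the scheme-theoretic images of $V_{n}$ in $V$ and of $G_{n}$ in $G$ are $V\red$ and $G\red$, whence $k[V\red]^{G\red}\subseteq k[V_{n}]^{G_{n}}=k$. (Equivalently, in algebraic terms: a lift $\tilde f\in k[V]$ of $f\in k[V\red]^{G\red}$ satisfies $\sigma^{*}\tilde f-\tilde f\otimes 1\in N_{V}\otimes k[G]+k[V]\otimes N_{G}$, a nilpotent ideal, so $\tilde f^{p^{n}}\in k[V]^{G}=k$ for $n\gg0$ and hence $f\in k$ since $k[V\red]$ is reduced.) You should replace your Step 3 with an argument of this kind.
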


\begin{proof}
If $\cha k = 0$ then $G = G\red$ is smooth, and the result is obvious, since $X\red$ and $X$ have the same closed reduced subschemes.

Assume $\cha k = p > 0$; assume that $V \subseteq X$ is a $G$-reduced subscheme of $X$ with $k[V]^{G} = k$. It is enough to show that $k[V]^{G\red} = k$, for then the action of $G(k) = G\red(k)$ on $V(k) = V\red(k)$ will be transitive.

Fix a positive integer $n$: the Frobenius morphism $F_{n}\colon X_{n} \arr X$ is $G_{n} \arr G$-equivariant, and carries the closed subscheme $V_{n} \subseteq X_{n}$ in $V$. For $n \gg 0$, the scheme theoretic images of $V_{n}$ in $V$ and of $G_{n}$ in $G$ will be, respectively, $V\red$ and $G\red$; hence $k[V\red]^{G\red} \subseteq k[V_{n}]^{G_{n}} = k$, and this completes the proof.
\end{proof}

\begin{lemma}\label{lem:criterion-well-founded-action-2}
Assume that $G$ is smooth, and that the action of $G^{0}$ on $X$ is well-founded. Then the action of $G$ on $X$ is also well-founded.
\end{lemma}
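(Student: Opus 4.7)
The plan is to decompose $V$ according to the idempotents of $A \eqdef k[V]^{G^0}$, and then invoke well-foundedness of the $G^0$-action on each piece. Given a $G$-reduced closed subscheme $V \subseteq X$ with $k[V]^G = k$, I would first note that $G$ smooth forces $G^0$ smooth, so by the earlier remark on smooth groups, both ``$G$-reduced'' and ``$G^0$-reduced'' coincide with ``reduced''; in particular $V$ is reduced. Since $G$ is smooth over the algebraically closed field $k$, the finite-type group scheme $\Gamma \eqdef G/G^0$ is a constant finite group; it acts on $A$, and $A^\Gamma = k[V]^G = k$.

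The heart of the argument is to show that $A$ is a finite product of copies of $k$. For this, observe that each $a \in A$ is annihilated by $\prod_{\gamma \in \Gamma}(T - \gamma \cdot a) \in A^\Gamma[T] = k[T]$, so $A$ is integral over $k$. Being a reduced $k$-algebra integral over an algebraically closed field, $A$ decomposes as $A \cong k^I$. The $\Gamma$-action permutes $I$, and $A^\Gamma = k$ forces this action to be transitive; as $\Gamma$ is finite, so is $I$.

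The corresponding orthogonal idempotents $\{e_i\}_{i \in I}$ then give a decomposition $V = \bigsqcup_{i \in I} V_i$ into finitely many open-closed reduced $G^0$-invariant subschemes, permuted transitively by $\Gamma$ (hence by $G(k)$), and each satisfies $k[V_i]^{G^0} = A \cdot e_i = k$. Applying the well-foundedness of the $G^0$-action to each $V_i$ (which is $G^0$-reduced by smoothness of $G^0$) shows that $V_i(k)$ is a single $G^0(k)$-orbit. Combining this with the transitivity of $\Gamma$ on $\{V_i\}$ yields that $V(k)$ is a single $G(k)$-orbit, as required.

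The only nontrivial step I foresee is establishing that $A = k[V]^{G^0}$ is a finite product of copies of $k$; once that structural description is in hand, the rest of the proof is a formal patching via $\Gamma$-equivariance. The trick ``$A^\Gamma = k$ plus $\Gamma$ finite implies $A$ integral over $k$'' is what makes this clean, bypassing any need to know whether $A$ is finitely generated as a $k$-algebra in general.
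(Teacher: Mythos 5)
Your proof is correct and follows essentially the same route as the paper's: both exploit the finiteness of $\Gamma = G/G^{0}$ to show that $k[V]^{G^{0}}$ is integral over $k[V]^{G} = k$, and then apply well-foundedness of the $G^{0}$-action to the resulting pieces, with $\Gamma$ permuting those pieces transitively. The only cosmetic difference is that the paper decomposes $V$ into connected components and works with the $\Gamma$-stabilizer of a single component $W$, deducing $k[W]^{G^{0}} = k$ from the fact that the integral closure of $k$ in $k[W]$ is $k$, whereas you decompose $V$ along the idempotents of $k[V]^{G^{0}}$; the two decompositions carry the same information.
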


\begin{proof}
Let $\Gamma \eqdef G/G^{0}$. Let $V \subseteq X$ be a $G$-reduced subscheme such that $k[V]^{G} = k$. The action of $\Gamma$ on the connected components of $V$ is clearly transitive. Let $W$ be a connected component of $V$, and let $\Delta \subseteq \Gamma$ be the stabilizer of $W$. Then we have $(k[W]^{G^{0}})^{\Delta} = (k[V]^{G^{0}})^{\Gamma} = k[V]^{G} = k$; hence $k[W]^{G^{0}}$ is an integral extension of $k$, so it is contained in the integral closure of $k$ in $k[W]$, which equals $k$. Since the action of $G^{0}$ is well-founded by hypothesis, we have that the action of $G^{0}(k)$ on $W(k)$ is transitive, which implies that the action of $G(k)$ on $V(k)$ is transitive.
\end{proof}

Lemmas \ref{lem:criterion-well-founded-action-1} and \ref{lem:criterion-well-founded-action-2} imply the following useful fact.

\begin{proposition}\label{prop:criterion-well-founded-action}
Suppose that the induced action of $G\ored$ on $X\red$ is well-found\-ed. Then the action of $G$ on $X$ is also well-founded.
\end{proposition}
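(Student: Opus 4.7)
The plan is to obtain the proposition by chaining together Lemma~\ref{lem:criterion-well-founded-action-2} and then Lemma~\ref{lem:criterion-well-founded-action-1}, with the key observation being that over the algebraically closed (hence perfect) base field $k$, the reduced subscheme $G\red$ is automatically a smooth closed subgroup scheme of $G$, and its identity component coincides with $G\ored$.

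First I would apply Lemma~\ref{lem:criterion-well-founded-action-2} to the action of $G\red$ on $X\red$. The hypothesis on smoothness is satisfied because $k$ is perfect, and one verifies (as recalled in the preliminaries) that $(G\red)^{0} = (G^{0})\red = G\ored$. The hypothesis that the identity component acts well-foundedly is precisely what we are given. The lemma therefore upgrades the well-foundedness of the $G\ored$-action on $X\red$ to well-foundedness of the full $G\red$-action on $X\red$.

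Next I would apply Lemma~\ref{lem:criterion-well-founded-action-1} to the action of $G$ on $X$: the hypothesis now available from the previous paragraph is exactly what the lemma requires, and its conclusion is the statement we want.

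I do not expect a serious obstacle here; the content is entirely bookkeeping, the only subtle point being the identification $(G\red)^{0} = G\ored$, which relies on $k$ being perfect. If one had wanted to prove the proposition from scratch, the hard step would be Lemma~\ref{lem:criterion-well-founded-action-1} (passing to Frobenius twists to handle the inseparable part of $G$), but that work has already been done.
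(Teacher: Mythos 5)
Your argument is exactly the one the paper intends: Proposition~\ref{prop:criterion-well-founded-action} is presented as an immediate consequence of Lemmas \ref{lem:criterion-well-founded-action-1} and \ref{lem:criterion-well-founded-action-2}, chained in precisely the order you describe, with the identification $(G\red)^{0} = G\ored$ being valid because the base field $k$ is algebraically closed, hence perfect. There is no gap.
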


\subsection{Well-founded group schemes}

\begin{definition}\label{def:well-founded-group}
An affine group scheme of finite type $G$ over a field $k$ is called \emph{well-founded} if for any algebraically closed extension $\ell$ of $k$ and any two subgroup schemes $H$ and $K$ of $G_{\ell}$, the action of $H \times K$ on $G_{\ell}$ defined by $(h,k)\cdot g = h^{-1}gk$ is well-founded.
\end{definition}

This strange-looking definition is exactly what is needed to make the proof of Theorem~\ref{thm:maintheorem} work. However, it turns out to be equivalent to the following much more natural condition.

Recall the following facts from the theory of algebraic groups. Assume that $k$ is algebraically closed, and let $G$ be a smooth connected affine algebraic group over $k$.

\begin{enumerate1} 

\item $G$ is solvable if and only if $G$ contains no non-trivial semisimple subgroups, and if and only if it is a semidirect product $U \rtimes T$, where $T$ is a torus and $U$ is a smooth unipotent group.

\item $G$ is nilpotent if and only if it is the product of a torus and a unipotent group.

\end{enumerate1}

\begin{definition} Let $G$ be an affine group scheme of finite type over a field $k$.

We say that $G$ is \emph{virtually nilpotent} if $(G_{\overline{k}})\ored$ is nilpotent.

\emph{Virtually abelian} and \emph{virtually unipotent} group schemes are defined similarly.

\end{definition}

Our main result in this section is the following characterization of well-founded group schemes.

\begin{theorem}\label{thm:char-well-founded}
An affine group scheme of finite type over $k$ is well-founded if and only if it is virtually nilpotent.
\end{theorem}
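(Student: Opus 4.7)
The plan is to prove the two implications separately. For sufficiency I reduce via Proposition~\ref{prop:criterion-well-founded-action} to the case of a smooth connected nilpotent group, then invoke the classical structure theorem to split the action into a torus part and a unipotent part. For necessity I argue by contrapositive, using the classification of smooth connected affine algebraic groups to exhibit explicit non-well-founded actions.

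For sufficiency, let $\ell$ be an algebraically closed extension of $k$ and $H, K \subseteq G_{\ell}$ subgroup schemes. Proposition~\ref{prop:criterion-well-founded-action} reduces us to the action of $H\ored \times K\ored$ on $(G_{\ell})\red$. Since $H\ored$ and $K\ored$ are smooth and connected, both lie in $(G_{\ell})\ored$, a normal smooth connected nilpotent subgroup whose cosets decompose the group scheme $(G_{\ell})\red$ (the latter being a subgroup scheme because $\ell$ is perfect); the action preserves each coset, and any reduced closed invariant $V$ with $\ell[V]^{H\ored \times K\ored} = \ell$ must be concentrated in one coset. After translating (which replaces $H\ored$ by a conjugate) one is reduced to the following core assertion: if $G' = T \times U$ is smooth connected nilpotent, with $T$ a torus and $U$ smooth unipotent, and $H', K' \subseteq G'$ are smooth connected subgroups, then the action $(h,k)\cdot g = h^{-1} g k$ is well-founded. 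The vanishing of nontrivial homomorphisms between tori and unipotent groups forces $H' = H_T \times H_U$ and $K' = K_T \times K_U$ with $H_T, K_T \subseteq T$ and $H_U, K_U \subseteq U$, so the action on $G'$ splits as a product. The action of $H_T \times K_T$ on the abelian group $T$ is simply translation by $H_T K_T$, which is elementary to see to be well-founded. The action of $H_U \times K_U$ on $U$ is well-founded by Proposition~\ref{prop:unipotent->well-founded}. Given a reduced invariant $V$ with trivial invariants, projecting to $T$ and applying the torus case gives that the image is a single coset $W = t_0 H_T K_T$; a short constancy argument (the element $(h_T, 1, k_T, 1)$ moves the $T$-coordinate while fixing the $U$-coordinate) then shows $V = W \times V_U$ for a reduced invariant $V_U \subseteq U$ with $\ell[V_U]^{H_U \times K_U} = \ell$, and the unipotent case finishes the job.

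For necessity, assume $(G_{\overline{k}})\ored$ is not nilpotent. Either it is not solvable, or it is solvable but not nilpotent. In the first case it contains a non-abelian semisimple subgroup, and hence a copy of $\SL_{2}$ or $\PGL_{2}$; taking $H = K = B$ a Borel, the Bruhat decomposition yields precisely two orbits, while density of the open orbit forces the ring of invariants on the whole group to be $\ell$. In the second case the maximal torus acts non-trivially on the unipotent radical, and selecting a cocharacter of the torus together with a weight vector in the unipotent radical on which this cocharacter acts by a nontrivial character yields a subgroup $\gm \ltimes \ga$; a direct computation with $H = K = \gm$, in the spirit of Example~\ref{ex:does-not-exist}, produces a bad $V$. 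In either case a bad triple for the subgroup gives a bad triple for $G_{\overline{k}}$ itself, since the definition of well-foundedness does not reference the ambient scheme.

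I expect the main obstacle to be the fibre-constancy step in the sufficiency proof, namely showing that a reduced invariant subscheme of $T \times U$ with trivial invariant ring is necessarily a product $W \times V_U$. This rests on the direct-product decomposition of smooth connected subgroups of $T \times U$, itself a consequence of the vanishing of characters between $\gm$ and $\ga$ in either direction, combined with the observation that the torus factor and the unipotent factor act independently on the two coordinates.
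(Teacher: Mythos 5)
Your sufficiency direction is essentially the paper's own argument: the same reduction to $H$, $K$ smooth and connected inside $(G_{\overline k})\ored$ via Proposition~\ref{prop:criterion-well-founded-action} and the coset/translation step, the same splitting $H' = H_T \times H_U$ from the vanishing of homomorphisms between tori and unipotent groups, and the same division into a torus case and a unipotent case. Your final ``fibre-constancy'' step is more than you need: once the images $V_T \subseteq T$ and $V_U \subseteq U$ are shown to be single orbits, the action of $H \times K$ on $V_T \times V_U$ is transitive, and a non-empty closed invariant $V \subseteq V_T \times V_U$ is then automatically a single orbit --- which is exactly how the paper concludes, with no product decomposition of $V$ required.

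The necessity direction is where you genuinely diverge. In the non-solvable case the paper takes $H = P$ a parabolic and $K = \{1\}$, so that $k[G]^{H\times K} = k[P\backslash G] = k$ by properness of $P\backslash G$ and non-transitivity is a dimension count; your choice $H = K = B$ inside a rank-one subgroup, with the Bruhat decomposition supplying exactly two orbits, works equally well but needs slightly more input. In the solvable-but-not-nilpotent case the difference is more substantial: the paper takes $H = K = T$ and runs a GIT argument (well-founded implies closed orbits, $T$ geometrically reductive, the identity is a fixed point, so the generic orbit is finite and the conjugation action is trivial), whereas you locate a copy of $\gm \ltimes \ga$ inside $G$ and exhibit the bad $V$ explicitly. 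Your route is correct, but the step ``select a weight vector in the unipotent radical on which the cocharacter acts by a nontrivial character'' is not automatic in positive characteristic: a nontrivial torus action on a smooth connected unipotent group does admit a $T$-stable $\ga$ with nontrivial character, but this rests on the Borel--Tits/Tits linearization theory for torus actions on unipotent groups (it is immediate only in characteristic zero via the exponential). You should either cite that structure theory explicitly or adopt the paper's GIT argument, which avoids it entirely. With that reference supplied, your proof is complete.
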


Equivalently, affine group scheme $G$ of finite type over $k$ is well-founded if and only if $(G_{\overline{k}})\ored$ is the product of a unipotent group and a torus.

\begin{proof}
For the proof we will need the following facts. 

\begin{lemma}\call{lem:well-founded-properties} Assume that  $G$ is an affine group scheme of finite type over $k$.

\begin{enumerate1}

\itemref{3} 
If $G$ is well-founded, subgroups and quotients of $G$ are well-founded.

\itemref{4} If $\ell$ is an extension of $k$, then $G_{\ell}$ is well-founded if and only if $G$ is well-founded.

\itemref{1} $G$ is well-founded if and only if the condition of Definition~\ref{def:well-founded-group} is satisfied when $H$ and $K$ are smooth and connected.

\itemref{2} $G$ is well-founded if and only if $(G_{\overline{k}})\ored$ is well-founded.

\end{enumerate1}
\end{lemma}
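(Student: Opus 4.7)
My plan is to prove the four parts in the order (3), (1), (4), (2), with Proposition~\ref{prop:criterion-well-founded-action} as the central technical tool.

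For (3), the subgroup case is immediate: if $G' \subseteq G$ is a subgroup, any subgroup of $G'_\ell$ is a subgroup of $G_\ell$ and any $(H \times K)$-invariant closed subscheme of $G'_\ell$ is invariant in $G_\ell$, so all defining data transfer. For a quotient $\pi\colon G \to G/N$ and subgroups $H, K \subseteq (G/N)_\ell$, set $\tilde H := \pi_\ell^{-1}(H)$ and $\tilde K := \pi_\ell^{-1}(K)$. For $V \subseteq (G/N)_\ell$ which is $(H \times K)$-reduced with $\ell[V]^{H \times K} = \ell$, the preimage $W := \pi_\ell^{-1}(V) \subseteq G_\ell$ is an $N_\ell$-torsor over $V$, and the induced morphism of quotient stacks $[W/(\tilde H \times \tilde K)] \to [V/(H \times K)]$ is a gerbe banded by a twist of $N_\ell$; hence $W$ is $(\tilde H \times \tilde K)$-reduced. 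Invariants descend along the $N_\ell$-torsor, giving $\ell[W]^{\tilde H \times \tilde K} = \ell$, and $(\tilde H \times \tilde K)(\ell)$-orbits on $W(\ell)$ project bijectively to $(H \times K)(\ell)$-orbits on $V(\ell)$. Well-foundedness of $G$ applied to $W$ yields transitivity on $W(\ell)$, hence on $V(\ell)$.

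For (1), given $H, K \subseteq G_\ell$, Proposition~\ref{prop:criterion-well-founded-action} shows that the action of $H \times K$ on $G_\ell$ is well-founded provided the action of $(H \times K)\ored = H\ored \times K\ored$ on $(G_\ell)\red$ is well-founded. Since $\ell$ is algebraically closed, $(G_\ell)\red$ is a closed subgroup scheme of $G_\ell$, and $H\ored$, $K\ored$ are smooth connected subgroups; hence it is enough to check the defining condition when $H, K$ are smooth and connected.

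For (4), the forward direction is immediate, since any algebraically closed extension of $\ell$ is also one of $k$. For the converse, using (1) reduce to smooth connected $H, K \subseteq G_L$ for algebraically closed $L/k$, and choose an algebraically closed field $M$ containing both $L$ and $\ell$. A reduced (equivalently $(H \times K)$-reduced, since $H \times K$ is smooth) $(H \times K)$-invariant closed subscheme $V \subseteq G_L$ with $L[V]^{H \times K} = L$ base-changes to a reduced, $(H \times K)_M$-invariant $V_M \subseteq G_M$ with trivial invariants (reducedness survives base change from the perfect field $L$, and invariants commute with flat base change for smooth group actions). Well-foundedness of $G_\ell$ then gives $(H \times K)(M)$-transitivity on $V_M(M)$. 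To descend: for any $v_1 \in V(L)$, the scheme-theoretic orbit of $v_1$ is a locally closed smooth $L$-subscheme of $V$ whose $L$-points coincide with $(H \times K)(L) \cdot v_1$, so any $v_2 \in V(L) \subseteq V(M)$ lying in this orbit over $M$ also lies in it over $L$.

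For (2), by (4) we may assume $k$ algebraically closed, so that $(G_{\overline k})\ored = G\ored$. The forward direction is (3) applied to the subgroup $G\ored \subseteq G$. For the converse, assume $G\ored$ is well-founded; by (1) it suffices to check smooth connected $H, K \subseteq G_L$ for algebraically closed $L$, and by Proposition~\ref{prop:criterion-well-founded-action} it suffices to check the action on $(G_L)\red$. A $(H \times K)$-invariant reduced $V \subseteq (G_L)\red$ with trivial invariants is connected, hence lies in some connected component $g \cdot (G\ored)_L$ of $(G_L)\red$ (with $g \in G(L)$, using normality of $G\ored$ in $G$); translating by $g^{-1}$ transforms the problem to one about the action of the conjugate $(g^{-1}Hg) \times K$ of subgroups of $(G\ored)_L$ on $g^{-1}V \subseteq (G\ored)_L$, where well-foundedness of $G\ored$ yields the needed transitivity. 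The main technical delicacies are the gerbe claim in (3) and the orbit-descent argument in (4); the rest is organized bookkeeping built on Proposition~\ref{prop:criterion-well-founded-action}.
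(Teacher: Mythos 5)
Your proof is correct and follows essentially the same route as the paper: part (1) is deduced from Proposition~\ref{prop:criterion-well-founded-action}, and part (2) is the same connectedness-plus-translation argument reducing to the identity component. The paper dismisses (3) and (4) as straightforward; your fleshed-out arguments for them (the gerbe banded by a form of $N$ in the quotient case, and the orbit-descent from $M$ to $L$ using smoothness of the orbit) are valid ways of filling in those details.
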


\begin{proof}
\refpart{lem:well-founded-properties}{3} and \refpart{lem:well-founded-properties}{4} are straightforward.

\refpart{lem:well-founded-properties}{1} follows from Proposition~\ref{prop:criterion-well-founded-action}.

Let us prove \refpart{lem:well-founded-properties}{2}. We may assume that $k$ is algebraically closed. If $G$ is well-founded, so is $G\ored$, by part~\refpart{lem:well-founded-properties}{3}.

Assume that $G\ored$ is well-founded. Let $H$ and $K$ be subgroup schemes of $G$, and let us check that the action of $H \times K$ on $G$ is well-founded. By Proposition~\ref{prop:criterion-well-founded-action} we may assume that $G$ is smooth, and $H$ and $K$ are smooth and connected.

Let $V \subseteq G$ be a closed $H\times K$-invariant subscheme with $k[V]^{H \times K} = k$. Then $V$ must be connected. If $V \subseteq G^{0}$, then the action of $H \times K$ on $V$ is transitive by hypothesis. Let us reduce to this case.

Let $g \in V(k)$, and set $W \eqdef g^{-1}V$. Then $W$ is contained in $G^{0}$. It is also invariant under the action of $(g^{-1}Hg)\times K$, and $k[W]^{(g^{-1}Hg)\times K} \simeq k[V]^{H\times K} = k$. So the action of $(g^{-1}Hg)\times K$ on $W$ is transitive, which implies that the action of $H \times K$ on $V$ is transitive.
\end{proof}

Let us prove the Theorem. By Lemma~\ref{lem:well-founded-properties}, we may assume that $k$ is algebraically closed, and $G$ is smooth and connected. 

Now assume that $G$ is well-founded. We want to show that $G$ is virtually 
nilpotent.

If $G$ were not solvable, it would contain a non-trivial semisimple group. By 
Lemma~\refall{lem:well-founded-properties}{3}, we may assume that $G$ is non-trivial and semisimple. Let $P$ be a parabolic subgroup of $G$: by setting $H = P$ and $K = \{1\}$ we see that $k[G]^{H \times K} = k$, but the action of $H \times K$ is clearly not transitive. This is a contradiction.

So $G$ is a semidirect product $U \rtimes T$, where $T$ is a torus and $U$ is a smooth unipotent group. We need to show that the action of $T$ on $U$ is trivial. Take $H = K = T$: the quotient $G/T$ is isomorphic to $U$, and the corresponding left action on $U$ is given by conjugation. By hypothesis, the orbits of the action of $T \times T$ on $G$ are closed: this implies that the orbits for the action of $T$ on $U$ are closed. Set $U\git T \eqdef \spec k[U]^{T}$: by affine GIT, the fibers of the projection $U\arr U\git T$ are, set-theoretically, precisely the orbits of $T$ on $U$. But the identity in $U$ forms an orbit: this implies that $\dim U = \dim U\git T$, so the generic orbit of $T$ on $U$ is finite. Since $T$ is smooth and connected, this implies that the action is trivial.

Now we need to prove that a product $U \times T$, where $U$ is smooth 
unipotent and $T$ is a torus, is well-founded. 

First, assume that $G$ is abelian. Let $H$ and $K$ be subgroup schemes of $G$. Then $H \backslash G/K = G/L = G \git L$, where $L$ is the subgroup scheme of $G$ generated by $H$ and $K$, and the statement is easy.

This takes care of the case of the torus. The case that $G$ is unipotent follows from Proposition~\ref{prop:unipotent->well-founded}.

For the general case, let $H$ be a smooth connected subgroup scheme of $G$. Since $H$ is also nilpotent, we can split 
it as 
product $H_{U} \times H_{T}$ of a unipotent group scheme and a torus: but since all homomorphisms $H_{U} \arr T$ and $H_{T} \arr U$ are trivial, we have that $H_{U} \subseteq U$ and $H_{T} \subseteq T$.

Let $H$ and $K$ be smooth connected subgroup schemes of $G$, and let $V \subseteq G$ be a 
$(H \times K)$-reduced 
subscheme of $G$ such that $k[V]^{H \times K} = k$. Let $V_{U}$ and $V_{T}$ be the scheme-theoretic images of $V$ in $U$ and $T$ respectively. Then  $V \subseteq V_{U} \times V_{T}$; furthermore, $V_{U}$ and $V_{T}$ are reduced, invariant under the actions of 
$H_{U}\times K_{U}$  and $H_{T}\times K_{T}$ respectively. We have
   \[
   k[V_{U}]^{H_{U} \times K_{U}} \subseteq k[V_{U}]^{H \times K}
   \subseteq k[V]^{H \times K} = k\,,
   \]
so that $k[V_{U}]^{H_{U} \times K_{U}} = k$, and analogously $k[V_{T}]^{H_{T} \times K_{T}} = k$. Since $U$ and $T$ are well-founded, we have that $V_{U}$ and $V_{T}$ are orbits for $H_{U} \times K_{U}$ and $H_{T} \times K_{T}$ respectively. It follows immediately that the action of $H \times T$ on $V$ is transitive. This completes the proof of this implication, and of Theorem~\ref{thm:char-well-founded}.
\end{proof}

We will need the following easy fact, which we record here.

\begin{lemma}
Let $G$ be a well-founded group scheme over $k$, and let $\phi\colon H \arr G$ and $\psi\colon K \arr G$ two homomorphisms of affine group schemes. Then the action of $H \times K$ on $G$ defined by $g\cdot (h, k) = \phi(h)^{-1}g\psi(k)$ is well founded.
\end{lemma}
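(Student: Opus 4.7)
The plan is to reduce the statement to Definition~\ref{def:well-founded-group} by replacing the homomorphisms $\phi$ and $\psi$ with the inclusions of their scheme-theoretic images. Using Lemma~\refall{lem:well-founded-properties}{4}, one passes to an algebraically closed extension $\ell$ of $k$ and considers $\phi_\ell\colon H_\ell \arr G_\ell$ and $\psi_\ell\colon K_\ell \arr G_\ell$. Let $H' \eqdef \phi_\ell(H_\ell)$ and $K' \eqdef \psi_\ell(K_\ell)$ denote the scheme-theoretic images, which are closed subgroup schemes of $G_\ell$, and note that $H_\ell \twoheadrightarrow H'$ and $K_\ell \twoheadrightarrow K'$ are faithfully flat. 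Since $G_\ell$ is well-founded, the action of $H' \times K'$ on $G_\ell$ by $g \cdot (h', k') = (h')^{-1} g k'$ is well-founded directly from Definition~\ref{def:well-founded-group}. The action of $H_\ell \times K_\ell$ on $G_\ell$ in the statement factors as the faithfully flat map $H_\ell \times K_\ell \twoheadrightarrow H' \times K'$ followed by this well-founded action.

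The main ingredient is then the following general fact: if $\rho\colon \Lambda \twoheadrightarrow \Lambda'$ is a faithfully flat homomorphism of affine group schemes of finite type over an algebraically closed field $\ell$, and $\Lambda'$ acts on an affine $\ell$-scheme $X$ of finite type, then the induced $\Lambda$-action on $X$ is well-founded in the sense of Definition~\ref{def:well-founded-action} if and only if the $\Lambda'$-action is. This one verifies by checking that every ingredient of that definition is insensitive to replacing $\Lambda'$ by $\Lambda$: since $\Lambda \arr \Lambda'$ is faithfully flat, so is $\Lambda \times V \arr \Lambda' \times V$ for any closed $V \subseteq X$, so closed invariant subschemes of $X$ and invariant ideals of $\cO_V$ are the same for $\Lambda$ and for $\Lambda'$; consequently $\ell[V]^\Lambda = \ell[V]^{\Lambda'}$, and the notion of $\Lambda$-reduced coincides with that of $\Lambda'$-reduced. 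Moreover, because $\ell$ is algebraically closed, the map $\Lambda(\ell) \arr \Lambda'(\ell)$ is surjective: its fibers are $\ker(\rho)$-torsors over $\ell$ and so have $\ell$-points by the Nullstellensatz. Hence $\Lambda(\ell)$- and $\Lambda'(\ell)$-orbits on $V(\ell)$ coincide.

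I foresee no serious obstacle: the substantive input is the surjectivity $\Lambda(\ell) \arr \Lambda'(\ell)$, which rests on $\ell$ being algebraically closed and $\ker(\rho)$ being affine. Applying the general fact above with $\Lambda = H_\ell \times K_\ell$, $\Lambda' = H' \times K'$, and $X = G_\ell$ yields well-foundedness of the given action, and since $\ell$ was an arbitrary algebraically closed extension of $k$, the conclusion follows.
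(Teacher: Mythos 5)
Your argument is correct and follows exactly the paper's (one-line) proof, which simply says to replace $H$ and $K$ by their images in $G$; you have filled in the justification that well-foundedness of an action is unchanged under precomposition with a faithfully flat surjection of group schemes, via the coincidence of invariant subschemes, invariant rings, and $\ell$-point orbits. No gaps.
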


\begin{proof}
Replace $H$ and $K$ with their images in $G$.
\end{proof}

\subsection{Well-founded classes}

\begin{definition}\call{def:well-founded-class}
A stable class $\cC$ is \emph{well-founded} if it consists of virtually nilpotent groups.
\end{definition}

The class of all virtually nilpotent groups is stable, hence well-founded. Obviously, if $\cC$ is a well-founded class, any subclass of $\cC$ that is closed under isomorphisms, extensions of scalars, and taking products, subgroups, quotients and twisted forms, is also well-founded.

This yields a vast range of examples of well-founded classes.

\begin{enumerate1}\call{list-examples}

\itemref{vf} All virtually nilpotent group schemes.

\itemref{a} Virtually abelian group schemes.

\itemref{b} Virtually unipotent group schemes.

\itemref{1} Finite group schemes.

\itemref{2} Linearly reductive finite group schemes.

\itemref{3} Abelian affine group schemes.

\itemref{4} Diagonalizable group schemes.

\itemref{5} Group schemes of multiplicative type.

\itemref{6} Unipotent group schemes.

\end{enumerate1}

All these classes are in fact very stable, with the exception of \refpart{list-examples}{4}.

\section{Existence of fundamental gerbes}\label{sec:maintheorem}

The following is our first main theorem.

\begin{theorem}\label{thm:maintheorem}
Let $X$ be a fibered category over $\aff\kappa$. Assume that $X$ is concentrated and geometrically reduced, and that $\H^{0}(X, \cO) = \kappa$. If $\cC$ is a well-founded class, then there exists a fundamental gerbe $X \arr \univ X$.
\end{theorem}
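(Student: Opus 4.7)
The plan is to construct $\univ X$ as a cofiltered projective limit of $\cC$-gerbes, following the template of the Nori case in \cite{borne-vistoli-fundamental-gerbe}. Let $\cS$ be the $2$-category whose objects are pairs $(\Gamma, f)$ with $\Gamma$ a $\cC$-gerbe over $\kappa$ and $f\colon X \arr \Gamma$ a morphism, with morphisms $(\Gamma_1, f_1) \arr (\Gamma_2, f_2)$ given by morphisms $\Gamma_1 \arr \Gamma_2$ over $X$. I would first isolate an essentially small cofinal subsystem $\cS_{0}\subseteq\cS$ of pairs $(\Gamma, f)$ in which $f$ is \emph{dominant}, meaning that $f$ does not factor through any proper faithful morphism $\Gamma' \arr \Gamma$ of $\cC$-gerbes, and then set $\univ X \eqdef \projlim_{\cS_{0}}\Gamma$. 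Dominance guarantees that factorizations of a morphism $X \arr \Gamma$ through an object of $\cS_{0}$ are essentially unique, which, combined with the universal property of projective limits of affine gerbes, yields the universal property of $\univ X$.

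The technical heart is a factorization lemma: every $f\colon X \arr \Gamma$ to a $\cC$-gerbe factors through a faithful morphism $\Gamma' \arr \Gamma$ of $\cC$-gerbes with $\Gamma'$ minimal (i.e.\ with $X \arr \Gamma'$ dominant). By Corollary~\ref{lem:finite-separable-point}, after a finite separable extension $\kappa'/\kappa$ we may identify $\Gamma_{\kappa'}\simeq\cB_{\kappa'}G$ for some $G\in\cC(\kappa')$, so that $f_{\kappa'}$ corresponds to a $G$-torsor $P\arr X_{\kappa'}$. Faithful morphisms $\Gamma' \arr \Gamma_{\kappa'}$ of $\cC$-gerbes through which $f_{\kappa'}$ factors correspond to reductions of structure group of $P$ to closed subgroup schemes $H\subseteq G$ with $H\in\cC(\kappa')$, i.e., to $X_{\kappa'}$-sections of $P\times^{G}(G/H)$; the task becomes to find a minimum such $H$, unique up to $G(\kappa')$-conjugation. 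This is exactly where the well-foundedness of $\cC$ is decisive: since $G\in\cC(\kappa')$ is virtually nilpotent, Theorem~\ref{thm:char-well-founded} yields well-foundedness of all the actions $(h,k)\cdot g = h^{-1}gk$ of $H\times K$ on $G$ that appear in Definition~\ref{def:well-founded-group}. Given two reductions with subgroups $H_{1}, H_{2}$, their relative position is encoded by a closed $(H_{1}\times H_{2})$-invariant subscheme $V\subseteq G_{\overline{\kappa}}$, and the hypotheses that $X$ is concentrated, geometrically reduced and satisfies $\H^{0}(X, \cO) = \kappa$ force $\overline{\kappa}[V]^{H_{1}\times H_{2}} = \overline{\kappa}$ after base change to $\overline\kappa$; well-foundedness then forces $V$ to be a single orbit, producing a common reduction. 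A descending chain argument on the lattice of closed subgroup schemes of $G$ extracts a minimum $H$, and fpqc descent along $\kappa'/\kappa$ produces $\Gamma'$ over $\kappa$.

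The remaining ingredients are largely formal: cofilteredness of $\cS_{0}$ follows by applying the factorization lemma to $(f_{1}, f_{2})\colon X \arr \Gamma_{1}\times_{\kappa}\Gamma_{2}$, where the product is a $\cC$-gerbe by stability under products (Definition~\ref{def:stable}\refpart{def:stable}{3}); essential smallness holds because $\cC$-gerbes are of finite type over $\kappa$, so their equivalence classes form a set. The main obstacle is the factorization lemma, and within it the delicate point is verifying that the scheme-theoretic image in $G_{\overline{\kappa}}$ that compares two reductions is genuinely $(H_{1}\times H_{2})$-reduced with trivial invariants — this is precisely the conversion of the global hypotheses on $X$ into the exact form of input required by Definition~\ref{def:well-founded-action}, and it is here that concentration, geometric reducedness, and $\H^{0}(X, \cO) = \kappa$ must all be used in concert.
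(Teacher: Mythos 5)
Your overall architecture is the paper's: take a skeleton of the Nori-reduced (your ``dominant'') morphisms to $\cC$-gerbes, set $\univ X$ equal to the cofiltered limit, and use well-foundedness plus the hypotheses on $X$ to force scheme-theoretic images to be single orbits, hence gerbes. The identification of where each hypothesis enters is essentially right. But there is a genuine gap in the 2-categorical bookkeeping, which is not a formality here. For $\projlim\Gamma_i$ to have the stated universal property you need the index 2-category to be \emph{boolean} cofiltered: given a Nori-reduced $f\colon X \arr \Gamma$ and two morphisms $u, v\colon \Gamma \arr \Delta$ of $\cC$-gerbes with isomorphisms $u\circ f \simeq g$ and $v\circ f\simeq g$, there must exist a \emph{unique} compatible isomorphism $u\simeq v$. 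Your sentence ``dominance guarantees that factorizations are essentially unique'' asserts exactly this but gives no argument, and it is not a consequence of dominance alone: in the paper it is Lemma~\ref{lem:unique-arrow}, proved by applying the scheme-theoretic-image lemma to the Isom-stack $\Gamma\times_{\Delta\times\Delta}\Delta$, which is a fibered product \emph{over a gerbe}. Your version of the key lemma only treats the absolute product $\Gamma_1\times_\kappa\Gamma_2$, which does not cover this case (the relative fibered product is not a closed substack of the absolute one), so your toolkit cannot close the gap as stated. The same issue infects your cofilteredness step: factoring $X\arr\Gamma_j\times_\kappa\Gamma_k$ gives arrows $l\arr j$ and $l\arr k$ but not the 2-commutativity of the square over $\Gamma_i$ --- either you use $\Gamma_j\times_{\Gamma_i}\Gamma_k$ as the paper does, or you must first establish the boolean property, which you have not.

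Two smaller points. First, for the mere existence of a Nori-reduced factorization you do not need well-foundedness at all: a descending chain of faithful (hence representable) morphisms of finite-type gerbes terminates by a dimension-plus-degree argument on automorphism groups; well-foundedness is only needed to \emph{compare} two factorizations, i.e., in the fibered-product lemma. Second, your plan to construct the minimal reduction over a finite separable extension $\kappa'$ and then ``descend by fpqc descent'' is under-justified: the minimal subgroup $H$ is unique only up to conjugation, so descent is not automatic; the clean fix (and the paper's route) is to define the candidate intrinsically over $\kappa$ as a scheme-theoretic image and base-change only to \emph{verify} it is a gerbe. Relatedly, geometric reducedness of $X$ is not used to show $\overline\kappa[V]^{H_1\times H_2}=\overline\kappa$ (that is forced by $\H^0(X,\cO)=\kappa$ and concentration alone); it is used afterwards, to show that $X\arr[V/H]$ factors through the orbit $[\Omega/H]$ even though $V$ itself need not be $H$-reduced.
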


\begin{corollary}
Let $X$ be fibered category over $\aff\kappa$, and let $x_{0} \in X(k)$. Assume that $X$ is concentrated and geometrically reduced, and that $\H^{0}(X, \cO) = \kappa$. If $\cC$ is a well-founded class, then there exists a $\cC$-fundamental group $\fund(X, x_{0})$.
\end{corollary}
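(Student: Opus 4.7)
The proof will be essentially a direct application of Theorem~\ref{thm:maintheorem} combined with Proposition~\ref{prop:group<->gerbe}, so there is little real work. My plan is as follows.

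First, I would invoke Theorem~\ref{thm:maintheorem} (whose hypotheses on $X$ are exactly those assumed in the corollary, and $\cC$ is well-founded by assumption) to obtain a $\cC$-fundamental gerbe $\rho\colon X \arr \univ X$. Since $x_{0} \in X(\kappa)$, one may set $\xi \eqdef \rho(x_{0}) \in \univ X(\kappa)$; note that $\xi$ exists as a genuine object over $\kappa$ precisely because $x_{0}$ is a $\kappa$-rational point.

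Then I would apply Proposition~\ref{prop:group<->gerbe}. That proposition asserts two things under exactly the present hypotheses: the pair $(X, x_{0})$ is $\cC$-rigid, and the pro-$\cC$-group scheme $\underaut_{\kappa}\xi$ is a $\cC$-fundamental group scheme for $(X, x_{0})$. Setting $\fund(X, x_{0}) \eqdef \underaut_{\kappa}\xi$ therefore yields the desired object. In particular, by definition of the fundamental gerbe together with the equivalence between pointed $G$-torsors on $(\cB_{\kappa}\underaut_{\kappa}\xi, e)$ and homomorphisms $\underaut_{\kappa}\xi \arr G$, one obtains the required bijection between isomorphism classes of pointed $G$-torsors on $(X, x_{0})$ and homomorphisms $\fund(X, x_{0}) \arr G$ for every $G \in \cC(\kappa)$, which is the prorepresentability statement defining a $\cC$-fundamental group.

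There is no real obstacle here, since all of the substantive work is already contained in Theorem~\ref{thm:maintheorem}, whose proof is deferred to Section~\ref{sec:maintheorem}, and in the gerbe/group dictionary of Proposition~\ref{prop:group<->gerbe}. The only thing to verify is that the hypotheses of those two results transfer verbatim, which they do.
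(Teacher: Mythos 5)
Your proof is correct and is exactly the intended argument: the paper states this corollary without proof as an immediate consequence of Theorem~\ref{thm:maintheorem} combined with the gerbe/group dictionary of Proposition~\ref{prop:group<->gerbe}, which is precisely the two-step reduction you carry out. Nothing is missing.
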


\begin{remark}
	By definition, the fundamental gerbe $\univ X$ only depends on the class of $\cC$-gerbes defined over $\kappa$; there maybe different classes $\cC$ for which the class of $\cC$-gerbes defined over $\kappa$ coincide (for example, when $\kappa$ is algebraically closed, $\cC$-gerbes over $\kappa$ are the same when $\cC$ is either the class of diagonalizable groups, or of groups schemes of multiplicative type).
\end{remark}

\begin{remark}\label{rmk:necessary}
There is a kind of converse of Theorem~\ref{thm:maintheorem}. Suppose that $\kappa$ is algebraically closed, and that $\cC$ is a stable class such that $\univ X$ exists for all $X \arr \aff\kappa$ satisfying the conditions of Theorem~\ref{thm:maintheorem}. Let $G$ be in $\cC(\kappa)$, and let $H$ and $K$ be subgroup schemes of $G$. Then we claim that the action of $H\times K$ on $G$ is well-founded.

By Lemma~\ref{lem:fiber-product} we have $\cB_{\kappa}H\times_{{\cB}_{\kappa}G} \cB_{\kappa}K = [G/H\times K]$. Let $V \subseteq G$ be an $H\times K$-reduced subscheme with $\kappa[V]^{H \times K} = \kappa$; then $X \eqdef [V/H \times K]$ satisfies the hypotheses of Theorem~\ref{thm:maintheorem}. The closed embedding
   \[
   X \subseteq [G/H\times K] = \cB_{\kappa}H\times_{{\cB}_{\kappa}G} \cB_{\kappa}K
   \]
must factor through $\univ X$, and this clearly implies that $X(\kappa)$ contains a unique isomorphism class, so the action of $(H \times K)(\kappa)$ on $V(\kappa)$ is transitive.

In fact, one can prove that to conclude that the action of $H\times K$ on $G$ is well-founded it is enough to assume that $\univ X$ exists for all schemes satisfying the conditions of Theorem~\ref{thm:maintheorem}. The proof is somewhat complicated, and we omit it.
\end{remark}

The strategy of the proof of the theorem is exactly the same as that in \cite{borne-vistoli-fundamental-gerbe}.

\begin{definition}
Let $\Gamma$ be a $\cC$-gerbe. A morphism of fibered categories $X \arr \Gamma$ is \emph{Nori-reduced} if for any factorization $X \arr \Gamma' \arr \Gamma$, where $\Gamma'$ is a $\cC$-gerbe and $\Gamma' \arr \Gamma$ is faithful, then $\Gamma' \arr \Gamma$ is an isomorphism.
\end{definition}

Let $I$ be a skeleton of the $2$-category of Nori reduced morphisms $X \arr \Gamma$. Thus, an element $i$ of $I$ is a Nori-reduced morphism $X \arr \Gamma_{i}$, and an arrow $u\colon j \arr i$ is given by a $2$-commutative diagram
   \[
   \begin{tikzcd}[row sep=5pt]
   &\Gamma_{j}\ar{dd}{u}\\
   X\ar{ur}\ar{dr}\\
   &\Gamma_{i}\,.
   \end{tikzcd}
   \]

The fundamental gerbe $\univ X$ will be constructed as the limit $\projlim_{i \in I}\Gamma_{i}$; for this we need to show that $I$ is a boolean cofiltered $2$-category, and that every morphism from $X$ to a $\cC$-gerbe $\Gamma$ is bounded by a Nori-reduced morphism $X \arr \Gamma_i$.

Let us recall from \cite{borne-vistoli-fundamental-gerbe} the notion of \emph{scheme-theoretic image} of a morphism of fibered categories $f\colon X \arr Y$, where $X$ is a concentrated fibered category and $Y$ is an algebraic stack. We define the scheme-theoretic image $Y'$ of $X$ in $Y$ to be the intersection of all the closed substacks $Z$ of $Y$ such that $f$ factors, necessarily uniquely, though $Z$. Alternatively, $Y' \subseteq Y$ is the closed substack associated with the kernel of the natural homomorphism $\cO_{Y} \arr f_{*}\cO_{X}$. The fact that this is quasi-coherent is easy when $X$ is a concentrated scheme; the general case reduces to this by using an fpqc cover $U \arr X$.

The key point is the following.

\begin{lemma}\label{lem:basic}
Let\/ $\Gamma' \arr \Gamma$ and\/ $\Gamma'' \arr \Gamma$ be two morphisms of\/ $\cC$-gerbes. The scheme-theoretic image of any morphism $X \arr \Gamma' \times_{\Gamma} \Gamma''$, where $X\neq \emptyset$, is a $\cC$-gerbe.
\end{lemma}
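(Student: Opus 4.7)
The plan is to identify the fibered product with a quotient stack after a finite separable base change using Lemma~\ref{lem:fiber-product}, and then to apply the well-founded action machinery established in Theorem~\ref{thm:char-well-founded}.

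First, I would apply Corollary~\ref{lem:finite-separable-point} to pass to a finite separable extension $\kappa'/\kappa$ such that $\Gamma(\kappa')$, $\Gamma'(\kappa')$ and $\Gamma''(\kappa')$ are all non-empty. Possibly enlarging $\kappa'$, pick compatible base objects, so that we have equivalences $\Gamma'_{\kappa'} \simeq \cB_{\kappa'}G'$, $\Gamma''_{\kappa'} \simeq \cB_{\kappa'}G''$, $\Gamma_{\kappa'} \simeq \cB_{\kappa'}G$ with $G',G'',G \in \cC(\kappa')$ and homomorphisms $\phi'\colon G'\to G$, $\phi''\colon G''\to G$. By Lemma~\ref{lem:fiber-product}, $(\Gamma' \times_\Gamma \Gamma'')_{\kappa'} \simeq [G/(G'\times G'')]$ with the twisted two-sided action. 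Since the scheme-theoretic image commutes with flat base change (it is defined as the closed substack cut out by the kernel of $\cO_{Y}\to f_*\cO_X$, and flat pullback preserves this kernel), the scheme-theoretic image $Y \subseteq \Gamma'\times_\Gamma \Gamma''$ of $X$ base-changes to $[V/(G'\times G'')]$ for a unique closed $(G'\times G'')$-invariant subscheme $V\subseteq G$.

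Next, I would base-change further to $\bar\kappa$. Since $X$ is geometrically reduced, $X_{\bar\kappa}$ is reduced in the sense of Definition~\ref{def:reduced}, so the scheme-theoretic image $Y_{\bar\kappa}$ is a reduced substack, which translates to $V_{\bar\kappa}$ being $(G'\times G'')_{\bar\kappa}$-reduced. Further, $\H^0(Y,\cO)$ injects into $\H^0(X,\cO) = \kappa$ and contains the constants, hence equals $\kappa$; since $Y$ is concentrated as a closed substack of the finite-type stack $\Gamma'\times_\Gamma\Gamma''$, the base-change formula for $\H^0$ of concentrated fibered categories yields $\H^0(Y_{\bar\kappa},\cO) = \bar\kappa$, that is, $\bar\kappa[V_{\bar\kappa}]^{(G'\times G'')_{\bar\kappa}} = \bar\kappa$. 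Since $G$ is in the well-founded class $\cC$, it is virtually nilpotent, so $G_{\bar\kappa}$ is well-founded by Theorem~\ref{thm:char-well-founded}; the final lemma of Subsection~6.2 shows that the two-sided action of $G'\times G''$ on $G$ is well-founded. Applied to $V_{\bar\kappa}$, this forces $(G'\times G'')(\bar\kappa)$ to act transitively on $V(\bar\kappa)$. Using $X\neq\emptyset$, we get $V(\bar\kappa)\neq\emptyset$, so $Y_{\bar\kappa}$ is an affine gerbe over $\bar\kappa$, and by descent $Y$ is an affine gerbe over $\kappa$. Finally, any object of $Y(\ell)$ has automorphism group sitting inside $\underaut_\ell(\eta')\times\underaut_\ell(\eta'')$, which belongs to $\cC(\ell)$ by stability under products (Definition~\refpart{def:stable}{3}); the automorphism group is therefore in $\cC(\ell)$ by stability under subgroups (Definition~\refpart{def:stable}{4}), so $Y$ is a $\cC$-gerbe.

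The principal obstacle is the careful bookkeeping around base change: verifying that the scheme-theoretic image is preserved under the flat base changes $\spec\kappa'\to\spec\kappa$ and $\spec\bar\kappa\to\spec\kappa'$, that geometric reducedness of $X$ translates into $(G'\times G'')_{\bar\kappa}$-reducedness of $V_{\bar\kappa}$ rather than merely classical reducedness, and that the hypothesis $\H^0(X,\cO)=\kappa$ propagates through the scheme-theoretic image and then through $\spec\bar\kappa\to\spec\kappa$ to yield $\bar\kappa[V_{\bar\kappa}]^{(G'\times G'')_{\bar\kappa}} = \bar\kappa$; this is exactly the hypothesis needed to invoke the well-founded action conclusion.
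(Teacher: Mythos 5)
Your proof is correct and follows essentially the same route as the paper's: reduce via Lemma~\ref{lem:fiber-product} to the quotient stack $[G/(G'\times G'')]$ over an algebraically closed field, use $\H^{0}(X,\cO)=\kappa$ to get trivial invariants, invoke well-foundedness of the two-sided action for transitivity, and handle automorphism groups by stability of $\cC$ under products and subgroups. The only (cosmetic) difference is that you conclude by noting that a reduced substack with transitive groupoid of points is a gerbe, whereas the paper exhibits the orbit $H/K$ explicitly and kills the nilpotent ideal of $[\Omega/H]$ in $[V/H]$ using the reducedness of $X$.
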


\begin{proof}
Call $\Delta \subseteq \Gamma' \times_{\Gamma} \Gamma''$ the scheme-theoretic image of $X$. We need to show that $\Delta$ is a gerbe, and that if $\eta$ is in $\Delta(\ell)$, where $\ell$ is an extension of $\kappa$, the group scheme $\underaut_{\ell}\eta$ is in $\cC$.

For this we can base change to $\ell$, and assume that $\kappa = \ell$. Call $\xi'$, $\xi''$ and $\xi$ the images of $\eta$ in $\Gamma'$, $\Gamma''$ and $\Gamma$ respectively, and set $G' \eqdef \underaut_{\kappa}\xi'$, and analogously for $G''$ and $G$. The morphisms $\Gamma' \arr \Gamma$ and $\Gamma'' \arr \Gamma$ induce homomorphisms $\phi'\colon G' \arr G$ and $\phi''\colon G'' \arr G$; furthermore we have $\underaut_\kappa \eta = G' \times_{G} G''$, and since $G' \times_{G} G''$ is a subgroup scheme of $G' \times G''$ we have $G' \times_{G} G'' \in \cC$.

Let us check that $\Delta$ is a gerbe. We can make a further extension of $\kappa$, and assume that $\kappa$ is algebraically closed. There are isomorphisms $\Gamma \simeq \cB_{\kappa}G$, $\Gamma' \simeq \cB_{\kappa}G'$ and $\Gamma'' \simeq \cB_{\kappa}G''$, and the morphisms $\Gamma' \arr \Gamma$ and $\Gamma'' \arr \Gamma$ are induced by $\phi'$ and $\phi''$. By Lemma~\ref{lem:fiber-product} we have an isomorphism
   \[
   \cB_{\kappa}G' \times_{\cB_{\kappa}G} \cB_{\kappa}G'' \simeq 
   [G/(G' \times G'')]
   \]
where the action of $G' \times G''$ on $G$ is defined by
   \[
   g\cdot (g', g'') \eqdef \phi'(g')^{-1} g \phi''(g'')\,.
   \]
The scheme-theoretic image of $X$ in $[G/(G' \times G'')]$ is of the form $[V/(G' \times G'')]$, where $V$ is a $(G' \times G'')$-stable subscheme of $G$. Call $A$ the $\kappa$-algebra corresponding to $V$, so that $V\git(G' \times G'')= \spec A^{G' \times G''}$. Since $H^{0}(X, \cO) = \kappa$, we have that the composite
$X \arr [V/(G' \times G'')] \arr V\git(G' \times G'')$ factors through $\spec \kappa$; but since the scheme-theoretic image of $X$ in $[V/(G' \times G'')]$ is $[V/(G' \times G'')]$ itself, we see that $A^{G' \times G''} = \kappa$. Then it follows that the action of $H \eqdef G' \times G''$ on $V$ has only one orbit (this is immediately seen by replacing $G'$ and $G''$ with their images in $G$). Choose a point $v_{0} \in V(\kappa)$, and call $K \subseteq H$ the stabilizer of $v_{0}$; we have a closed embedding $\Omega \eqdef H/K \subseteq V$. Call $I$ the sheaf of ideals of $[\Omega/H]$ in $[V/H]$; the inverse image of $I$ in $\cO_{V}$ is nilpotent, so the inverse image of $I$ in $X$ is $0$, because $X$ is reduced. Hence $X \arr [V/H]$ factors through $[\Omega/H] = [(H/K)/H] = \cB_{\kappa}K$. So $[V/H] = \cB_{\kappa}K$, and this ends the proof of the lemma.
\end{proof}

\begin{lemma}\label{lem:dominance}
If $\Delta$ is a $\cC$-gerbe, every morphism $X \arr \Delta$ factors as $X \arr \Gamma$, where $\Gamma$ is a $\cC$-gerbe, $X \arr \Gamma$ is Nori-reduced and $\Gamma \arr \Delta$ is representable.
\end{lemma}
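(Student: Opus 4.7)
The plan is to obtain $\Gamma$ as a minimal element in a suitable filtered class of factorizations. Let $\cF$ be the class of equivalence classes of factorizations $X \arr \Gamma' \arr \Delta$ in which $\Gamma'$ is a $\cC$-gerbe and $\Gamma' \arr \Delta$ is faithful. It is non-empty, since $X \arr \Delta \xarr{\id} \Delta$ belongs to it. Partially order $\cF$ by setting $\Gamma'' \leq \Gamma'$ whenever there exists a faithful morphism $\Gamma'' \arr \Gamma'$ compatible with the two maps to $\Delta$. I will produce a minimal element $\Gamma$ of $\cF$ and then verify that the resulting $X \arr \Gamma$ is Nori-reduced.

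The essential input is Lemma~\ref{lem:basic}, which will provide downward filteredness. Given two elements of $\cF$ represented by $X \arr \Gamma_{1} \arr \Delta$ and $X \arr \Gamma_{2} \arr \Delta$, the fibered product $\Gamma_{1} \times_{\Delta}\Gamma_{2}$ is an algebraic stack of finite type (each $\Gamma_{i} \arr \Delta$ being representable, since faithful morphisms between gerbes of finite type are representable). Let $\Gamma_{12}$ be the scheme-theoretic image of the induced morphism $X \arr \Gamma_{1} \times_{\Delta} \Gamma_{2}$; by Lemma~\ref{lem:basic}, $\Gamma_{12}$ is a $\cC$-gerbe. Each composite
\[
\Gamma_{12} \hookrightarrow \Gamma_{1} \times_{\Delta} \Gamma_{2} \xarr{\pr_{i}} \Gamma_{i}
\]
is faithful, because the first arrow is a closed immersion (hence a monomorphism) and the second is the base change of the faithful morphism $\Gamma_{3-i} \arr \Delta$. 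Therefore $\Gamma_{12}\leq \Gamma_{1}$ and $\Gamma_{12}\leq \Gamma_{2}$ in $\cF$.

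To extract a minimum I would argue by Noetherianity. Using Corollary~\ref{lem:finite-separable-point}, choose a finite separable extension $\kappa'/\kappa$ such that $\Delta(\kappa') \neq \emptyset$, fix $\xi \in \Delta(\kappa')$, and set $G \eqdef \underaut_{\kappa'}\xi$, an affine group scheme of finite type over $\kappa'$. After base change to $\kappa'$, an element of $\cF$ pulls back to a faithful morphism $\Gamma'_{\kappa'} \arr \cB_{\kappa'}G$, which is determined by a closed subgroup scheme of $G$. Since closed subgroup schemes of $G$ satisfy the descending chain condition, every descending chain in $\cF$ stabilizes; combined with the downward filteredness established above, this yields a minimal element $X \arr \Gamma \arr \Delta$ of $\cF$.

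Finally, I would verify Nori-reducedness of $X \arr \Gamma$. Suppose it factored as $X \arr \Gamma^{\star} \arr \Gamma$ with $\Gamma^{\star}$ a $\cC$-gerbe and $\Gamma^{\star} \arr \Gamma$ faithful. Then $\Gamma^{\star} \arr \Delta$ is a composition of faithful morphisms, hence faithful, so $\Gamma^{\star}$ represents an element of $\cF$ with $\Gamma^{\star}\leq \Gamma$. By minimality, $\Gamma^{\star} \arr \Gamma$ must be an equivalence. The main technical nuisance I expect is bookkeeping: ensuring $\cF$ is essentially a set, checking that the correspondence between elements of $\cF$ and closed subgroup schemes of $G$ after base change to $\kappa'$ is bijective, and confirming that minimality descends back from $\kappa'$ to $\kappa$. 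All of these are standard consequences of the finite-type hypothesis on $\cC$-gerbes.
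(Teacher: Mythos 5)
Your proof is correct and follows essentially the same strategy as the paper's: both reduce the statement to the fact that a strictly descending chain of faithful (equivalently, representable) morphisms of finite-type gerbes over $\Delta$ must stabilize, which you establish via Noetherian descending chain condition for closed subgroup schemes after trivializing, while the paper uses the non-increasing pair consisting of the dimension and the degree of the component group of the automorphism group scheme. The one structural difference is that your downward-filteredness step via Lemma~\ref{lem:basic} is superfluous for this lemma (the chain condition alone produces a minimal element, and minimality plus the same finite-type invariants forces any further faithful factorization to be an equivalence); the paper saves that fiber-product argument for the later verification that the index category $I$ is cofiltered.
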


\begin{proof}
If $X \arr \Delta$ is not Nori-reduced, choose a factorization $X \arr \Delta_{1} \arr \Delta$, where $\Delta_{1} \arr \Delta$ is representable. If $X \arr \Delta_{1}$ is not Nori-reduced, let us choose an analogous factorization $X \arr \Delta_{2} \arr \Delta_{1}$. This process can not continue ad infinitum, as it follow from the following lemma.

\begin{lemma}
If $\Delta$ is an affine gerbe, and
    \[
   \cdots \arr \Delta_{i} \arr \Delta_{i-1} \arr \cdots \arr \Delta_{1} \arr \Delta
   \]
is an infinite sequence of representable maps of affine gerbes, there exists a positive integer $m$ such that $\Delta_{i} \arr \Delta_{i-1}$ is an equivalence for all $i \geq m$.
\end{lemma}

\begin{proof}
We can extend the base field $\kappa$, and assume that $\kappa$ is algebraically closed. If $\ell$ is an algebraically closed extension of $\kappa$ such that $\Delta_{i}(\ell) \neq \emptyset$, choose an object $\xi$ of $\Delta_{i}(\ell)$. Call $d_{i}$ the dimension of $\aut_{\ell}\xi$, and $e_{i}$ the degree of the finite scheme $\aut_{\ell}\xi/(\aut_{\ell}\xi)\ored$. It is immediate to show that $d_{i}$ and $e_{i}$ are independent of $\xi$ and $\ell$. If $\eta$ denotes the image of $\xi$ in $\Delta_{i-1}(\ell)$, the morphism $\aut_{\ell}\xi \arr \aut_{\ell}\eta$ induced by the map $\Delta_{i} \arr \Delta_{i-1}$ is a monomorphism, and an isomorphism if and only if $\Delta_{i} \arr \Delta_{i-1}$ is an equivalence.

Clearly $d_{i} \leq d_{i-1}$, and if $d_{i} = d_{i-1}$ then $e_{i}\leq e_{i-1}$. Furthermore, $d_{i} = d_{i-1}$ and $e_{i} = e_{i-1}$ if and only if $\aut_{\ell}\xi \arr \aut_{\ell}\eta$ is an isomorphism, and the statement follows from this.
\end{proof}

This ends the proof of Lemma~\ref{lem:dominance}.
\end{proof}

Let us show that $I$ is a cofiltered category; we need to show that given two arrows $j \arr i$ and $k \arr i$ in $I$, there exists a commutative diagram
   \[
   \begin{tikzcd}[row sep=-3pt]
   &j\ar{rd}&\\
   l\ar{ur}\ar{dr}&&i\,,\\
   &k\ar{ru}&\\
   \end{tikzcd}
   \]
and, that given two arrows $u$, $v\colon j \arr i$, there exists a unique $2$-arrow $u \arr v$.

For the first point, the two arrows $j \arr i$ and $k \arr i$ correspond to a $2$-commutative diagram
   \[
   \begin{tikzcd}[row sep=7pt]
   &\Gamma_{j}\ar{rd}&\\
   X\ar{ur}\ar{dr} \ar{rr}&&\Gamma_{i}\\\
   &\Gamma_{k}\ar{ru}&\\
   \end{tikzcd}
   \]
inducing a morphism $X \arr \Gamma_{j}\times_{\Gamma_{k}}\Gamma_{i}$. By Lemma~\ref{lem:basic} this factors through a $\cC$-subgerbe $\Delta \subseteq \Gamma_{j}\times_{\Gamma_{k}}\Gamma_{i}$, and by Lemma~\ref{lem:dominance} it factors through a Nori-reduced morphism $X \arr \Gamma_{l}$, proving what we want.

The second fact follows from the analogue of \cite[Lemma~5.13]{borne-vistoli-fundamental-gerbe}. 

\begin{lemma}\label{lem:unique-arrow}
Let $f\colon X \arr \Gamma$ and $g\colon X \arr \Delta$ be morphisms, where $\Gamma$ and $\Delta$ are $\cC$-gerbes and $f$ is Nori-reduced. Suppose that $u$, $v\colon \Gamma \arr \Delta$ are morphisms of fibered categories, and $\alpha\colon u \circ f \simeq g$ and $\beta\colon v \circ f \simeq g$ are isomorphisms. Then there exists a unique isomorphism $\gamma\colon u \simeq v$ such that $ \beta\circ(\gamma * \id_{f})=\alpha  $.
\end{lemma}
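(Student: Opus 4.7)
The plan is to realize $2$-isomorphisms $u \simeq v$ as sections of an affine representable projection over $\Gamma$, and to combine this with Lemma~\ref{lem:basic} and the Nori-reducedness of $f$.

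First I would introduce the fibered category
\[
E \eqdef \Gamma \times_{(u,v),\,\Delta \times \Delta,\,\Delta_{\mathrm{diag}}} \Delta\,,
\]
whose $T$-points are, up to equivalence, pairs $(\xi, \psi)$ with $\xi \in \Gamma(T)$ and $\psi \colon u(\xi) \simeq v(\xi)$ in $\Delta(T)$. The projection $\pi \colon E \to \Gamma$ is the base-change of the affine diagonal of $\Delta$, so it is representable and affine, and its sections are precisely the $2$-isomorphisms $u \simeq v$. Since $\cC$ is stable under products (condition~\refpart{def:stable}{3}), $\Delta \times \Delta$ is a $\cC$-gerbe, so $E$ has exactly the form required to apply Lemma~\ref{lem:basic}. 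The $2$-isomorphism $\delta \eqdef \beta^{-1}\alpha \colon u \circ f \simeq v \circ f$ together with $f$ defines a morphism $\tilde f\colon X \to E$ lifting $f$ along $\pi$.

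For existence, I would invoke Lemma~\ref{lem:basic} to deduce that the scheme-theoretic image $\Theta \subseteq E$ of $\tilde f$ is a $\cC$-gerbe. The composite $\Theta \hookrightarrow E \xrightarrow{\pi} \Gamma$ is a closed immersion followed by a representable affine morphism, hence representable and therefore faithful; since $f$ factors as $X \to \Theta \to \Gamma$ and $f$ is Nori-reduced, this composite must be an equivalence. Composing a quasi-inverse $\sigma\colon \Gamma \to \Theta$ with $\Theta \hookrightarrow E$ yields a section $s\colon \Gamma \to E$ of $\pi$, which is the desired $2$-isomorphism $\gamma\colon u \simeq v$; the compatibility $\beta \circ (\gamma * \id_f) = \alpha$ is automatic because $s \circ f$ recovers $\tilde f$ up to a canonical $2$-isomorphism built into the quasi-inverse.

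For uniqueness, given two such candidates $\gamma_1$, $\gamma_2$, I would set $\rho \eqdef \gamma_2 \gamma_1^{-1}\colon u \simeq u$, so that $\rho * \id_f = \id_{u \circ f}$. The morphism $\rho$ and the identity $\iota_u$ are two sections of the analogous projection $E' \eqdef \Gamma \times_{(u,u),\,\Delta\times\Delta,\,\Delta_{\mathrm{diag}}} \Delta \arr \Gamma$, so their equalizer $\Gamma^\circ \subseteq \Gamma$ is a closed substack (the projection being representable and separated), through which $f$ factors. Any closed substack of an affine gerbe over $\kappa$ is either empty or all of $\Gamma$, as can be seen by pulling back along any cover $\spec\ell \to \Gamma$ and using that the only closed subschemes of $\spec\ell$ are $\emptyset$ and $\spec\ell$; thus $\Gamma^\circ = \Gamma$, $\rho = \iota_u$, and $\gamma_1 = \gamma_2$. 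The main obstacle I foresee is simply verifying that Lemma~\ref{lem:basic} applies cleanly at the key step, which boils down to identifying $E$ as a fiber product of $\cC$-gerbes over a $\cC$-gerbe; once that is in place the rest is bookkeeping on scheme-theoretic images and on the Nori-reducedness of $f$.
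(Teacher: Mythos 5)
Your proposal is correct and follows essentially the same route as the paper: both realize $2$-isomorphisms $u \simeq v$ as sections of the fiber product $\Gamma \times_{\Delta\times\Delta}\Delta$ over $\Gamma$, lift $f$ to it via $\beta^{-1}\alpha$, apply Lemma~\ref{lem:basic} to see that the scheme-theoretic image of the lift is a $\cC$-gerbe, and use the Nori-reducedness of $f$ to conclude that this image is a section. The only (harmless) divergence is in the uniqueness step, which you handle by an explicit equalizer-of-sections argument while the paper folds it into the uniqueness of the closed gerbe-substack of $\Gamma \times_{\Delta\times\Delta}\Delta$ through which $X$ factors; both rest on the fact that a nonempty closed substack of an affine gerbe is the whole gerbe.
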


This can be expressed by saying that, given two 2-commutative diagrams
   \[
   \begin{tikzcd}[row sep=5pt]
   &\Gamma\ar{dd}{u}\\
   X\ar{ur}{f}\ar[swap]{dr}{g}\\
   &\Delta
   \end{tikzcd}
   \qquad
   \text{and}
   \qquad
   \begin{tikzcd}[row sep=5pt]
   &\Gamma\ar{dd}{v}\\
   X\ar{ur}{f}\ar[swap]{dr}{g}\\
   &\Delta
   \end{tikzcd}
   \]
in which $f$ is Nori-reduced, there exists a unique isomorphism $u \simeq v$ making the diagram
   \[
   \begin{tikzcd}[column sep=30pt, row sep=8pt]
   &\Gamma\ar[bend left=25]{dd}{v}\ar[bend right=25, swap]{dd}{u}\\
   X\ar{ur}{f}\ar[swap]{dr}{g}\\
   &\Delta
   \end{tikzcd}
   \]
2-commutative.

\begin{proof}
This is virtually identical to the proof of \cite[Lemma~5.13]{borne-vistoli-fundamental-gerbe}.

Consider the category $\Gamma' \arr \Gamma$ fibered in sets over $\Gamma$, whose objects over a $\kappa$-scheme $T$ are pairs $(\xi, \rho)$, where $\xi$ is an object of $\Gamma(T)$ and $\rho$ is an isomorphism of $u(\xi)$ with $v(\xi)$ in $\Delta(T)$. This can be written as a fibered product
   \[
   \begin{tikzcd}
   \Gamma' \rar\dar & \Gamma \dar{\generate{u,v}}\\
   \Delta \rar{\delta} & \Delta \times \Delta\,,   
   \end{tikzcd}
   \]
where $\delta\colon \Delta \arr \Delta \times \Delta$ is the diagonal. 

An isomorphism $u \simeq v$ corresponds to a section of the projection $\Gamma' \arr \Gamma$, or, again, to a substack $\Gamma'' \subseteq \Gamma'$ such that the restriction $\Gamma'' \arr \Gamma$ of the projection is an isomorphism. The composite isomorphism $u \circ f \xarr{\alpha} g \xarr{\beta^{-1}} v \circ f$ yields a lifting $X \arr \Gamma'$ of $f\colon X \arr \Gamma$; the thesis can be translated into the condition that there exists a unique substack $\Gamma'' \subseteq \Gamma'$ as above, such that $X \arr \Gamma'$ factors through $\Gamma''$. By Lemma~\ref{lem:basic}, there is a unique closed substack $\Gamma''$ of $\Gamma'$ that is a gerbe, such that $X \arr \Gamma'$ factors through $\Gamma''$. However, $\Gamma'' \arr \Gamma$ is representable, because $\Gamma' \arr \Gamma$ is, so $\Gamma'' \arr \Gamma$ is an equivalence, since $f$ is Nori-reduced.
\end{proof}

Now we set $\univ X \eqdef \projlim_{i \in I} \Gamma_{i}$; the morphisms $X \arr \Gamma_{i}$ yield a morphism $X \arr \univ X$. We need to show that if $\Delta$ is a $\cC$-gerbe, the induced functor
   \[
   \hom_{\kappa}(\univ X, \Delta) \arr \hom_{\kappa}(X, \Delta)
   \]
is an equivalence. From Lemma~\ref{lem:dominance} it follows that it is essentially surjective.

Now, the natural functor
   \[
   \indlim_{i\in I}\hom_{\kappa}(\Gamma_{i}, \Delta) \arr \hom_{\kappa}(\univ X, \Delta)
   \]
is an equivalence, by \cite[Proposition~3.7]{borne-vistoli-fundamental-gerbe}; hence to prove that the above functor is fully faithful it is enough to show that for every $i \in I$ the induced functor $\hom(\Gamma_{i}, \Delta) \arr \hom(X, \Delta)$ is fully faithful. 

Call $f\colon X \arr \Gamma_{i}$ the canonical morphism. Consider two morphisms $u$, $v\colon \Gamma_{i} \arr \Delta$, and an isomorphism of functors $\beta\colon u\circ f \simeq v \circ f$. We need to check that there exists a unique isomorphism $\gamma\colon u \simeq v$ such that $\gamma * \id_{f} = \beta$. This follows immediately from Lemma~\ref{lem:basic}.

\section{Change of class}\label{sec:change-class}

Let $\cC$ be a well-founded class of group schemes over $\kappa$, and let $\Gamma$ be an affine gerbe. It follows from Proposition~\ref{prop:char-locally-full} that a morphism $\Gamma \arr \Delta$ is Nori-reduced if and only if it is locally full. From this, and from Proposition~\ref{prop:locally-full} we deduce that the morphism $\Gamma \arr \univ{\Gamma}$ is locally full.

From now on we use the notation $\Gamma^{\cC}$ for $\univ\Gamma$. Another way of thinking about $\Gamma^{\cC}$ is the following. We can write $\Gamma$ as the projective limit $\projlim \Gamma_{i}$, where the limit is taken over all the locally fully morphisms $\Gamma \arr \Gamma_{i}$. Let $\{\Gamma_{j}\}_{j \in J}$ denote the full subcategory of $I$ consisting of those $\Gamma_{i}$ that are $\cC$-gerbes. This is a cofiltered full subcategory of $I$. Every morphism from $\Gamma$ to a $\cC$-gerbe factors uniquely through $\projlim \Gamma_{j}$; hence $\Gamma^{\cC} = \projlim \Delta_{i}$.

Suppose that $\cD$ is a stable subclass of $\cC$. Since a pro-$\cD$-gerbe is also a pro-$\cC$-gerbe, the morphism $X \arr \univd X$ induces a morphism $\univ X \arr \univd X$.

\begin{proposition}\label{prop:induced-locally-full}
The induced morphism  $\univ{X} \arr \univd{X}$ is locally full.
\end{proposition}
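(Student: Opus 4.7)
The plan is to identify $\univd X$ with $(\univ X)^{\cD}$, the $\cD$-fundamental gerbe of the affine gerbe $\univ X$ viewed as a fibered category, and then to invoke the observation recorded at the beginning of this section that for any affine gerbe $\Gamma$ the canonical map $\Gamma \arr \Gamma^{\cD}$ is locally full. In other words, the content of the proposition is essentially the compatibility of two nested universal properties.

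The identification goes as follows. Since $\cD$ is contained in $\cC$, every pro-$\cD$-gerbe is also a pro-$\cC$-gerbe, so the universal property of $\univ X$ yields an equivalence $\hom_{\kappa}(\univ X, \Delta) \simeq \hom_{\kappa}(X, \Delta)$ for every pro-$\cD$-gerbe $\Delta$. Composing with the defining equivalence $\hom_{\kappa}(\univd X, \Delta) \simeq \hom_{\kappa}(X, \Delta)$ for $\univd X$, I obtain an equivalence $\hom_{\kappa}(\univd X, \Delta) \simeq \hom_{\kappa}(\univ X, \Delta)$ for every pro-$\cD$-gerbe $\Delta$. This is the universal property characterizing the pro-$\cD$-quotient of the affine gerbe $\univ X$; hence $\univd X$ represents $(\univ X)^{\cD}$, and the canonical morphism $\univ X \arr \univd X$ agrees, up to canonical equivalence, with the morphism $\univ X \arr (\univ X)^{\cD}$.

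To finish, I would simply invoke the fact already established at the beginning of this section: by Proposition~\ref{prop:char-locally-full} the Nori-reduced morphisms from an affine gerbe are exactly the locally full ones, so $(\univ X)^{\cD}$ is the cofiltered limit of the locally full morphisms from $\univ X$ to $\cD$-gerbes, and Proposition~\ref{prop:locally-full} then guarantees that the resulting limit morphism $\univ X \arr (\univ X)^{\cD}$ is itself locally full.

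The only delicate point, and the one I would write out most carefully, is the bookkeeping in the middle step: one must check that the equivalence $\hom_{\kappa}(\univd X, \Delta) \simeq \hom_{\kappa}(\univ X, \Delta)$ is actually implemented by precomposition with a \emph{fixed} morphism $\univ X \arr \univd X$, namely the one obtained from $X \arr \univd X$ via the universal property of $\univ X$, so that the abstract identification $\univd X \simeq (\univ X)^{\cD}$ really identifies the two structure morphisms and not just their targets. This is routine but deserves a line to record.
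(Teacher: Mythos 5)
Your proof is correct and follows essentially the same route as the paper: the paper's own argument is precisely to observe that $X \arr \univ X$ identifies $\univd X$ with $\univd{\univ X}$ and then to invoke the discussion at the start of the section (Nori-reduced $=$ locally full for affine gerbes via Proposition~\ref{prop:char-locally-full}, plus Proposition~\ref{prop:locally-full} for the limit). You have merely written out the universal-property bookkeeping that the paper dismisses as ``clear''.
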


\begin{proof}
Clearly the morphism $X \arr \univ{X}$ induces an equivalence between $\univd{X}$ and $\univd{\univ{X}}$, so the result follows from the previous discussion.
\end{proof}

\begin{corollary}\label{cor:fully-faithful}
If\/ $\rep{\univ{X}} \arr \vect{X}$ is fully faithful, so is $\rep{\univd{X}} \arr \vect{X}$.
\end{corollary}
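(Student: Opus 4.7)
The plan is to chain together two fully faithful functors. Since $\cD \subseteq \cC$, any pro-$\cD$-gerbe is a pro-$\cC$-gerbe, so the universal map $X \arr \univd{X}$ factors canonically as
\[
X \arr \univ{X} \arr \univd{X},
\]
where the second arrow is the one built in the paragraph preceding the corollary. Pulling back representations along this factorization gives a commutative triangle of functors
\[
\rep{\univd{X}} \arr \rep{\univ{X}} \arr \vect_{X},
\]
whose composite is precisely the pullback $\rep{\univd{X}} \arr \vect_{X}$ that we wish to show is fully faithful.

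For the first arrow, I would invoke Proposition~\ref{prop:induced-locally-full}, which tells us that $\univ{X} \arr \univd{X}$ is locally full, together with the equivalence \refpart{prop:char-locally-full}{1}$\iff$\refpart{prop:char-locally-full}{6} in Proposition~\ref{prop:char-locally-full}: a morphism of affine gerbes is locally full if and only if its pullback on representation categories is fully faithful. Hence $\rep{\univd{X}} \arr \rep{\univ{X}}$ is fully faithful. The second arrow $\rep{\univ{X}} \arr \vect_{X}$ is fully faithful by the hypothesis of the corollary. Since the composite of two fully faithful functors is fully faithful, we are done.

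There is no real obstacle here; the only thing to check is that the triangle of pullback functors indeed commutes (up to natural isomorphism), which is immediate from the fact that $X \arr \univd{X}$ is the composite $X \arr \univ{X} \arr \univd{X}$ by construction, and pullback is functorial.
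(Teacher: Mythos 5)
Your proof is correct and is essentially the paper's own argument: the paper's proof simply cites Propositions~\ref{prop:induced-locally-full} and \ref{prop:char-locally-full}, and you have spelled out exactly the intended composition of fully faithful functors.
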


\begin{proof}
This follow from Propositions \ref{prop:induced-locally-full} and \ref{prop:char-locally-full}.
\end{proof}

\begin{corollary}\label{cor:finite->realizable}
If $\cC$ is a stable class of finite group schemes, the pullback functor $\rep\univ X \arr \vect_{X}$ is fully faithful.
\end{corollary}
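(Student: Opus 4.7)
The plan is to deduce this from Corollary~\ref{cor:fully-faithful}, which propagates the fully faithful property down from a larger well-founded class to any stable subclass, combined with the fully faithfulness for the virtually unipotent fundamental gerbe mentioned in the introduction as Corollary~\ref{cor:fully-faithful-2}.

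First I would observe that any finite group scheme $G$ over a field $k$ is virtually unipotent (in fact, virtually nilpotent and virtually abelian). Indeed, after base change to $\overline{k}$, the connected component $G^{0}_{\overline{k}}$ is a finite connected $\overline{k}$-group scheme, hence $(G_{\overline{k}})\ored = (G^{0}_{\overline{k}})\red = \spec \overline{k}$ is the trivial group, which is a fortiori unipotent. Consequently, any stable class $\cC$ of finite group schemes is contained in the (well-founded) class of virtually unipotent affine group schemes of finite type.

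Next, I would invoke Corollary~\ref{cor:fully-faithful-2}, which asserts that $\rep\univvu X \arr \vect_{X}$ is fully faithful. Applying Corollary~\ref{cor:fully-faithful} with the well-founded class taken to be the virtually unipotent groups and the stable subclass taken to be our $\cC$, we immediately obtain that $\rep\univ X \arr \vect_{X}$ is fully faithful, which is exactly the conclusion.

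No step here is really difficult; the only subtle point is the initial observation that finite group schemes are virtually unipotent, which might at first glance seem suspect for something like $\boldsymbol{\mu}_{n}$ in characteristic zero. But once one notes that the definition of virtually unipotent only asks about $(G_{\overline{k}})\ored$, and that this is always trivial when $G$ is finite, the inclusion of classes is clear and the result follows formally from the two already established corollaries.
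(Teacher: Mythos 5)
Your two mathematical observations are both correct: every finite group scheme $G$ is virtually unipotent, since $(G_{\overline{k}})\ored$ is a finite reduced connected group scheme over an algebraically closed field and hence trivial; and Corollary~\ref{cor:fully-faithful} does let you descend full faithfulness from a well-founded class to a stable subclass. The problem is the anchor you chose. You base everything on Corollary~\ref{cor:fully-faithful-2}, but in the paper's logical organization that corollary comes two sections later and is itself deduced from Theorem~\ref{thm:tannakian-characterization-3} (and, in characteristic zero, Theorem~\ref{thm:tannakian-char-u-vu}), whose proofs run through Theorem~\ref{thm:tannakian-characterization-1} and the notion of a \emph{realizable} class of finite group schemes --- and Section~\ref{sec:general-tannakian} explicitly justifies that ``every stable class of finite group schemes is realizable'' by citing Corollary~\ref{cor:finite->realizable}, i.e.\ the very statement you are trying to prove. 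As written, your argument is therefore circular. (One can argue that the specific instances of realizability actually needed downstream are only the trivial class and the full class of finite group schemes, both of which are established independently, but you do not make that argument, and it is exactly the kind of untangling a proof must do before it can legitimately cite a forward reference.)

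The paper's own proof avoids all of this by anchoring at the \emph{full} class of finite group schemes: full faithfulness of $\rep\univfin X \arr \vect_{X}$ is already known from \cite[Theorem~7.9]{borne-vistoli-fundamental-gerbe}, and Corollary~\ref{cor:fully-faithful} (with $\cC$ the class of all finite group schemes and $\cD$ your given stable subclass) then gives the result in one line. Note also that even if the circularity were repaired, your route would prove less: Corollary~\ref{cor:fully-faithful-2} is stated under the standing Conditions~\ref{conditions-on-X} of Section~\ref{sec:tannakian}, which include pseudo-properness (and Theorem~\ref{thm:tannakian-characterization-3} additionally requires an fpqc cover by a noetherian reduced scheme), hypotheses that Corollary~\ref{cor:finite->realizable} does not impose. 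You should replace the appeal to Corollary~\ref{cor:fully-faithful-2} with the citation of the earlier paper's result for the Nori fundamental gerbe.
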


\begin{proof}
If $\cC$ is the whole class of finite group schemes this is in \cite[Theorem~7.9]{borne-vistoli-fundamental-gerbe}. The general case follows from Corollary~\ref{cor:fully-faithful}.
\end{proof}

\section{Weil restriction and change of base}\label{sec:base-change}

If $A$ is a finite $\kappa$-algebra, and $X \arr \aff A$ a fibered category, we have the Weil restriction $\rR_{A/\kappa}X\arr \aff\kappa$; for the definition and the basic properties we refer to \cite[Section~6]{borne-vistoli-fundamental-gerbe}.

We will use the following fact. 

\begin{lemma}\label{lem:weil-transfer-classifying}
Let $G$ be an affine group scheme over a finite extension $\ell$ of $\kappa$. Then the Weil restriction $\rR_{\ell/\kappa}(\cB_{\ell}G)$ is canonically equivalent to $\cB_\kappa(\rR_{\ell/\kappa}G)$.
\end{lemma}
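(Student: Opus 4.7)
The plan is to construct mutually quasi-inverse morphisms of fibered categories by unwinding both sides on affine test schemes and invoking the adjunction between base change along $\spec\ell\to\spec\kappa$ and Weil restriction $\rR_{\ell/\kappa}$. For a $\kappa$-algebra $A$, write $A_\ell\eqdef A\otimes_\kappa\ell$; by the definition of Weil restriction of fibered categories recalled in \cite[Section~6]{borne-vistoli-fundamental-gerbe}, one has
   \[
   \rR_{\ell/\kappa}(\cB_\ell G)(A) = \cB_\ell G(\spec A_\ell),
   \]
which is the groupoid of $G$-torsors on $\spec A_\ell$, whereas $\cB_\kappa(\rR_{\ell/\kappa}G)(A)$ is the groupoid of $\rR_{\ell/\kappa}G$-torsors on $\spec A$. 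So the task is to produce, naturally in $A$, an equivalence between these two groupoids.

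From left to right, I would send a $G$-torsor $P\to \spec A_\ell$ to the pullback, along the unit of adjunction $\spec A \to \rR_{\ell/\kappa}(\spec A_\ell)$, of the Weil restriction $\rR_{\ell/\kappa}P\to \rR_{\ell/\kappa}(\spec A_\ell)$, which carries a natural structure of $\rR_{\ell/\kappa}G$-torsor. In the opposite direction, given an $\rR_{\ell/\kappa}G$-torsor $Q$ on $\spec A$, I would base-change to $\spec A_\ell$ and push out along the counit $(\rR_{\ell/\kappa}G)_\ell \to G$ to produce a $G$-torsor on $\spec A_\ell$. Both constructions are manifestly functorial in $A$ and in isomorphisms of torsors, and are compatible with fpqc covers, so they assemble into morphisms of fibered categories in both directions.

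That the two constructions are mutually quasi-inverse reduces, via fpqc descent and the fact that every torsor is locally trivial, to the case of trivial torsors, where it becomes a direct consequence of the triangular identities of the adjunction. The main technical point --- and the only place where finiteness of $\ell/\kappa$ enters essentially --- is verifying that $\rR_{\ell/\kappa}P$ is genuinely an $\rR_{\ell/\kappa}G$-torsor rather than merely a scheme with $\rR_{\ell/\kappa}G$-action: this uses that, for $\ell/\kappa$ finite (hence finite flat), the functor $\rR_{\ell/\kappa}$ commutes with the fiber products encoding the torsor axiom and preserves fpqc surjections, so that it carries a local trivialization of $P$ to a local trivialization of $\rR_{\ell/\kappa}P$. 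Once this is in place, the equivalence is canonical in $G$ and $A$, giving the stated equivalence of fibered categories.
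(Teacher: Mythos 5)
Your proposal is correct and follows essentially the same route as the paper: the forward functor is obtained by Weil-restricting the torsor and pulling back along the unit $\spec A \to \rR_{\ell/\kappa}(\spec A_\ell)$, the backward functor by base change to $\ell$ followed by change of group along the counit $(\rR_{\ell/\kappa}G)_\ell \to G$, and the torsor property is checked by reducing to a trivializing cover using that $\rR_{\ell/\kappa}$ commutes with products and fiber products. The paper leaves the quasi-inverse verification to the reader, which your reduction to trivial torsors via the triangular identities fills in compatibly.
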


\begin{proof}
Let $T$ be an affine $\kappa$-scheme, $T \arr \rR_{\ell/\kappa}(\cB_{\ell}G)$ a morphism, corresponding to a morphism $T_{\ell} \arr \cB_{\ell}G$, which in turn corresponds to a $G$-torsor $E \arr T_{\ell}$. By applying the Weil restriction functor we obtain a morphism $\rR_{\ell/\kappa}E \arr \rR_{\ell/\kappa}(T_{\ell})$; since the Weil restriction commutes with fibered products, from the action of $G$ on $E$ we get an action $\rR_{\ell/\kappa}E \times_{\ell} \rR_{\ell/\kappa}G = \rR_{\ell/\kappa}(E \times G) \arr \rR_{\ell/\kappa}E$, which leaves the morphism $\rR_{\ell/\kappa}E \arr \rR_{\ell/\kappa}(T_{\ell})$ invariant. By pulling back along the unit morphism $T \arr  \rR_{\ell/\kappa}(T_{\ell})$ we obtain a morphism $F \arr T$, with an action of $\rR_{\ell/\kappa}G$ on $F$ leaving it invariant. It is easy to see that $F \arr T$ is a $\rR_{\ell/\kappa}G$-torsor: when $E = T_\ell \times G$ this follows from the fact that $\rR_{\ell/\kappa}$ preserves products. In general, if $T' \arr T$ is a faithfully flat morphism of affine schemes such that $T' \times_{T}E \simeq T' \times G$, we get a diagram
   \[
   \begin{tikzcd}
   T'\times\rR_{\ell/\kappa}G \rar\dar& F\dar\\
   T' \rar & T
   \end{tikzcd}
   \]
which is easily checked to be cartesian.

Thus we get a functor $\rR_{\ell/\kappa}(\cB_{\ell}G) \arr\cB_{\kappa}(\rR_{\ell/\kappa}G) $. Let us produce a functor in the opposite direction.

Let $T$ be an affine $\kappa$-scheme, $T \arr \cB_{\kappa}(\rR_{\ell/\kappa}G)$ a morphism, corresponding to a $\rR_{\ell/\kappa}G$-torsor $E \arr T$. Then the pullback $E_{\ell} \arr T_{\ell}$ is a $(\rR_{\ell/\kappa}G)_{\ell}$-torsor; with a change of group along the unit morphism $(\rR_{\ell/\kappa})_{\ell} \arr G$ we obtain a $G$-torsor $E \arr T_{\ell}$, which yields a morphism $T \arr \rR_{\ell/\kappa}(\cB_{\kappa}G)$.

We leave it to the reader to check that the two functors above are in fact quasi-inverses.
\end{proof}

\begin{proposition}\call{prop:Weil-restriction} Let $\cC$ be a stable class of groups over $\kappa$. Let $\ell/\kappa$ be a finite extension, and\/ $\Gamma$ a $\cC$-gerbe over $\ell$. Assume that either

\begin{enumerate1}

\itemref{1} $\cC$ is weakly very stable and $\ell/\kappa$ is separable, or

\itemref{2} $\cC$ is very stable class of groups over $\kappa$, and every extension of a group in $\cC$ by a product of copies of $\ga$ is still in $\cC$.

\end{enumerate1}

Then $\rR_{\ell/k}\Gamma$ is a $\cC$-gerbe.
\end{proposition}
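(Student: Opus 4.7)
The plan is first to show that $\rR_{\ell/\kappa}\Gamma$ is an affine gerbe of finite type over $\kappa$, and then to check the $\cC$-condition on its automorphism groups. The first part follows from standard properties of Weil restriction along a finite extension (reviewed in \cite[\S 6]{borne-vistoli-fundamental-gerbe}): $\rR_{\ell/\kappa}\Gamma$ is affine and of finite type because $\Gamma$ is, and is an fpqc gerbe by descent from $\Gamma$. For the $\cC$-condition, it will be enough, by the stability properties of $\cC$, to exhibit a single extension $m/\kappa$ and a single object $\xi \in (\rR_{\ell/\kappa}\Gamma)(m)$ for which $\underaut_m\xi$ lies in $\cC(m)$; the (weakly) very stable hypothesis will then propagate the conclusion to every other extension.

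For case~(1), I would choose a finite Galois extension $k/\kappa$ containing all embeddings of $\ell$, so that $\ell\otimes_\kappa k \simeq \prod_{\sigma\colon \ell \into k} k$. Then $\rR_{\ell/\kappa}\Gamma \times_\kappa k \simeq \prod_\sigma \Gamma_{k,\sigma}$, where $\Gamma_{k,\sigma}$ denotes $\Gamma$ base-changed to $k$ via $\sigma$. For any object $\xi = (\xi_\sigma)$ over $k$, the automorphism group is $\prod_\sigma \underaut_k(\xi_\sigma)$; each factor lies in $\cC(k)$ by stability under base change (property \refpart{def:stable}{2}), and the product lies in $\cC(k)$ by stability under products (property \refpart{def:stable}{3}). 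Since $k/\kappa$ is finite separable, the weakly very stable hypothesis then yields the conclusion over $\kappa$.

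For case~(2), I would factor $\ell/\kappa = \ell/\ell^{s}/\kappa$ with $\ell^{s}/\kappa$ separable and $\ell/\ell^{s}$ purely inseparable, and use the tower $\rR_{\ell/\kappa} = \rR_{\ell^{s}/\kappa} \circ \rR_{\ell/\ell^{s}}$ to reduce by case~(1) to the purely inseparable case. Base-changing to $\bar\kappa$, the ring $R \eqdef \ell\otimes_\kappa \bar\kappa$ is Artinian local with residue field $\bar\kappa$. Any object $\xi$ of $(\rR_{\ell/\kappa}\Gamma)(\bar\kappa)$ lifts to $\tilde\xi \in \Gamma(R)$, and Lemma~\ref{lem:weil-transfer-classifying} together with descent gives $\underaut_{\bar\kappa}\xi \simeq \rR_{R/\bar\kappa}\underaut_R(\tilde\xi)$. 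Writing $G \eqdef \underaut_R(\tilde\xi)$, with special fiber $G_0 \in \cC(\bar\kappa)$, the standard structure theorem for Weil restriction along a local Artinian extension with residue field section produces a short exact sequence
\[
1 \arr U \arr \rR_{R/\bar\kappa}G \arr G_0 \arr 1,
\]
where $U$ is a unipotent vector group (a product of copies of $\ga$), obtained by filtering $R$ by powers of its maximal ideal and identifying successive quotients of the kernel with vector groups. The hypothesis in case~(2) then gives $\rR_{R/\bar\kappa}G \in \cC(\bar\kappa)$, and the very stable property transports this conclusion down to $\kappa$.

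The main obstacle will be establishing the short exact sequence above with kernel a product of $\ga$'s in case~(2): this requires a careful analysis of the local structure of $R$, and one must verify the argument still goes through if $G$ is not smooth, as can happen when $\cC$ contains non-smooth groups such as $\mmu_p$. A subsidiary nuisance is the bookkeeping needed to pass between $\bar\kappa$, finite extensions of $\kappa$, and $\kappa$ itself while keeping the stability machinery under control.
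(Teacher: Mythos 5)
Your case~(1) is essentially the paper's argument: base change to a Galois closure $\kappa'$, use $\ell\otimes_{\kappa}\kappa'\simeq(\kappa')^{d}$ to write $(\rR_{\ell/\kappa}\Gamma)_{\kappa'}$ as a product of conjugate base changes of $\Gamma$, and conclude by weak very stability. In case~(2) the reductions (peeling off the separable part, and reducing via Lemma~\ref{lem:weil-transfer-classifying} to showing that the Weil restriction of a $\cC$-group along a purely inseparable extension is a $\cC$-group) also agree with the paper. But the step you single out as ``the main obstacle'' is a genuine gap, and it does not hold in the form you state. The kernel of $\rR_{R/\overline{\kappa}}G \arr G_{0}$ is not in general a product of copies of $\ga$: filtering $R$ by powers of its maximal ideal exhibits it only as an \emph{iterated extension} of copies of the vector group $\operatorname{Lie}(G)$, and for non-commutative $G$ this kernel need not even be commutative, hence is certainly not a vector group. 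Since the hypothesis of case~(2) only allows extensions whose kernel is literally a product of copies of $\ga$, you cannot apply it once to the whole kernel. The paper's proof is precisely the inductive version of your idea: after reducing to $[\ell:\kappa]=p$ and to $\Gamma=\cB_{\ell}G$, it sets $A_{n}\eqdef\ell[x]/(x^{n+1})$, notes $\ell\otimes_{\kappa}\ell\simeq A_{p-1}$, and proves by induction that $G_{n}\eqdef\underhom_{\ell}(\spec A_{n},G)$ lies in $\cC$: the map $G_{n}\arr G_{n-1}$ has image a subgroup scheme of $G_{n-1}$ (hence in $\cC$ by induction and stability under subgroups --- note this map need not be surjective when $G$ is not smooth) and kernel $\operatorname{Lie}(G)$, which \emph{is} a product of copies of $\ga$, so the extension hypothesis applies one step at a time.

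A secondary problem is your passage to $\overline{\kappa}$: very stability is postulated only for finite extensions, so knowing that $\underaut_{\overline{\kappa}}\xi$ lies in $\cC(\overline{\kappa})$ does not formally descend to $\kappa$ for an abstract very stable class. The paper stays within finite extensions throughout: it checks the $\cC$-condition after a finite separable base change $\kappa'$ chosen so that $\Gamma$ acquires a rational point (reducing to $\Gamma=\cB_{\ell}G$), and then applies very stability along the finite extension $\ell/\kappa$ itself to reduce to computing $(\rR_{\ell/\kappa}G)_{\ell}$.
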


\begin{proof}
Suppose that $\ell/\kappa$ is separable of degree~$d$, and let $\kappa'$ be a Galois closure of $\ell/\kappa$; call $\nu_{1}$, \dots,~$\nu_{d}$ the embeddings of $\ell$ in $\kappa'$. Denote by $\Gamma_{i}$ the fibered product $\Gamma_{\kappa'}$ induced by $\nu_{i}\colon \ell \arr \kappa'$. 

We have that $\ell\otimes_{\kappa}\kappa'$ is isomorphic to the $\kappa'$-algebra $(\kappa')^{d}$, where the $i\th$ projection $\ell\otimes_{\kappa}\kappa' \simeq (\kappa')^{d} \arr \kappa'$ corresponds to $\nu_{i}$. But $(\rR_{\ell/\kappa}\Gamma)_{\kappa'} \simeq \rR_{\ell\otimes_{\kappa}\kappa'/\kappa'}\Gamma_{\ell\otimes_{\kappa}\kappa'} \simeq \rR_{\ell\otimes_{\kappa}\kappa'/\kappa'}\coprod_{i=1}^d \Gamma_i \simeq \prod_{i=1}^d \Gamma_i  $, and each of the $\Gamma_{i}$ is a $\cC$-gerbe; since $\cC$ is weakly very stable, it follows that $\rR_{\ell/\kappa}\Gamma$ is a $\cC$-gerbe. This proves \refpart{prop:Weil-restriction}{1}.

For \refpart{prop:Weil-restriction}{2}, we may replace $\kappa$ with its separable closure in $\ell$, and assume that $\ell/\kappa$ is purely inseparable. Call $p$ the characteristic of $\kappa$; since the Weil restriction is functorial, we may assume $\ell = \kappa(\hspace*{-1pt}\radice{p}{a})$. 

Let $\kappa'$ a finite separable extension of $\kappa$, $\ell' \eqdef \kappa'\otimes_{\kappa}\ell$. Then $\ell' = \kappa'(\hspace*{-1pt}\radice{p}{a})$; it is enough to show that $(\rR_{\ell/\kappa}\Gamma)_{\kappa'} = \rR_{\ell'/\kappa'}(\Gamma_{\ell'})$ is a $\cC$-gerbe. Since every finite separable extension of $\ell$ is of the type above, we may assume that $\Gamma(\ell) \neq \emptyset$, so that $\Gamma = \cB_{\ell}G$ for some $\cC$-group $G$ over $\ell$. By Lemma~\ref{lem:weil-transfer-classifying}, we need to show that $\rR_{\ell/\kappa}G$ is a $\cC$-group. Since the class $\cC$ is by hypothesis very stable, it is enough to show that $(\rR_{\ell/\kappa}G)_{\ell}$ is a $\cC$-group.

For each non-negative integer $n$, denote by $A_{n}$ the $\ell$-algebra $\ell[x]/(x^{n+1})$. Then $\ell\otimes_{\kappa}\ell \simeq A_{p-1}$, and $(\rR_{\ell/\kappa}G)_{\ell}$ is the group scheme $\underhom_{\ell}(\spec A_{p-1}, G)$.

Let us show by induction on $n$ that the group scheme $G_{n} \eqdef\underhom_{\ell}(\spec A_{n}, G)$ is in $\cC$. In fact $G_{0} = G \in \cC$; the embedding $\spec A_{n-1} \subseteq \spec A_{n}$ induced a homomorphism $G_{n} \arr G_{n-1}$, whose image is a subgroup scheme of $G_{n-1}$, which is in $\cC$, by induction hypothesis. It is easy to see that its kernel is isomorphic to the Lie algebra of $G$, which a product of copies of $\ga$; hence $G_{n}$ is in $\cC$. This ends the proof.
\end{proof}

Let $X \arr \aff \kappa$ be a concentrated geometrically reduced fibered category, $\cC$ a well-founded class. Let $\kappa'/\kappa$ be a field extension. Consider the morphism $X_{\kappa'} \arr (\univ X)_{\kappa'}$ obtained by base change from the morphism $X \arr\univ X$; since $(\univ X)_{\kappa'}$ is a pro-$\cC$-gerbe, it will factor through $\univprime {X_{\kappa'}}$; so we obtain a morphism of $\kappa'$-gerbes $\univprime {X_{\kappa'}} \arr (\univ X)_{\kappa'}$.

\begin{theorem}\call{thm:base-change}
Assume one of the following hypotheses:

\begin{enumerate1}

\itemref{1} the class $\cC$ is weakly very stable, and $\kappa'/\kappa$ is an algebraic separable extension, or

\itemref{2} the class $\cC$ is very stable, every extension of a group in $\cC$ by a product of copies of $\ga$ is still in $\cC$, and $\kappa'/\kappa$ is an algebraic extension.

\end{enumerate1}

Then the map $\univprime {X_{\kappa'}} \arr (\univ X)_{\kappa'}$ is an equivalence.
\end{theorem}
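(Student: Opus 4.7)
The strategy is to verify that $(\univ X)_{\kappa'}$ satisfies the universal property characterizing $\univprime{X_{\kappa'}}$, which will force the canonical comparison morphism to be an equivalence. First we observe that $(\univ X)_{\kappa'}$ is itself a pro-$\cC$-gerbe over $\kappa'$: writing $\univ X = \projlim \Gamma_{i}$ with each $\Gamma_{i}$ a $\cC$-gerbe over $\kappa$, base change commutes with projective limits, and each $(\Gamma_{i})_{\kappa'}$ is a $\cC$-gerbe by condition~\refpart{def:stable}{2}. So it suffices to show that for every $\cC$-gerbe $\Delta$ over $\kappa'$ the composition with $X_{\kappa'} \arr (\univ X)_{\kappa'}$ induces an equivalence
\[
\hom_{\kappa'}\bigl((\univ X)_{\kappa'}, \Delta\bigr) \arr \hom_{\kappa'}(X_{\kappa'}, \Delta).
\]

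The heart of the proof is the case when $\kappa'/\kappa$ is \emph{finite}. Here the Weil restriction $\rR_{\kappa'/\kappa}$ is available, together with the adjunction $\hom_{\kappa'}(Y_{\kappa'}, \Delta) \simeq \hom_{\kappa}(Y, \rR_{\kappa'/\kappa}\Delta)$ from \cite[Section~6]{borne-vistoli-fundamental-gerbe}, valid for any fibered category $Y$ over $\kappa$. Under either hypothesis \refpart{thm:base-change}{1} or \refpart{thm:base-change}{2}, Proposition~\ref{prop:Weil-restriction} ensures that $\rR_{\kappa'/\kappa}\Delta$ is a $\cC$-gerbe over $\kappa$. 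Then the universal property of $\univ X$ gives
\[
\hom_{\kappa}(X, \rR_{\kappa'/\kappa}\Delta) \simeq \hom_{\kappa}(\univ X, \rR_{\kappa'/\kappa}\Delta),
\]
and applying the adjunction with $Y = X$ on the left and $Y = \univ X$ on the right yields the desired equivalence. This argument simultaneously handles existence and uniqueness of the factorization.

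To handle the infinite algebraic case, write $\kappa' = \indlim_{i} \kappa_{i}$ as a directed union of finite subextensions of $\kappa$ (separable in case \refpart{thm:base-change}{1}, arbitrary in case \refpart{thm:base-change}{2}). Given a $\cC$-gerbe $\Delta$ of finite type over $\kappa'$, standard noetherian approximation shows $\Delta$ descends to $\Delta_{0}$ over some $\kappa_{i}$, with $\Delta \simeq (\Delta_{0})_{\kappa'}$ (and $\Delta_{0}$ is still a $\cC$-gerbe, enlarging $\kappa_{i}$ if needed so that its automorphism group schemes descend). A morphism $f\colon X_{\kappa'} \arr \Delta$ then also descends to some $f_{0}\colon X_{\kappa_{j}} \arr (\Delta_{0})_{\kappa_{j}}$ with $\kappa_{j} \supseteq \kappa_{i}$: one uses that $X$ is concentrated, that filtered colimits in $\kappa'$-algebras correspond to filtered colimits of $\kappa_{j}$-algebras for $j$ sufficiently large, and the colimit-preservation of gerbes of finite type furnished by Proposition~\refall{finite-type}{5}. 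Applying the finite case to $\kappa_{j}/\kappa$ and $f_{0}$ gives a factorization through $(\univ X)_{\kappa_{j}}$, which base changes to the required factorization through $(\univ X)_{\kappa'}$; the same argument with $2$-isomorphisms in place of objects gives uniqueness.

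The main obstacle is the descent step: making precise that both a $\cC$-gerbe of finite type over $\kappa'$ and a morphism from the concentrated fibered category $X_{\kappa'}$ to it can be descended \emph{compatibly} to a common finite subextension. The descent of the gerbe is routine via noetherian approximation of its diagonal and of an fppf chart, while the descent of the morphism hinges on selecting an fpqc chart $U \arr X$ by a concentrated affine scheme and propagating descent through the groupoid $U\times_{X}U \double U$, appealing to Proposition~\refall{finite-type}{5} to transfer morphisms along the filtered colimit $\kappa' = \indlim \kappa_{i}$.
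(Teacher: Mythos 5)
Your proof is correct and follows essentially the same route as the paper, whose proof simply defers to \cite[Proposition~6.1]{borne-vistoli-fundamental-gerbe}: the finite case via the Weil restriction adjunction combined with Proposition~\ref{prop:Weil-restriction}, and the general algebraic case by descending the target $\cC$-gerbe and the morphism to a finite subextension and passing to the limit. The only place where you are slightly glib is the parenthetical claim that the descended gerbe $\Delta_{0}$ is still a $\cC$-gerbe over $\kappa_{i}$ (this uses the (weak) very-stability hypothesis and deserves a line of justification), but this does not affect the overall argument.
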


\begin{proof}
The proof is virtually identical to the proof of \cite[Proposition~6.1]{borne-vistoli-fundamental-gerbe}, using Proposition~\ref{prop:Weil-restriction}.
\end{proof}

\section{The tannakian interpretations of the unipotent\\and virtually unipotent fundamental gerbes}\label{sec:tannakian}

Let $X$ be a concentrated fibered category over $\aff\kappa$ with $\H^{0}(X, \cO) = \kappa$, and $\cC$ a well-founded class. The gerbe $\univ{X}$ is tannakian, hence the category of representations $\rep\univ{X}$ is a tannakian category. For any $\cC$, one can ask if it is possible to give a direct description of $\rep\univ{X}$ in terms of $X$. Of course there is a pullback map $\rep\univ{X} \arr \vect_{X}$ into the category of locally free sheaves on $X$; however, this is in general not fully faithful. For example, when $\cC$ is the class of abelian group schemes it is immediate to see that the pullback functor is not fully faithful, for example, when there are maps between invertible sheaves on $X$ that are neither zero nor isomorphisms. And in fact we don't have a candidate for such a description of $\rep\univ{X}$ when $\cC$ is the class of abelian groups.

However when $\cC$ is the category of unipotent, or virtually unipotent, group schemes, the functor is fully faithful; and in fact in these cases there is a good description of $\rep\univ{X}$.

In what follows we will assume the following conditions on $X$.

\begin{conditions}\call{conditions-on-X}\hfil

\begin{enumerate1}

\itemref{0} $X$ is concentrated.

\itemref{1} $X$ is geometrically reduced.

\itemref{2} $\H^{0}(X, \cO) = \kappa$.

\itemref{3} $X$ is pseudo-proper, in the sense of \cite{borne-vistoli-fundamental-gerbe}.

\end{enumerate1}
\end{conditions}

Recall that $X$ is pseudo-proper if for any locally free sheaf $E$ on $X$, the $\kappa$-vector space $\H^{0}(X, E)$ is finite dimensional.

For example, an affine gerbe always satisfies these conditions.

We can't think of an example in which \refpart{conditions-on-X}{0}, \refpart{conditions-on-X}{1} and \refpart{conditions-on-X}{2} are satisfied but \refpart{conditions-on-X}{3} is not, but we have no doubt that this is for lack of trying.

Notice that if $X$ satisfies the conditions above and $\kappa'$ is a finite extension of $\kappa$, the fibered category $X_{\kappa'} \arr \aff{\kappa'}$ also satisfies them.

\begin{definition}\label{def:vect-u-vu}
Let $E$ be a locally free sheaf on $X$.

\begin{enumerate1}

\item We say that $E$ is \emph{unipotent} if it admits a filtration
   \[
   0 = E_{r+1} \subseteq E_{r} \subseteq E_{r-1} \subseteq \dots \subseteq E_{1} \subseteq E_{0} = E
   \]
in which all the quotients $E_{i}/E_{i+1}$ are free.

\item We say that $E$ is an \emph{extended essentially finite sheaf} if there is a filtration as above in which all the quotients $E_{i}/E_{i+1}$ are essentially finite.

\end{enumerate1}
\end{definition}

Unipotent bundles have been introduced by M.~Nori in \cite{nori-phd} under the name \emph{nilpotent bundles}. The second class of bundles has been introduced by S.~Otabe in \cite{otabe-extension}; he calls them \emph{semi-finite bundles}.

Both classes form a tannakian category, and have a natural interpretation in our language. 

\begin{definition}
A group scheme $G \arr \spec \ell$\/ is \emph{strongly virtually unipotent} if it  if it has a normal unipotent subgroup $H \subseteq G$ such that $G/H$ is finite.
\end{definition}

A strongly virtually unipotent group scheme is clearly virtually unipotent. If $G$ is smooth, then the converse holds; hence if $\cha \kappa = 0$, then the converse holds. This is not true in in positive characteristic, as the following examples show.

\begin{examples} Let us give two example of two group schemes in positive characteristic that are virtually unipotent, but not strongly virtually unipotent. The first one is abelian and defined over a non-perfect field, the second one is not abelian, but is defined over an arbitrary field of positive characteristic. The first example also tells us that the class of strongly virtually unipotent groups, which is weakly very stable, is not very stable.

\begin{enumerate1}\call{counterexamples}

\itemref{1} Let $k$ be a non-perfect field of characteristic $p > 0$, $\ell/k$ a purely inseparable extension of degree~$p$. Denote by $G$ the Weil transfer $\rR_{\ell/k}\mmu_{p}$. Then we claim that $G$ is virtually unipotent, but not strongly virtually unipotent.

We have $\ell\otimes_{k}\ell \simeq \ell[\epsilon] \eqdef \ell[x]/(x^{p})$; hence $G_{\ell}$ is the Weil transfer $\rR_{\ell[\epsilon]/\ell}\mmu_{p}$. This can be described as the group scheme
   \[
   H \eqdef \underhom_{\ell}(\spec \ell[\epsilon], \mmu_{p})
   \]
whose sections over an $\ell$-algebra $A$ are the homomorphisms of $A$-algebras
   \[
   A[t]/\bigl((t-1)^{p}\bigr) \arr A[\epsilon] = A[x]/(x^{p})\,.
   \]
These are uniquely determined by the image of $t$ in $A[\epsilon]$, which is an element $a_{0} + a_{1}\epsilon + \dots a_{p-1}\epsilon^{p-1}$ with $a_{0}^{p} = 1$; the product structure is given by the product in $A[\epsilon]$. There is a projection $H \arr \mmu_{p}$, defined by sending $a_{0} + a_{1}\epsilon + \dots a_{p-1}\epsilon^{p-1}$ into $a_{0}$, whose kernel is easily seen to be unipotent. Hence $G_{\ell}$ is an extension of $\mmu_{p}$ by unipotent group scheme, so $G$ is virtually unipotent, and has dimension $p-1$.

On the other hand, a homomorphism from a unipotent group scheme $U$ on $k$ to $G$ corresponds, by adjunction, to a homomorphism $U_{\ell}\arr \mmu_{p}$, which must be trivial; so $G$ does not contain any non-trivial unipotent subgroups, and is not strongly virtually unipotent.

\itemref{2} Here $k$ can be an arbitrary field of positive characteristic, $n$ an integer with $n > 1$. Call $\GL_{n}^{(1)}$ the Frobenius kernel in $\GL_{n}$, and $\rU_{n} \subseteq \GL_{n}$ the subgroup consisting of strictly upper triangular matrices. Let $G \eqdef \rU_{n} \ltimes \GL_{n}^{(1)}$, where the action of $\rU_{n}$ on $\GL_{n}^{(1)}$ is by conjugation. Clearly $G\ored = \rU_{n}$, so $G$ is virtually unipotent. 

Suppose that $H \subseteq G$ is a normal unipotent subgroup such that $G/H$ is finite. Then $\rU_{n} \subseteq H$, and $K \eqdef H\cap \GL_{n}^{(1)}$. Then $K$ is nontrivial, because the action of $\rU_{n}$ on $\GL_{n}^{(1)}$ is non-trivial, so $\rU_{n}$ is not normal in $G$. Since $K$ is unipotent we have that the invariant subspace $(k^{n})^{K}$ in $k^{n}$ is proper and non-trivial; but $K$ is normal in $\GL_{n}^{(1)}$, so $(k^{n})^{K}$ is $\GL_{n}^{(1)}$-invariant. But obviously $\GL_{n}^{(1)}$ is not contained in any proper parabolic subgroup of $\GL_{n}$, and this gives a contradiction, showing that $H$ can not exist, and that $G$ is not strongly virtually unipotent.

\end{enumerate1}

\end{examples}

In what follows we will denote by $\univu X$, $\univvu X$ and $\univsvu X$ the fundamental gerbe $\univ X$, when $\cC$ is, respectively, the class of unipotent, virtually unipotent, or strongly virtually unipotent groups.

\begin{theorem}\call{thm:tannakian-char-u-vu}\hfil
\begin{enumerate1}

\itemref{1} The pullback $\rep\univu{X} \arr \vect_{X}$ induces an equivalence of\/ $\rep\univu{X}$ with the full subcategory of $\vect{X}$ whose objects are unipotent locally free sheaves.

\itemref{2} The pullback $\rep\univsvu{X} \arr \vect_{X}$ induces an equivalence of\/ $\rep\univsvu{X}$ with the full subcategory of $\vect{X}$ whose objects are extended essentially finite sheaves.

\end{enumerate1}

\end{theorem}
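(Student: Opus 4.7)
My plan is to handle both parts in parallel, since each decomposes into the same two steps: proving fully faithfulness of the pullback functor, and then identifying its essential image.

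For fully faithfulness, I would first verify that both the class of unipotent group schemes and the class of strongly virtually unipotent group schemes are well-founded subclasses of the virtually unipotent groups. The nontrivial check is stability of the strongly virtually unipotent class (closure under subgroups, quotients, products, base change, and inner forms); this is routine, using for instance that for $H \subseteq G$ with $U \triangleleft G$ unipotent of finite index, $H \cap U$ is a unipotent normal subgroup of $H$ of finite index. Corollary~\ref{cor:fully-faithful-2} then delivers fully faithfulness of both pullback functors.

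For the essential image in \refpart{thm:tannakian-char-u-vu}{1}, one direction is immediate: a representation of a unipotent group admits, by definition, a filtration by subrepresentations with trivial quotients, so its pullback to $X$ is a unipotent bundle. For the converse, given a unipotent bundle $E$ of rank $n$ with filtration $0 = E_{r+1} \subset \cdots \subset E_0 = E$ and fixed trivializations $E_i/E_{i+1} \simeq \cO_X^{n_i}$, I would consider the fpqc sheaf on $X$ parametrizing isomorphisms $E \simeq \cO_X^{\oplus n}$ that carry the filtration to the standard flag and induce the chosen trivializations on the associated graded. This is a torsor under the unipotent radical $U \subseteq \GL_n$ of the parabolic stabilizing the flag, giving a morphism $X \to \cB_{\kappa}U$. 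Since $\cB_{\kappa}U$ is in the unipotent class, the universal property factors this through $\univu X$, and the standard $n$-dimensional representation of $U$ pulls back to $E$.

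For the essential image in \refpart{thm:tannakian-char-u-vu}{2}, the direction from representations to bundles goes as follows: given a representation $V$ of a strongly virtually unipotent group $G$ with normal unipotent $U \triangleleft G$ and finite $G/U$, normality of $U$ makes $V^U$ a $G$-subrepresentation, so iterating on $V/V^U$ yields a $G$-stable filtration of $V$ whose subquotients factor through $G/U$. Pulled back to $X$, each subquotient is the pullback of a representation of a finite group scheme, hence an essentially finite bundle by the tannakian description of $\univfin X$ proved in \cite{borne-vistoli-fundamental-gerbe}, so the pullback of $V$ is extended essentially finite. For the converse, given $E$ extended essentially finite with filtration whose quotients $F_i$ are essentially finite, I would take the parabolic $P \subseteq \GL_n$ preserving a flag of the appropriate shape, with Levi quotient $L = \prod_i \GL_{n_i}$ and unipotent radical $U_P$; the flag structure on $E$ provides a $P$-torsor $\cP$ on $X$, and the induced $L$-torsor $\cP/U_P$ is the product of frame bundles of the $F_i$. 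Each such factor reduces to a finite subgroup $H_i \subseteq \GL_{n_i}$ since $F_i$ is essentially finite. Setting $G := P \times_L \prod_i H_i$ yields a group which is an extension of the finite group $\prod_i H_i$ by the unipotent group $U_P$, hence strongly virtually unipotent; pulling back the reduction along $\cP \to \cP/U_P$ produces a reduction of $\cP$ to a $G$-torsor, giving $X \to \cB_{\kappa}G$ that factors through $\univsvu X$ and whose standard representation recovers $E$.

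The hard part will be the last step in the converse of part~\refpart{thm:tannakian-char-u-vu}{2}: verifying that the reductions of the individual $F_i$'s to finite subgroups assemble coherently into a well-defined reduction of the $P$-torsor $\cP$, and that the resulting object in $\rep\univsvu X$ pulls back to $E$ independently of the auxiliary choices. The essentially-finite data on each $F_i$ is only canonical after passing through $\univfin X$, so some bookkeeping is required to produce a functorial object; I expect this to be the only non-formal step, and I anticipate it will be cleaner to phrase the whole argument intrinsically by showing that the category of extended essentially finite bundles is tannakian, its associated gerbe is a pro-strongly-virtually-unipotent gerbe through which $X$ factors, and then invoking universality of $\univsvu X$.
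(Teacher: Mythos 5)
There are two genuine gaps. First, your route to full faithfulness is circular: Corollary~\ref{cor:fully-faithful-2} is deduced in the paper from Theorem~\ref{thm:tannakian-characterization-3}, whose proof (specifically the proof of Proposition~\ref{prop:tannakian-vu-1}, where one shows cokernels of maps of virtually unipotent sheaves are again locally free) explicitly invokes Theorem~\refall{thm:tannakian-char-u-vu}{2}. At the point where Theorem~\ref{thm:tannakian-char-u-vu} must be proved, the only class for which full faithfulness of $\rep\univ X \arr \vect_X$ is available is the class of finite group schemes (Corollary~\ref{cor:finite->realizable}), which does not contain the unipotent or strongly virtually unipotent classes, so Corollary~\ref{cor:fully-faithful} cannot be bootstrapped either. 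In the paper full faithfulness is an \emph{output}, not an input: one applies Deligne's theorem to the tannakian category of unipotent (resp.\ extended essentially finite) sheaves to get a gerbe $\Pi$ with $\rep\Pi\simeq\cV(X)\subseteq\vect_X$ fully faithful by construction, and then identifies $\Pi$ with $\univ X$ (Lemma~\ref{lem:criterion}).

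Second, the step you flag as "the hard part" of the converse in \refpart{thm:tannakian-char-u-vu}{2} is exactly where the paper's real work lies, and your proposal does not supply it. Your closing sentence correctly guesses the clean strategy (show the class of extended essentially finite sheaves is a tannakian class, identify its Deligne gerbe with $\univsvu X$ by universality), but carrying it out requires the content of Section~\ref{sec:general-tannakian}: Proposition~\ref{prop:unipotent-saturation} (the unipotent saturation of a tannakian class is tannakian, including the nontrivial closure under subobjects and quotients and the verification of condition~\refpart{def:tannakian-class}{3}), and above all Proposition~\ref{prop:second} — that for an affine \emph{gerbe} $\Gamma$ the extended essentially finite representations are precisely the pullbacks from $\Gamma\svu$ — which in turn rests on Proposition~\ref{prop:third} (a finite-type gerbe is an $\overline{\cD}$-gerbe iff it is unipotent over its $\cD$-quotient, proved using the base-change Theorem~\refall{thm:base-change}{1}) and on the extension argument showing that an extension of two representations pulled back from $\Gamma^{\overline{\cD}}$ is again pulled back from $\Gamma^{\overline{\cD}}$ (via the gerbe of extensions $\Delta\arr\Gamma'$ with unipotent relative automorphisms). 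Your alternative of assembling reductions of the frame bundle of $E$ on $X$ itself to a group $P\times_L\prod_i H_i$ founders on the non-canonicity you identify: the finite structure groups $H_i$ are only defined after passing through $\univfin X$, and there is no evident way to glue these reductions over $X$ without already knowing the gerbe-level statement. By contrast, your direct torsor argument for part~\refpart{thm:tannakian-char-u-vu}{1} (reduction to the unipotent radical of a parabolic) does produce objects of the essential image, but by itself it yields neither full faithfulness nor the identification of the essential image as \emph{exactly} the unipotent sheaves without the same tannakian bookkeeping.
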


So, if $\cha\kappa = 0$, this gives a tannakian interpretation of $\univvu X = \univsvu X$. 

Theorem~\ref{thm:tannakian-char-u-vu} is a particular case of the more general Theorem~\ref{thm:tannakian-characterization-1} in the next section.

There also a tannakian interpretation of $\univvu X$ in positive characteristic, at least with a weak additional assumption on $X$. Assume that $\cha\kappa = p > 0$.

If $U$ is a scheme over $\kappa$, denote by $\frob_{U}\colon U \arr U$ the absolute Frobenius map of $U$.

Denote by $\frob_{X}\colon X \arr X$ the functor sending an object $\xi \in X(T)$, where $T$ is an affine scheme over $\kappa$ to the pullback $\frob_{T}^{*}\xi$; this is a morphism of fibered categories over $\aff{\FF_{p}}$, not over $\aff \kappa$. Notice that this definition involves the choice of a cleavage for $X$; but the resulting functor is unique, up to a unique isomorphism.

If $U$ is an affine scheme, then $\frob_{\aff U}\colon \aff U \arr \aff U$ is the functor corresponding the morphism $\frob_{U}\colon U \arr U$; we will also denote it by $\frob_{U}$. If $U = \spec A$ we will also use the notation $\frob_{A}$.

Clearly, the diagram
   \[
   \begin{tikzcd}
   X \rar{\frob_{X}} \dar & X \dar\\
   \aff \kappa \rar{\frob_{\kappa}} & \aff \kappa
   \end{tikzcd}
   \]
is strictly commutative. If $F\colon X \arr Y$ is a morphism of fibered categories over $\aff\kappa$, then we have an obvious commutative diagram
   \[
   \begin{tikzcd}
   X \rar{\frob_{X}} \dar{F} & X \dar{F}\\
   Y \rar{\frob_{Y}} & Y\hsmash{\,.}
   \end{tikzcd}
   \]

\begin{definition}
A locally free sheaf $E$ on $X$ is \emph{virtually unipotent} if there exists a positive integer $n$, such that $(\frob_{X}^{n})^{*}E$ is an extended essentially finite sheaf.
\end{definition}

\begin{theorem}\label{thm:tannakian-characterization-3} Assume that $X$ has an fpqc cover $U \arr X$, where $U$ is a noetherian reduced scheme. Then the pullback\/ $\rep\univvu{X} \arr \vect_{X}$ induces an equivalence of the tannakian category $\rep\univvu{X}$ with the full subcategory of $\vect{X}$ consisting of virtually unipotent sheaves.
\end{theorem}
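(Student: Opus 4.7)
Plan: I will identify $\rep\univvu X$ with the full subcategory $\cV \subseteq \vect_X$ of virtually unipotent sheaves, by showing $\cV$ is a tannakian subcategory whose Tannaka dual gerbe $\Gamma$ satisfies the universal property of $\univvu X$. Full faithfulness of the pullback $\rep\univvu X \arr \vect_X$ follows from the general machinery of Corollary~\ref{cor:fully-faithful} once essential surjectivity onto $\cV$ is established, so the main task is to locate $\Gamma$ inside the class of pro-VU gerbes.

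First I would check that $\cV$ is closed in $\vect_X$ under tensor products, duals, direct sums, subobjects and quotients. The key observations are that absolute Frobenius pullback is an exact tensor endofunctor of $\vect_X$ --- this is where the fpqc cover $U \arr X$ by a noetherian reduced scheme is used, to pin down kernels, cokernels, and Frobenius pullback of vector bundles --- and that the class of extended essentially finite sheaves is tannakian by Theorem~\refall{thm:tannakian-char-u-vu}{2}; every tannakian operation on virtually unipotent sheaves can therefore be carried out inside extended essentially finite sheaves after sufficiently many Frobenius twists.

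Next I would show that the Tannaka dual $\Gamma$ of $\cV$ is pro-virtually-unipotent. Working over $\bar\kappa$ by Theorem~\ref{thm:base-change}, it suffices to check that for any $V \in \cV_{\bar\kappa}$ the finite-type quotient $\Gamma_V = \cB_{\bar\kappa}G_V$ cut out by $V$ is virtually unipotent. Since $V$ provides a faithful representation $G_V \hookrightarrow \GL(V)$ and $(\frob_X^n)^*V$ is extended essentially finite for some $n$, the composite $G_V \xrightarrow{\frob_{G_V}^n} G_V \hookrightarrow \GL(V)$ has strongly virtually unipotent image by Theorem~\refall{thm:tannakian-char-u-vu}{2}. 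Absolute Frobenius kills nilpotents, so for $n$ sufficiently large the scheme-theoretic image $\frob_{G_V}^n(G_V)$ equals $G_V\red$; faithfulness of $V$ then forces $G_V\red$ itself to be SVU. In any SVU group the identity component of the reduced part is contained in the normal unipotent subgroup (because the finite quotient has trivial identity component), whence $G_V\ored$ is unipotent and $G_V$ is virtually unipotent. For universality, given $f\colon X \arr \Lambda$ with $\Lambda$ pro-VU, the dual Frobenius analysis applied to a finite-type VU quotient $\cB_{\bar\kappa}K$ of $\Lambda$ yields $\frob_K^n(K) = K\red$, which is SVU because by the VU hypothesis on $K$ the normal unipotent $K\ored$ is of finite index in $K\red$. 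Hence every $W \in \rep\cB_{\bar\kappa}K$ satisfies that $(\frob_\Lambda^n)^*W$ is extended essentially finite, so $f^*W \in \cV$, and $f$ factors through $\Gamma$ by Tannaka duality, identifying $\Gamma$ with $\univvu X$.

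Main obstacle: the closure of $\cV$ under subobjects and quotients inside $\vect_X$ is the most delicate step, and is where the hypothesis of a noetherian reduced fpqc cover is exploited most crucially --- one must control absolute Frobenius pullback on kernels and cokernels of morphisms of vector bundles, which is handled by descending through $U$. A secondary but nontrivial issue is the coordination of absolute Frobenius (an $\FF_p$-morphism) with the $\kappa$-linear structures on gerbes and their representation categories; this is managed via the base-change Theorem~\ref{thm:base-change} applied at $\bar\kappa$.
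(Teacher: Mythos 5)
Your overall strategy is the one the paper follows: Section~\ref{sec:proof-3} proves the theorem by checking (Proposition~\ref{prop:tannakian-vu-1}) that virtually unipotent sheaves form a tannakian class --- using the noetherian reduced cover exactly as you do, namely that $\frob_U$ is a homeomorphism so the rank of the cokernel is locally constant --- and then (Proposition~\ref{prop:tannakian-vu-2}) that a representation of an affine gerbe is virtually unipotent if and only if it comes from the $\mathrm{VU}$-quotient; your direct Tannaka-duality verification of the universal property is exactly what Lemma~\ref{lem:criterion} packages. Your endgame is in fact a mild simplification: by identifying the image of a high Frobenius power with $G\red$ and reading off that $G\ored$ lies in the normal unipotent subgroup of that SVU image, you bypass the paper's separate lemma that an extension of a virtually unipotent group by a finite normal subgroup is virtually unipotent (whose proof requires an induction on dimension and a case analysis on the kernel).

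The one substantive gap is the reduction to $\overline\kappa$. Theorem~\refall{thm:base-change}{2} does give $(\univvu X)_{\overline\kappa}\simeq\Pi^{\mathrm{VU}}_{X_{\overline\kappa}/\overline\kappa}$, but your argument also needs the \emph{sheaf-level} statements that $V$ is virtually unipotent (resp.\ extended essentially finite) if and only if $V_{\overline\kappa}$ is, both to run the monodromy computation over $\overline\kappa$ and, in the universality step, to bring the conclusion $f^*W\in\cV$ back down to $\kappa$. This is not automatic: the class of strongly virtually unipotent groups is weakly very stable but \emph{not} very stable (Examples~\refpart{counterexamples}{1}), so extended essential finiteness does not obviously behave well under purely inseparable base change. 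The paper avoids this by never leaving the separable world: it first reduces to gerbes of finite type, then uses Corollary~\ref{lem:finite-separable-point} to pass to a \emph{finite Galois} extension and proves the descent statement there by Galois descent of the equivariant structure (Lemma~\ref{lem:reduce-Galois}). You would need to either follow that route or supply a separate argument for the purely inseparable part of $\overline\kappa/\kappa$; as written, ``working over $\overline\kappa$ by Theorem~\ref{thm:base-change}'' does not cover this.
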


We do not know whether the (rather weak) condition on the existence of a cover $U \arr X$ as above is necessary for the conclusion to hold. It is certainly satisfied when $X$ is an affine gerbe, because any morphism from the spectrum of a field to an affine gerbe is an fpqc cover. 

Notice that from Theorem~\ref{thm:tannakian-characterization-3} and Corollary~\ref{cor:fully-faithful} we obtain the following.

\begin{corollary}\label{cor:fully-faithful-2}
If $\cC$ is a stable subclass of the class of virtually unipotent group schemes, then the pullback $\rep\univ X \arr \vect_{X}$ is fully faithful.
\end{corollary}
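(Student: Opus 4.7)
The plan is to deduce this as a direct combination of Theorem~\ref{thm:tannakian-characterization-3} with Corollary~\ref{cor:fully-faithful}. The key observation is that the class of all virtually unipotent group schemes is itself stable (it is one of our standing examples of a well-founded class), and it contains every stable subclass $\cC$ of interest. So the strategy is to verify the hypothesis of Corollary~\ref{cor:fully-faithful} for this maximal class, and then descend to $\cC$ along the inclusion.

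First I would handle the top class. In positive characteristic, Theorem~\ref{thm:tannakian-characterization-3} asserts that the pullback $\rep\univvu{X}\arr\vect_X$ is an equivalence of tannakian categories onto the full subcategory of virtually unipotent locally free sheaves; in particular it is fully faithful. In characteristic zero, strongly virtually unipotent and virtually unipotent group schemes coincide (any virtually unipotent group in characteristic zero is smooth, hence equal to its reduction, so the connected unipotent radical is automatically a normal unipotent subgroup of finite index), and hence $\univvu{X}=\univsvu{X}$; then Theorem~\refall{thm:tannakian-char-u-vu}{2} supplies the same fully faithful pullback. Either way, $\rep\univvu{X}\arr\vect_X$ is fully faithful.

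Now I apply Corollary~\ref{cor:fully-faithful} in the form where the larger class is the class $\cC_{\mathrm{vu}}$ of all virtually unipotent group schemes and the subclass is our $\cC$ (which by hypothesis is stable and sits inside $\cC_{\mathrm{vu}}$). The corollary then gives that $\rep\univ{X}\arr\vect_X$ is fully faithful as well, which is exactly the statement to be proved.

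There is essentially no obstacle: everything has already been done in the cited results. The only mild subtlety is the clean separation of characteristics that allows us to invoke Theorem~\ref{thm:tannakian-characterization-3} without its fpqc-cover hypothesis mattering in characteristic zero. One should also note for the reader that the conclusion is stated under the standing assumptions Conditions~\ref{conditions-on-X} on $X$ (concentrated, geometrically reduced, $\H^0(X,\cO)=\kappa$, pseudo-proper) which are precisely those under which the two cited results operate.
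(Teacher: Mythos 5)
Your proposal is correct and is essentially identical to the paper's own (one-line) derivation: the paper states that the corollary follows by combining Theorem~\ref{thm:tannakian-characterization-3} (full faithfulness for the full class of virtually unipotent group schemes) with the change-of-class result Corollary~\ref{cor:fully-faithful}. Your extra remark about characteristic zero, where $\univvu X = \univsvu X$ and Theorem~\refall{thm:tannakian-char-u-vu}{2} applies instead, matches the paper's own comment following that theorem and is a reasonable way to sidestep the fpqc-cover hypothesis of Theorem~\ref{thm:tannakian-characterization-3} in that case.
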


The following is due to Tonini and Zhang.

\begin{theorem}[\hbox{\cite[Corollary II]{tonini-zhang-divided}}]\label{thm:tonini-zhang}
Assume that $\cha \kappa > 0$, that $X$ is a pseudo-proper geometrically reduced algebraic stack of finite type over $\kappa$, and that\/ $\H^{1}(X, E)$ is a finite-dimensional vector space over $\kappa$ for all locally free sheaves on $X$. Then
   \[
   \univvu X = \univsvu X = \univfin X\,.
   \]
\end{theorem}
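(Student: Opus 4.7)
The plan is to reduce the theorem, via the tannakian dictionary, to two concrete statements about locally free sheaves on $X$, and then attack each by combining the cohomological finiteness hypothesis with Frobenius descent. From the class inclusions (finite) $\subseteq$ (strongly virtually unipotent) $\subseteq$ (virtually unipotent) and Proposition~\ref{prop:induced-locally-full}, one obtains comparison morphisms $\univvu X \to \univsvu X \to \univfin X$ that are locally full; by the remark that a locally full and faithful morphism of affine gerbes is an equivalence, and by Proposition~\ref{prop:char-locally-full}\refpart{prop:char-locally-full}{6}, it suffices to show that the three associated full subcategories of $\vect_{X}$ described by Theorems~\ref{thm:tannakian-char-u-vu}\refpart{thm:tannakian-char-u-vu}{2} and \ref{thm:tannakian-characterization-3}---essentially finite, extended essentially finite, and virtually unipotent bundles---actually coincide.

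The heart of the matter is showing $\univsvu X = \univfin X$, i.e.\ that every extended essentially finite sheaf is essentially finite. Given an extension
\[
0 \arr E' \arr E \arr E'' \arr 0
\]
with $E'$, $E''$ essentially finite, the hypothesis that $\H^{1}(X, F)$ is finite-dimensional for every locally free $F$ forces $\ext^{1}_{X}(E'', E')$ to be finite-dimensional. The essentially finite locally free sheaves form a category stable under $\frob_{X}^{*}$, so pullback by iterated Frobenius yields classes $[\frob_{X}^{n*}E] \in \ext^{1}(\frob_{X}^{n*}E'', \frob_{X}^{n*}E')$ acted upon by a $p$-semilinear endomorphism on a finite-dimensional vector space. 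Standard semilinear algebra (boundedness of the image of such an iterate) then forces some sufficiently high $\frob_{X}^{n*}E$ to be a direct sum of essentially finite bundles, hence essentially finite. A Frobenius descent argument promotes this to essential finiteness of $E$ itself.

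Once this step is in hand, the equality $\univvu X = \univsvu X$ is essentially formal: a virtually unipotent bundle $E$ is by definition one for which $\frob_{X}^{n*}E$ is extended essentially finite for some $n$; combined with the preceding step, $\frob_{X}^{n*}E$ is essentially finite, and the same descent principle, applied filtration step by filtration step, shows $E$ is extended essentially finite.

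The main obstacle is the Frobenius descent step. The absolute Frobenius $\frob_{X}$ is not a morphism over $\aff\kappa$, so ordinary fpqc descent does not apply directly; one must control how Frobenius pullback interacts with essential finiteness and with the extension groups that classify filtrations. Tonini and Zhang handle exactly this in \cite{tonini-zhang-divided} via the machinery of divided tannakian categories, where the finiteness of $\H^{1}(X, F)$ for all $F$ ensures that Frobenius acts with bounded kernel on all relevant invariants and can be effectively inverted on the categories in question. Without that framework one would have to produce an explicit smooth cover $U \arr X$ along which Frobenius descent is tractable, which seems out of reach in the generality of pseudo-proper geometrically reduced stacks; this is why the authors cite the result rather than reproducing its proof.
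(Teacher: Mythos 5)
The paper does not prove this statement at all: it is imported verbatim from Tonini and Zhang (\cite[Corollary II]{tonini-zhang-divided}), so there is no internal argument to compare yours against. Your reduction is reasonable in outline: the comparison morphisms $\univvu X \arr \univsvu X \arr \univfin X$ are locally full by Proposition~\ref{prop:induced-locally-full}, so by Proposition~\refall{prop:char-locally-full}{6} it suffices to show that the three tannakian subcategories of $\vect_{X}$ identified in Theorems \ref{thm:tannakian-char-u-vu} and \ref{thm:tannakian-characterization-3} coincide, and you correctly locate the crux in showing that an extension of essentially finite bundles is essentially finite when $\H^{1}(X,E)$ is always finite-dimensional.

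However, the mechanism you propose for that crux is wrong. The $p$-semilinear action of Frobenius on the finite-dimensional space $\ext^{1}_{X}(E'',E')$ does not force any iterate $(\frob_{X}^{n})^{*}E$ to split as a direct sum: a semilinear endomorphism of a finite-dimensional space need not be nilpotent. Already for $X$ an ordinary elliptic curve and $E'=E''=\cO_{X}$, a Frobenius eigenclass $\alpha\in\H^{1}(X,\cO_{X})$ with $\frob^{*}\alpha=\lambda\alpha$, $\lambda\neq 0$, gives a nonsplit extension whose Frobenius pullbacks all remain nonsplit (one gets $(\frob^{n})^{*}\alpha=\lambda^{1+p+\dots+p^{n-1}}\alpha\neq 0$); the middle term is nonetheless essentially finite, but for a different reason (it is Frobenius-periodic, hence trivialized by a finite \'etale cover in the style of Lange--Stuhler). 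So finite-dimensionality yields only a linear recursion among the classes $(\frob_{X}^{n})^{*}\alpha$, and converting that into essential finiteness is exactly the nontrivial content of the divided-sheaf formalism of Tonini--Zhang --- it is not ``standard semilinear algebra.'' Your closing ``Frobenius descent'' step is likewise not automatic: $\frob_{X}$ is not a morphism over $\aff\kappa$ and is not flat on a general reduced stack, so deducing essential finiteness of $E$ from that of $(\frob_{X}^{n})^{*}E$ requires an argument of the kind carried out, with some care, in the proof of Proposition~\ref{prop:tannakian-vu-2}. Since you ultimately defer both of these points to the cited reference, the proposal identifies where the difficulty lies but does not constitute a proof.
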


This is clearly false without the hypothesis on $\H^{1}$ (for example, take $X = \cB_{\kappa}\ga$).

\section{Unipotent saturations}\label{sec:general-tannakian}

Suppose that $\cV$ is a class of locally free sheaves on fibered categories satisfying Conditions~\ref{conditions-on-X}; for each such fibered category $X$ we denote by $\cV(X)$ the class of locally free sheaves on $X$ that are in $\cV$, and also the corresponding full subcategory of $\vect_{X}$.

\begin{definition}\call{def:tannakian-class}
Let $\cV$ be  a class of locally free sheaves on fibered categories satisfying Conditions~\ref{conditions-on-X}. We say that $\cV$ is a \emph{tannakian class} if it satisfies the following conditions (where $X$ and $Y$ are arbitrary fibered categories satisfying Conditions~\ref{conditions-on-X} and $\Gamma$ is an affine gerbe).

\begin{enumerate1}

\itemref{1} For each $X$, the subcategory $\cV(X) \subseteq \vect_{X}$ is a tannakian subcategory.

\itemref{2} If $f\colon Y \arr X$ is a morphism and $E$ in $\cV(X)$, then $f^{*}E$ is in $\cV(Y)$.

\itemref{3} Let $f\colon X \arr \Gamma$ a morphism such that the pullback $f^{*}\colon \rep\Gamma \arr \vect_{X}$ induces an equivalence between $\rep\Gamma$ and $\cV(X)$. Then $\cV(\Gamma) = \rep\Gamma$.

\end{enumerate1}
\end{definition}

Here by a \emph{tannakian subcategory} $\cV(X) \subseteq \vect_{X}$ we mean that it is a monoidal subcategory closed under isomorphisms and taking dual, that is tannakian with respect to the induced rigid monoidal structure, and that kernels and cokernels in $\cV(X)$ are also kernels and cokernels as homomorphism of sheaves of $\cO_{X}$-modules.

\begin{remark}
Condition \refpart{def:tannakian-class}{3} of the definition above may look strange; it has been introduced because it is essential for the proof of Lemma~\ref{lem:criterion}. We should point out that we don't examples of in which \refpart{def:tannakian-class}{1} and \refpart{def:tannakian-class}{2} hold, but \refpart{def:tannakian-class}{3} does not.
\end{remark}

There are many examples of tannakian classes, for example, the class of free locally free sheaves, and that of essentially finite locally free sheaves. Many more examples are provided by the following Proposition.

\begin{proposition}\label{prop:saturation-tannakian}
Let $\cC$ be a well-founded class of group schemes; assume that for each $X$ satisfying Conditions~\ref{conditions-on-X} the functor $\rep\univ X \arr \vect_{X}$ is fully faithful. Denote by $\cV(X)$ its essential image. Then $\cV$ is a tannakian class.
\end{proposition}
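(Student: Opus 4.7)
The plan is to verify the three conditions of Definition~\ref{def:tannakian-class} in turn, exploiting that $\cV(X)$ is by construction the essential image of the fully faithful $\kappa$-linear tensor functor $\rho_{X} \eqdef g_{X}^{*}\colon \rep\univ{X} \arr \vect_{X}$, where $g_{X}\colon X \arr \univ{X}$ denotes the universal morphism.

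For \refpart{def:tannakian-class}{1}, the functor $\rho_{X}$ is a tensor functor, so $\cV(X) \subseteq \vect_{X}$ is automatically closed under tensor products, duals, and isomorphisms. To handle kernels and cokernels, I would observe that in the abelian tannakian category $\rep\univ{X}$ any short exact sequence of representations is a short exact sequence of locally free sheaves on $\univ{X}$, hence locally split; consequently $\rho_{X}$, which is a priori only right exact, is in fact exact on such sequences, and carries kernels and cokernels computed in $\rep\univ{X}$ to kernels and cokernels in the category of sheaves of $\cO_{X}$-modules. Combined with the full faithfulness of $\rho_{X}$, this shows that $\cV(X)$ inherits a tannakian subcategory structure from $\rep\univ X$ in the sheaf-theoretic sense required by the definition.

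For \refpart{def:tannakian-class}{2}, given $f\colon Y \arr X$ and $E \simeq \rho_{X}(V)$ with $V \in \rep\univ{X}$, I would form the composite $g_{X} \circ f\colon Y \arr \univ{X}$; since $\univ{X}$ is a pro-$\cC$-gerbe and $Y$ satisfies Conditions~\ref{conditions-on-X}, the universal property of $\univ{Y}$ provides an essentially unique factorization $Y \xarr{g_{Y}} \univ{Y} \xarr{h} \univ{X}$. Then $f^{*}E \simeq (g_{X}\circ f)^{*}V \simeq g_{Y}^{*}(h^{*}V) = \rho_{Y}(h^{*}V)$, which shows $f^{*}E \in \cV(Y)$.

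For \refpart{def:tannakian-class}{3}, suppose $f\colon X \arr \Gamma$ induces a tensor equivalence $f^{*}\colon \rep\Gamma \larrowsim \cV(X)$. Composing with a quasi-inverse of $\rho_{X}\colon \rep\univ{X} \larrowsim \cV(X)$ yields a $\kappa$-linear tensor equivalence $\rep\Gamma \simeq \rep\univ{X}$, which by tannakian duality for affine gerbes (\cite[Chapitre~III]{saavedra}) is induced by an equivalence of affine gerbes $\univ{X} \simeq \Gamma$; in particular $\Gamma$ is itself a pro-$\cC$-gerbe. Since $\Gamma$ is an affine gerbe it satisfies Conditions~\ref{conditions-on-X}, so $\univ\Gamma$ exists, and because $\Gamma$ is already pro-$\cC$, the identity exhibits $\Gamma$ as its own $\cC$-fundamental gerbe, giving $\univ\Gamma \simeq \Gamma$ and therefore $\cV(\Gamma) = \rep\Gamma$. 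The main obstacle I expect is this last step: one must be careful that the tensor equivalence $\rep\Gamma \simeq \rep\univ{X}$ carries enough additional structure (compatibility with fiber functors over varying $\kappa$-algebras) to come from a genuine equivalence of gerbes rather than merely of abstract tannakian categories, which is precisely what Saavedra's gerbe-theoretic version of tannakian duality provides.
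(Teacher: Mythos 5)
Your proof is correct, and since the paper's own proof of this proposition consists of the single word ``Straightforward,'' you have in fact supplied exactly the details the authors omit: closure of the essential image under the tensor operations, exactness of the pullback via local splitness of exact sequences of representations of an affine gerbe, the factorization $Y \arr \univ Y \arr \univ X$ for functoriality, and tannakian reconstruction to identify $\Gamma$ with $\univ X$ in condition (3). This is the intended argument, and no gaps remain.
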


\begin{proof}
Straightforward.
\end{proof}

We will call this $\cV$ the \emph{tannakian realization} of the class $\cC$. Classes $\cC$ satisfying the condition of Proposition~\ref{prop:saturation-tannakian} will be called \emph{realizable}. Every stable class of finite group schemes is realizable, because of Corollary~\ref{cor:finite->realizable}.

The following gives a criterion to check that a tannakian class $\cV$ is the tannakian realization of a fundamental class $\cC$.

\begin{lemma}\call{lem:criterion}
Let $\cV$ be a tannakian class and $\cC$ a fundamental class of group schemes. Assume that for every affine gerbe $\Gamma$ over $\kappa$, a representation of\/ $\Gamma$ is in $\cV(\Gamma)$ if and only if it is the pullback of a representation of\/ $\univ{\Gamma}$.

Then $\cV$ is the tannakian realization of $\cC$.
\end{lemma}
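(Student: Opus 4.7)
The plan is to prove, for $X$ satisfying Conditions~\ref{conditions-on-X}, that $\univ{X}$ is equivalent to the tannakian gerbe $\Gamma_{X}$ associated with the subcategory $\cV(X) \subseteq \vect_{X}$, which is tannakian by condition \refpart{def:tannakian-class}{1}. By tannakian reconstruction there is a canonical morphism $X \arr \Gamma_{X}$ whose pullback induces an equivalence $\rep\Gamma_{X} \simeq \cV(X)$, and the strategy is to build mutually inverse morphisms $\phi\colon \univ{X} \arr \Gamma_{X}$ and $\psi\colon \Gamma_{X} \arr \univ{X}$ compatible with $X$.

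To construct $\phi$, I would first show $\Gamma_{X}$ is a pro-$\cC$-gerbe. Condition \refpart{def:tannakian-class}{3} applied to $X \arr \Gamma_{X}$ gives $\cV(\Gamma_{X}) = \rep\Gamma_{X}$; the hypothesis of the lemma applied to $\Gamma_{X}$ then says that $\rep\univ{\Gamma_{X}} \arr \rep\Gamma_{X}$ is essentially surjective, and since $\Gamma_{X} \arr \univ{\Gamma_{X}}$ is locally full (see Section~\ref{sec:change-class}) this pullback is also fully faithful by Proposition~\ref{prop:char-locally-full}; hence it is an equivalence, so $\Gamma_{X}$ is a pro-$\cC$-gerbe. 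The universal property of $\univ{X}$ then produces $\phi$. For $\psi$, I would apply the hypothesis to $\Gamma = \univ{X}$ and use $\univ{\univ{X}} = \univ{X}$ to obtain $\cV(\univ{X}) = \rep\univ{X}$; condition \refpart{def:tannakian-class}{2} along $X \arr \univ{X}$ then shows that $\rep\univ{X} \arr \vect_{X}$ lands in $\cV(X) \simeq \rep\Gamma_{X}$, giving a tensor functor which Tannaka duality converts into $\psi$.

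The main obstacle is verifying that $\phi$ and $\psi$ are mutually quasi-inverse. For $\psi \circ \phi \simeq \id_{\univ{X}}$, one traces through the constructions: the composite $X \arr \univ{X} \xarr{\phi} \Gamma_{X} \xarr{\psi} \univ{X}$ corresponds, viewing $\univ{X}$-points as fiber functors on $\rep\univ{X}$, to the same fiber functor as the canonical $X \arr \univ{X}$, so the uniqueness part of the universal property of $\univ{X}$ forces the identification. For $\phi \circ \psi \simeq \id_{\Gamma_{X}}$, a direct check shows that $\psi^{*} \circ \phi^{*}\colon \rep\Gamma_{X} \arr \rep\Gamma_{X}$ is the identity, since $\phi^{*}$ lifts an object of $\rep\Gamma_{X}$ to an object of $\rep\univ{X}$ with the same image in $\vect_{X}$ and $\psi^{*}$ recovers this image; Tannaka duality for $\Gamma_{X}$ then concludes. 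Once $\phi$ is an equivalence, the pullback $\rep\univ{X} \arr \vect_{X}$ inherits full faithfulness and essential image $\cV(X)$ from $\rep\Gamma_{X} \arr \vect_{X}$, completing the proof that $\cV$ is the tannakian realization of $\cC$.
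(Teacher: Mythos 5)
Your proposal is correct and follows essentially the same route as the paper: both pass to the tannakian gerbe of $\cV(X)$, show it is a pro-$\cC$-gerbe using condition (3) of Definition~\ref{def:tannakian-class} together with the characterization of pro-$\cC$-gerbes as exactly those affine gerbes $\Gamma$ with $\cV(\Gamma) = \rep\Gamma$ (which combines the lemma's hypothesis with the local fullness of $\Gamma \arr \univ{\Gamma}$), and then identify this gerbe with $\univ{X}$ via the universal property. The only cosmetic difference is in the last step, where the paper concludes by directly comparing essential images in $\vect_{X}$ (using faithfulness of $\rep\univ{X} \arr \vect_{X}$ and full faithfulness of $\rep\Pi \arr \vect_{X}$) rather than constructing an explicit quasi-inverse $\psi$.
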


\begin{proof}
If $\Gamma$ is an affine gerbe, we claim that $\Gamma$ is a pro-$\cC$-gerbe if and only if $\cV(\Gamma) = \rep \Gamma$. In fact, $\Gamma$ is a pro-$\cC$-gerbe if and only if $\Gamma \arr \univ\Gamma$ is an equivalence, that is, if and only if the pullback $\rep{\univ\Gamma} \arr \rep\Gamma$ is an equivalence. But $\rep{\univ\Gamma} \arr \rep\Gamma$ is fully faithful, by Proposition~\ref{prop:induced-locally-full}.

Let $\Pi$ be the affine gerbe corresponding to the tannakian category $\cV(X)$; this is a pro-$\cC$-gerbe, because of what we just showed. By Deligne's theorem \cite[Th\'eor\`eme 1.12]{deligne-tannakian} we obtain a map $X \arr \Pi$ such that the pullback $\rep\Pi \arr \vect_{X}$ induces an equivalence $\rep\Pi \simeq \cV(X)$. Hence $\cV(\Pi) = \rep\Pi$, because of part~\refpart{def:tannakian-class}{3} of Definition~\ref{def:tannakian-class}, so $\Pi$ is a pro-$\cC$-gerbe, because of the result above. Consider the factorization $X \arr \univ{X} \arr \Pi$, which induces a factorization $\rep\Pi \arr \rep\univ{X} \arr \vect{X}$. But $\rep\Pi$ and $\rep\univ{X}$ have the same essential image $\cV(X)$, $\rep\univ{X} \arr \vect_{X}$ is faithful, and $\rep\Pi \arr \vect_{X}$ is fully faithful, so $\rep\Pi \arr \rep\univ{X}$ is an equivalence. This concludes the proof.
\end{proof}

\begin{definition}
Let $\cV$ be a tannakian class. The \emph{unipotent saturation} $\overline{\cV}$ is defined as follows. Let $E$ be a locally free on $X$. We say that $E$ is in $\overline{\cV}$ if it admits a filtration
   \[
   0 = E_{r+1} \subseteq E_{r} \subseteq E_{r-1} \subseteq \dots \subseteq E_{1} \subseteq E_{0} = E
   \]
in which all the quotients $E_{i}/E_{i+1}$ are in $\cV$.
\end{definition}

Thus, the unipotent saturation of the class of free sheaves is the class of unipotent sheaves, while that of the class of essentially free sheaves is the class of extended essentially finite sheaves.

\begin{proposition}\label{prop:unipotent-saturation}
The unipotent saturation of a tannakian class is a tannakian class.
\end{proposition}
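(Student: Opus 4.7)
The plan is to verify the three axioms of Definition~\ref{def:tannakian-class} for $\overline{\cV}$ in turn. Axiom~\refpart{def:tannakian-class}{2} (pullback stability) is immediate: if $f\colon Y \arr X$ is a morphism and $E \in \overline{\cV}(X)$ has a filtration $0 = E_{r+1} \subseteq \cdots \subseteq E_0 = E$ with quotients in $\cV(X)$, then the pulled-back filtration $f^*E_i$ has quotients $f^*(E_i/E_{i+1}) \in \cV(Y)$ by axiom~\refpart{def:tannakian-class}{2} applied to $\cV$, exactness being preserved since the quotients are locally free. For axiom~\refpart{def:tannakian-class}{1}, closure of $\overline{\cV}(X)$ under tensor products is obtained via the convolution filtration $(E \otimes F)_k \eqdef \sum_{i+j \geq k} E_i \otimes F_j$, whose successive quotients are direct sums of tensor products of quotients of $E_\bullet$ and $F_\bullet$ and hence lie in $\cV(X)$; closure under duals uses the filtration $(E/E_i)^\vee$ with quotients $(E_i/E_{i+1})^\vee \in \cV(X)$. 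The unit $\cO_X$ lies in $\cV(X) \subseteq \overline{\cV}(X)$, and a fiber functor $\overline{\cV}(X) \arr \vect_L$ is provided by evaluation at an $L$-point of $X$ for some extension $L/\kappa$; this functor is exact on $\vect_X$ and faithful on $\overline{\cV}(X)$ because locally free sheaves have constant rank (thanks to $\H^0(X, \cO) = \kappa$).

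The main obstacle is showing that $\overline{\cV}(X)$ is abelian with kernels and cokernels agreeing with those computed in $\cO_X$-modules. Given $\phi\colon E \arr F$ in $\overline{\cV}(X)$, I argue by double induction with outer variable the filtration length of $F$ and inner variable the length of $E$. The base case $E, F \in \cV(X)$ is the tannakian subcategory hypothesis on $\cV$. When $F \in \cV(X)$ and $E$ has length $\geq 2$, pick a short exact sequence $0 \arr E' \arr E \arr E'' \arr 0$ with $E' \in \cV(X)$ and $E''$ shorter; then $\phi(E') \in \cV(X)$ by tannakian closure, so $F/\phi(E') \in \cV(X)$, and the inductive hypothesis applies to the induced $\bar\phi\colon E'' \arr F/\phi(E')$. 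The key identity $\phi^{-1}(\phi(E')) = \ker\phi + E'$ yields $\coker\phi \cong \coker\bar\phi$ and a short exact sequence $0 \arr \ker(\phi|_{E'}) \arr \ker\phi \arr \ker\bar\phi \arr 0$, placing both $\ker\phi$ and $\coker\phi$ in $\overline{\cV}(X)$. When $F$ has length $\geq 2$, write $0 \arr F' \arr F \arr F'' \arr 0$ with $F' \in \cV(X)$ and $F''$ shorter; applying the outer inductive hypothesis to $\pi\phi\colon E \arr F''$ produces $K \eqdef \ker(\pi\phi) \in \overline{\cV}(X)$, and applying the already-treated length-one case to $\phi|_K\colon K \arr F'$ yields $\ker\phi = \ker(\phi|_K) \in \overline{\cV}(X)$ together with a snake lemma extension $0 \arr \coker(\phi|_K) \arr \coker\phi \arr \coker(\pi\phi) \arr 0$ exhibiting $\coker\phi \in \overline{\cV}(X)$.

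For axiom~\refpart{def:tannakian-class}{3}, suppose $f\colon X \arr \Gamma$ induces an equivalence $f^*\colon \rep\Gamma \simeq \overline{\cV}(X)$. Introduce the full subcategory $\cW(\Gamma) \eqdef \{Q \in \rep\Gamma : f^*Q \in \cV(X)\}$: exactness, monoidality and faithfulness of $f^*$, together with $\cV(X)$ being a tannakian subcategory of $\vect_X$, make $\cW(\Gamma)$ a tannakian subcategory of $\rep\Gamma = \vect_\Gamma$, so $\cW(\Gamma) = \rep\Lambda$ for some affine gerbe $\Lambda$ fitting into a factorization $X \arr \Gamma \arr \Lambda$. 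The composite $X \arr \Lambda$ realizes an equivalence $\rep\Lambda \simeq \cV(X)$, so axiom~\refpart{def:tannakian-class}{3} for $\cV$ yields $\cV(\Lambda) = \rep\Lambda$; pulling back along $\Gamma \arr \Lambda$ via axiom~\refpart{def:tannakian-class}{2} gives $\cW(\Gamma) \subseteq \cV(\Gamma)$, the reverse inclusion being immediate from axiom~\refpart{def:tannakian-class}{2} applied to $f$. Thus $\cV(\Gamma) = \cW(\Gamma)$. Finally, for any $V \in \rep\Gamma$, the filtration of $f^*V \in \overline{\cV}(X)$ with $\cV(X)$-quotients lifts through the equivalence $f^*$ to a filtration of $V$ in $\rep\Gamma$ whose quotients pull back to objects of $\cV(X)$, hence lie in $\cW(\Gamma) = \cV(\Gamma)$, so $V \in \overline{\cV}(\Gamma)$ as desired.
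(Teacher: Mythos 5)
Your proof is correct and follows essentially the same route as the paper: condition \refpart{def:tannakian-class}{2} is immediate, condition \refpart{def:tannakian-class}{1} is the Nori-style filtration argument (which the paper delegates to \cite{nori-phd} and you spell out via the double induction), and condition \refpart{def:tannakian-class}{3} uses the gerbe corresponding to $\cV(X)$ and the induced factorization through $\Gamma$ --- your $\Lambda$ is the paper's $\Delta$, and your identification $\cV(\Gamma)=\cW(\Gamma)$ followed by lifting the whole filtration is just a repackaging of the paper's induction on rank. No gaps.
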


\begin{proof}
Let us check that the three conditions of Definition~\ref{def:tannakian-class} are satisfied. This is trivial for \refpart{def:tannakian-class}{2}.

Condition~\refpart{def:tannakian-class}{1} is easily proved by adapting the proof in \cite[Chapter~IV.1]{nori-phd} for unipotent locally free sheaves.

For condition~\refpart{def:tannakian-class}{3}, let $f\colon X \arr \Gamma$ be a morphism such that $f^{*}\colon \rep\Gamma \arr \vect_{X}$ induces an equivalence between $\rep\Gamma$ and $\overline{\cV}(X)$. Let $\Delta$ be the affine gerbe corresponding to the tannakian category $\cV(X)$. The embedding $\cV(X) \subseteq \overline{\cV}(X)$ induces a morphism $\phi\colon \Gamma \arr \Delta$, and a commutative diagram
   \[
   \begin{tikzcd}
   X \rar["f"]\ar[swap, dr, "g"]& \Gamma\dar{\phi}\\
   &\Delta
   \end{tikzcd}
   \]
such that $g^{*}\colon \rep\Delta \arr \vect_{X}$ induces an equivalence between $\rep\Delta$ and $\cV(X)$. Since $\cV$ is a tannakian class, we have $\cV(\Delta) = \rep\Delta$.

Let $E$ be a non-zero representation of $\Gamma$; we need to show that $E$ is in $\overline{\cV}(\Gamma)$. We proceed by induction on $\rk E$, and assume that all representations of $\Gamma$ of rank less than $\rk E$ are in $\overline{\cV}(\Gamma)$. If $f^{*}E$ is in $\cV(X)$, then it is the pullback of a representation $G_{0} \in \rep\Delta = \cV(\Delta)$, so $E \simeq \phi^{*}G_{0}$ is in $\cV(\Gamma)$, and we are done.

By hypothesis we have $f^{*}E \in \overline{\cV}(X)$; hence there exists a subsheaf $0 \neq F_{0} \subseteq f^{*}E$ with $F_{0} \in \cV(X)$. Let $E_{0}$ be a representation of $\Gamma$ with $f^{*}E_{0} \simeq F_{0}$; by the argument above, $E_{0} \in \cV\bigl(\Gamma\bigr)$. Since $f^{*}$ is fully faithful we get an embedding $E_{0} \subseteq E$. By induction hypothesis $E/E_{0}$ is in $\overline{\cV}(X)$, so $E \in \overline{\cV}(X)$.
\end{proof}

In general, the unipotent saturation of  the tannakian realization of a fundamental class of group schemes is not the tannakian realization of a fundamental class; for example, one can show that the unipotent saturation of the tannakian realization of the class of abelian group schemes is not a tannakian realization.

The main result of this section is that the tannakian realization of the unipotent saturation of a very stable fundamental class of finite group schemes is again a tannakian realization.

\begin{definition}
Let $\cD$ be a very stable class of finite group schemes. The \emph{unipotent saturation} $\overline{\cD}$ of $\cD$ is the class of affine algebraic groups $G$ over extensions $\ell$ of $\kappa$, with the property that there exists a normal unipotent subgroup scheme $H$ such that $G/H$ is in $\cD$.

\end{definition}

\begin{proposition}
The unipotent saturation of a very stable class of finite group schemes is well-founded.
\end{proposition}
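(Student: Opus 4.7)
The plan is to verify the two clauses of Definition~\ref{def:well-founded-class}: namely, that $\overline{\cD}$ is a stable class and that every group in $\overline{\cD}$ is virtually nilpotent. Stability is a routine check of the six axioms of Definition~\ref{def:stable}; the content lies in the virtual nilpotence.

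For stability, I would work with a typical $G \in \overline{\cD}(\ell)$ together with a witnessing normal unipotent subgroup $H \lhd G$ having $G/H \in \cD(\ell)$, and for each closure operation exhibit the appropriate normal unipotent subgroup of the result whose quotient lies in $\cD$: for a subgroup $K \subseteq G$ take $K \cap H$ (so that $K/(K\cap H) \hookrightarrow G/H$ lands in $\cD$ by stability of $\cD$ under subgroups); for a quotient $G/N$ take $HN/N$, which is a quotient of $H$ hence unipotent, and $(G/N)/(HN/N) \cong G/HN$ is a quotient of $G/H$; base change and products are transparent; and inner forms of $G$ carry along inner forms of $H$, which remain unipotent because unipotence is a geometric property preserved after passing to a separable closure.

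For the virtual nilpotence clause, fix $G \in \overline{\cD}(\ell)$ with witness $H \lhd G$ unipotent and $G/H$ finite. I want to show that $K \eqdef (G_{\overline{\ell}})\ored$ is unipotent, which will force it to be nilpotent (smooth connected unipotent groups are nilpotent) and thus show $G$ is virtually nilpotent. The observation driving this is that the composite
\[
K \hookrightarrow G_{\overline{\ell}} \twoheadrightarrow (G/H)_{\overline{\ell}}
\]
is a homomorphism from a smooth connected algebraic group into a finite group scheme over $\overline{\ell}$. The image necessarily sits inside the identity component of $(G/H)_{\overline{\ell}}$, which is infinitesimal, and since $K$ is reduced the map factors through the reduced infinitesimal scheme, i.e.\ through $\spec \overline{\ell}$. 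Hence $K \subseteq H_{\overline{\ell}}$, and being a subgroup of a unipotent group it is unipotent.

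I do not foresee a genuine obstacle: the real idea is the single observation that a smooth connected group admits only the trivial homomorphism into a finite group scheme over an algebraically closed field, and the proposition is essentially a packaging of that fact together with the (classical) statement that smooth connected unipotent groups are nilpotent. The hypothesis that $\cD$ consists of finite group schemes is used exactly to make $(G/H)_{\overline{\ell}}$ finite; the very stability of $\cD$ is not needed for this proposition itself and presumably enters only when this result is combined with Weil restriction as in Proposition~\ref{prop:Weil-restriction}.
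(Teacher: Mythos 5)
Your proof is correct and follows essentially the same route as the paper, which simply observes that $\overline{\cD}$ is a subclass of the well-founded class of virtually unipotent (hence virtually nilpotent) group schemes and then declares the stability check straightforward. You merely fill in the two details the paper leaves implicit --- the explicit verification of the stability axioms, and the argument that a smooth connected group admits no nontrivial homomorphism to a finite group scheme, which is exactly why strongly virtually unipotent groups are virtually unipotent --- and your closing remark that very stability of $\cD$ is not actually used here is consistent with the paper's own proof.
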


\begin{proof}
Let $\cD$ be a weakly  very stable class of finite group schemes. Since $\overline{\cD}$ is a subclass of the class of all virtually finite group schemes, which is well-founded, it is enough to show that $\overline{\cD}$ is stable. This is straightforward.
\end{proof}

\begin{theorem}\label{thm:tannakian-characterization-1}
Let $\cD$ be a very stable class of finite group schemes, $\cV$ its tannakian realization. Then the tannakian realization of the unipotent saturation $\overline{\cD}$ is the unipotent saturation $\overline{\cV}$ of $\cV$.
\end{theorem}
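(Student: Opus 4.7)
The plan is to apply Lemma~\ref{lem:criterion} to $\overline{\cV}$ and $\overline{\cD}$. By Proposition~\ref{prop:unipotent-saturation}, $\overline{\cV}$ is a tannakian class, and a routine check (using Proposition~\ref{prop:Weil-restriction}\refpart{prop:Weil-restriction}{1} for descent along Galois closures of separable extensions) shows that $\overline{\cD}$ is stable and weakly very stable, hence a well-founded class. It therefore suffices to verify that for every affine gerbe $\Gamma$ over $\kappa$ and every representation $E$ of $\Gamma$, $E \in \overline{\cV}(\Gamma)$ if and only if $E$ is pulled back from a representation of $\Gamma^{\overline{\cD}}$. Using Corollary~\ref{lem:finite-separable-point} together with Theorem~\ref{thm:base-change}\refpart{thm:base-change}{1} (and the analogous Galois-descent property for $\overline{\cV}$), I first reduce to the case where $\Gamma$ has a $\kappa$-point, so $\Gamma = \cB_\kappa H$ and representations of $\Gamma$ are just representations of the affine group scheme $H$.

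For the forward direction, if $E$ is pulled back from $\Gamma^{\overline{\cD}}$, I may further assume $H \in \overline{\cD}$, fix a normal unipotent subgroup $U \subseteq H$ with $H/U \in \cD$, and consider the ascending filtration $V_0 \subseteq V_1 \subseteq \cdots$ of $V_E$ defined inductively by $V_0 = 0$ and $V_{i+1}/V_i = (V_E/V_i)^U$. Since $U$ is unipotent this filtration exhausts $V_E$; since $U$ is normal in $H$ the $V_i$ are $H$-subrepresentations; and each subquotient $V_{i+1}/V_i$ is $U$-trivial, hence factors through the $\cD$-group $H/U$ and so lies in $\cV(\Gamma)$. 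Therefore $E \in \overline{\cV}(\Gamma)$.

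For the reverse direction, given a filtration $0 = W_0 \subsetneq \cdots \subsetneq W_r = V_E$ of $H$-subrepresentations with $W_{i+1}/W_i \in \cV(\Gamma)$, I let $N_i \triangleleft H$ be the kernel of the $H$-action on $W_{i+1}/W_i$: each $H/N_i$ embeds into the $\cD$-group through which this action factors, so $H/N_i \in \cD$ by stability of $\cD$. Setting $N = \bigcap_i N_i$ gives $N \triangleleft H$ with $H/N \hookrightarrow \prod_i H/N_i \in \cD$, so $H/N \in \cD$. On the other hand $N$ preserves the filtration and acts trivially on every subquotient; writing $K = \ker(H \arr \GL(V_E))$, this shows $N/K$ is contained in the unipotent group of filtered automorphisms of $V_E$ acting trivially on the graded pieces, so $N/K$ is unipotent, normal in $H/K$, with $(H/K)/(N/K) = H/N \in \cD$. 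Thus $H/K \in \overline{\cD}$, so the classifying morphism $\Gamma \arr \cB_\kappa \GL(V_E)$ factors through the $\overline{\cD}$-gerbe $\cB_\kappa(H/K)$, hence through $\Gamma^{\overline{\cD}}$, as required.

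The main obstacle I foresee is the preliminary descent bookkeeping: establishing weak very stability of $\overline{\cD}$ (which requires extracting a Galois-invariant normal unipotent subgroup from a non-invariant one, with quotient still inside a $\cD$-group via a Weil restriction) and verifying that $\overline{\cV}(\Gamma)$ is preserved by pullback and descent along finite separable base change of $\kappa$ (which reduces to the analogous property for $\cV$, together with closure of tannakian subcategories under sums and subquotients). Once this reduction is in place, the filtration arguments in both directions are elementary and essentially symmetric.
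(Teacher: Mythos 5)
Your overall frame --- Proposition~\ref{prop:unipotent-saturation} plus Lemma~\ref{lem:criterion}, reducing everything to the claim that a representation $E$ of an affine gerbe $\Gamma$ lies in $\overline{\cV}(\Gamma)$ if and only if it is pulled back from $\Gamma^{\overline{\cD}}$ --- is exactly the paper's, but your verification of that claim takes a genuinely different route. The paper never reduces to a pointed gerbe: for ``$E\in\overline{\cV}(\Gamma)\Rightarrow E$ comes from $\Gamma^{\overline{\cD}}$'' it builds, for an extension of two representations coming from a $\overline{\cD}$-gerbe $\Gamma'$, an auxiliary gerbe of extensions $\Delta\to\Gamma'$ whose automorphism groups are extensions of $\overline{\cD}$-groups by the unipotent kernels $\hom\bigl((E_{2})_{\xi},(E_{1})_{\xi}\bigr)$; for the converse it characterizes $\overline{\cD}$-gerbes as gerbes unipotent over $\Gamma^{\cD}$ (Proposition~\ref{prop:third}) and runs an induction on rank using injectivity of the counit $\pi^{*}\pi_{*}E\to E$ for $\pi\colon\Gamma\to\Gamma^{\cD}$. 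Your two filtration arguments on $\cB_{\kappa}H$ --- the ascending filtration by $U$-invariants in one direction, the kernels $N_{i}$ of the actions on the graded pieces in the other --- are correct and more concrete; your $V_{i+1}/V_{i}=(V_E/V_{i})^{U}$ is in effect the explicit form of the paper's counit argument, and your passage to $H/K\in\overline{\cD}$ replaces the extension-gerbe construction. (Small slip: $H/N_{i}$ is a quotient, not a subgroup, of the $\cD$-group through which the action on $W_{i+1}/W_{i}$ factors; either way it lies in $\cD$ by stability.)

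The price of your route is the preliminary reduction, and there are two genuine gaps there. First, Corollary~\ref{lem:finite-separable-point} applies only to gerbes of finite type; a general affine gerbe need not acquire a point over any finite extension of $\kappa$, so before invoking it you must first factor $E$, its filtration, and the finitely many maps to $\cD$-gerbes realizing the graded pieces through a locally full finite-type quotient $\Gamma\to\Delta$, using Proposition~\ref{prop:char-locally-full}~(\ref{prop:char-locally-full;6}) to descend the subobjects --- this is the analogue of Step~1 in the paper's proof of Proposition~\ref{prop:tannakian-vu-2}, and you omit it entirely. Second, your Galois descent requires (a) weak very stability of $\overline{\cD}$, which does hold but by a different argument than you sketch: intersect the Galois conjugates of the normal unipotent subgroup; the intersection is unipotent, descends, and the quotient embeds in a product of $\cD$-groups, so very stability of $\cD$ finishes it (no Weil restriction is needed); and (b) descent of membership in $\overline{\cV}$ itself, for which you should push forward along $\Gamma_{\kappa'}\to\Gamma$, note that $E$ is a direct summand of $E\otimes_{\kappa}\kappa'$, and use closure of $\overline{\cV}(\Gamma)$ under subobjects. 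The paper avoids all of this by only ever invoking Theorem~\ref{thm:base-change} for the very stable class $\cD$, never for $\overline{\cD}$. So your approach can be made to work and is illuminating in the pointed case, but the reductions are not routine enough to leave asserted.
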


When applied to the class $\cD$ consisting of trivial groups, and to the class of all finite group schemes, this immediately implies Theorem~\ref{thm:tannakian-char-u-vu}.

The proof of Theorem~\ref{thm:tannakian-characterization-1} will occupy the rest of this section. We use Proposition~\ref{prop:unipotent-saturation} and Lemma~\ref{lem:criterion}; we only have to check that the condition of Lemma~\ref{lem:criterion} is satisfied. This is the content of the following Proposition.

\begin{proposition}\label{prop:second}
If $\Gamma$ is an affine gerbe, a representation $E$ is in $\overline{\cV}(\Gamma)$ if and only if it is a pullback from $\Gamma^{\overline{\cD}}$.
\end{proposition}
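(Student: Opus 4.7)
The plan is to prove the two directions separately; the nontrivial content lives in the ``only if'' direction, which will be handled by studying the tannakian affine subgerbe of $\Gamma$ generated by $E$.

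For the ``if'' direction, suppose $E$ is pulled back from $\Gamma^{\overline{\cD}}$. Since $\Gamma^{\overline{\cD}}$ is the cofiltered limit of its $\overline{\cD}$-gerbe quotients, $E$ descends to some $\overline{\cD}$-gerbe $\Pi$; by functoriality of $\overline{\cV}$ (condition \refpart{def:tannakian-class}{2}) it will suffice to show that every representation of $\Pi$ lies in $\overline{\cV}(\Pi)$. Extend scalars to $\ell/\kappa$ so that $\Pi_\ell \simeq \cB_\ell G$ with $G \in \overline{\cD}(\ell)$, and let $V$ be the $G$-representation corresponding to $E$. Fix a normal unipotent subgroup $H \triangleleft G$ with $G/H \in \cD(\ell)$. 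The $H$-socle filtration of $V$ is $G$-stable because $H$ is normal, and its successive quotients are annihilated by $H$, hence pull back from representations of $G/H$. Since $\cB_\ell G \arr \cB_\ell(G/H)$ factors through $(\cB_\ell G)^{\cD}$, each such quotient lies in $\cV(\cB_\ell G)$; therefore $V \in \overline{\cV}(\cB_\ell G)$, and by descent $E \in \overline{\cV}(\Pi) \subseteq \overline{\cV}(\Gamma)$.

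For the ``only if'' direction, let $\Pi$ be the tannakian affine gerbe associated with the rigid tensor subcategory $\langle E \rangle \subseteq \rep \Gamma$, so that $E$ is the pullback of some $E' \in \rep \Pi$ along a canonical map $\Gamma \arr \Pi$. It suffices to prove that $\Pi$ is a pro-$\overline{\cD}$-gerbe, as then $\Gamma \arr \Pi$ factors through $\Gamma^{\overline{\cD}}$. Fix a field extension $\ell/\kappa$ and an object $\xi \in \Pi(\ell)$, set $G \eqdef \underaut_\ell \xi$, and let $V$ be the representation of $G$ corresponding to $E'_\ell$. Because $\Pi$ is tannakianly generated by $E'$, $V$ tensorially generates $\rep G$; by Deligne this is equivalent to the homomorphism $G \arr \GL(V)$ being a closed immersion. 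The filtration of $E$ pulls back to a $G$-stable filtration of $V$ whose subquotients $W_i$ lie in $\cV(\cB_\ell G)$; unpacking the definition of the tannakian realization, the action of $G$ on each $W_i$ factors through a quotient $G \twoheadrightarrow H_i$ with $H_i \in \cD(\ell)$. Let $H$ be the image of the product homomorphism $G \arr \prod_i H_i$; then $H \in \cD(\ell)$ since $\cD$ is closed under products and subgroups, and $N \eqdef \ker(G \arr H)$ is a normal subgroup with $G/N \simeq H \in \cD(\ell)$. By construction $N$ acts trivially on each $W_i$, hence unipotently on $V$, and the closed embedding $N \hookrightarrow \GL(V)$ forces $N$ itself to be unipotent. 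Therefore $G \in \overline{\cD}(\ell)$.

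The main obstacle is the step, in the ``only if'' direction, of identifying a subquotient $W_i \in \cV(\cB_\ell G)$ with a representation that actually factors through a quotient $G \twoheadrightarrow H_i$ with $H_i \in \cD(\ell)$ (rather than merely a pro-$\cD$ quotient). This requires the compatibility of the tannakian realization $\cV$ with base change from $\kappa$ to $\ell$, which is ensured by the very-stability of $\cD$ together with Theorem~\ref{thm:base-change}, and a finite-generation observation: since each $W_i$ is a finite-dimensional representation, the image of $G$ in $\GL(W_i)$ lands in a genuine $\cD$-group, not just a pro-$\cD$-group. Once this point is clear, the rest of the argument is a formal combination of the defining properties of the tannakian realization, Deligne's closed-embedding criterion for tannakian generation, and the stability axioms for $\cD$.
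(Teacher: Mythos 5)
Your ``only if'' direction (a representation in $\overline{\cV}(\Gamma)$ is pulled back from $\Gamma^{\overline{\cD}}$) takes a genuinely different route from the paper: where the paper inducts on the length of the filtration and realizes each extension step as a lift of $\Gamma$ into a gerbe of extensions $\Delta \arr \Gamma'$ (whose automorphism groups are extensions of $\overline{\cD}$-groups by vector groups, hence in $\overline{\cD}$), you pass to the monodromy gerbe $\Pi$ of $\langle E\rangle$ and argue group-theoretically that its automorphism groups lie in $\overline{\cD}$. This is workable and arguably more direct, but the pivotal step --- ``the action of $G$ on each $W_i$ factors through a quotient in $\cD(\ell)$'' --- is doing more work than you acknowledge. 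The filtration you have lives on $\Gamma$, not on $\Pi$; to transport it you need that the essential image of $\rep\Pi \arr \rep\Gamma$ is closed under subobjects (Proposition~\refall{prop:char-locally-full}{6}, since $\Gamma\arr\Pi$ is locally full), and then you must identify the monodromy gerbe of each subquotient of $E'$ in $\rep\Pi$ with that of the corresponding subquotient of $E$ in $\rep\Gamma$, which is a quotient of $\Gamma^{\cD}$ and hence a $\cD$-gerbe of finite type. Your closing remark about finite generation gestures at this, but the argument should be made explicit.

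The genuine gap is in the ``if'' direction. You base-change to $\ell$ with $\Pi_\ell \simeq \cB_\ell G$, choose a normal unipotent $H\triangleleft G$ with $G/H \in \cD(\ell)$, build the $H$-socle filtration of $V = E_\ell$, and then assert ``by descent $E \in \overline{\cV}(\Pi)$.'' That descent is not automatic: $H$ is not canonical, the resulting filtration of $E_\ell$ need not be Galois-stable, and a filtration with subquotients in $\cV(\Pi_\ell)$ does not obviously descend to one with subquotients in $\cV(\Pi)$. (The statement that membership in $\overline{\cV}$ descends along finite Galois extensions is essentially Lemma~\refall{lem:reduce-Galois}{2}, whose proof in the paper relies on the very result you are proving.) It can be repaired --- either by taking the canonical maximal $\cV$-subobject at each stage, which is Galois-stable and descends via Lemma~\refall{lem:reduce-Galois}{1}, or, as the paper does, by never leaving $\kappa$: use the projection $\pi\colon \Pi \arr \Pi^{\cD}$ and the counit $\pi^{*}\pi_{*}E \arr E$, which Lemma~\ref{lem:nonzero-injective} shows is injective with nonzero source, producing a canonical filtration over $\kappa$ directly. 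Note also that the base-change compatibility you invoke for $\cD$ (a class of \emph{finite} group schemes, which is not closed under extensions by $\ga$) is only available for separable extensions, so you must first reduce to $\Pi$ of finite type and take $\ell/\kappa$ finite separable via Corollary~\ref{lem:finite-separable-point}.
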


For the proof of Proposition~\ref{prop:second} we need the following fact.

Let $\Gamma \arr \Delta$ be a morphism of gerbes over $\kappa$. Let us assume that this is locally full, or, equivalently, that $\Gamma$ is a gerbe over $\Delta$. Then we say that $\Gamma$ is \emph{unipotent over $\Delta$} if for any morphism $\spec \ell \arr \Delta$, where $\ell$ is a field, the fibered product $\spec \ell \times_{\Delta} \Gamma$ is unipotent.

It is easy to see that if $\spec \ell \times_{\Delta} \Gamma$ is unipotent for some morphism $\spec \ell \arr \Delta$, then it is unipotent for all such morphisms.

\begin{proposition}\label{prop:third}
Let $\Gamma$ be a gerbe of finite type over $\kappa$; then $\Gamma$ is a $\overline{\cD}$-gerbe if and only if\/ $\Gamma$ is unipotent over\/ $\Gamma^{\cD}$.
\end{proposition}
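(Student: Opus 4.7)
The strategy is to reduce to the split case $\Gamma \simeq \cB_\kappa G$ and then give an explicit group-theoretic description. By Corollary~\ref{lem:finite-separable-point}, $\Gamma$ has an object over some finite separable extension $\kappa'/\kappa$, so $\Gamma_{\kappa'} \simeq \cB_{\kappa'}G$ for an affine group scheme $G$ of finite type over $\kappa'$. The class $\cD$, being a very stable class of finite group schemes, is in particular weakly very stable and well-founded (finite group schemes are virtually nilpotent), so Theorem~\ref{thm:base-change}\refpart{thm:base-change}{1} yields $(\Gamma^{\cD})_{\kappa'} \simeq (\Gamma_{\kappa'})^{\cD}$. Since ``$\Gamma$ is a $\overline{\cD}$-gerbe'' and ``$\Gamma$ is unipotent over $\Gamma^{\cD}$'' are both properties that can be tested on geometric fibers, and are invariant under extension of the base field, I may replace $\Gamma$ by $\Gamma_{\kappa'}$ and assume $\Gamma = \cB_\kappa G$; under this reduction $\Gamma$ is a $\overline{\cD}$-gerbe if and only if $G \in \overline{\cD}(\kappa)$.

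Next, I compute $\Gamma^{\cD}$ explicitly. Let $\cU$ be the family of closed normal subgroup schemes $U$ of $G$ with $G/U \in \cD(\kappa)$. The family $\cU$ is closed under finite intersection, because for $U_1, U_2 \in \cU$ the quotient $G/(U_1 \cap U_2)$ embeds into $G/U_1 \times G/U_2$, which lies in $\cD(\kappa)$ by stability. Since $\kappa[G]$ is Noetherian and the defining ideal of $K \eqdef \bigcap_{U \in \cU} U$ is $\sum_{U \in \cU} I_U$, this sum is finitely generated, hence $K$ is already a finite intersection of members of $\cU$, thus $K = U_0$ for some $U_0 \in \cU$. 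In particular, $G/K \in \cD(\kappa)$. I claim $\Gamma^{\cD} \simeq \cB_\kappa(G/K)$: every $\cD$-gerbe receiving a morphism from $\cB_\kappa G$ acquires a $\kappa$-point (the image of the trivial torsor) and is thus of the form $\cB_\kappa H$ with $H \in \cD(\kappa)$; such a morphism corresponds to a homomorphism $G \to H$ up to conjugation, whose kernel belongs to $\cU$ and hence contains $K$. This verifies the universal property, since $\cB_\kappa(G/K)$ is itself a $\cD$-gerbe.

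The morphism $\Gamma \to \Gamma^{\cD}$ is induced by the surjection $G \to G/K$, and by Lemma~\ref{lem:fiber-product} its fiber over $\spec\kappa$ (at the trivial $(G/K)$-torsor) is $\cB_\kappa K$. Hence $\Gamma$ is unipotent over $\Gamma^{\cD}$ if and only if $K$ is a unipotent group scheme. Combining the pieces: if $G \in \overline{\cD}$, pick any normal unipotent $U \triangleleft G$ with $G/U \in \cD$; then $U \in \cU$ and so $K \subseteq U$, making $K$ unipotent as a subgroup of a unipotent group. Conversely, if $K$ is unipotent, then $K$ itself witnesses $G \in \overline{\cD}$, since $G/K \in \cD$ by construction.

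The main obstacle I anticipate is justifying the reduction to the split case and the explicit identification of the fundamental gerbe; once these are in place, the proof is a routine Noetherian stabilization combined with the elementary observation that a subgroup of a unipotent group is unipotent. A minor subtlety is that $\Gamma^{\cD}$ must a priori be interpreted as pro-$\cD$, but the Noetherian argument shows it is in fact of finite type, so the inverse system stabilizes and we land inside $\cD$ itself.
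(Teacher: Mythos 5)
Your overall strategy is sound and in one respect more explicit than the paper's: you identify $\Gamma^{\cD}$ concretely as $\cB_{\kappa}(G/K)$, where $K$ is the intersection of all closed normal subgroup schemes of $G$ with quotient in $\cD(\kappa)$ (the Noetherian stabilization showing that $K$ itself belongs to this family is correct, since the family is closed under finite intersection), and you then read off the fiber $\cB_{\kappa}K$ from Lemma~\ref{lem:fiber-product}. The equivalence becomes the elementary statement that $K$ is unipotent if and only if $G$ admits some normal unipotent subgroup with quotient in $\cD$, which is right. (You gloss over the fact that $\hom(\cB_{\kappa}(G/K),\Delta)\arr\hom(\cB_{\kappa}G,\Delta)$ must be fully faithful and not merely surjective on isomorphism classes; this follows from $G\arr G/K$ being an epimorphism, but should be said.) The paper never names $K$: it argues pointwise with automorphism groups, and for the direction ``$\overline{\cD}$-gerbe $\Rightarrow$ unipotent over $\Gamma^{\cD}$'' it factors $G\arr L\arr G''$ through $(\cB_{\kappa}G)^{\cD}$ to see that the kernel sits inside the chosen normal unipotent subgroup.

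There is, however, a genuine flaw in your reduction to the split case, in the direction ``unipotent over $\Gamma^{\cD}$ $\Rightarrow$ $\overline{\cD}$-gerbe''. You justify replacing $\Gamma$ by $\Gamma_{\kappa'}$ by asserting that being a $\overline{\cD}$-gerbe ``can be tested on geometric fibers'' and is ``invariant under extension of the base field''. This is false: $\overline{\cD}$ is not very stable. The paper's own Example~\refall{counterexamples}{1} exhibits a group $G$ over a non-perfect field that is not strongly virtually unipotent (i.e.\ not in $\overline{\cD}$ for $\cD$ the class of finite group schemes, which is very stable) even though $G_{\ell}$ is, for $\ell$ purely inseparable; so membership in $\overline{\cD}$ cannot be checked after an arbitrary field extension, let alone on geometric fibers. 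What your reduction actually needs is (a) that $\overline{\cD}$ is \emph{weakly} very stable, so that having proved $\Gamma_{\kappa'}$ is a $\overline{\cD}$-gerbe for $\kappa'/\kappa$ finite separable you may descend, and (b) the observation that for an arbitrary $\xi\in\Gamma(\ell)$ one passes to a compositum of $\ell$ and $\kappa'$ and descends $\underaut_{\ell}\xi$ back along a finite separable extension. Point (a) is true but requires an argument you have not given: replace the normal unipotent subgroup of $G_{\ell'}$ by the intersection of its Galois conjugates, which is still normal and unipotent, has quotient embedding in a product of groups in $\cD$, and descends to $\ell$. As written, the stated reason is wrong and the step does not go through. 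The paper sidesteps this entirely in that direction by working with an arbitrary $\xi\in\Gamma(\ell)$ directly: the kernel of $\underaut_{\ell}\xi\arr\underaut_{\ell}\eta$ is the automorphism group of an object of the fiber $(\spec\ell\times_{\Gamma^{\cD}}\Gamma)(\ell)$, hence unipotent, with quotient in $\cD$ --- no descent of $\overline{\cD}$-membership is ever needed.
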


\begin{proof}[Proof of Proposition~\ref{prop:second}, assuming Proposition~\ref{prop:third}]

Let $\Gamma$ be an affine gerbe over $\kappa$.

Let us prove that every representation in $\overline{\cV}(\Gamma)$ comes from $\Gamma^{\overline{\cD}}$. This is obviously true for representations in $\cV(\Gamma)$, by definition; hence, it is enough to show that given an extension
   \[
   0 \arr E_{1} \arr E \arr E_{2} \arr0
   \]
in which both $E_{1}$ and $E_{2}$ come from $\Gamma^{\overline{\cD}}$, the representation $E$ also comes from $\Gamma^{\overline{\cD}}$. Let 
$\Gamma'$ be a $\overline{\cD}$-gerbe of finite type with a map $\Gamma \arr \Gamma'$ such that $E_{1}$ and $E_{2}$ come from representations $F_{1}$ and $F_{2}$ of $\Gamma'$.

Consider the fibered category $\Delta$ over $\aff\kappa$ defined as follows. Given an object $\xi \in \Gamma'(A)$, where $A$ is a $\kappa$-algebra, we denote by $(E_{1})_{\xi}$ and $(E_{2})_{\xi}$ the pullbacks to $\spec A$ obtained from the morphism $\spec A \arr \Gamma'$ corresponding to $\xi$. An object of $\Delta$ over a $\kappa$-algebra $A$ is an extension
   \[
   0 \arr (E_{1})_{\xi} \arr F \arr (E_{2})_{\xi} \arr 0
   \]
of sheaves of $\cO$-modules over $\spec A$.

The arrows in the fiber category $\Gamma(A)$ are given by homomorphisms of sheaves $F \arr F'$ of sheaves of $\cO$-modules over $\spec A$ fitting into a commutative diagram
   \[
   \begin{tikzcd}
   0\rar & (F_{1})_{\xi} \dar[equal] \rar & F \dar\rar
      & (F_{2})_{\xi} \rar \dar[equal] & 0\\
   0\rar & (F_{1})_{\xi} \rar & F' \rar & (F_{2})_{\xi} \rar  &  0  \,.
   \end{tikzcd}
   \]
   
It is immediate to check that $\Delta$ is a gerbe over $\Gamma'$. Furthermore, if $\ell$ is an extension of $\kappa$ and $\eta$ is an object of $\Delta(\ell)$ mapping to $\xi$ in $\Gamma'(\ell)$, the kernel of the natural homomorphism of group schemes $\underaut_{\ell}\eta \arr \underaut_{\ell}\xi$ is the vector space $\hom_{\ell}\bigl((E_{2})_{\xi}, (E_{1})_{\xi}\bigr)$; hence it is unipotent. Clearly, an extension of a $\overline{\cD}$-group with unipotent kernel is again in $\overline{\cD}$; hence $\underaut_{\ell}\eta$ is in $\overline{\cD}$, and $\Delta$ is a $\overline{\cD}$-gerbe.

The extension
   \[
   0 \arr E_{1} \arr E \arr E_{2} \arr 0
   \]
gives a lifting $\Gamma \arr \Delta$ of the given morphism $\Gamma \arr \Gamma'$; since $\Delta$ is a $\overline{\cD}$-gerbe, the morphism $X \arr \Delta$ factors through $\Gamma^{\overline{\cD}}$, and $E$ come from $\Gamma^{\overline{\cD}}$, as claimed.

In the other direction, we can replace $\Gamma$ with $\Gamma^{\overline{\cD}}$; so it enough to show that every representation of a $\overline{\cD}$-gerbe $\Gamma$ is in $\cV(\Gamma)$. This follows immediately from the following lemma, by an obvious induction on the rank.

\begin{lemma}\label{lem:nonzero-injective}
Let $\Gamma$ be a $\overline{\cD}$-gerbe over $\kappa$; denote by $\pi\colon \Gamma \arr \Gamma^{\cD}$ the projection. Let $E$ be a non-zero representation of\/ $\Gamma$. Then $\pi_{*}E \neq 0$, and the counit homomorphism $\pi^{*}\pi_{*}E \arr E$ is injective.
\end{lemma}

\begin{proof}
Let $\ell$ be an extension of $\kappa$ such that $\Gamma(\ell) \neq 0$. By Proposition~\ref{prop:third}, the pullback $\spec \ell \times_{\Gamma^{\cD}} \Gamma$ is of the form $\cB_{\ell}G$, where $G$ is a unipotent group scheme over $\ell$, and we have a cartesian diagram
   \[
   \begin{tikzcd}
   \cB_{\ell}G \rar{\psi}\dar{\rho}& \Gamma \dar{\pi}\\
   \spec \ell \rar{\phi} & \Gamma^{\cD}\,.
   \end{tikzcd}
   \]
Set $V \eqdef \psi^*E$. Since $\phi$ is faithfully flat, so is $\psi$. Furthermore, the formation of $\pi_{*}$ commutes with base change, so it is enough to show that $\psi^{*}\pi^{*}\pi_{*}E = \rho^{*}\rho_{*}V$ injects in $V$. But thinking of $V$ as a representation of $G$ we have $\rho^{*}\rho_{*}V = V^{G}$, and the statement is clear.
\end{proof}

\begin{remark}\label{rmk:counit-injective}
The argument in the proof of Lemma~\ref{lem:nonzero-injective} proves the following fact, that will be used later.

Let $\pi\colon \Gamma \arr \Delta$ be a locally full morphism of affine gerbes. If $E$ is a representation of $\Gamma$, the counit homomorphism $\pi^{*}\pi_{*}E \arr E$ is injective.
\end{remark}

This ends the proof of Proposition~\ref{prop:second}, assuming Proposition~\ref{prop:third}.
\end{proof}

\begin{proof}[Proof of Proposition~\ref{prop:third}]
Assume that $\Gamma$ is unipotent on $\Gamma^{\cD}$, and let $\xi$ be an object of $\Gamma(\ell)$ for some extension $\ell/\kappa$. Let $\eta$ be the image of $\xi$ in $\Gamma^{\cD}(\ell)$; we need to show that $\underaut_{\ell}\xi$ is 
a $\overline{\cD}$-group. Let $\eta$ be the image of $\xi$ in $\Gamma^{\cD}(\ell)$; the kernel of the natural surjective map $\underaut_{\ell}\xi \arr \underaut_{\ell}\eta$ is the automorphism of the object of $(\spec \ell \times_{\Gamma^{\cD}} \Gamma)(\ell)$ coming from $\xi$, so it is a unipotent group, while $\underaut_{\ell}\eta$ is in $\cD$.

For the other implication, let $\Gamma$ be a $\overline{\cD}$-gerbe. By Lemma~\ref{lem:finite-separable-point}, there exist a finite separable extension  $\ell/\kappa$ such that $\Gamma(\ell)\neq \emptyset$. Let $\xi$ be an object of $\Gamma(\ell)$; consider the fibered product $\Delta \eqdef \spec\ell \times_{\Gamma^{\cD}}\Gamma$. We need to show that $\Delta$ is a unipotent gerbe. By Theorem~\refall{thm:base-change}{1} we see that the formation of $\Delta$ commutes with base change along the morphism $\spec\ell \arr \spec\kappa$; hence, by base changing we may assume that $\Delta(\kappa) \neq \emptyset$. Let $\eta$ be the image of $\xi \in \Delta(\kappa)$ in $\Gamma(\kappa)$, and $\zeta$ its image in $\Gamma^{\cD}(\kappa)$. Set $H \eqdef \underaut_{\kappa}\xi$, $G \eqdef \underaut_{\kappa}\eta$ and $L \eqdef \underaut_{\kappa}\zeta$; then $H$ is a normal subgroup of $G$, and $L = G/H$. We need to show that $H$ is unipotent.

By hypothesis $G$ is in $\overline{\cD}(\kappa)$; hence there exists a normal unipotent subgroup $G' \subseteq G$ such that $G'' \eqdef G/G'$ is in $\cD(\kappa)$. The corresponding morphism $\cB_{\kappa}G \arr \cB_{\kappa}G''$ factors through $\cB_{\kappa}G \arr (\cB_{\kappa}G)^{\cD} = \cB_{\kappa}L$; this give a factorization $G \arr L \arr G''$, and shows that $H$ is included in $G'$. Since $G'$ is unipotent, so is $H$, as claimed.
\end{proof}

This ends the proof of Theorem~\ref{thm:tannakian-characterization-1}.

\section{The proof of Theorem \ref{thm:tannakian-characterization-3}}\label{sec:proof-3} 

By Lemma~\ref{lem:criterion}, to prove Theorem~\ref{thm:tannakian-characterization-3} it is enough to show the following two results.

\begin{proposition}\label{prop:tannakian-vu-1}
Assume that $X$ has an fpqc cover $U \arr X$, where $U$ is a noetherian reduced scheme. Then the  virtually unipotent locally free sheaves form a tannakian class.
\end{proposition}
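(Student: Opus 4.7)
The approach will be to exploit the tannakian class structure of the class of extended essentially finite bundles, henceforth $\cE$; this is a tannakian class by Proposition~\ref{prop:unipotent-saturation} applied to the class of essentially finite bundles. A locally free sheaf $E$ on $X$ is virtually unipotent precisely when $(\frob_{X}^{n})^{*} E \in \cE(X)$ for some $n \ge 0$, so $\cV(X) = \bigcup_{n} (\frob_{X}^{n})^{*-1}\cE(X)$; this is an increasing union by pullback stability of $\cE$. Throughout I will use that $\frob_{X}^{*}$ is an additive tensor endofunctor on $\vect_{X}$ compatible with $\oplus$, $\otimes$ and $(-)^{\vee}$ and exact on short exact sequences of locally free sheaves, and that Frobenius commutes with pullback: $(\frob_{Y}^{n})^{*} f^{*} = f^{*} (\frob_{X}^{n})^{*}$ for any $f\colon Y \arr X$.

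For condition~\refpart{def:tannakian-class}{1}, closure of $\cV(X)$ under $\oplus$, $\otimes$ and duals is immediate after choosing a common Frobenius level $n$, and closure under sub- and quotient bundles in short exact sequences of locally free sheaves follows from the exactness of $(\frob_{X}^{n})^{*}$ on such sequences together with the corresponding closure for $\cE$. The delicate part is verifying that kernels and cokernels of morphisms in $\cV(X)$ are themselves locally free. Given $\phi\colon E \arr F$ in $\cV(X)$ and $n$ with $(\frob_{X}^{n})^{*}\phi$ in $\cE(X)$, the latter morphism has locally free kernel and cokernel; I then pull back to the noetherian reduced fpqc cover $U \arr X$. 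The absolute Frobenius on $U$ is the identity on the underlying topological space, and Frobenius pullback over a field preserves the rank of a linear map (the image and its Frobenius twist have the same dimension), so the function $x \mapsto \rk \phi_{x}$ on $|U|$ coincides with $x \mapsto \rk \bigl((\frob_{U}^{n})^{*}\phi\bigr)_{x}$. The latter is locally constant because the kernel and cokernel of $(\frob_{U}^{n})^{*}\phi$ are locally free, hence so is the former; local constancy of rank on a reduced noetherian scheme forces $\ker \phi\rest{U}$ and $\coker \phi\rest{U}$ to be locally free, which descends to local freeness of $\ker \phi$ and $\coker \phi$ on $X$ by faithfully flat descent.

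Condition~\refpart{def:tannakian-class}{2} is immediate from Frobenius compatibility with pullback and pullback stability of $\cE$: if $(\frob_{X}^{n})^{*} E \in \cE(X)$, then $(\frob_{Y}^{n})^{*} f^{*} E = f^{*} (\frob_{X}^{n})^{*} E \in \cE(Y)$. For condition~\refpart{def:tannakian-class}{3}, suppose $f\colon X \arr \Gamma$ induces an equivalence $f^{*}\colon \rep\Gamma \simeq \cV(X)$. By Proposition~\ref{prop:char-locally-full}, the tannakian subcategory $\cE(X) \subseteq \cV(X) \simeq \rep\Gamma$, closed under subobjects, corresponds to a locally full morphism $\pi\colon \Gamma \arr \Gamma'$ with $\rep\Gamma' \simeq \cE(X)$, realized by the composite $g \eqdef \pi \circ f\colon X \arr \Gamma'$. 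Condition~\refpart{def:tannakian-class}{3} applied to $\cE$ then yields $\cE(\Gamma') = \rep\Gamma'$. For any $E \in \rep\Gamma$, pick $n$ with $f^{*} (\frob_{\Gamma}^{n})^{*} E = (\frob_{X}^{n})^{*} f^{*} E \in \cE(X)$. This object corresponds under $g^{*}$ to some $E' \in \rep\Gamma'$, giving $f^{*} \pi^{*} E' = g^{*} E' = f^{*} (\frob_{\Gamma}^{n})^{*} E$; full faithfulness of $f^{*}$ yields $\pi^{*} E' = (\frob_{\Gamma}^{n})^{*} E$ in $\rep\Gamma$. Since $E' \in \cE(\Gamma')$ and $\cE$ is pullback-stable, $(\frob_{\Gamma}^{n})^{*} E \in \cE(\Gamma)$, so $E \in \cV(\Gamma)$.

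The main obstacle is the kernel/cokernel step in condition~\refpart{def:tannakian-class}{1}, which is precisely where the existence of a noetherian reduced fpqc cover is essential. Without reducedness, Frobenius pullback can turn a non-locally-free coherent sheaf into a locally free one (on account of $\frob$-torsion), and the descent of local freeness from $(\frob_{U}^{n})^{*}\phi$ back to $\phi$ would fail.
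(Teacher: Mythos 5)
Your proof is correct, and for two of the three conditions it follows the paper's own route: the paper likewise reduces condition~(1) of Definition~\ref{def:tannakian-class} to showing that the cokernel $Q$ of a map of virtually unipotent sheaves is locally free, observes that $(\frob_{X}^{n})^{*}Q$ is extended essentially finite (hence locally free), pulls back to the noetherian reduced cover $U$, and uses that $\frob_{U}$ is a homeomorphism preserving fibre dimensions to get local constancy of rank, hence local freeness on the reduced scheme $U$, and then descends; condition~(2) is dismissed in the same way. Where you genuinely diverge is condition~(3). The paper argues by induction on the rank of the representation $E$: after a Frobenius twist, $f^{*}(\frob_{\Gamma}^{n})^{*}E$ lies in the smaller class, one extracts a subobject there, lifts it to $\rep\Gamma$ by full faithfulness, applies the inductive hypothesis to the quotient, and concludes by stability of virtually unipotent sheaves under extensions. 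You instead interpose the affine gerbe $\Gamma'$ with $\rep\Gamma'\simeq\cE(X)$ (which exists by Deligne's theorem, since $\cE(X)$ is tannakian), apply condition~(3) for the class $\cE$ of extended essentially finite sheaves --- already available from Proposition~\ref{prop:unipotent-saturation} --- to get $\cE(\Gamma')=\rep\Gamma'$, and transport $(\frob_{\Gamma}^{n})^{*}E$ through $\pi^{*}$ using pullback-stability of $\cE$. This buys you a cleaner, induction-free argument that reuses the already-established tannakian-class structure of $\cE$, at the price of invoking more gerbe-theoretic machinery. One small remark: the local fullness of $\pi$, which you assert via Proposition~\ref{prop:char-locally-full}, would require checking that $\cE(X)$ is closed under subobjects inside $\cV(X)$; but your argument never actually uses local fullness --- it only needs $\pi\colon\Gamma\arr\Gamma'$ to be a morphism of affine gerbes with $\pi^{*}$ the inclusion $\cE(X)\subseteq\cV(X)$ --- so this is harmless.
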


\begin{proposition}\label{prop:tannakian-vu-2}
Let $\Gamma$ be an affine gerbe. Then a representation of\/ $\Gamma$ is virtually unipotent if and only if it is isomorphic to a pullback from $\rep\Gamma\vu$.
\end{proposition}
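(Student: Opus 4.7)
The plan is to prove both directions separately, combining Theorem~\refall{thm:tannakian-char-u-vu}{2} (which identifies $\rep\Gamma\svu$ with the extended essentially finite sheaves) with a structural fact about Frobenius twists of virtually unipotent group schemes. The central technical ingredient is the following \emph{Frobenius smoothing lemma}: if $G$ is a virtually unipotent affine group scheme of finite type over a field of positive characteristic $p$, then for $n$ sufficiently large the quotient $G/G[\frob^n]$ by the $n$-th Frobenius kernel is strongly virtually unipotent. The reason is that the infinitesimal obstructions in $G$ have bounded height and so are killed by a high enough Frobenius; the resulting quotient is smooth over a perfect field, inherits virtual unipotence from $G$, and any smooth virtually unipotent group is strongly virtually unipotent (its identity component being smooth connected unipotent, and its component group finite étale).

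For the direction $(\Leftarrow)$, suppose $E$ is a pullback from $\rep\Gamma\vu$. Replacing $\Gamma$ by $\Gamma\vu$ and then by a finite-type VU quotient through which $E$ factors, and using Theorem~\ref{thm:base-change} to pass to an algebraic closure, we reduce to $\Gamma = \cB_\kappa G$ with $G$ a virtually unipotent group of finite type over $\bar\kappa$. The representation $\frob_\Gamma^{n*}E$, as a $G$-representation, has its action factoring through the Frobenius quotient $G \to G/G[\frob^n]$; by the Frobenius smoothing lemma this quotient is SVU for $n$ large, so $\frob_\Gamma^{n*}E$ is a pullback from $\rep\Gamma\svu$, hence extended essentially finite by Theorem~\refall{thm:tannakian-char-u-vu}{2}. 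Thus $E$ is virtually unipotent.

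For the direction $(\Rightarrow)$, Proposition~\ref{prop:tannakian-vu-1} shows that the virtually unipotent representations of $\Gamma$ form a tannakian subcategory $\cV(\Gamma) \subseteq \rep\Gamma$, and tannakian duality produces an affine pro-gerbe $\Pi$ with a morphism $\Gamma \to \Pi$ inducing $\rep\Pi \simeq \cV(\Gamma)$. It suffices to show that $\Pi$ is pro-VU, for then $\Gamma \to \Pi$ factors through $\Gamma\vu$ and $E$ (which corresponds to an object of $\rep\Pi$) comes from $\rep\Gamma\vu$. To this end let $\Pi_V$ be any finite-type quotient gerbe of $\Pi$, generated by a single faithful representation $V$, and let $G_V$ be its automorphism group scheme at a $\bar\kappa$-point. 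Since $V$ is virtually unipotent, Theorem~\refall{thm:tannakian-char-u-vu}{2} gives $\frob^{n*}V \in \rep\Gamma\svu$ for some $n$. The action $G_V \to \GL(\frob^{n*}V)$ factors as $G_V \twoheadrightarrow G_V^{(p^n)} \hookrightarrow \GL(V^{(p^n)})$, and its image $G_V^{(p^n)}$, realized as a quotient of a pro-SVU group, is SVU and in particular virtually unipotent. Hence $((G_V)^0_{\rm red})^{(p^n)} = (G_V^{(p^n)})^0_{\rm red}$ is unipotent over $\bar\kappa$; since Frobenius twist is an abstract isomorphism of group schemes over $\bar\kappa$ preserving unipotence, $(G_V)^0_{\rm red}$ itself is unipotent, so $G_V$ is virtually unipotent. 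As this holds for every finite-type quotient of $\Pi$, we conclude $\Pi$ is pro-VU.

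The main obstacle is establishing the Frobenius smoothing lemma in the form needed: proving that for a VU group $G$ of finite type the quotient $G/G[\frob^n]$ is smooth for $n \gg 0$ requires a careful analysis of the infinitesimal structure of finite-type affine group schemes, with additional care when $\kappa$ is not perfect (where one may first descend from the perfection using Theorem~\ref{thm:base-change} and then track the inseparable contribution).
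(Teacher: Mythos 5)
Your proposal follows the same route as the paper in all essentials: both directions are reduced to $\Gamma = \cB_{\kappa}G$, the key mechanism is that a high Frobenius power of $G$ factors through a smooth (hence strongly virtually unipotent) quotient, and Theorem~\refall{thm:tannakian-char-u-vu}{2} (via Proposition~\ref{prop:second}) is used to translate between extended essentially finite sheaves and pullbacks from the SVU gerbe. Your ``Frobenius smoothing lemma'' is exactly the paper's observation that $\frob_{G}^{n}$ factors through $G\red$ for $n \gg 0$, and your endgame for the converse direction --- deducing that $G\ored$ is unipotent from the fact that its Frobenius twist is --- is a legitimate shortcut: the paper instead invokes a more general lemma (an extension of a virtually unipotent group by a finite normal subgroup is virtually unipotent, proved by a longer induction), but in the situation at hand the kernel is the infinitesimal Frobenius kernel, so your twist argument suffices.

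Two points need repair. First, the asserted factorization $G_{V} \twoheadrightarrow G_{V}^{(p^{n})} \hookrightarrow \GL(V^{(p^{n})})$ is false as written: the relative Frobenius is not surjective when $G_{V}$ is non-smooth (already $F\colon \mmu_{p} \arr \mmu_{p}^{(p)}$ is trivial). What is true, and what your argument actually needs, is that the action on $V^{(p^{n})}$ factors through the image of $F^{n}$, i.e.\ through $G_{V}/G_{V}[F^{n}]$, and that over $\overline{\kappa}$ this image coincides with $\bigl(G_{V}^{(p^{n})}\bigr)\red$ for $n \gg 0$; with that correction the identity $\bigl(G_{V}\ored\bigr)^{(p^{n})} = \bigl(\im F^{n}\bigr)\ored$ holds and the rest goes through. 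Second, your reduction ``to an algebraic closure via Theorem~\ref{thm:base-change}'' is not innocent: the class of strongly virtually unipotent groups is weakly very stable but \emph{not} very stable (the paper's Example~\refall{counterexamples}{1} is precisely a counterexample), so neither Theorem~\refall{thm:base-change}{2} nor a naive descent of ``extended essentially finite'' applies along the inseparable part of $\overline{\kappa}/\kappa$; this is why the paper's Step~2 only ever extends along finite Galois extensions. The gap is patchable for the property you care about --- if $E_{\ell}$ is extended essentially finite for $\ell/\kappa$ purely inseparable of height $m$, then $(\frob_{X}^{m})^{*}E$ is, so $E$ is still virtually unipotent --- but this extra Frobenius twist must be said explicitly.
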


\begin{proof}[Proof of Proposition~\ref{prop:tannakian-vu-1}]

Let us check again that the three conditions of Definition~\ref{def:tannakian-class} are satisfied. This is clear for \refpart{def:tannakian-class}{2}.

For condition \refpart{def:tannakian-class}{1}, the only non-obvious thing to prove is that virtually unipotent locally free sheaves form an abelian subcategory; for this we need to show that the cokernel of a homomorphism of virtually unipotent locally free sheaves is again  locally free and virtually unipotent.

Let $\phi\colon F \arr G$ be a homomorphism of virtually unipotent sheaves, and let $Q$ be its cokernel. Choose an integer $n$ such that $(\frob_{X}^{n})^{*}F$ and $(\frob_{X}^{n})^{*}G$ are extended essentially finite sheaves on $X$; then $(\frob_{X}^{n})^{*}Q$ is the cokernel of the pullback homomorphism $(\frob_{X}^{n})^{*}\phi$, so it is an extended essentially finite sheaf on $X$, from Theorem~\refall{thm:tannakian-char-u-vu}{2}.

Choose an fpqc cover $\pi\colon U \arr X$, where $U$ is a noetherian reduced scheme. We have a commutative diagram
   \[
   \begin{tikzcd}
   U \rar{\frob_{U}^{n}} \dar{\pi}& U\dar{\pi}\\
   X \rar{\frob_{X}^{n}} & X\hsmash{\,.}
   \end{tikzcd}
   \]
Now, $\pi^{*}(\frob_{X}^{n})^{*} Q = (\frob_{U}^{n})^{*}\pi^{*}Q$ is locally free on $U$; since $\frob_{U}$ is a homeomorphism, we have that the function $U \arr \NN$ which sends $p \in U$ into $\dim_{k(p)}\bigl(\pi^{*}Q\otimes_{\cO_{U}}k(p)\bigr)$ is locally constant. We deduce that $\pi^{*}Q$ is locally free on $X$; hence $Q$ is a virtually unipotent locally free sheaf on $X$, and this concludes the proof.

Let us now check condition \refpart{def:tannakian-class}{3}. Let $f\colon X \arr \Gamma$ be a morphism such that $f^{*}\colon \rep\Gamma \arr \vect_{X}$ induces an equivalence between $\rep\Gamma$ and virtually  unipotent locally free sheaves on $X$. Let $E$ be a representation of $\Gamma$; we need to show that $E$ is virtually unipotent. We use induction on the rank $r$ of $E$, the result being clear for $r=0$. Assume $r>0$, and fix an integer $n$ such that $f^*(\frob_{\Gamma}^{n})^{*}E= (\frob_{X}^{n})^{*}f^* E$ is an unipotent sheaf on $X$. So there exists a positive integer $s$ and a monomorphism $\cO_X^{\oplus s} \to f^*(\frob_{\Gamma}^{n})^{*}E$. Since $f$ is fully faithful, this lifts into a monomorphism $\cO_\Gamma^{\oplus s} \to (\frob_{\Gamma}^{n})^{*}E$. Because the rank of $(\frob_{\Gamma}^{n})^{*}E / \cO_\Gamma^{\oplus s}$ is less than $r$, it is virtually unipotent. Since virtually unipotent sheaves are stable by extensions, we conclude that $(\frob_{\Gamma}^{n})^{*}E$ is virtually unipotent, which implies in turn that $E$ itself is virtually unipotent.
\end{proof}

\begin{proof}[Proof of Proposition~\ref{prop:tannakian-vu-2}]

The proof is somewhat long, so we split it into three steps.

\step{Step 1: reduction to the case that $\Gamma$ is of finite type}

Let us assume that the Proposition holds for affine gerbes of finite type; let $\Gamma$ be an arbitrary affine gerbe over $\kappa$, and let $V$ be a representation of $\Gamma$.

If $V$ comes from $\Gamma\vu$, we need to show that $V$ is virtually unipotent. Since the pullback of a virtually unipotent representation is virtually unipotent, we can replace $\Gamma$ with $\Gamma\vu$, and assume that $\Gamma$ is virtually unipotent.  Choose a locally full morphism $\Gamma \arr \Delta$, where $\Delta$ is a gerbe of finite type, and a representation $W$ of $\Delta$ whose pullback to $\Gamma$ is isomorphic to $V$. Then $\Delta$ is virtually unipotent, hence $W$ is virtually unipotent, and so $V$ is virtually unipotent.

Conversely, let $V$ be a virtually unipotent representation of $\Gamma$; we need to show that $V$ is a pullback from $\Gamma\vu$. Once again, choose a locally full morphism $\Gamma \arr \Delta$, where $\Delta$ is a gerbe of finite type, and a representation $W$ of $\Delta$ whose pullback to $\Gamma$ is isomorphic to $V$. It is enough to show that there exists a factorization $\Gamma \arr \Delta_{1} \arr \Delta$, where $\Delta_{1}$ is again an affine gerbe of finite type, such that the pullback $W_{1}$ of $W$ to $\Delta_{1}$ is virtually unipotent. In fact, this implies that $W_{1}$ is a pullback from $\Delta_{1}\vu$, and, since the composite $\Gamma \arr \Delta_{1}\vu$ factors through $\Gamma\vu$, the conclusion follow.

Assume that $V$ is an extended essentially finite sheaf. In this case we know from Proposition~\ref{prop:second} that $V$ comes from $\Gamma\svu$; choose a locally full morphism $\Gamma\svu \arr \Theta$ and a representation $Z$ of $\Theta$, whose pullback to $\Gamma$ is isomorphic to $V$. We can choose a locally full morphism $\Gamma \arr \Delta_{1}$, where $\Delta_{1}$ is an affine gerbe of finite type, such that both $\Gamma \arr \Delta$ and $\Gamma \arr \Theta$ factor through $\Delta_{1}$. If $Z_{1}$ denotes the pullback of $Z$ to $\Delta_{1}$, we have that $Z_{1}$ is an extended essentially finite sheaf. The pullback of $W_{1}$ and $Z_{1}$ to $\Gamma$ are both isomorphic to $V$; since the pullback $\rep\Delta_{1} \arr \rep\Gamma$ is fully faithful, it follows that $W_{1}$ and $Z_{1}$ are isomorphic, so $W_{1}$ is an extended essentially finite sheaf.

In the general case, choose a positive integer such that $(\frob_{\Gamma}^{m})^{*}V$ is an extended essentially finite sheaf; choose a factorization $\Gamma \arr \Delta_{1} \arr \Delta$ of the desired type, such that the pullback of $(\frob_{\Delta}^{m})^{*}W$ is an extended essentially finite sheaf; but this pullback is $(\frob_{\Delta_{1}}^{m})^{*}W_{1}$, and the conclusion follows.

\step{Step 2: reduction to the case that\/ $\Gamma(\kappa) \neq \emptyset$}

Since by Lemma~\ref{lem:finite-separable-point} there exists a finite Galois extension $\kappa'$ of $\kappa$ with $\Gamma(\kappa') \neq \emptyset$, the following lemma allows us to assume that $\Gamma(\kappa) \neq \emptyset$, so that $\Gamma = \cB_{\kappa}G$ for some affine group $G$ of finite type over $\kappa$. 

\begin{lemma}\call{lem:reduce-Galois}
Let $\kappa'$ be a finite Galois extension of $\kappa$. Let $V$ be a representation of $\Gamma$.

\begin{enumerate1}

\itemref{1} The representation $V$ is a pullback from $\Gamma\vu$ if and only if the pullback $V_{\kappa'}$ of $V$ to $\Gamma_{\kappa'}$ is a pullback from $(\Gamma_{\kappa'})\vu$.

\itemref{2} The representation $V$ is virtually unipotent if and only the pullback of $V$ to $\Gamma_{\kappa'}$ is virtually unipotent.

\end{enumerate1}
\end{lemma}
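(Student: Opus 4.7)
The proof of part (1) uses the base change theorem together with faithfully flat descent. The class of virtually unipotent group schemes is very stable, since the condition ``$(G_{\overline{k}})\ored$ is unipotent'' only involves the algebraic closure; in particular it is weakly very stable. Applying Theorem~\ref{thm:base-change}(1) to the affine gerbe $\Gamma$, which satisfies Conditions~\ref{conditions-on-X}, one obtains a canonical equivalence $(\Gamma_{\kappa'})^{\mathrm{VU}} \simeq (\Gamma^{\mathrm{VU}})_{\kappa'}$ compatible with the canonical morphisms. Writing $\pi\colon \Gamma \arr \Gamma^{\mathrm{VU}}$ for the (locally full) canonical morphism, the representation $V$ lies in the essential image of $\pi^{*}$ if and only if the counit $\pi^{*}\pi_{*}V \arr V$ is an isomorphism; it is automatically injective by Remark~\ref{rmk:counit-injective}, so only its surjectivity needs to be checked. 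Since pushforward of quasi-coherent sheaves commutes with flat base change, the counit base-changes compatibly to the counit over $\kappa'$, and surjectivity of a map of finite-rank locally free sheaves is fpqc-local; this settles (1).

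The ``only if'' direction of (2) is easy: the absolute Frobenius commutes with base change, so $\bigl((\frob_{\Gamma}^{n})^{*}V\bigr)_{\kappa'} = (\frob_{\Gamma_{\kappa'}}^{n})^{*}V_{\kappa'}$, and extended essential finiteness is preserved under arbitrary pullback of fibered categories (Definition~\ref{def:tannakian-class}(2) applied to the unipotent saturation of the class of essentially finite sheaves). For the converse, suppose $V_{\kappa'}$ is virtually unipotent and fix $n$ such that $W \eqdef (\frob_{\Gamma}^{n})^{*}V$ has $W_{\kappa'}$ extended essentially finite. It suffices to show that $W$ itself is extended essentially finite over $\Gamma$.

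I argue by noetherian induction on $\rk W$. The base change $W_{\kappa'}$ contains a non-zero essentially finite sub-bundle $W_{0}$. For each $\sigma \in \mathrm{Gal}(\kappa'/\kappa)$ the canonical identification $\sigma^{*}W_{\kappa'} = W_{\kappa'}$ (since $W$ descends to $\kappa$) exhibits $\sigma^{*}W_{0}$ as a sub-bundle of $W_{\kappa'}$, and it is essentially finite because the Galois group acts on the Nori gerbe $(\Gamma_{\kappa'})^{\mathrm{N}} \simeq (\Gamma^{\mathrm{N}})_{\kappa'}$ (Theorem~\ref{thm:base-change}(1) for the very stable class of finite group schemes) compatibly with pullback of representations. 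The sum $W_{0}' \eqdef \sum_{\sigma} \sigma^{*}W_{0}$, formed as the image of $\bigoplus_{\sigma} \sigma^{*}W_{0} \arr W_{\kappa'}$ inside the abelian category of essentially finite representations, is then a Galois-invariant essentially finite sub-bundle; by faithfully flat Galois descent it descends to an essentially finite sub-bundle $W_{1} \subseteq W$ over $\Gamma$ (again via Theorem~\ref{thm:base-change}(1)). The quotient $W/W_{1}$ is locally free of strictly smaller rank, and $(W/W_{1})_{\kappa'} = W_{\kappa'}/W_{0}'$ is still extended essentially finite since the class is tannakian and hence closed under quotients; by the inductive hypothesis $W/W_{1}$ is extended essentially finite on $\Gamma$, producing the required filtration of $W$. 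The delicate point is the verification that $W_{0}'$ is genuinely a sub-bundle in both the categorical and the sheaf-theoretic senses, which relies on essentially finite representations forming a tannakian subcategory of $\vect_{\Gamma_{\kappa'}}$ in which images agree with those computed in $\vect$.
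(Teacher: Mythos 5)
Your proof is correct, but it takes a genuinely different route from the paper's in both parts. For \refpart{lem:reduce-Galois}{1} the paper argues by Galois descent of the representation itself: writing $V_{\kappa'}\simeq\pi_{\kappa'}^{*}W$, it transports the canonical Galois-equivariant structure of $V_{\kappa'}$ to $W$ using full faithfulness of $\rep(\Gamma\vu)_{\kappa'}\arr\rep\Gamma_{\kappa'}$, and then descends $W$ to $\Gamma\vu$. Your counit criterion (injectivity from Remark~\ref{rmk:counit-injective}, surjectivity checked fpqc-locally after identifying $(\Gamma_{\kappa'})\vu$ with $(\Gamma\vu)_{\kappa'}$ via Theorem~\ref{thm:base-change} and using flat base change for $\pi_{*}$) is a clean alternative that handles both directions at once and avoids manipulating descent data; both arguments lean on the same base-change theorem. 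For \refpart{lem:reduce-Galois}{2} the paper is much shorter: after the same Frobenius reduction to the extended essentially finite case, it simply observes that the part~(1) argument applies verbatim to $\Gamma\svu$, whose pullback representations are exactly the extended essentially finite ones by Proposition~\ref{prop:second}, so descent of the property is immediate. Your rank induction with the Galois-stable essentially finite sub-bundle $\sum_{\sigma}\sigma^{*}W_{0}$ is valid --- the points you flag (essential finiteness of $\sigma^{*}W_{0}$, images computed in the tannakian subcategory agreeing with sheaf images, closure of the extended class under quotients and extensions) are all covered by Proposition~\ref{prop:char-locally-full} and Proposition~\ref{prop:unipotent-saturation} --- but it re-proves by hand what the identification of extended essentially finite sheaves with pullbacks from $\Gamma\svu$ gives for free; note also that your descent of essential finiteness of $W_{1}$ is itself an instance of the part~(1) argument applied to $\Gamma\nori$, so you have not actually avoided that mechanism, only relocated it.
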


\begin{proof}
Call $G$ the Galois group of $\kappa'/\kappa$. By descent theory, we have an equivalence between $\rep\Gamma$ and the category of $G$-equivariant locally free sheaves on $\Gamma_{\kappa'}$.

For part \refpart{lem:reduce-Galois}{1}, assume that $V_{\kappa'}$ comes from a representation $W$ of $(\Gamma_{\kappa'})\vu$. Notice that by Theorem~\ref{thm:base-change} we have $(\Gamma_{\kappa'})\vu = (\Gamma\vu)_{\kappa'}$. For each $g \in G$ we denote by $g\colon \Gamma_{\kappa'} \arr \Gamma_{\kappa'}$ and $g\colon (\Gamma\vu)_{\kappa'} \arr (\Gamma\vu)_{\kappa'}$
the induced morphisms. The $G$-equivariant structure of $V_{\kappa'}$ gives a collections of isomorphisms $V_{\kappa'} \simeq g^{*}V_{\kappa'}$ of locally free sheaves over $\Gamma_{\kappa'}$; since the pullback $\rep(\Gamma\vu)_{\kappa'} \arr \rep\Gamma_{\kappa'}$ is fully faithful, these give isomorphisms $g^{*}\colon W \simeq g^{*}W$, that give $W$ a structure of $G$-equivariant representation on $(\Gamma\vu)_{\kappa'}$. Thus $W$ descends to a representation of $\Gamma\vu$, whose pullback to $\Gamma$ is isomorphic to $V$.

For part \refpart{lem:reduce-Galois}{2}, first of all notice that the argument above also applies to the $\Gamma\svu$ fundamental gerbe; since the representations coming from the $\Gamma\svu$ are the extended essentially finite representations, this shows that if $V_{\kappa'}$ is an extended essentially finite representation, then so is $V$.

Now fix $V$ a representation of $\Gamma$ so that $V_{\kappa'}$ is virtually unipotent. Since the pullback $(\frob_{\Gamma_{\kappa'}}^{m})^{*}V_{\kappa'}$ is isomorphic to $\bigl((\frob_{\Gamma}^{m})^{*}V\bigr)_{\kappa'}$, we may assume that $V_{\kappa'}$ is an extended essentially finite sheaf, and the conclusion follows. 
\end{proof}

\step{Step 3: the conclusion}

Let us show that every locally free sheaf on $\Gamma = \cB_{\kappa}G$ coming from $\Gamma\vu$ is virtually unipotent; since being virtually unipotent is a property that is stable under pullback, we may assume that $\Gamma = \Gamma\vu$, so that $G$ is virtually unipotent. Let $V$ be a representation of $G$; for $n \gg 0$ we have that the Frobenius morphism $\frob_{G}^{n}\colon G \arr G$ factors through $G\red$. Since $G\red$ is strongly virtually unipotent, we have that that pullback $(\frob_{\cB_{\kappa}G}^{n})^{*}V$ is strongly virtually unipotent, so $V$ is virtually unipotent.

Conversely, assume that $V$ is a virtually unipotent representation of $G$. We can replace $G$ with its image in $\GL(V)$, and assume that $V$ is faithful. Denote by $V^{(n)}$ the pullback of $V$ under the Frobenius map $\frob_{\cB_{\kappa}G}^{n}$; then for $n \gg 0$ the representation $V^{(n)}$ is strongly virtually unipotent. The kernel of $G \arr \GL(V^{(n)})$ is clearly finite; call $H$ the image of $G$ in $\GL(V^{(n)})$; then $V^{(n)}$ is strongly virtually unipotent as a representation of $H$. From Proposition~\ref{prop:second} it follows that $V^{(n)}$ is a pullback of a representation of a strongly virtually unipotent quotient of $H$; but $V^{(n)}$ is a faithful representation, so $H$ is strongly virtually unipotent. The following lemma allows us to conclude.

\begin{lemma}
Let $G$ be an affine algebraic group of finite type over $\kappa$. Assume that there exists a normal finite subgroup scheme $K \subseteq G$ such that $G/K$ is virtually unipotent. Then $G$ is virtually unipotent.
\end{lemma}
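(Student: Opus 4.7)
The plan is to reduce immediately to the case where $\kappa$ is algebraically closed, since virtual unipotence is defined by passing to $\overline{\kappa}$ and since the formation of $K$ and of $G/K$ is compatible with base change. After this reduction I set $G' \eqdef G\ored$, which is a smooth connected subgroup of $G$, and $F \eqdef G' \cap K$, a finite normal subgroup scheme of $G'$ (it is normal because $G'$ normalizes $K$ and normalizes itself). It suffices to show that $G'$ is unipotent.

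The first step is to analyze the composite $G' \hookrightarrow G \twoheadrightarrow G/K$. Its image is a smooth connected subgroup of $G/K$, since homomorphic images of smooth connected group schemes are smooth and connected (the quotient by the kernel is faithfully flat and smoothness descends). This image is therefore contained in $(G/K)\ored$, which is unipotent by hypothesis. The kernel of the composite is exactly $F$, so we obtain an injection $G'/F \hookrightarrow (G/K)\ored$, and $G'/F$ is unipotent.

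The heart of the argument is then the following purely group-theoretic assertion: a smooth connected affine algebraic group $H$ over an algebraically closed field, equipped with a finite normal subgroup scheme $F$ such that $H/F$ is unipotent, must itself be unipotent. To prove it, let $R \eqdef R_{\rmu}(H)$ denote the unipotent radical of $H$, so that $H/R$ is reductive. The product $R \cdot F$ is a normal subgroup scheme of $H$, and $H/(R\cdot F)$ is simultaneously a quotient of the unipotent group $H/F$ and of the reductive group $H/R$, hence both unipotent and reductive, hence trivial. Therefore $H = R\cdot F$, so $H/R$ is a smooth connected group scheme that is also a quotient of the finite group scheme $F$; such a group scheme is trivial, so $H = R$ is unipotent. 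Applied to $H = G'$, this finishes the proof.

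The only delicate points are the two standard facts invoked above: that the homomorphic image of a smooth connected affine algebraic group is again smooth connected (so that the image of $G' \arr G/K$ really lands in $(G/K)\ored$), and that a smooth connected affine algebraic group over an algebraically closed field that is both reductive and unipotent must be trivial. Neither should cause any real obstacle; the main conceptual content is just the clever use of the subgroup $R \cdot F$ to force the ``reductive part'' to collapse.
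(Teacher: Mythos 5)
Your proof is correct, but it takes a genuinely different route from the one in the paper. You reduce to $G\ored$ and invoke the structure theory of smooth connected affine groups over an algebraically closed field: the unipotent radical $R = R_{\rmu}(H)$ with reductive quotient $H/R$, the facts that quotients of unipotent (resp.\ reductive) groups are unipotent (resp.\ reductive), and the observation that $H/(R\cdot F)$ is then both unipotent and reductive, hence trivial, after which $H/R \simeq F/(F\cap R)$ is finite, smooth and connected, hence trivial. The paper instead reduces to $G$ and $H = G/K$ smooth and connected, splits $K$ into its infinitesimal and \'etale parts, kills the infinitesimal part by noting that the relative Frobenius $G \arr G^{[n]}$ factors through $G/K$ for $n \gg 0$ (so $G^{[n]}$, hence $G$, is unipotent), and handles the \'etale part by induction on $\dim G$, pulling back a normal copy of $\ga$ in $H$ and using the classification of one-dimensional smooth connected affine groups to rule out $\gm$. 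Your argument is shorter and avoids both the case split and the induction, at the price of relying on the existence of the reductive quotient and the stability of reductivity under quotients; the paper's argument is more hands-on and uses only the Frobenius-kernel trick and the classification of one-dimensional groups. The only points you should make explicit if you write this up are the two ``standard facts'' you flag yourself, plus the (equally standard) second isomorphism theorem for affine group schemes used to identify $(R\cdot F)/R$ with $F/(F\cap R)$; none of these causes any difficulty over an algebraically closed field.
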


\begin{proof}
We can base change to the algebraic closure of $\kappa$, and assume that $\kappa$ is algebraically closed. Furthermore, if $H = G/K$, then $H\ored = G\ored/(G\ored \cap K)$; so we may assume that $G$, and therefore $H$, is smooth and connected. It follows that $H$ is unipotent; we will show that $G$ is also unipotent.

If $K$ is infinitesimal and we denote by $G^{[n]}$ the pullback under the isomorphism $\frob_{\kappa}^{n}\colon \spec\kappa \arr \spec\kappa$; then the relative Frobenius homomorphism $G \arr G^{[n]}$ factors through $G/K$ for $n \gg 0$. Hence $G^{[n]}$ is a quotient of $G/K$, so it is unipotent; it follows that $G$ is unipotent.

In the general case, by the previous case we can replace $G$ by $G/K^{0}$, and assume that $K$ is étale. Every smooth unipotent group scheme contains a normal subgroup scheme which is isomorphic to $\ga$; let $H' \subseteq H = G/K$ be such a subgroup scheme. Call $G'$ the connected component of the identity in the inverse image of $H'$ in $G$, and $K'$ the inverse image of $G'$ in $K$. Then $G/G'$ is an extension of $H/H'$ by $K/K'$; by induction on the dimension of $G$ we may assume that $G/G'$ is unipotent. Since every extension of unipotent groups is unipotent, it is enough to prove that $G'$ is unipotent. However, $G'$ is a smooth connected $1$-dimensional affine group scheme, so it is isomorphic to $\ga$ or to $\gm$. Since $G/K$ is isomorphic to $\ga$, $G$ cannot be $\gm$, so it is $\ga$. This concludes the proof.
\end{proof}

This ends the proof of Proposition~\ref{prop:tannakian-vu-2}, and of Theorem~\ref{thm:tannakian-characterization-3}.
\end{proof}

\section{Gerbes of multiplicative type and Picard stacks}

\subsection{Groups and gerbes of multiplicative type} Recall the following definitions.

Given an abelian group $A$, we consider the functor $\rD(A)\colon {\aff \kappa}\op \arr \catab$ that sends a $\kappa$-algebra $R$ into the group of homomorphisms of abelian groups $A \arr R^{\times}$. Then $\rD(A)$ is an affine group scheme; if $A$ is finitely generated, then $\rD(A)$ is of finite type. A group scheme over $\kappa$ is called \emph{diagonalizable} if it is isomorphic to some $\rD(A)$.

A group scheme $G$ over $\kappa$ is called \emph{of multiplicative type} if it satisfies one the following equivalent conditions.

\begin{enumerate1}

\item $G_{\kappa\sep}$ is diagonalizable.

\item $G_{\overline{\kappa}}$ is diagonalizable.

\item $G_{\ell}$ is diagonalizable for some extension $\ell$ of $\kappa$.

\item $G$ is commutative, and all representations of $G$ are semisimple.

\end{enumerate1}

The category of groups of multiplicative type is closed under taking subgroups, quotients and projective limits. Hence it coincides with the class of pro-$\cC$-groups, where $\cC$ is the class of multiplicative groups of finite type over $\kappa$.

Let us set up some notation.

If $R$ is a commutative ring, we will denote by $R\et$ the small étale site of $\spec R$, which we think of as the dual of the category of étale $R$-algebras.

If $R$ is a $\kappa$-algebra, we denote by $\cO^{\times}_{R\et}\colon R\et \arr \catab$ the sheaf $A \arr A^{\times}$. We shorten $\cO^{\times}_{\kappa\et}$ in $\cO^{\times}$.

We denote by $\gmet{R}\colon \kappa\et \arr \catab$ the pushforward of $\cO_{R\et}^\times$ to $\kappa\et$ via the morphism $R\et \arr \kappa\et$ induced by the homomorphism $\kappa \arr R$; this is the sheaf that sends an étale $\kappa$-algebra $A$ into the group $(A\otimes_{\kappa}R)^{\times}$. A homomorphism of $\kappa$-algebras $R \arr S$ induces a homomorphism of sheaves $\gmet{R} \arr \gmet{S}$.

Recall that the category of groups of multiplicative type over $\kappa$ is anti-equivalent to the category of sheaves of abelian groups over the small étale site $\kappa\et$ of $\spec k$; we will mostly think of $\kappa\et$ as the dual of the category of étale $\kappa$-algebras. With a group of multiplicative type $G$ we associate the sheaf of characters $\widehat{G}\colon \kappa\et \arr \catab$ defined as the functor sending each étale $\kappa$-algebra $A$ to the group of characters $\hom_{A}(G_{A}, \cO^{\times}_{A})$. 

In the other direction, given a sheaf $F$ on $\kappa\et$ we define a functor
   \[
   \rD(F)\colon \aff\kappa\op \arr \catab
   \]
by sending a $\kappa$-algebra $R$ into the group $\hom(F, \gmet{R})$.

Another way of stating this equivalence is the following. Let $\cG$ be the Galois group of $\kappa\sep$ over $\kappa$. The abelian group $M \eqdef \hom(G_{\kappa\sep}, \gmp{\kappa\sep})$  has a natural continuous action of $\cG$; sending $G$ into $M$ gives an equivalent between the opposite of the category of group schemes of multiplicative type over $\kappa$, with that of abelian groups with a continuous action of $\cG$.

We will call a pro-$\cC$-gerbe, where $\cC$ is the class of groups of multiplicative type and finite type, a \emph{gerbe of multiplicative type}. Equivalently, a gerbe of multiplicative type is an affine gerbe banded by a group of multiplicative type, not necessarily of finite type.

The aim of this section is give a description of gerbes of multiplicative type that is very similar in spirit to that of groups of multiplicative type given above.

\subsection{Picard stacks} In this subsection we recall some known facts about Picard stacks.

By a \emph{Picard stack} $P$ over $\kappa$ we will mean, as in \cite[Exposé~XVIII]{sga4-3}, a stack in strictly commutative 
monoidal groupoids $P \arr \kappa\et$, whose operation $P \times_{\kappa\et} P \arr P$ is denoted by $(\xi, \eta) \arrto \xi\otimes\eta$, such that every object of $P$ is invertible. We will denote by $\epsilon\colon \kappa\et \arr P$ be the section corresponding to the identity; if $A \in \kappa\et$, image of $A$ in $P(A)$ will be denoted by $\epsilon_{A}$. If $\phi\colon A \arr B$ is a homomorphism of étale $\kappa$-algebras, the image of the corresponding arrow $\spec B \arr \spec A$ in $\kappa\et$ will be denoted by $\epsilon_{\phi}\colon \epsilon_{B} \arr \epsilon_{A}$.

We denote by $\pi_{0}(P)$ the sheafification of the presheaf on $\kappa\et$ sending $A$ into the abelian group of isomorphism classes in $P(A)$. By $\pi_{1}(P)$ we denote the sheaf of automorphisms of the identity section $\epsilon\colon \kappa\et \arr P$. Because of the monoidal structure of $P$, the sheaf of groups $\pi_{1}(P)$ is abelian. The inverse image of the identity in $\pi_{0}(P)$ in $P$ is equivalent to the classifying stack $\cB_{\kappa\et}\pi_{1}(P)$.

Furthermore, if $\xi \in P(A)$ we have a homomorphism of groups
   \[
	   \pi_{1}(P)(A) =\aut_{A}(\epsilon_{A}) \arr \aut_{A}(\epsilon_{A}\otimes\xi)
   \]
   sending $\alpha$ into $\alpha\otimes\id_{\xi}$; by using the given isomorphism $\epsilon_{A}\otimes \xi \simeq \xi$ we obtain an group homomorphism $\pi_{1}(P)(A) \arr \aut_{A}\xi$, which is in fact an isomorphism. This gives an equivalence of Picard stacks between the inertia $I_{P}$ and the product $\pi_{1}(P)\times P$. 

   The rigidification $P\thickslash \pi_{1}(P)$ (in the sense of \cite[Appendix A]{dan-olsson-vistoli1}) is a sheaf, isomorphic to $\pi_{0}(P)$. Thus, $P$ can be thought of as a central extension of the sheaf $\pi_{0}(P)$ by the classifying stack $\cB\pi_{1}(P)$.

A homomorphism $P \arr Q$ of Picard stacks is an equivalence if and only if the induced homomorphisms of sheaves $\pi_{1}(P) \arr \pi_{1}(Q)$ and $\pi_{0}(P) \arr \pi_{0}(Q)$ are isomorphisms.

Picard stacks over $\kappa\et$ form a strict $2$-category, whose $1$-arrows are symmetric monoidal base-preserving functors; we call these \emph{homomorphisms of Picard stacks}.

In \cite[Exposé~XVIII]{sga4-3}, Deligne also showed how to describe Picard stacks in terms of complexes of sheaves of abelian groups: a Picard stack is a quotient $[L^{0}/L^{-1}]$ where $L^{\smallbullet}$ is a complex of sheaves on $\kappa\et$ concentrated in degrees $-1$ and $0$, and $L^{-1}$ is an injective sheaf. Such complexes form a $2$-category, the $2$-arrows being given by homotopies. Sending $L^{\smallbullet}$ into $[L^{0}/L^{-1}]$ gives an equivalence between the $2$-category of complexes of this type, and the $2$-category of Picard stacks.

If we fix two complexes of sheaves $G_{0}$ and $G_{1}$, consider the $2$-category of $2$-extensions of $G_{0}$ by $G_{1}$. An object of this $2$-category is a Picard stack $P$, with a fixed isomorphism $\pi_{0}(P) \simeq G_{0}$ and $\pi_{1}(P) \simeq G_{1}$. As a corollary of Deligne's result, we have that equivalence classes of such extensions are parametrized by $\ext^{2}_{\kappa\et}(G_{0}, G_{1})$.

\subsection{$\cO^{\times}$-stacks}

A simple example of a Picard stack is the classifying stack
   \[
   \cB_{\kappa\et}\cO^{\times} \arr \kappa\et\,;
   \]
it is the Picard stack in which every $\cB_{\kappa\et}\cO^{\times}(A)$ has a unique object, whose automorphism group is $A^{\times}$. Notice that it is a stack in the étale topology, because every étale $\kappa$-algebra has trivial Picard group.

We will often shorten $\cB_{\kappa\et}\cO^{\times}$ in $\cB\cO^{\times}$.

\begin{definition}
An $\cO^{\times}$-stack over $\kappa$ is a Picard stack $P$ over $\kappa$, together with a homomorphism of sheaves of groups $\rho_{P}\colon \cO^{\times} \arr \pi_{1}(P)$.

An $\cO^{\times}$-stack is \emph{rigid} if $\rho_{P}\colon \cO^{\times}_{\kappa\et} \arr \pi_{1}(P)$ is an isomorphism.
\end{definition}

Equivalently, an $\cO^{\times}$-stack is a Picard stack with a homomorphism of Picard stacks $\cB_{\kappa\et}\cO^{\times} \arr P$.

There is an obvious $2$-category of $\cO^{\times}$-stacks over $\kappa$: a homomorphism of $\cO^{\times}$-stacks $\phi\colon P \arr Q$ is a homomorphism of Picard stacks, such that the composite $\cO^{\times} \xarr{\rho_{P}} \pi_{1}(P) \xarr{\pi_{1}(\phi)} \pi_{1}(Q)$ equals $\rho_Q$. 

A homomorphism $P \arr Q$ of rigid $\cO^{\times}$-stacks is an equivalence if and only if the induced homomorphism  $\pi_{0}(P) \arr \pi_{0}(Q)$ is an isomorphism.

Every rigid $\cO^{\times}$-stack is a quotient stack $[L^{0}/L^{-1}]$, , with a fixed isomorphism $\ker(L^{-1}\to L^{0}) \simeq \cO^{\times}$. We will call complexes of this type \emph{$\cO^{\times}$-complexes}. They form a $2$-category; the $2$-arrows are given by homotopies. We have a functor from the $2$-category of $\cO^{\times}$-complexes into $\cO^{\times}$-stacks that sends $L^{\smallbullet}$ to $[L^{0}/L^{-1}]$; restricting this functor to $\cO^{\times}$-complexes $L^{\smallbullet}$ with $L^{-1}$ injective gives an equivalence of $2$-categories.

This implies that given a sheaf $F$ on $\kappa\et$, equivalence classes of rigid $\cO^{\times}$-stacks $P$ with a fixed isomorphism $\pi_{0}(P) \simeq F$ are classified by $\ext^{2}(F, \cO^{\times})$, where the $\ext^{2}$ is taken in the category of sheaves on $\kappa\et$.

\subsection{From gerbes to rigid $\cO^{\times}$-stacks} Let $X \arr \aff \kappa$ be a fibered category. If $A$ is an étale $\kappa$ algebra, the composite $X_{A} \arr \aff A \arr \aff \kappa$ makes $X_{A}$ into a category fibered over $\aff\kappa$.

The  Picard stack $\underpic_{X/\kappa} = \underpic_{X} \arr \kappa\et$ is the fibered category corresponding to the pseudo-functor on $\kappa\et$ that sends $A$ into the groupoid $\underpic(X_{A})$ of invertible sheaves on $X_{A}$. It can be conveniently defined as the stack of morphism $X_{A} \arr \cB_{\kappa}\gm$.

The stack $\underpic_{X}$ has a canonical structure of $\cO^{\times}$-stack. Furthermore, if $X$ is concentrated and $\H^{0}(X, \cO) = \kappa$, then $\underpic_{X}$ is rigid.

In particular, if $\Gamma$ is a gerbe of multiplicative type over $\kappa$, its Picard stack $\underpic_{\Gamma} \arr \kappa\et$ is a rigid $\cO^{\times}$-stack.
Notice that $\underpic_{\aff\kappa}$ is equivalent to $\cB\cO^{\times}$.

A morphism of fibered categories $X \arr Y$ over $\aff\kappa$ yields a homomorphism of $\cO^{\times}$-stacks $\underpic_{Y} \arr \underpic_{X}$; this defines a strict $2$-functor from the $2$-category of fibered categories over $\aff \kappa$ to the $2$-category of $\cO^{\times}$-stacks over $\kappa$.

If $R$ is a $\kappa$ algebra, we set $\underpic_{R} \eqdef \underpic_{\spec R}$; then $\pi_{1}(\underpic_{R}) = \gmet{R}$.

\begin{proposition}\label{prop:underpic-split}
Let $G \arr \spec\kappa$ be a group of multiplicative type. Then
   \[
   \underpic_{\cB_{\kappa} G} \simeq \cB_{\kappa\et}\cO^{\times} \times \widehat{G}\,.
   \]
\end{proposition}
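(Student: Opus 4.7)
The plan is to describe explicitly, for each \'etale $\kappa$-algebra $A$, the groupoid of invertible sheaves on $(\cB_{\kappa}G)_{A} = \cB_{A}G_{A}$, and check that this description is compatible with tensor product and pullback along arrows in $\kappa\et$.

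First, I would unpack the definition: an invertible sheaf on $\cB_{A}G_{A}$ is a morphism of fibered categories $\cB_{A}G_{A} \arr \cB_{A}\gm$, which by descent along the trivial $G_{A}$-torsor $\spec A \arr \cB_{A}G_{A}$ corresponds to a $G_{A}$-equivariant invertible sheaf on $\spec A$. Since any étale $\kappa$-algebra $A$ is a finite product of finite separable field extensions, $\pic(A) = 0$, so such a $G_{A}$-equivariant invertible sheaf is non-canonically isomorphic to $(A, \chi)$, where $\chi\colon G_{A} \arr \gmp{A}$ is a character, i.e.\ an element of $\widehat{G}(A)$. More precisely, sending the equivariant line bundle $(L,\chi)$ into the pair consisting of $L$ (viewed as an object of $\underpic(\spec A)$) and $\chi$ yields an equivalence of symmetric monoidal groupoids
   \[
   \underpic(\cB_{A}G_{A}) \simeq \underpic(\spec A) \times \widehat{G}(A)\,,
   \]
natural in $A$, where the tensor product on the right is $(L,\chi)\otimes(L',\chi') = (L\otimes_{A}L', \chi\cdot\chi')$.

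Passing to the associated stacks over $\kappa\et$, the first factor becomes $\underpic_{\aff\kappa}$, which is equivalent to $\cB\cO^{\times}$ by the remark preceding the proposition, and the second factor is already an \'etale sheaf of abelian groups (the character sheaf $\widehat{G}$), so the naturality in $A$ gives the desired equivalence of $\cO^{\times}$-stacks
   \[
   \underpic_{\cB_{\kappa}G} \simeq \cB\cO^{\times} \times \widehat{G}\,.
   \]
One can also describe this equivalence intrinsically by combining the pullback $\cB\cO^{\times} = \underpic_{\aff\kappa} \arr \underpic_{\cB_{\kappa}G}$ induced by the structure morphism $\cB_{\kappa}G \arr \spec \kappa$ with the character homomorphism $\widehat{G} \arr \underpic_{\cB_{\kappa}G}$ sending $\chi$ to the associated line bundle $L_{\chi}$ on $\cB_{\kappa}G$; the two maps commute up to canonical isomorphism, and their combined effect on $\pi_{0}$ and $\pi_{1}$ is an isomorphism.

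The main point requiring care is the verification that line bundles on $\cB_{A}G_{A}$ are classified by $\widehat{G}(A) \times \underpic(\spec A)$ as monoidal groupoids, i.e.\ that arrows between $(L,\chi)$ and $(L',\chi')$ exist only when $\chi = \chi'$ and are then given by $G_{A}$-invariant $A$-module isomorphisms $L \arr L'$; this is straightforward from the fact that the character $\chi$ through which $G_{A}$ acts on the fiber over the trivial torsor is an isomorphism invariant of the equivariant line bundle. Everything else is formal bookkeeping with stackification and with the pointwise product decomposition.
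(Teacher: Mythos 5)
Your proposal is correct and follows essentially the same route as the paper: both identify an invertible sheaf on $\cB_{A}G_{A}$ with a pair consisting of an invertible sheaf on $\spec A$ and a character of $G_{A}$, and then use $\pic(A)=0$ for \'etale $\kappa$-algebras to recognize the first factor as $\cB\cO^{\times}(A)$. The only cosmetic difference is that the paper cites Giraud's Remarque~1.6.7 for the classification of morphisms $\cB_{A}G_{A} \arr \cB_{A}\gmp{A}$, whereas you derive the same classification directly by descent along the atlas $\spec A \arr \cB_{A}G_{A}$.
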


\begin{proof}
There is an equivalence between invertible sheaves on $\cB_{A}G_{A} = (\cB_{\kappa}G)_{A}$ and morphisms $\cB_{A}G_{A} \arr \cB_{A}\gmp{A}$ over $\aff A$. If $A$ is an étale $\kappa$-algebra, by \cite[III Remarque~1.6.7]{giraud} the category of such morphisms is equivalent to the category of global object of the stack $\cB_{\kappa}\gm(A) \times \widehat{G}(A)  \simeq (\cB_{\kappa\et}\cO^{\times} \times \widehat{G})(A)$, and this gives the desired equivalence.
\end{proof}

For general gerbe we have the following.

\begin{proposition}\label{prop:band-picard}
Let $\Gamma$ be a gerbe of multiplicative type; call $G$ its band. Then $\pi_{0}(\underpic_{\Gamma})$ is canonically isomorphic to $\widehat{G}$.

\end{proposition}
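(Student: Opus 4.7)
The plan is to construct a canonical morphism of sheaves $\pi_{0}(\underpic_{\Gamma}) \to \widehat{G}$ on $\kappa\et$ and then show it is an isomorphism by étale-local reduction to the split case handled by Proposition~\ref{prop:underpic-split}.

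First, I would construct the map. Let $A$ be an étale $\kappa$-algebra and let $L$ be an invertible sheaf on $\Gamma_{A}$, regarded as a morphism $L\colon \Gamma_{A}\arr \cB_{A}\gmp{A}$. Suppose $\xi\in \Gamma(A)$ is an object; then the induced homomorphism $\underaut_{A}\xi\arr \underaut_{A}L(\xi)=\gmp{A}$ is a character of $\underaut_{A}\xi$. Because $\Gamma$ has band $G$ and $G$ is abelian (so inner automorphisms of $G$ are trivial), the sheaf of automorphism groups of local objects of $\Gamma$ is canonically identified with $G$, independent of the choice of object. This yields a well-defined element of $\widehat{G}(A)=\hom_{A}(G_{A},\gmp{A})$, which depends only on the isomorphism class of $L$. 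In case $\Gamma(A)$ is empty, the construction still makes sense étale-locally: pick an étale cover $A\arr B$ such that $\Gamma(B)\neq\emptyset$ (which exists by Lemma~\ref{lem:finite-separable-point} applied after base change), carry out the construction over $B$, and observe that the two pullbacks to $B\otimes_{A}B$ agree by the canonicity above, so the element of $\widehat{G}(B)$ descends to $\widehat{G}(A)$ by the sheaf property. Passing to isomorphism classes and then sheafifying produces the desired morphism $\Phi\colon \pi_{0}(\underpic_{\Gamma})\arr \widehat{G}$.

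Next, I would check that $\Phi$ is an isomorphism. Both source and target are sheaves on $\kappa\et$, so it suffices to verify this after passing to a finite separable extension $\kappa'/\kappa$ with $\Gamma(\kappa')\neq\emptyset$, whose existence is guaranteed by Lemma~\ref{lem:finite-separable-point}. After such a base change, $\Gamma_{\kappa'}\simeq \cB_{\kappa'}G_{\kappa'}$, and Proposition~\ref{prop:underpic-split} gives $\underpic_{\cB_{\kappa'}G_{\kappa'}}\simeq \cB_{\kappa'\et}\cO^{\times}\times \widehat{G_{\kappa'}}$, so in particular $\pi_{0}(\underpic_{\Gamma_{\kappa'}})\simeq \widehat{G}|_{\kappa'\et}$. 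A direct inspection of the equivalence in the proof of Proposition~\ref{prop:underpic-split} (which sends an invertible sheaf on $\cB_{A'}G_{A'}$ to its underlying character of $G_{A'}$) shows that this isomorphism coincides with the base change $\Phi_{\kappa'}$ of $\Phi$. Hence $\Phi$ is an isomorphism étale-locally on $\kappa$, so it is an isomorphism.

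The only mildly delicate point is making the character construction canonical without fixing a local object; the main obstacle there is rhetorical rather than substantive, and is resolved by the observation that the band being abelian trivializes the inner-automorphism ambiguity that otherwise obstructs the identification of $\underaut_{A}\xi$ with $G_{A}$. Everything else reduces to Proposition~\ref{prop:underpic-split} plus étale descent.
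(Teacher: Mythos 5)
Your proposal is correct and follows essentially the same route as the paper: identify $\pi_{0}(\underpic_{\Gamma})$ with $\widehat{G}$ in the neutral case via Proposition~\ref{prop:underpic-split}, observe that the abelianness of the band makes the identification independent of the chosen object (hence canonical and compatible with restriction), and descend along a finite separable splitting extension. Your version is somewhat more explicit about the construction of the comparison map via characters of automorphism group schemes, but the underlying argument is the same.
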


\begin{proof}
Suppose that $\Gamma(\kappa) \neq \emptyset$; then the choice of an object $\xi$ of $\Gamma(\kappa)$ gives an isomorphism between $\pi_{0}(\underpic_{\Gamma})$
and $\widehat{G}$, where $G=\underaut_{\kappa}\xi$. This is independent of the choice of $\xi$; it is also functorial under finite separable extensions of the base field.

In the general case, for each separable extension $\kappa'/\kappa$ such that $G(\kappa') \neq \emptyset$ we obtain an isomorphism between the band of $\Gamma_{\kappa'}$, which is the restriction of the band of $\Gamma$, and $\widehat{G}_{\kappa'}$. These isomorphism are canonical, and descend to the desired isomorphism between the band of $\Gamma$ and $\widehat{G}$.
\end{proof}

Consider a cofiltered system $\{\Gamma_{i}\}_{i \in I}$ of affine gerbes over $\kappa$, and set $\Gamma \eqdef \projlim_{i}\Gamma_{i}$. For each arrow $j \arr i$ in $I$ we have a morphism $\Gamma_{j} \arr \Gamma_{i}$, hence a homomorphism of $\cO^{\times}$-stacks $\underpic_{\Gamma_{i}} \arr \underpic_{\Gamma_{j}}$. This gives a strict $2$-functor from $I\op$ to Picard stacks. The projection $\projlim \Gamma_{i} \arr \Gamma_{i}$ induces a homomorphism of Picard stacks $\underpic_{\Gamma_{i}} \arr \underpic_{\projlim_{i}\Gamma_{i}}$, and consequently a homomorphism $\indlim_{i}\underpic_{\Gamma_{i}} \arr \underpic_{\projlim_{i}\Gamma_{i}}$.

\begin{proposition}\label{prop:inductive-limit}
The homomorphism $\indlim_{i}\underpic_{\Gamma_{i}} \arr \underpic_{\projlim_{i}\Gamma_{i}}$ above is an equivalence.
\end{proposition}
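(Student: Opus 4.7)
The plan is to verify that the natural homomorphism of Picard stacks induces isomorphisms on $\pi_{0}$ and $\pi_{1}$; a homomorphism of Picard stacks with this property is an equivalence, as recalled in the general discussion of Picard stacks above.

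For $\pi_{1}$: each $\underpic_{\Gamma_{i}}$ is a rigid $\cO^{\times}$-stack, and so is $\underpic_{\projlim_{i}\Gamma_{i}}$, because $\projlim_{i}\Gamma_{i}$ is an affine gerbe in the sense of this paper (the projective limit of the affine automorphism group schemes of the $\Gamma_{i}$ is again an affine group scheme), and therefore satisfies $\H^{0}(\cO) = \kappa$. Via Deligne's equivalence between Picard stacks and two-term complexes of sheaves in degrees $-1$ and $0$, filtered $2$-colimits of Picard stacks are compatible with the formation of $\pi_{0}$ and $\pi_{1}$, since these are the cohomology sheaves in degrees $0$ and $-1$ and filtered colimits of sheaves of abelian groups are exact. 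Since every transition map in the system $\{\underpic_{\Gamma_{i}}\}$ is a homomorphism of $\cO^{\times}$-stacks, hence the identity on $\pi_{1} = \cO^{\times}$, we get $\pi_{1}(\indlim_{i} \underpic_{\Gamma_{i}}) = \cO^{\times}$, and the comparison map is the identity of $\cO^{\times}$.

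For $\pi_{0}$ the key input is \cite[Proposition~3.7]{borne-vistoli-fundamental-gerbe}, used elsewhere in the present paper, stating that for a cofiltered system of affine gerbes $\{\Delta_{i}\}$ and an affine gerbe $\Theta$ of finite type the natural functor $\indlim_{i}\hom(\Delta_{i}, \Theta) \to \hom(\projlim_{i}\Delta_{i}, \Theta)$ is an equivalence of groupoids. Fix an \'etale $\kappa$-algebra $A$; base change the system to $A$ and take $\Theta = \cB_{A}\gm$, which is an affine gerbe of finite type over $A$. Since invertible sheaves on a fibered category $Y$ over $\aff{A}$ are the same as morphisms $Y \arr \cB_{A}\gm$, this yields an equivalence $\indlim_{i}\underpic\bigl((\Gamma_{i})_{A}\bigr) \simeq \underpic\bigl((\projlim_{i}\Gamma_{i})_{A}\bigr)$ of groupoids, and taking isomorphism classes gives
\[
\indlim_{i}\pic\bigl((\Gamma_{i})_{A}\bigr) \simeq \pic\bigl((\projlim_{i}\Gamma_{i})_{A}\bigr).
\]
Both sides are \'etale sheaves in $A$ (morphisms to the fpqc stack $\cB_{\kappa}\gm$ form a stack), so varying $A$ we obtain the required isomorphism $\indlim_{i}\pi_{0}(\underpic_{\Gamma_{i}}) \simeq \pi_{0}(\underpic_{\projlim_{i}\Gamma_{i}})$ of sheaves on $\kappa\et$.

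The main obstacle is the formal bookkeeping concerning filtered $2$-colimits in the $2$-category of Picard stacks: one must confirm that such colimits, computed via Deligne's complex description, really do have the expected compatibility with $\pi_{0}$ and $\pi_{1}$, and that no further sheafification is needed after passing to sections over \'etale $\kappa$-algebras. Once this is settled, the substantive content of the proposition is just the finite-type hom-limit statement from \cite{borne-vistoli-fundamental-gerbe} applied to the classifying gerbe $\cB_{A}\gm$.
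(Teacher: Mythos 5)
Your proof is correct and rests on the same key input as the paper's one-line proof, namely the statement from \cite{borne-vistoli-fundamental-gerbe} that $\indlim_{i}\hom(\Gamma_{i},\Delta)\arr\hom(\projlim_{i}\Gamma_{i},\Delta)$ is an equivalence for $\Delta$ of finite type, applied to $\Delta=\cB\gm$ over each \'etale $\kappa$-algebra. In fact the detour through $\pi_{0}$ and $\pi_{1}$ is not needed: that equivalence already identifies the fiber groupoids of the colimit prestack with those of $\underpic_{\projlim_{i}\Gamma_{i}}$ over every object of $\kappa\et$, which gives the equivalence of Picard stacks directly.
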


\begin{proof}
This follows from \cite[Proposition~3.8]{borne-vistoli-fundamental-gerbe}.
\end{proof}

\subsection{From rigid $\cO^{\times}$-stacks to gerbes} We can also go from $\cO^{\times}$-stacks $P$ to fibered categories over $\aff\kappa$ in the following way. Let $P$ be an $\cO^{\times}$-stack. Let us define a fibered category $\underger_{P} \arr \aff\kappa$ as follows. If $R$ is a $\kappa$-algebra, an object of $\underger_{P}(R)$ is a homomorphism of $\cO^{\times}$-stacks $P \arr \underpic_{R}$. A morphism $\spec S \arr \spec R$ in $\aff\kappa$ induces a pullback functor $\underpic_{R} \arr \underpic_{S}$. Composing with this gives the function $R \mapsto \underger_{P}(R)$ a structure of pseudo-functor; we define $\underger_{P}$ to be the associated fibered category. (The notation here is a little improper, as $\underger_{P}$ is not necessarily a gerbe, if $P$ is not rigid.)

This construction gives a strict $2$-functor from the $2$-category of $\cO^{\times}$-stacks to that of stacks in groupoids on $\aff\kappa$.

\begin{proposition}\label{prop:is-a-gerbe}
	Assume that $P$ is a rigid $\cO^{\times}$-stack; then the fibered category $\underger_{P}$ is an affine gerbe. Its band is the group scheme of multiplicative type $\rD(\pi_{0}(P))$ corresponding to $\pi_{0}(P)$.
\end{proposition}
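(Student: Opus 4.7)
The plan is to verify the definition of affine gerbe for $\underger_P$, namely that it is a stack in the fpqc topology with affine diagonal, is locally nonempty and locally transitive, and admits an affine chart, and then to read off the band from the automorphism computation.

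First I would observe that $\underger_P$ satisfies fpqc descent in $R$: this follows because $\underpic_R$ itself has that property, being defined as the groupoid of morphisms $\spec R \to \cB_\kappa\gm$, and forming $\cO^\times$-stack homomorphisms out of a fixed source $P$ preserves descent.

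Next I would compute the sheaf of automorphisms of an object $f \in \underger_P(R)$, as this yields both the affineness of the diagonal and the band simultaneously. A monoidal natural transformation $\alpha\colon f \Rightarrow f$ is determined by a compatible family of elements $\alpha_\xi \in \aut_{\underpic_R}(f(\xi))$; monoidality forces $\alpha_\xi$ to depend only on the class of $\xi$ in $\pi_0(P)$ and to be multiplicative in that class, so $\alpha$ amounts to a homomorphism of sheaves $\pi_0(P) \to \pi_1(\underpic_R) = \gmet{R}$. The further compatibility with the $\cO^\times$-structure is automatic because $P$ is rigid, hence
\[
\underaut(f) = \underhom_{\kappa\et}\bigl(\pi_0(P),\, \gmet{R}\bigr) = \rD(\pi_0(P))(R).
\]
Thus the automorphism sheaf is represented by the affine group scheme of multiplicative type $G \eqdef \rD(\pi_0(P))$; in particular the diagonal of $\underger_P$ is affine, and once local non-emptiness and local transitivity are established, $G$ is automatically identified with the band.

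The remaining task is to show local existence and local isomorphism of objects. Presenting $P = [L^0/L^{-1}]$ with $L^{-1}$ injective and $\ker(L^{-1} \to L^0) = \cO^\times$, and presenting $\underpic_R$ by an analogous two-term complex with $\pi_1 = \gmet{R}$, a homomorphism of $\cO^\times$-stacks $P \to \underpic_R$ corresponds, up to homotopy, to a morphism of such complexes extending the structural map $\cO^\times \to \gmet{R}$. The obstruction to constructing one lies in $\ext^2_{\kappa\et}(\pi_0(P), \gmet{R})$ and the obstruction to two such homomorphisms being isomorphic lies in $\ext^1_{\kappa\et}(\pi_0(P), \gmet{R})$; both vanish after replacing $R$ by a sufficiently large faithfully flat $\kappa$-algebra, built from an injective resolution of $\cO^\times$. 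The key point here is that the sheaves $\gmet{R}$ for varying $\kappa$-algebras $R$ form a generating family on $\kappa\et$, so every sheaf of abelian groups embeds into a product of such, and iterating this construction yields an $R$ over which both obstructions die.

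The main obstacle is precisely this last step, namely realizing enough of an injective resolution of $\cO^\times$ inside $\gmet{R}$ for a single fpqc $\kappa$-algebra $R$: this is what forces the argument to leave the realm of finite-type gerbes and work with pro-gerbes of multiplicative type. Once that is handled, the three gerbe axioms hold and the automorphism computation above identifies the band with $\rD(\pi_0(P))$, completing the proof.
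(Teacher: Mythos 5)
Your overall skeleton is the right one, and your computation of the automorphism sheaf, hence of the band, is a legitimate direct alternative to the paper's derivation via Deligne's complexes: both arrive at $\underaut(f) = \underhom_{\kappa\et}\bigl(\pi_{0}(P), \gmet{R}\bigr) = \rD(\pi_{0}(P))(R)$. The genuine gap is in the step you yourself flag as the main obstacle: local non-emptiness and local transitivity. You reduce these to the vanishing of $\ext^{2}_{\kappa\et}(\pi_{0}(P), \gmet{S})$ and $\ext^{1}_{\kappa\et}(\pi_{0}(P), \gmet{S})$ for a suitable faithfully flat $R \subseteq S$, and justify the existence of such an $S$ by asserting that the sheaves $\gmet{R}$ form a generating family, so that iterating an embedding into products of them kills both obstructions. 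This does not work as stated: embedding sheaves into products of $\gmet{R}$'s does not make any single $\gmet{S}$ injective or acyclic for $\hom(\pi_{0}(P), -)$, and a product of members of a generating family is not injective. Worse, identifying the obstruction groups as $\ext^{i}_{\kappa\et}(\pi_{0}(P), \gmet{R})$ already presupposes that $\underpic_{R}$ is presented by a truncation of an injective resolution of $\gmet{R}$ on $\kappa\et$, i.e.\ that the pushforward from $R\et$ to $\kappa\et$ is exact; this fails for general $R$ (the higher direct images of $\cO^{\times}_{R\et}$ do not vanish), so the bookkeeping is circular as written.

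What is actually needed --- and what the paper spends a full lemma proving --- is that every $\kappa$-algebra $R$ admits a faithfully flat extension $R \subseteq S$ such that (a) every faithfully flat \'etale $S \to T$ has a retraction, which makes the pushforward to $\kappa\et$ exact, and (b) $S^{\times}$ is divisible, which, after reducing to $\kappa$ separably closed (where sheaves on $\kappa\et$ are just abelian groups), makes $\gmet{S}$ an injective sheaf and splits the resolution. Point (a) is a pro-\'etale-style weakly contractible cover; point (b) requires, in characteristic $p$, adjoining $p$-power roots of units by a non-\'etale, purely inseparable limit, precisely because $p$-divisibility cannot be reached by \'etale covers. Neither construction appears in your sketch. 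You also omit the reduction to $\kappa$ separably closed (which rests on the easy compatibility of $\underger_{P}$ with separable base change); that reduction is what makes non-emptiness cheap, since over a separably closed field $\ext^{2}$ of abelian groups vanishes because $\ZZ$ has global dimension one, so $P$ splits as $\cB\cO^{\times} \times \pi_{0}(P)$ and the projection to $\cB\cO^{\times}$ already yields an object of $\underger_{P}$.
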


\begin{proof} We will divide the proof into several steps.

\step{Step 1: $\underger_{P}$ is an fpqc stack} This is a straightforward exercise in descent theory.

\step{Step 2: formation of $\underger_{P}$ commutes with separable algebraic extensions of the base field} Suppose that $\ell$ is a separable algebraic extension of $\kappa$. The obvious functor $\kappa\et \arr \ell\et$ sending $A$ into $A\otimes_{\kappa}\ell$ induces a morphism of sites $\phi\colon \ell\et \arr \kappa\et$, which gives a pullback functor $\phi^{-1}$ from sheaves, or stacks, over $\kappa\et$ to sheaves, or stacks, over $\ell\et$. Since $\phi^{-1}\cO_{\kappa\et} = \cO_{\ell\et}$, we have that $P_{\ell} \eqdef \spec\ell \times_{\spec\kappa}P$ is an $\cO^{\times}$-stack over $\ell$. It is immediate to conclude that $\underger_{P_{\ell}} = \spec\ell\times_{\spec\kappa}\underger_{P}$.

\step{Step 3: describing $\underger_{P}$ in terms of complexes of sheaves}


Let $R$ be a $\kappa$-algebra; call $\phi\colon R\et \arr \kappa\et$ the morphism of sites induced by the homomorphism $\kappa \arr R$. Fix an embedding $\cO^{\times}_{R\et}\subseteq I^{-1}$ into an injective sheaf, set $I^{0} \eqdef I^{-1}/\cO^{\times}_{R\et}$, and extend the projection $I^{-1}\arr I^{0}$ into a complex $I^{\smallbullet}$. Then, under Deligne's correspondence the Picard stack $\underpic_{R/\kappa}$ corresponds to the complex $\phi_{*}I^{\smallbullet}$.

Now, represent $P$ as a quotient $[L^{0}/L^{-1}]$, were $L^{\smallbullet}$ is an $\cO^{\times}$-complex. From the adjunction between pushforward and pullback, we obtain the that $\underger_{P}(R)$ is the equivalent to the category that has the following description.

\begin{enumeratea}

\item An object is an homomorphism $\xi\colon \phi^{-1}L^{\smallbullet} \arr I^{\smallbullet}$ of sheaves on $\kappa\et$, such that $H^{-1}(\xi)\colon \phi^{-1}\cO^{\times}_{\kappa\et} \arr \cO^{\times}_{R\et}$ is the canonical homomorphism.

\item An arrow $\xi \arr \eta$ is a homomorphism of graded sheaves $a\colon \phi^{-1}L^{\smallbullet} \arr I^{\smallbullet}[-1]$ such that $\eta - \xi = \delta_{I^{\smallbullet}}\circ a + a\circ \delta_{\phi^{-1}L^{\smallbullet}}$.

\end{enumeratea}

\step{Step 4: identifying the sheaf of automorphisms of an object of $\underger_{P}(R)$} From the description above it is immediate to check the automorphism group of an object $\xi$ is the group 
   \begin{align*}
   \underhom_{R\et}\bigl(\coker(\phi^{-1}L^{-1}\rightarrow \phi^{-1}L^{0}), \cO^{\times}_{R}\bigr)
   &= \underhom_{R\et}\bigl(\phi^{-1}\pi_{0}(P), \cO^{\times}_{R}\bigr)\\
   &= \underhom_{\kappa\et}\bigl(\pi_{0}(P), \gmet{R}\bigr) \\
   &= \rD(\pi_{0}(P)) 
   \end{align*}
as claimed.

\step{Step 5: $\underger_{P}$ is non-empty} Let us show that $\underger_{P}(\kappa\sep) \neq \emptyset$, where $\kappa\sep$ is a separable closure of $\kappa$. By step~2 we can base-change to $\kappa\sep$, and assume that $\kappa$ is separably closed. In this case $\ext^{2}\bigl(\pi_{0}(P), \cO^{\times}\bigr) = 0$, so that $P \simeq \cB\cO^{\times}\times \pi_{0}(P)$, and composing the projection $P \arr \cB\cO^{\times}$ with the natural homomorphism of Picard stacks $\cB\cO^{\times} \arr \underpic_{\spec \kappa}$ we obtain an object of $\underger_{P}(\kappa)$.

\step{Step 6: two objects of\/ $\underger_{P}$ are fpqc locally isomorphic} Once again, we may assume that $\kappa$ is separably closed, so that $P$ is of the form $\cB\cO^{\times} \times \pi_{0}(P)$. Let $R$ be a $\kappa$-algebra, $\xi$ and $\eta$ two objects of $\underger_{P}(R)$.

%

\begin{lemma}\call{lem:acyclic-cover}
For any $\kappa$-algebra $R$, there exists a faithfully flat extension $R \subseteq S$, such that
\begin{enumeratea}

\itemref{1} every faithfully flat étale ring homomorphism $S \arr T$ has a retraction $T \arr S$, and

\itemref{2} the abelian group $S^{\times}$ is divisible.

\end{enumeratea}
\end{lemma}

\begin{proof}
For part~\refpart{lem:acyclic-cover}{1} we will switch to the language of affine schemes, as this seems to make the proof somewhat more intuitive. Set $X \eqdef \spec R$. First of all, let us show there exists an faithfully flat map $Y \arr X$ where $Y$ is an affine scheme with the property that every étale surjective map $Z \arr Y$, where $Z$ is an affine scheme, has a section $Y \arr Z$.

The construction is straightforward, and mirrors the standard construction of the strict henselization at a point. The existence of $S$ also follows from \cite[Corollary 2.2.14]{bhatt-scholze-pro-etale}, for the convenience of the reader, we give a direct proof. For each $p\in X$ choose a separable closure $\kappa(p)\sep$, and set $X_{0} \eqdef \sqcup_{p \in X}\spec\bigl(\kappa(p)\sep\bigr)$. Let us define a category  $I$ whose objects $(U, f, a)$ are commutative diagrams 
   \[
   \begin{tikzcd}
   &U\dar{f}\\
   X_{0}\rar\ar[ur, "a"]&X
   \end{tikzcd}
   \]
   in which the bottom morphism is the obvious map, and the vertical one is an étale map with $U$ affine. A map from $(U, f, a)$ to $(V, g, b)$ is, naturally, a morphism $\phi\colon U \arr V$ with $f = g\phi$ and $b = \phi a$. This category has fibered products and a terminal object, so it is cofiltered. Define $Y = \projlim_{I}U$; the natural map $Y \arr X$ is faithfully flat. If $Z \arr Y$ is an étale surjective map with $Z$ affine, then $Z$ is a pullback $W\times_{U} Y$ of an étale surjective map $\phi\colon W \arr U$ for some object $(U, f, a)$ of $I$ (\cite[Tag 00U2]{stacks-project}). The map $a\colon X_{0} \arr U$ lifts to a map $b\colon X_{0} \arr W$; so $\phi$ is an arrow $(W, f\phi , b) \arr (U, f, a)$. Hence $Y \arr U$ lifts to $Y \arr W$, and this gives a section $Y \arr Z$. 

Now, set $S \eqdef \cO(Y)$; this $S$ has property \refpart{lem:acyclic-cover}{1}. If $\cha\kappa = 0$ and $f \in S^{\times}$, the extension $S \subseteq S[x]/(x^{n} - f)$ is étale, hence it has a section $S[x]/(x^{n} - f) \arr S$, which means exactly that $f$ has an $n\th$ root, so $S$ also has property~\refpart{lem:acyclic-cover}{2}.

Assume that $\cha \kappa = p > 0$; in this case the argument above only works when $p$ does not divide $n$, and we need to extend $S$ further, so that it has $p^{n}$-th roots of all the elements of $S^{\times}$ for all $n$. Let us start by replacing $R$ with $S$, and assume that $R$ has property~\refpart{lem:acyclic-cover}{1}. 

Consider the set $J$ of finite subsets $S \subseteq R^{\times}$, ordered by inclusion. For each $S \in J$ define an $R$-algebra $R[S]$ by adding an indeterminate $x_{f}$ for each $f \in S$, and dividing by the ideal generated by the polynomials $x_{f}^{p} - f$ for all $f \in S$. The algebra $R[S]$ is clearly faithfully flat, and the induced morphism $\spec R[S] \arr \spec R$ is a universal homeomorphism.

If $S \subseteq T$ there is an induced homomorphism $R[S] \arr R[T]$ defined by sending $[x_{f}] \in R[S]$ into $[x_{f}] \in R[T]$ for each $f \in S$. Set
   \[
   R_{1} \eqdef \indlim_{S\in J}R[S]\;;
   \]
from the construction it is clear that every $f \in R^{\times}$ has a $p\th$ root in $R_{1}$. Also, $R_{1}$ is faithfully flat over $R$, and the induced morphism $\spec R_{1} \arr \spec R$ is a universal homeomorphism.

Let us iterate this construction: for each $n > 0$ define $R_{n} \eqdef (R_{n-1})_{1}$. We get a sequence of faithfully flat extensions $R \subseteq R_{1} \subseteq \dots \subseteq R_{n} \subseteq \dots $ such that every induced morphism $\spec R \arr \spec R_{n-1}$ is a universal homeomorphism, and every element of $R_{n-1}^{\times}$ has  $p\th$ root in $R_{n}$. Set $S \eqdef \indlim_{n} R_{n}$; it follows that $S^{\times}$ is $p$-divisible.

Furthermore, $S$ is faithfully flat over $R$, and $\spec S \arr \spec R$ is a universal homeomorphism; from this it follows that pullback induces an equivalence between affine étale surjective maps to $\spec R$ and affine étale surjective maps to $\spec S$. Since $R$ has property~\refpart{lem:acyclic-cover}{1}, it follows that $S$ has it too. But this implies that $S^{\times}$ is $n$-divisible for all $n$ not divisible by $p$.
\end{proof}

So, we may assume that $R = S$. Consider the morphism of sites $\phi\colon R\et \arr \kappa\et$ given by the homomorphism $\kappa \arr R$. Condition~\refpart{lem:acyclic-cover}{1} of Lemma~\ref{lem:acyclic-cover} implies that sending a sheaf $F$ into its group of global sections is an exact functor; in other words, the pushforward $\phi_{*}$ from sheaves on $R\et$ to $\kappa\et$ is exact.

Choose an injective sheaf $I^{-1}$ containing $\cO^{\times}_{R\et}$, and set $I^{0} \eqdef I^{-1}/\cO^{\times}_{R\et}$, as in Step~3; hence $\phi_{*}I^{\smallbullet}$ is an injective resolution of $R^{\times}$. Now, $P$ is represented by the complex $\cO^{\times} \xarr{0} \pi_{0}(P)$, where $\pi_{0}(P)$ is in degree $0$; hence the difference $\eta - \xi$ gives a homomorphism of sheaves from  $\cO^{\times} \xarr{0} \pi_{0}(P)$ to $\phi_{*}I^{\smallbullet}$, which is $0$ in degree~$-1$; we need to show that this is homotopic to $0$, which is clear, because the sequence
   \[
   0 \arr R^{\times} \arr \phi_{*}I^{-1} \arr \phi_{*}I^{0} \arr 0
   \]
is split exact, because, by condition~\refpart{lem:acyclic-cover}{2} of Lemma~\ref{lem:acyclic-cover}, the abelian group $R^{\times}$ is injective.

This ends the proof of Proposition~\ref{prop:is-a-gerbe}.
\end{proof}

Thus we get a functor from the $2$-category of rigid $\cO^{\times}$-stacks to the category of gerbes of multiplicative type sending $P$ into $\underger_{P}$.

As a particular case of Proposition~\ref{prop:is-a-gerbe}, we obtain the following description of the gerbe associated with a split $\cO^{\times}$ stack $\cB\cO^{\times} \times A$.

\begin{corollary}\label{cor:gerbe-split-Picard}
Let $A$ be a sheaf of abelian groups in $\kappa\et$. Then $\underger_{\cB\cO^{\times} \times A}$ is canonically equivalent to $\cB_{\kappa}\rD(A)$.
\end{corollary}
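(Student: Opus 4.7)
The plan is to recognize $\underger_{P}$ for $P\eqdef\cB\cO^{\times}\times A$ as the neutral gerbe $\cB_{\kappa}\rD(A)$ by exhibiting a canonical $\kappa$-rational object of $\underger_{P}$ and computing its automorphism group scheme; the substance has already been absorbed into Proposition~\ref{prop:is-a-gerbe}, and this corollary should record the trivial case in which the underlying $\cO^{\times}$-stack comes with a canonical splitting.

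First I would verify that $P$ is a rigid $\cO^{\times}$-stack with $\pi_{0}(P)\simeq A$ and $\pi_{1}(P)\simeq \cO^{\times}$; the $\cO^{\times}$-structure on $P$ is induced by the map $\cB\cO^{\times}\arr \cB\cO^{\times}\times A$ into the first factor. Proposition~\ref{prop:is-a-gerbe} then already tells us that $\underger_{P}$ is an affine gerbe of multiplicative type with band $\rD(A)$, so it only remains to exhibit a canonical global trivialization.

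Next, I would take as canonical $\xi_{0}\in \underger_{P}(\kappa)$ the projection homomorphism of $\cO^{\times}$-stacks
   \[
   \pr_{1}\colon \cB\cO^{\times}\times A \arr \cB\cO^{\times}=\underpic_{\kappa}.
   \]
This is indeed a homomorphism of $\cO^{\times}$-stacks, since the induced map on $\pi_{1}$ is the identity of $\cO^{\times}$. By the computation carried out in Step~4 of the proof of Proposition~\ref{prop:is-a-gerbe}, the automorphism group scheme of $\xi_{0}$ is
   \[
   \underhom_{\kappa\et}(\pi_{0}(P),\cO^{\times})=\underhom_{\kappa\et}(A,\cO^{\times})=\rD(A).
   \]

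Finally, the morphism $\spec\kappa\arr\underger_{P}$ corresponding to $\xi_{0}$, together with the identification above of its automorphism group with $\rD(A)$, yields a canonical base-preserving functor $\cB_{\kappa}\rD(A)\arr \underger_{P}$. Since both sides are affine fpqc gerbes and this functor is an isomorphism on the automorphism groups of the distinguished objects, it is an equivalence; its canonicity is built in, as it is pinned down by the canonical projection $\pr_{1}$. I do not expect a real obstacle: the heavy lifting is all in Proposition~\ref{prop:is-a-gerbe}, and the only thing to check beyond that is the compatibility of $\pr_{1}$ with the $\cO^{\times}$-structures on source and target, which is immediate from the definitions.
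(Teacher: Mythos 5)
Your proof is correct and follows essentially the same route as the paper: the projection $\cB\cO^{\times}\times A \arr \cB\cO^{\times} = \underpic_{\kappa}$ furnishes a $\kappa$-rational object of $\underger_{\cB\cO^{\times}\times A}$, and Proposition~\ref{prop:is-a-gerbe} identifies its automorphism group scheme with $\rD(A)$, yielding the equivalence with $\cB_{\kappa}\rD(A)$. Your additional remarks on rigidity of $P$ and compatibility of $\pr_{1}$ with the $\cO^{\times}$-structures are harmless elaborations of what the paper leaves implicit.
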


\begin{proof}
The projection $\cB\cO^{\times} \times A \arr \cB\cO^{\times} = \underpic_{\kappa}$ gives an object $\xi \in \underger_{\cB\cO^{\times} \times A}(\kappa)$, hence an equivalence $\underger_{\cB\cO^{\times} \times A} \simeq \cB \underaut_{\kappa}\xi$. But according to Proposition~\ref{prop:is-a-gerbe} $\underaut_{\kappa}\xi$ is canonically isomorphic to $\rD(A)$.
\end{proof}

\subsection*{The main result}

If $X$ is a concentrated fibered category, it is easy to construct a functor $\Phi_{X}\colon X \arr \underger_{\underpic_{X}}$. Suppose that we are given an object $\xi$ of $X(R)$, corresponding to a morphism $\spec R \arr X$, which in turn gives a symmetric monoidal functor $\underpic_{X} \arr \underpic_{R}$, which is, by definition, an object of $\underger_{\underpic_{X}}(R)$. This gives a function on the objects, that extends easily to a base-preserving functor $\Phi_{X}\colon X \arr \underger_{\underpic_{X}}$.

A morphism of fibered categories $X \arr Y$ gives morphisms of fibered categories $\underpic_{Y} \arr \underpic_{X}$ and $\underger_{\underpic_{X}} \arr \underger_{\underpic_{Y}}$; it is straightforward to show that $\Phi$ is a natural transformation of $2$-functors from the identity to $\underger_{\underpic_{?}}$.

\begin{proposition}\label{prop:equivalence-1}
If $\Gamma$ is a gerbe of multiplicative type, the morphism $\Phi_{\Gamma}\colon \Gamma\arr \underger_{\underpic_{\Gamma}} $ is an equivalence.
\end{proposition}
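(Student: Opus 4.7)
The strategy is to show that $\Phi_\Gamma$ is a morphism of affine gerbes over $\kappa$ whose induced homomorphism on bands is an isomorphism; then it is automatically an equivalence. Both sides are affine gerbes: $\Gamma$ by hypothesis, and $\underger_{\underpic_\Gamma}$ by Proposition~\ref{prop:is-a-gerbe}, since $\underpic_\Gamma$ is a rigid $\cO^\times$-stack when $\Gamma$ is concentrated with $\H^0(\Gamma,\cO) = \kappa$. Moreover, if $G$ denotes the band of $\Gamma$, Proposition~\ref{prop:band-picard} gives $\pi_0(\underpic_\Gamma) \cong \widehat G$, so by Proposition~\ref{prop:is-a-gerbe} the band of $\underger_{\underpic_\Gamma}$ is $\rD(\widehat G) \cong G$. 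Hence both gerbes have the same band, and it remains to verify that $\Phi_\Gamma$ induces this identification on bands.

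First I would reduce to the case where $\Gamma$ is of finite type. Write $\Gamma = \projlim_i \Gamma_i$ with $\Gamma_i$ gerbes of multiplicative type of finite type. By Proposition~\ref{prop:inductive-limit}, $\underpic_\Gamma \simeq \indlim_i \underpic_{\Gamma_i}$; and since a homomorphism of $\cO^\times$-stacks out of an inductive 2-limit is the same as a compatible system of such homomorphisms, there is a natural equivalence $\underger_{\indlim_i \underpic_{\Gamma_i}} \simeq \projlim_i \underger_{\underpic_{\Gamma_i}}$. The morphisms $\Phi_{\Gamma_i}$ are compatible and their projective limit agrees with $\Phi_\Gamma$, so if each $\Phi_{\Gamma_i}$ is an equivalence, so is $\Phi_\Gamma$.

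Next I would reduce to the split case $\Gamma = \cB_\kappa G$. Whether $\Phi_\Gamma$ is an equivalence can be checked after base change along some faithfully flat cover $\spec \kappa' \to \spec\kappa$; by Corollary~\ref{lem:finite-separable-point} we may choose $\kappa'$ finite separable so that $\Gamma(\kappa') \neq \emptyset$. It is immediate from the definitions that the formation of $\underpic_\Gamma$ and of $\underger_{-}$ commute with such base change (the latter was verified inside the proof of Proposition~\ref{prop:is-a-gerbe}). So we may assume $\Gamma = \cB_\kappa G$ for some group scheme $G$ of multiplicative type of finite type over $\kappa$.

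In this split case, Proposition~\ref{prop:underpic-split} yields $\underpic_{\cB_\kappa G} \simeq \cB\cO^\times \times \widehat G$, and Corollary~\ref{cor:gerbe-split-Picard} then yields $\underger_{\cB\cO^\times \times \widehat G} \simeq \cB_\kappa \rD(\widehat G) = \cB_\kappa G$, with the equivalence realized by the projection $\cB\cO^\times \times \widehat G \to \cB\cO^\times = \underpic_\kappa$, i.e.\ by the distinguished object corresponding to the trivial $G$-torsor. Under these identifications, the functor $\Phi_{\cB_\kappa G}$ sends the trivial torsor to this distinguished object, and on automorphisms it sends $g \in G(R) = \aut_R(\text{triv})$ to the automorphism of the pullback functor $\underpic_{\cB_\kappa G} \to \underpic_R$ induced by $g$; chasing through Proposition~\ref{prop:underpic-split} this is precisely Cartier duality $G \xrightarrow{\sim} \rD(\widehat G)$. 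Hence $\Phi_{\cB_\kappa G}$ is the identity (up to the canonical isomorphism $G \simeq \rD(\widehat G)$), and is therefore an equivalence. The main obstacle is exactly this last bookkeeping step: making the identifications of Proposition~\ref{prop:underpic-split} and Corollary~\ref{cor:gerbe-split-Picard} functorial enough to read off that $\Phi$ induces Cartier duality on bands rather than some other, possibly non-canonical, isomorphism.
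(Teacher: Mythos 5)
Your proposal is correct and follows essentially the same route as the paper: reduce to the finite-type case via Proposition~\ref{prop:inductive-limit}, then to the split case $\Gamma = \cB_{\kappa}G$ by a finite separable base change, and finally identify the composite $\cB_{\kappa}G \arr \cB_{\kappa}\rD(\widehat{G})$ with the biduality isomorphism using Proposition~\ref{prop:underpic-split} and Corollary~\ref{cor:gerbe-split-Picard}. The only cosmetic differences are the order of the reductions and your framing of the final step in terms of bands rather than directly as an equivalence of classifying stacks.
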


\begin{proof}
We split the proof into three steps.

\step{Step 1} Assume that $\Gamma(\kappa) \neq \emptyset$, so that $\Gamma\simeq \cB_\kappa G$ for some group scheme of multiplicative type $G$ over $\kappa$. By Proposition~\ref{prop:underpic-split} we have a canonical equivalence $\underpic_{\cB_\kappa} \simeq \cB\cO^{\times} \times \widehat{G}$; by Corollary~\ref{cor:gerbe-split-Picard} we have $\underger_{\underpic_{\cB_\kappa}} \simeq \cB_{\kappa}\rD(\widehat{G})$. The composite $\cB_\kappa G \arr  \cB_{\kappa}\rD(\widehat{G})$ is easily seen to be isomorphic to the functor induced by the canonical map $G \arr \rD(\widehat{G})$, which is an isomorphism; this proves the result.

\step{Step 2} Assume that $\Gamma$ is of finite type over $\kappa$. Then, by Proposition~\ref{prop:gerbe-finite-type} we have that $\Gamma$ is a smooth algebraic stack over $\kappa$, hence there exists a finite separable extension $\kappa'/\kappa$ such that $\Gamma(\kappa') \neq \emptyset$. 

Since $\Gamma$ and $\underger_{\underpic_{\Gamma}}$ are fpqc stacks, is enough to show that the morphism $(\Phi_{X})_{\kappa'}\colon \Gamma_{\kappa'} \arr (\underger_{\underpic_{\Gamma}})_{\kappa'}$ of gerbes over $\arr\kappa'$ is an equivalence. But it is obvious that $\underpic_{\Gamma_{\kappa'}}$ is the restriction of $\underpic_{\Gamma}$ to $\kappa'\et$; hence $(\underger_{\underpic_{\Gamma}})_{\kappa'} =\underger_{\underpic_{\Gamma_{\kappa'}}} $. So, since $\Gamma_{\kappa'}(\kappa') \neq \emptyset$, the result follows from the first step.

\step{Step 3} In the general case, write $\Gamma$ as a projective limit $\projlim_{i}\Gamma_{i}$ of affine gerbes of finite type over~$\kappa$. Then by Proposition~\ref{prop:inductive-limit} we have that $\underpic_{\Gamma} \simeq \indlim_{i}\underpic_{\Gamma_{i}}$, and it is easy to see that $\underger_{\indlim_{i}\underpic_{\Gamma_{i}}} \simeq \projlim_{i}\underger_{\underpic_{\Gamma_{i}}}$. This completes the proof.
\end{proof}

If $P$ is an $\cO^{\times}$-stack, let us construct a homomorphism of $\cO^{\times}$-stacks $\Psi_{P}\colon P \arr \underpic_{\underger_{P}}$. Suppose that $\eta$ is an object of $P(A)$ for some étale $\kappa$-algebra $A$; we need to define an object $\Psi_{P}(\eta) \in \underpic_{\underger_{P}}(A)$, that is, a morphism of fibered categories $\Psi_{P}(\eta)\colon (\underger_{P})_{A} \arr \cB_{\kappa}\gm$.

Assume that we have an object $\phi$ of $(\underger_{P})_{A}(R)$; this consists of an $A$-algebra structure on the $\kappa$-algebra $R$, and an object of $\phi$ of $\underger_{P}(R)$, that is, a symmetric monoidal functor $\phi\colon P \arr \underpic_{R}$.  By applying $\phi$ to $\eta$ we obtain an element $\phi(\eta)$ of $\underpic_{R}(A) = \underpic(A\otimes R)$; we define $\Psi_{P}(\eta)(\phi) \eqdef \phi(\eta)$ as the image in $\underpic(R)$ of $\phi(\eta)$, via the functor $\underpic(A\otimes R) \arr \underpic(R)$ induced by the product homomorphism $A\otimes R \arr R$ (the one that gives the $A$-algebra structure on $R$). This defines $\Psi_{P}(\eta)$ at the level of objects. This is easily extended to a morphism of fibered categories.

We leave to the reader the straightforward, but dull, task of defining $\Psi_{P}$ as a symmetric monoidal functor, and to check that this makes $\Psi$ into a natural transformation of $2$-functors from the identity to $\underpic_{\underger_{?}}$.

\begin{proposition}\label{prop:equivalence-2}
If $P$ is rigid, the homomorphism $\Psi_{P}\colon P \arr \underpic_{\underger_{P}}$ is an equivalence.
\end{proposition}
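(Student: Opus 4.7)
The plan is to reduce the claim to an isomorphism on $\pi_{0}$. Since $\Psi_P$ is a homomorphism of rigid $\cO^{\times}$-stacks, it is already the identity on $\pi_{1} = \cO^{\times}$, and by the general theory of Picard stacks recalled above (equivalence of homomorphisms of rigid $\cO^{\times}$-stacks is detected on $\pi_{0}$), $\Psi_P$ is an equivalence if and only if the induced map of sheaves on $\kappa\et$
   \[
   \pi_{0}(\Psi_P)\colon \pi_{0}(P) \arr \pi_{0}(\underpic_{\underger_{P}})
   \]
is an isomorphism. So the whole task is to identify this induced map and check it is an iso.

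To compute the target, I would chain together the results just established. By Proposition~\ref{prop:is-a-gerbe}, the band of the gerbe $\underger_{P}$ is the group of multiplicative type $\rD(\pi_{0}(P))$; by Proposition~\ref{prop:band-picard}, $\pi_{0}(\underpic_{\underger_{P}})$ is canonically isomorphic to the character sheaf $\widehat{\rD(\pi_{0}(P))}$, which by the classical duality between sheaves of abelian groups on $\kappa\et$ and groups of multiplicative type is canonically $\pi_{0}(P)$ itself. So abstractly the source and target of $\pi_{0}(\Psi_P)$ agree; the content of the proposition is that $\pi_{0}(\Psi_P)$ is the canonical identification just obtained.

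The main obstacle is verifying this matching, and here I would reduce to a split situation. Both of the constructions $P \mapsto \underger_P$ (by Step~2 in the proof of Proposition~\ref{prop:is-a-gerbe}) and $\Gamma \mapsto \underpic_\Gamma$ commute with separable algebraic extensions of the base field, and the definitions of $\Psi_P$ and of the canonical identification above are natural in $\kappa$; so I can base change to the separable closure $\kappa\sep$. Over $\kappa\sep$ the group $\ext^{2}_{\kappa\et}(\pi_{0}(P), \cO^{\times})$ vanishes, so $P$ splits as $\cB\cO^{\times} \times \pi_{0}(P)$. In this split setting, Corollary~\ref{cor:gerbe-split-Picard} identifies $\underger_P$ with $\cB_{\kappa}\rD(\pi_{0}(P))$, and Proposition~\ref{prop:underpic-split} identifies $\underpic$ of this with $\cB\cO^{\times} \times \widehat{\rD(\pi_{0}(P))} = \cB\cO^{\times} \times \pi_{0}(P)$. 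Unwinding the definition of $\Psi_P$ in this split model, one sees directly that on the $\pi_{0}$-factor it sends $\eta \in \pi_{0}(P)(A)$ to the character of $\rD(\pi_{0}(P))_A$ it determines under duality, which is precisely the canonical identification; this completes the argument. As an alternative, one could try to avoid the explicit split-case computation by noting that $\Phi$ and $\Psi$ should assemble into the unit and counit of a 2-adjunction between $\cO^{\times}$-stacks and fibered categories over $\aff\kappa$, and then, since $\Phi_\Gamma$ is already known to be an equivalence for every gerbe of multiplicative type $\Gamma$ (Proposition~\ref{prop:equivalence-1}), a triangle-identity argument would force $\Psi_P$ to be an equivalence as well; but this approach requires setting up the adjunction carefully, which seems at least as involved as the direct verification.
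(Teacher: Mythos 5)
Your proposal follows the paper's proof exactly: reduce to showing the induced map on $\pi_{0}$ is an isomorphism, identify $\pi_{0}(\underpic_{\underger_{P}})$ with $\widehat{\rD(\pi_{0}(P))}$ via Propositions~\ref{prop:band-picard} and \ref{prop:is-a-gerbe}, and recognize the induced map as the biduality isomorphism. Your extra paragraph reducing to the split case over $\kappa\sep$ just fills in the paper's ``one checks'' step, and does so correctly.
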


\begin{proof}
We need to check that the homomorphism $\pi_{0}(P) \arr \pi_{0}(\underpic_{\underger_{P}})$ induced by $\Psi_{P}$ is an isomorphism. It follows from Propositions \ref{prop:band-picard} and \ref{prop:is-a-gerbe} that $\pi_{0}(\underpic_{\underger_{P}})$ is canically isomorphic to $\widehat{\rD(\pi_{0}(P))}$; and one checks that the homomorphism $\pi_{0}(P) \arr \widehat{\rD(\pi_{0}(P))}$ induced by $\Psi_{P}$ is the biduality map, which is an isomorphism.
\end{proof}

Propositions \ref{prop:equivalence-1} and \ref{prop:equivalence-2} imply the following.

\begin{theorem}\label{thm:main-equivalence}
Sending a gerbe $\Gamma$ of multiplicative type into $\underpic_{\Gamma}$, and a rigid $\cO^{\times}$-stack $P$ into $\underger_{P}$, gives an equivalence between the $2$-category of gerbes of multiplicative type and the opposite of the $2$-category of rigid $\cO^{\times}$-stacks.
\end{theorem}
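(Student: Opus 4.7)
The plan is simply to assemble the pieces already constructed. First I would verify that $\underpic$ and $\underger$ are both well-defined as strict $2$-functors between the $2$-categories in question: by construction $\underpic_{?}$ sends gerbes to rigid $\cO^{\times}$-stacks (this is part of the setup, and rigidity for $\Gamma$ of multiplicative type is guaranteed because $\H^{0}(\Gamma, \cO) = \kappa$), while $\underger_{?}$ sends rigid $\cO^{\times}$-stacks to gerbes of multiplicative type thanks to Proposition~\ref{prop:is-a-gerbe}. Both functors reverse the direction of $1$-morphisms: a morphism $\Gamma \arr \Gamma'$ pulls back invertible sheaves, and a homomorphism $P \arr Q$ of $\cO^{\times}$-stacks induces, by precomposition, a morphism $\underger_{Q} \arr \underger_{P}$. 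This explains the appearance of the opposite $2$-category.

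Next I would recall the two natural transformations already produced. We have $\Phi\colon \id \Rightarrow \underger_{\underpic_{?}}$ on fibered categories, whose restriction to gerbes of multiplicative type is, by Proposition~\ref{prop:equivalence-1}, an objectwise equivalence; and $\Psi\colon \id \Rightarrow \underpic_{\underger_{?}}$ on $\cO^{\times}$-stacks, which is an objectwise equivalence on the full subcategory of rigid $\cO^{\times}$-stacks by Proposition~\ref{prop:equivalence-2}. Together $\Phi$ and $\Psi$ constitute the unit and counit of a (contravariant) biadjunction, and since both are equivalences of the respective fibered categories/stacks, they exhibit $\underpic$ and $\underger$ as mutually quasi-inverse $2$-functors.

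The only thing that needs any care is the bookkeeping on $2$-morphisms: one must check that $\Phi$ is genuinely $2$-natural (so that it intertwines not only $1$-arrows but also the $2$-arrows between morphisms of gerbes), and the analogous statement for $\Psi$. This is routine from the construction of $\Phi_{X}$ and $\Psi_{P}$ as strictly natural in the variables, since both are defined by evaluation/precomposition, operations which are strictly compatible with $2$-morphisms. Once this is in place, the standard fact that pointwise-equivalence unit and counit yield an equivalence of $2$-categories finishes the proof.

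I expect no real obstacle here; the genuine content of the theorem is concentrated in Propositions~\ref{prop:equivalence-1} and~\ref{prop:equivalence-2}, which themselves rely on the reduction-to-split-case arguments (via finite separable extensions and projective limits) and on the cohomological acyclicity Lemma~\ref{lem:acyclic-cover}. The present theorem is the formal consequence that packages these two equivalences into a single statement, so its proof is essentially one paragraph of verification.
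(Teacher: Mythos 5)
Your proposal matches the paper's proof exactly: the paper derives Theorem~\ref{thm:main-equivalence} as an immediate formal consequence of Propositions~\ref{prop:equivalence-1} and~\ref{prop:equivalence-2}, with $\Phi$ and $\Psi$ serving as the unit and counit of the (contravariant) equivalence, just as you describe. Your additional remarks on $2$-naturality and on where the real mathematical content lives are accurate and consistent with the paper's (very terse) treatment.
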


Here is the main result of this section.

\begin{theorem}\label{thm:main-multiplicative}
Let $X \arr \aff\kappa$ be a concentrated fibered category such that\/ $\H^{0}(X, \cO) = \kappa$. Then the morphism $\Phi_{X}\colon X \arr \underger_{\underpic_{X}}$ makes $\underger_{\underpic_{X}}$ into the fundamental gerbe of $X$ for the class of group schemes of multiplicative type.
\end{theorem}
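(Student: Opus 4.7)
The plan has three main steps.

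First, I would verify that $\underger_{\underpic_{X}}$ is itself a gerbe of multiplicative type. The hypotheses that $X$ is concentrated and $\H^{0}(X, \cO) = \kappa$ guarantee, as noted in the paragraph preceding Proposition~\ref{prop:underpic-split}, that $\underpic_{X}$ is a rigid $\cO^{\times}$-stack. Proposition~\ref{prop:is-a-gerbe} then gives that $\underger_{\underpic_{X}}$ is an affine gerbe whose band is the group scheme $\rD(\pi_{0}(\underpic_{X}))$ of multiplicative type, hence a gerbe of multiplicative type (that is, a pro-$\cC$-gerbe for $\cC$ the class of multiplicative-type groups of finite type).

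Second, I would verify the universal property: for every gerbe $\Gamma$ of multiplicative type, the composition functor
   \[
   \Phi_{X}^{\ast} \colon \hom(\underger_{\underpic_{X}}, \Gamma) \arr \hom(X, \Gamma), \qquad g \longmapsto g \circ \Phi_{X},
   \]
is an equivalence. The strategy is to write down an explicit quasi-inverse $\Theta$ using the functoriality of $\underpic_{?}$ and $\underger_{?}$ together with the equivalence $\Phi_{\Gamma}\colon \Gamma \xarr{\sim} \underger_{\underpic_{\Gamma}}$ from Proposition~\ref{prop:equivalence-1}: given $f \colon X \arr \Gamma$, set
   \[
   \Theta(f) \eqdef \Phi_{\Gamma}^{-1}\circ \underger_{\underpic_{f}} \colon \underger_{\underpic_{X}} \arr \underger_{\underpic_{\Gamma}} \arr \Gamma,
   \]
where $\underpic_{f}\colon \underpic_{\Gamma} \arr \underpic_{X}$ is the induced homomorphism of rigid $\cO^{\times}$-stacks.

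Third, I would check that $\Theta$ and $\Phi_{X}^{\ast}$ are quasi-inverse. For $\Theta \circ \Phi_{X}^{\ast} \simeq \id$, one uses the 2-naturality of $\Phi$ as a transformation from the identity $2$-functor to $\underger_{?} \circ \underpic_{?}$: applied to $f \colon X \arr \Gamma$, this gives a 2-commutative square
   \[
   \begin{tikzcd}
   X \rar{f} \dar{\Phi_{X}} & \Gamma \dar{\Phi_{\Gamma}} \\
   \underger_{\underpic_{X}} \rar{\underger_{\underpic_{f}}} & \underger_{\underpic_{\Gamma}}
   \end{tikzcd}
   \]
so that $\Theta(f)\circ\Phi_{X} = \Phi_{\Gamma}^{-1}\circ \underger_{\underpic_{f}}\circ\Phi_{X} \simeq f$. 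For $\Phi_{X}^{\ast} \circ \Theta \simeq \id$, given $g\colon \underger_{\underpic_{X}} \arr \Gamma$, applying the same 2-naturality square with $g$ in place of $f$ (and using that $\Phi_{\underger_{\underpic_{X}}}$ is an equivalence by Step~1 together with Proposition~\ref{prop:equivalence-1}), combined with the triangle identity $\underpic_{\Phi_{X}} \circ \Psi_{\underpic_{X}} \simeq \id_{\underpic_{X}}$ implicit in the 2-equivalence of Theorem~\ref{thm:main-equivalence}, yields $\Theta(g\circ \Phi_{X}) \simeq g$.

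The main obstacle is the 2-categorical bookkeeping in the last step, especially verifying the triangle identity linking $\Phi$ and $\Psi$ cleanly; once that identity is available, the remainder is a direct chase of 2-isomorphisms. Happily, the substantive content has already been assembled in Theorem~\ref{thm:main-equivalence} together with Propositions~\ref{prop:equivalence-1} and~\ref{prop:equivalence-2}, so this final theorem comes out as a formal consequence of those earlier results.
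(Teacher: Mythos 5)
Your strategy is sound and does lead to a proof, but it is genuinely different from the paper's. The paper does not construct an explicit quasi-inverse at all: it sets $P \eqdef \underpic_{\Gamma}$, notes $\Gamma \simeq \underger_{P}$ by Theorem~\ref{thm:main-equivalence}, and fits both $\hom(\underger_{\underpic_{X}},\Gamma)$ and $\hom(X,\Gamma)$ into a $2$-commutative triangle over $\hom(P,\underpic_{X})\op$. The left leg is an equivalence by Theorem~\ref{thm:main-equivalence} (a statement purely about gerbes), and the whole weight of the argument falls on a separate adjunction-type lemma: for \emph{any} concentrated fibered category $X$ and any rigid $\cO^{\times}$-stack $P$, the natural functor $\hom(X,\underger_{P}) \arr \hom(P,\underpic_{X})$ is an equivalence, proved by reducing to $X = \spec R$ (where it is the definition of $\underger_{P}$ plus Yoneda) and then descending. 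Your route replaces that lemma by the two triangle identities for the unit $\Phi$ and counit $\Psi$. These are indeed equivalent pieces of information, and both are established by unwinding the definitions; your version arguably makes the formal structure more transparent, while the paper's version isolates the one computation that genuinely involves a non-gerbe $X$ into a single clean Yoneda statement.

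One caution: the identity $\underpic_{\Phi_{X}} \circ \Psi_{\underpic_{X}} \simeq \id_{\underpic_{X}}$ that you need for the direction $\Theta(g\circ\Phi_{X}) \simeq g$ is \emph{not} implicit in Theorem~\ref{thm:main-equivalence}, because that theorem only compares the $2$-category of gerbes of multiplicative type with rigid $\cO^{\times}$-stacks, whereas here $X$ is an arbitrary concentrated fibered category and $\Phi_{X}$ is not a morphism of gerbes. You must verify this identity directly (for $L \in \underpic_{X}(A)$ and $\xi \in X(R)$ one finds that $\Phi_{X}(\xi)^{*}\bigl(\Psi_{\underpic_{X}}(L)\bigr)$ is canonically $\xi^{*}L$, so the composite is the identity up to canonical isomorphism, compatibly with the monoidal structures); the companion identity $\underger_{\Psi_{P}}\circ\Phi_{\underger_{P}} \simeq \id_{\underger_{P}}$ for $P = \underpic_{X}$ is checked the same way. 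Once those two computations are written out, your argument closes; without them the second half of Step~3 is not yet justified by anything proved earlier in the paper.
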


\begin{proof}
Let $\Gamma \arr \aff\kappa$ be a gerbe of multiplicative type; we need to show that the functor
   \[
   \hom(\underger_{\underpic_{X}}, \Gamma) \arr \hom(X, \Gamma)
   \]
induced by composition with $\Phi_{X}\colon X \arr \underger_{\underpic_{X}}$ is an equivalence.

Set $P \eqdef \underpic_{\Gamma}$; by Theorem~\ref{thm:main-equivalence} we have $\Gamma\simeq \underger_{P}$. There is a  $2$-commutative diagram of functors
   \[
   \begin{tikzcd}
   \hom(\underger_{\underpic_{X}}, \Gamma) \ar[rr]\ar[rd] && \hom(X, \Gamma)\ar[ld]\\
   &\hom(P, \underpic_{X})\op\,.
   \end{tikzcd}
   \]
The leftmost arrow is an equivalence, by Theorem~\ref{thm:main-equivalence}; hence to prove the theorem it is enough to show that the rightmost arrow is an equivalence. This holds for any concentrated stack, as the following lemma states; this completes the proof of the theorem.

\begin{lemma}
Let $X$ be a concentrated stack and $P$ a rigid $\cO^{\times}$-stack. Then the natural functor
   \[
   \hom(X, \underger_{P}) \arr \hom(P, \underpic_{X})
   \]
is an equivalence
\end{lemma}

\begin{proof}
If $X = \spec R$, this follows from the definition of $\underger_{P}$, and Yoneda's Lemma. The general case follows easily from this.

%
\end{proof}\noqed
\end{proof}

\bibliographystyle{amsplain}
\bibliography{fundamental.bib}

\end{document}